\documentclass[preprint]{imsart}
\pdfoutput=1

%% Packages
\RequirePackage[utf8]{inputenc}
\RequirePackage[OT1]{fontenc}
\RequirePackage{amsmath, amssymb, amsthm, mathtools}
\RequirePackage[mathscr]{euscript}
\RequirePackage{xfrac}
\RequirePackage{bbm}
\RequirePackage{amsfonts}
\RequirePackage[numbers]{natbib}
\RequirePackage[colorlinks,citecolor=blue,urlcolor=blue]{hyperref}
\RequirePackage{graphicx}% uncomment this for including figures
\RequirePackage{enumitem}
\RequirePackage{multirow}
\RequirePackage{hhline}
\RequirePackage{xcolor}
\RequirePackage{xr}
\RequirePackage{algorithm}
\RequirePackage{algorithmicx}
\RequirePackage{algpseudocode}
\RequirePackage{float}
\RequirePackage[margin=1.5in]{geometry}

\floatname{algorithm}{Generator}

\startlocaldefs
\numberwithin{equation}{section}
\theoremstyle{plain}
\newtheorem{theorem}{Theorem}[section]
\newtheorem{corollary}[theorem]{Corollary}
\newtheorem{proposition}[theorem]{Proposition}
\newtheorem{lemma}[theorem]{Lemma}
\newtheorem{statement}{Statement}[section]
\theoremstyle{definition}
\newtheorem{definition}{Definition}[section]
\theoremstyle{remark}

%\newtheorem*{???}{???}
%\newtheorem{???}{???}[???]
%\newtheorem{???}[???]{???}
%%%%%%%%%%%%%%%%%%%%%%%%%%%%%%%%%%%%%%%%%%%%%%
%% Please put your definitions here:        %%
%%%%%%%%%%%%%%%%%%%%%%%%%%%%%%%%%%%%%%%%%%%%%%
%Definitions JK
%
% for comments

%
\let\mybrk\\
\newcommand{\p}[1]{\left({#1}\right)}
\newcommand{\brk}[1]{\left[{#1}\right]}
\newcommand{\set}[1]{\left\lbrace #1 \right\rbrace}

\newcommand{\prob}[2][]{ \mathbb P_{#1}\left[{#2}\right]}
\newcommand{\probout}[2][]{ \mathbb P_{#1}^*\left[{#2}\right]}
\newcommand{\ind}[1]{\mathbbm 1\left\lbrace{#1}\right\rbrace}
\newcommand{\abs}[1]{\left|{#1}\right|}
\newcommand{\de}[1]{\ \text{d}{#1}}

\newcommand{\E}[2][]{ \mathbb E_{#1}\left[{#2}\right]}

\DeclareMathOperator{\.Var}{Var}
\newcommand{\Var}[2][]{\.Var_{#1}\left[{#2}\right]}

\newcommand{\im}[1]{\text{im}\left(#1\right)}

\newcommand{\diam}[1]{\text{diam}\left({#1}\right)}
\newcommand{\st}{\text{ s.t. }}

\newcommand{\runif}[1]{\text{Unif}\left(#1\right)}
\newcommand{\rexp}[1]{\text{Exp}\left(#1\right)}
\newcommand{\iid}{\overset{\text{iid}}{\sim}}
\newcommand{\rnorm}[1]{\text{N}\left(#1\right)}

\newcommand{\rpois}[1]{\text{Pois}\left(#1\right)}

\newcommand{\reals}{\mathbb R}
\newcommand{\nats}{\mathbb N}
\newcommand{\ints}{\mathbb Z}

\newcommand{\AND}{ \ \text{and} \ }

\newcommand{\norm}[2][]{{\left|\left|{#2}\right|\right|}_{#1}}

\endlocaldefs

\begin{document}

\begin{frontmatter}
%%%%%%%%%%%%%%%%%%%%%%%%%%%%%%%%%%%%%%%%%%%%%%
\title{Bootstrapping Persistent Betti Numbers \\ and Other Stabilizing Statistics}
%\title{A sample article title with some additional note\thanksref{T1}}
\runtitle{Bootstrapping Stabilizing Statistics}
%\thankstext{T1}{A sample of additional note to the title.}

\begin{aug}
\author[A]{\fnms{Benjamin} \snm{Roycraft}
\ead[label=e1,mark]{btroycraft@ucdavis.edu}},
\author[B]{\fnms{Johannes} \snm{Krebs} 
\ead[label=e2,mark]{krebs@uni-heidelberg.de}}
\and
\author[A]{\fnms{Wolfgang} \snm{Polonik} 
\ead[label=e3,mark]{wpolonik@ucdavis.edu}}
%%%%%%%%%%%%%%%%%%%%%%%%%%%%%%%%%%%%%%%%%%%%%%
%% Addresses                                %%
%%%%%%%%%%%%%%%%%%%%%%%%%%%%%%%%%%%%%%%%%%%%%%
\address[A]{Department of Statistics, University of California, Davis, One Shields Avenue, 95616, USA\\ \printead{e1,e3}}
\address[B]{Institute for Applied Mathematics, Heidelberg University, Im Neuenheimer Feld 205, 69120 Heidelberg, Germany\\\printead{e2}}
\runauthor{Roycraft et al.}
\affiliation{UC Davis and Heidelberg University}
\end{aug}

\begin{abstract}
The present contribution investigates multivariate bootstrap procedures for general stabilizing statistics, with specific application to topological data analysis. Existing limit theorems for topological statistics prove difficult to use in practice for the construction of confidence intervals, motivating the use of the bootstrap in this capacity. However, the standard nonparametric bootstrap does not directly provide for asymptotically valid confidence intervals in some situations. A smoothed bootstrap procedure, instead, is shown to give consistent estimation in these settings. The present work relates to other general results in the area of stabilizing statistics, including central limit theorems for functionals of Poisson and Binomial processes in the critical regime. Specific statistics considered include the persistent Betti numbers of \v{C}ech and Vietoris-Rips complexes over point sets in $\reals^d$, along with Euler characteristics, and the total edge length of the $k$-nearest neighbor graph. Special emphasis is made throughout to weakening the necessary conditions needed to establish bootstrap consistency. In particular, the assumption of a continuous underlying density is not required. A simulation study is provided to assess the performance of the smoothed bootstrap for finite sample sizes, and the method is further applied to the cosmic web dataset from the Sloan Digital Sky Survey (SDSS). Source code is available at \href{https://github.com/btroycraft/stabilizing_statistics_bootstrap}{github.com/btroycraft/stabilizing\_statistics\_bootstrap}.

\end{abstract}

\begin{keyword}[class=MSC2010]
\kwd[Primary ]{62F40}
\kwd[; secondary ]{62H10, 62G05}
\end{keyword}

\begin{keyword}
\kwd{Betti numbers}
\kwd{Bootstrap}
\kwd{Euler characteristic}
\kwd{Random geometric complexes}
\kwd{Stabilizing statistics}
\kwd{Stochastic geometry}
\kwd{Topological data analysis}
\kwd{Persistent homology}
\end{keyword}

\end{frontmatter}

%\tableofcontents

%%%%%%%%%%%%%%%%%%%%%%%%%%%%%%%%%%%%%%%%%%%%%%
%%%% Main text entry area:
\section{Introduction}

In recent years, a multitude of topological statistics have been developed to describe and analyze the structure of data, achieving notable success. These methods have seen application in astrophysics \cite{Adler2017, Pranav2019, Pranav2016, Pranav2019a}, cancer genomics \cite{Arsuaga2015, DeWoskin2010, Camara2016}, medical imaging \cite{Crawford2019}, materials science \cite{Kramar2013}, fluid dynamics \cite{Kramar2016} and chemistry \cite{Xia2014}, and other wide ranging fields.

The use of simplicial complexes to summarize the geometric and topological properties of data culminates in the techniques of persistent homology. Summary statistics based on persistent homology, persistent Betti numbers, persistence diagrams, and derivatives thereof effectively extract essential topological properties from point cloud data. A broad introduction to the methods of topological data analysis can be found in \cite{Wasserman2018, Chazal2017}.

While the use of such statistics has seen wide success, very little is currently known about the statistical properties of these topological summaries. An initial attempt at statistical analysis using persistent homology can be seen in \cite{Bubenik2007}, with the later introduction of persistence landscapes in \cite{Bubenik2015}. Likewise, central limit theorems have been developed for persistence landscapes \cite{Chazal2015a}, Betti numbers \cite{Yogeshwaran2017} and persistent Betti numbers \cite{Hiraoka2018, Krebs2019} under a variety of asymptotic settings. However, the form of these results is insufficient to provide for valid confidence intervals.

In the construction of asymptotically valid confidence intervals, subsampling and bootstrap estimation have proven successful. In \cite{Fasy2014}, various techniques are given for constructing confidence sets for persistence diagrams and derived statistics, including persistence diagrams generated from sublevel sets of the density function, as well as for the \v{C}ech and Vietoris-Rips complexes of data constrained to a manifold embedded in $\reals^d$. In \cite{Chazal2015a, Chazal2015}, bootstrap consistency is established very generally for persistence landscapes drawn from independently generated point clouds in $\reals^d$, assuming that the number of independent samples is allowed to grow.

However, even with these recent developments, the available techniques for constructing confidence sets using topological statistics remain severely limited. The bootstrap has proven one of the only effective tools, however the theoretical properties of bootstrap estimation applied to topological statistics are not well understood. For the large-sample asymptotic regime in particular, results are largely nonexistent.

The goal of this work is to provide the foundational theory for the bootstrap in this area. Here the validity of the bootstrap in the multivariate setting is established, a key step towards an eventual process-level result. However, the latter remains a significant technical hurdle. While motivated primarily by application to topological data analysis, the results presented here apply much more generally over a class of \textit{stabilizing} statistics. For an additional application, we show convergence for the bootstrap applied to the total edge length of the $k$-nearest neighbor graph.

We also analyze the large-sample asymptotic properties of the bootstrap applied to the \v{C}ech and Vietoris-Rips complexes directly, where the underlying point cloud is a sample drawn from a common distribution on $\reals^d$. In particular, we will show that the standard nonparametric bootstrap can fail to provide asymptotically valid confidence intervals directly in some cases. Via a smoothed bootstrap, however, we will construct multivariate confidence intervals for the mean persistent Betti number, which lie in bijection with the corresponding persistence diagram.

As defined in \cite{Penrose2001}, a statistic \textit{stabilizes} if the change in the function value induced by addition of new points to the underlying sample is at most locally determined. Applications of stabilization have allowed for the development of central limit theorems for several topological statistics. \cite{Yogeshwaran2017} show that Betti numbers exhibit the stabilization property, and provide a central limit theorem for Betti numbers derived from a homogenous Poisson process with unit intensity. \cite{Hiraoka2018} considers persistent Betti numbers in the homogenous Poisson process case with arbitrary intensity. Most recently \cite{Krebs2019} established multivariate central limit theorems for persistent Betti numbers with an underlying point cloud coming from either a nonhomogenous Poisson or binomial process. For the results in the present contribution, we draw significant inspiration from this most recent work.

An application of our general consistency result is made to the persistent Betti numbers of a class of distance-based simplicial complexes, including the \v{C}ech and Vietoris-Rips complexes. Throughout this work, a special focus is given towards weakening the necessary assumptions compared to previous results. Specifically, the theorems presented here apply for distributions with unbounded support, unbounded density, and possible discontinuities. We assume only a bound for the $L_p$-norm of the underlying sampling density.

For the first half of this paper, we focus on the theory of bootstrap estimation applied to stabilizing statistics. In Section~\ref{section::stabilizing_statistics} we will introduce the concept of stabilization and establish intermediate technical results in this context. We then present our general bootstrap consistency theorem.

In the second half, we introduce the main topological and geometric statistics of interest, applying the theory presented in the previous sections. In Section~\ref{section::simpicial_complex} we connect the general theory to the specific case of persistent homology and related statistics. Towards this end, we give a short introduction to simplicial complexes and persistent homology. In Section~\ref{section::topology_bootstrap}, the stabilization properties of persistent Betti numbers are analyzed, along with the Euler characteristic, for general classes of distance-based simplicial complexes. We establish bootstrap consistency in the large-sample limit for each of these statistics, as well as for the total edge length of the $k$-nearest neighbor graph. In Section~\ref{section::simulation_study} we provide several simulations demonstrating the finite-sample properties of the smoothed bootstrap applied to persistent Betti numbers. Finally, Section~\ref{section::data_analysis} illustrates the utility of the smoothed bootstrap with an application to a cosmic web dataset from the Sloan Digital Sky Survey (SDSS) \cite{Blanton2017}. Source code for the computational sections is available at \href{https://github.com/btroycraft/stabilizing_statistics_bootstrap}{github.com/btroycraft/stabilizing\_statistics\_bootstrap} \cite{Roycraft2021}.

Appendix~\ref{section::b_bounded} gives an investigation of several altered problem settings in which precise stabilization properties may be derived. The proofs for all results can be found in Appendix~\ref{appendix::proofs}. Functionals considered include the ``$B$-bounded'' persistent Betti numbers and the ``$q$-truncated'' Euler characteristic. 

\section{Stabilizing Statistics} \label{section::stabilizing_statistics}

\subsection{Central Limit Theorems for Stabilizing Statistics}

Before proving bootstrap convergence, we give a brief overview of the existing work regarding stabilizing statistics. For the precise definitions used throughout this paper, see Section~\ref{section::stabilization}.

In the seminal work of \cite{Penrose2001}, the chief objects of study are real valued functionals applied over point sets in $\reals^d$. It is here that a stabilization property was first defined, and used to show central limit theorems for certain types of geometric functionals, including the length of the $k$-nearest neighbor graph and the number of edges in the sphere of influence graph. This initial work distilled two properties key to showing central limit theorems for geometric functionals. First is the \textit{stabilization} property, and second is a moment bound. In short, we say that a functional $\psi$ \textit{stabilizes} if the cost of adding an additional point, or a set of points, to the point cloud varies only on a bounded region. Specific definitions differ by context.

In \cite{Penrose2001}, the authors distinguish between two data generating regimes. First, results are shown for a homogenous Poisson process over $\reals^d$. Alternatively, a binomial process is considered, being equivalent to a sample of fixed size from an appropriate probability distribution. Here, the functional under consideration is restricted to a bounded domain $B_n$ of volume $n$, where $n$ is allowed to increase.  In this initial work, only homogenous Poisson processes and uniform binomial sampling are considered. In \cite{Penrose2003}, a similar framework is used to establish laws of large numbers for graph-based functionals, including the number of connected components in the minimum spanning tree. Further quantitative refinements on the general central limit theorems for stabilizing statistics are shown in \cite{LachiezeRey2019}, \cite{LachiezeRey2020}, and \cite{Last2015}.

As pertains to topological statistics, an initial central limit theorem for Betti numbers (see Section~\ref{section::homology} for definitions) was shown in \cite{Yogeshwaran2017}, establishing so-called \textit{weak stabilization} for Betti numbers in the homogenous Poisson and uniform Binomial sampling settings. There an alternative set-up is being used where the domain is kept fixed, while the filtration parameter is decreasing to zero. A similar result for persistent Betti numbers is given in \cite{Hiraoka2018}.

Finally, \cite{Krebs2019} establishes multivariate central limit theorems for persistent Betti numbers under a flexible sampling setting. Here, a nonhomogeneous Poisson or binomial process is generated again over a growing domain with fixed filtration radii. %over the unit cube $\brk{0, 1}^d$, with asymptotics taken that the intensity or sample size increases towards infinity.

With these central limit theorem results, the stabilization property plays a central role in understanding the asymptotic behavior for wide classes of geometric and topological functionals. Unfortunately, as a reoccurring trend, explicit forms for the asymptotic normal distributions are unavailable or computationally intractable. In this work it is shown how a smoothed bootstrap procedure allows for consistent estimation of these inaccessible limiting distributions, and thus for any subsequent inference derived therefrom.

Further, the bootstrap convergence results shown in this paper apply even more broadly, given that the necessary assumptions are much weaker than normally used to establish central limit theorems. To the best of our knowledge, it is not known whether there exist stabilizing statistics which exhibit a non-normal limit, but our convergence results apply equally for any distributional limit.

\subsection{Stabilization} \label{section::stabilization}
Here, we extend and rephrase existing definitions found in \cite{Penrose2001}, \cite{Penrose2003}, \cite{Yogeshwaran2017}, and \cite{Krebs2019} to provide a more general and consistent statistical framework. Let $\mathcal X\p{\reals^d}$ denote the space consisting of multisets drawn from $\reals^d$ with no accumulation points, with the further restriction that no point in a given multiset may be counted more than finitely often. Any locally-finite point process on $\reals^d$ can be represented as a random element of $\mathcal X\p{\reals^d}$. Let $\tilde{\mathcal X}\p{\reals^d}\subset \mathcal X\p{\reals^d}$ contain the finite multisets drawn from $\reals^d$ and $\psi\colon\tilde{\mathcal{X}}\p{\reals^d}\to\reals$ be a measurable function. Furthermore, for $S, T\in \tilde{\mathcal X}\p{\reals^d}$ define the \textit{addition cost} of $T$ to $S$ as $D\p{S; \psi, T}:=\psi\p{S\cup T}-\psi\p{S}$. When $T=\set{z}$ consists of a single point, we call $D_z\p{S; \psi} := D\p{S; \psi, \set{z}}$ an \textit{add-one cost} or the \textit{add-$z$ cost}.

Broadly, we say that $\psi$ \textit{stabilizes} if the addition cost of a given $T$ varies only on a bounded region. In the preceding literature, the terms ``strong" and ``weak" stabilization are very often used, with precise definitions changing based on circumstance. In the interest of providing more explanatory and specific terminology, we propose the following definitions.

Seen below, almost-sure and locally-determined almost-sure stabilization  (see Definitions~\ref{definition::stabilization_almost_sure} and \ref{definition::local_determined}) correspond, respectively, to Definitions 3.1 and 2.1 in \cite{Penrose2001}. Here we have generalized by accounting for possible measurability issues, however the definitions are essentially equivalent. Let $B_z\p{r}$ denote the closed Euclidean ball centered at $z\in\reals^d$ with radius $r$. For convenience, the dependence on $\psi$ and $T$ is implicit in each of the following.

\begin{definition}[Terminal Addition Cost] \label{definition::terminal_addition}
	$D^\infty\colon\mathcal X\p{\reals^d}\rightarrow \reals$ is a \textit{terminal addition cost} centered at $z\in\reals^d$ if $D^\infty\p{S}=\lim_{l\rightarrow\infty}D\p{S\cap B_z\p{l}}$ for any $S\in\mathcal X\p{\reals^d}$ such that the limit exists.
\end{definition}

For a finite multiset $S\in\tilde{\mathcal X}\p{\reals^d}$, the terminal addition cost centered at $z\in\reals^d$ is $D^\infty\p{S}=D\p{S}$, because no further changes to the addition cost may occur once $S\cap B_z\p{a}$ contains all of $S$. This does not hold for infinite multisets, motivating a separate definition. In the special case where $T=\set{z}$ is a singleton at the centerpoint, the notation $D^\infty = D_z^\infty$ may be used, and will be seen throughout the remaining sections of the paper.

\begin{definition}[Stabilization in Probability] \label{definition::stabilization_probability}
	For $\mathbf S$ a point process taking value in $\mathcal X\p{\reals^d}$, $\psi$ \textit{stabilizes} on $\mathbf S$ \textit{in probability} if there exists a center point $z\in\reals^d$ and a terminal addition cost $D^\infty$ for $\psi$ such that
	\begin{equation}
		\lim_{l\rightarrow\infty}\probout{D\p{\mathbf S\cap B_z\p{l}}\neq D^\infty\p{\mathbf S}}=0.
	\end{equation}
\end{definition}

Here $\mathbb P^*$ denotes the outer probability of a set. Stabilization is said to occur \textit{in probability} because, for any sequence of non-negative radii $\p{l_i}_{i\in\nats}$ such that $l_i\rightarrow\infty$, $D\p{\mathbf S\cap B_z\p{l_i}}\overset{p}{\rightarrow}D^\infty\p{\mathbf S}$ whenever both quantities are measurable. $D^\infty$ is unique up to a null set in this case. Stabilization in probability is difficult to show directly for many functions of interest. As such, we have the following:

\begin{definition}[Radius of Stabilization] \label{definition::radius_stability}
	$\rho\colon \mathcal X\p{\reals^d}\to \brk{0, \infty}$ is a \textit{radius of stabilization} for $\psi$ centered at $z\in\reals^d$ if, for any $S\in\mathcal X\p{\reals^d}$ and $l\in\reals$ such that $\rho\p{S}\leq l <\infty$,
	\begin{equation}
		D\p{S\cap B_z\p{l}}=D\p{S\cap B_z\p{\rho\p{S}}}.
	\end{equation}
\end{definition}

$D^\infty\p{S} := D\p{S\cap B_z\p{\rho\p{S}}}$ is a valid terminal addition cost. In the case where $\mybrk \lim_{l\rightarrow\infty}D\p{S\cap B_z\p{l}}$ does not exist, $\rho\p{S}=\infty$ necessarily, with the stabilization criterion satisfied vacuously. As with the terminal addition cost, when $T=\set{z}$ we denote $\rho=\rho_z$.

In general, for any $\psi$ there exists a unique minimal radius of stabilization, defined as the pointwise minimum over all such radii sharing the same centerpoint. This minimum exists because $\psi\p{S\cap B_{z}\p{l}}$ is piecewise constant in $0\leq l <\infty$, changing value only when a new point of $S$ is added, and because $S$ has no accumulation points.

\begin{definition}[Stabilization Almost Surely] \label{definition::stabilization_almost_sure}
	For $\mathbf S$ a point process taking value in $\mathcal X\p{\reals^d}$, $\psi$ stabilizes on $\mathbf S$ \textit{almost surely} if there exists a radius of stabilization $\rho\colon\mathcal X\p{\reals^d}\rightarrow\brk{0, \infty}$ for $\psi$ centered at $z\in\reals^d$ such that
	\begin{equation}
		\lim_{L\rightarrow\infty}\probout{\rho\p{\mathbf S}>L}=0.
	\end{equation}
\end{definition}

Mirroring our previous terminology, we say stabilization occurs \textit{almost surely} because, for any sequence of nonnegative radii $\p{l_i}_{i\in\nats}$ such that $l_i\rightarrow\infty$, $D\p{\mathbf S\cap B_z\p{l_i}}\overset{a.s.}{\rightarrow}D^\infty\p{\mathbf S} = D\p{\mathbf S\cap B_z\p{\rho\p{\mathbf S}}}$ whenever both quantities are measurable. Here we use outer probability, because a radius of stabilization may not be a measurable function, specifically considering the unique minimal radius. Almost sure stabilization implies stabilization in probability, as shown in the following.

\begin{proposition} \label{proposition::almost_sure_in_probability}
	For $\mathbf S$ a simple point process taking values in $\mathcal X\p{\reals^d}$, let $\psi$ stabilize on $\mathbf S$ almost surely. Then $\psi$ stabilizes on $\mathbf S$ in probability. 
\end{proposition}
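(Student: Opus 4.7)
The plan is to exhibit an explicit terminal addition cost built from the radius of stabilization guaranteed by the almost-sure hypothesis, and then show that the probability-in-tail convergence immediately piggybacks on the tail decay of $\rho$.

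First I would let $\rho$ be the radius of stabilization centered at $z \in \reals^d$ provided by Definition~\ref{definition::stabilization_almost_sure}, and define the candidate
\begin{equation}
    D^\infty(S) := D\bigl(S \cap B_z(\rho(S))\bigr)
\end{equation}
on the set $\{S : \rho(S) < \infty\}$ (and arbitrarily, say zero, elsewhere). The defining property of a radius of stabilization says that for any $l$ with $\rho(S) \leq l < \infty$ we have $D(S \cap B_z(l)) = D(S \cap B_z(\rho(S)))$, so in particular $\lim_{l \to \infty} D(S \cap B_z(l))$ exists and equals $D^\infty(S)$. Hence $D^\infty$ qualifies as a terminal addition cost in the sense of Definition~\ref{definition::terminal_addition}.

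Next I would establish the elementary set-inclusion
\begin{equation}
    \bigl\{ D(\mathbf S \cap B_z(l)) \neq D^\infty(\mathbf S) \bigr\} \;\subseteq\; \bigl\{ \rho(\mathbf S) > l \bigr\},
\end{equation}
valid for every $l \in [0,\infty)$. Indeed, on the complementary event $\{\rho(\mathbf S) \leq l\}$ the defining property of $\rho$ forces $D(\mathbf S \cap B_z(l)) = D(\mathbf S \cap B_z(\rho(\mathbf S))) = D^\infty(\mathbf S)$. Applying monotonicity of outer probability and then the almost-sure stabilization hypothesis yields
\begin{equation}
    \limsup_{l \to \infty} \probout{ D(\mathbf S \cap B_z(l)) \neq D^\infty(\mathbf S) } \;\leq\; \lim_{l \to \infty} \probout{ \rho(\mathbf S) > l } \;=\; 0,
\end{equation}
which is precisely stabilization in probability.

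The only delicate point is measurability: neither $\rho$, nor the events involving $D$ and $D^\infty$, are guaranteed to be measurable, which is exactly why Definitions~\ref{definition::stabilization_probability} and \ref{definition::stabilization_almost_sure} are formulated with outer probability $\mathbb{P}^*$. The argument above, however, is a pure set-inclusion bound, so monotonicity of outer probability applies directly and no measurability issue arises. I do not foresee a substantive obstacle; the statement essentially unpacks the two definitions and observes that the same $\rho$ witnesses both.
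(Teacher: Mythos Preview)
Your proposal is correct and follows essentially the same approach as the paper: define $D^\infty(S) = D(S \cap B_z(\rho(S)))$, establish the set inclusion $\{D(\mathbf S \cap B_z(l)) \neq D^\infty(\mathbf S)\} \subseteq \{\rho(\mathbf S) > l\}$, and pass to outer probabilities. You are somewhat more explicit than the paper in verifying that $D^\infty$ is a terminal addition cost and in discussing why outer probability handles the measurability concern, but the argument is the same.
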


For our proof techniques, it is often necessary to compare the stabilization properties of a function over a range of related point processes. For example, corresponding binomial, Poisson, and Cox processes can be shown to have essentially equivalent local properties, while differing globally. As defined in Definition~\ref{definition::radius_stability}, a given radius of stabilization could feasibly show completely different behavior on each process type. This motivates the following:

\begin{definition}[Locally Determined Radius of Stabilization] \label{definition::local_determined}
	The radius of stabilization $\rho$ centered at $z\in\reals^d$ is \textit{locally determined} if for any $S, T\in\mathcal X\p{\reals^d}$
	\begin{equation*}
		T\cap B_z\p{\rho\p{S}}=S\cap B_z\p{\rho\p{S}} \implies \rho\p{T}=\rho\p{S}.
	\end{equation*}
\end{definition}

With the local-determination criterion from Definition~\ref{definition::local_determined}, we can assure that stabilization must occur simultaneously on any two point processes which are locally equivalent. As in the non-locally-determined case, there exists a unique minimal locally-determined radius of stabilization:

\begin{proposition} \label{proposition::locally_determinined_minimum}
	For $\mathcal R$ the space of locally-determined radii of stabilization for $\psi$ centered at $z\in\reals^d$, let $\rho^*\colon\mathcal X\p{\reals^d}\rightarrow \brk{0, \infty}$ such that $\rho^*\p{S}=\inf_{\rho\in \mathcal R}\rho\p{S}$. Then $\rho^*$ is a locally determined radius of stabilization for $\psi$ centered at $z$.
\end{proposition}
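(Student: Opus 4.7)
The plan is to verify the two required properties of $\rho^*$ in turn: first, that $\rho^*$ is a radius of stabilization for $\psi$ centered at $z$, and second, that $\rho^*$ is locally determined. The central observation used throughout is that while the infimum defining $\rho^*$ need not be attained by any individual $\rho\in\mathcal R$, every $S\in\mathcal X\p{\reals^d}$ is locally finite, so whenever $\rho^*\p{S}<\infty$ there is some $\delta=\delta\p{S}>0$ for which the annulus $B_z\p{\rho^*\p{S}+\delta}\setminus B_z\p{\rho^*\p{S}}$ contains no points of $S$. This ``spatial gap'' means that $S\cap B_z\p{r}=S\cap B_z\p{\rho^*\p{S}}$ for every $r$ with $\rho^*\p{S}\le r<\rho^*\p{S}+\delta$, and it lets me use attainable values $\rho\p{S}$, for $\rho\in\mathcal R$ arbitrarily close to $\rho^*\p{S}$ from above, as stand-ins for $\rho^*\p{S}$ itself.

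For the radius-of-stabilization property, the case $\rho^*\p{S}=\infty$ is vacuous and the case $l=\rho^*\p{S}$ trivial, so I fix $S$ with $\rho^*\p{S}<\infty$ and $l>\rho^*\p{S}$. By the definition of infimum I can select $\rho\in\mathcal R$ with $\rho^*\p{S}\le\rho\p{S}<\min\p{l,\rho^*\p{S}+\delta\p{S}}$. Since $\rho$ is a radius of stabilization and $l\ge\rho\p{S}$, we get $D\p{S\cap B_z\p{l}}=D\p{S\cap B_z\p{\rho\p{S}}}$, and by the spatial gap $S\cap B_z\p{\rho\p{S}}=S\cap B_z\p{\rho^*\p{S}}$. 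Combining these yields the required equality $D\p{S\cap B_z\p{l}}=D\p{S\cap B_z\p{\rho^*\p{S}}}$.

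The more delicate step is local determination. Assuming $T\cap B_z\p{\rho^*\p{S}}=S\cap B_z\p{\rho^*\p{S}}$, I must show $\rho^*\p{T}=\rho^*\p{S}$. The main obstacle is exactly that because no particular $\rho\in\mathcal R$ is guaranteed to satisfy $\rho\p{S}=\rho^*\p{S}$, the local-determination property of any specific $\rho$ cannot be invoked at the radius $\rho^*\p{S}$ directly. To bypass this, I use local finiteness of both $S$ and $T$ to choose $\delta>0$ so small that the annulus $B_z\p{\rho^*\p{S}+\delta}\setminus B_z\p{\rho^*\p{S}}$ contains no points of either; then $T$ and $S$ agree on the enlarged ball $B_z\p{\rho^*\p{S}+\delta}$. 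Any $\rho\in\mathcal R$ with $\rho\p{S}<\rho^*\p{S}+\delta$ therefore has $T$ and $S$ agreeing on $B_z\p{\rho\p{S}}$, and its own local determination gives $\rho\p{T}=\rho\p{S}$, whence $\rho^*\p{T}\le\rho\p{T}<\rho^*\p{S}+\delta$. Letting $\delta\downarrow 0$ through admissible values yields $\rho^*\p{T}\le\rho^*\p{S}$. The reverse inequality follows by symmetry: if $\rho^*\p{T}<\rho^*\p{S}$, then $T$ and $S$ also agree on the smaller ball $B_z\p{\rho^*\p{T}}\subseteq B_z\p{\rho^*\p{S}}$, and repeating the preceding argument with the roles of $S$ and $T$ swapped gives $\rho^*\p{S}\le\rho^*\p{T}$, a contradiction.
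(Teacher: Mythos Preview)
Your proof is correct and follows essentially the same approach as the paper's: exploit local finiteness of $S$ and $T$ to extend their agreement from $B_z\p{\rho^*\p{S}}$ to a slightly larger ball, pick an approximating $\rho\in\mathcal R$ with $\rho\p{S}$ in that gap, apply local determination of $\rho$ to get $\rho\p{T}=\rho\p{S}$, and conclude $\rho^*\p{T}\le\rho^*\p{S}$ before swapping roles. Your argument is in fact more complete than the paper's, which only treats the local-determination step and leaves the radius-of-stabilization property to the informal discussion preceding the proposition; your explicit verification of that property via the spatial gap is a welcome addition. One minor remark: the phrase ``letting $\delta\downarrow 0$ through admissible values'' is slightly roundabout---for fixed $\delta$ you already have $\rho^*\p{T}\le\rho\p{S}$ for every $\rho$ with $\rho\p{S}<\rho^*\p{S}+\delta$, and taking the infimum over such $\rho$ gives $\rho^*\p{T}\le\rho^*\p{S}$ directly---but the argument as written is still valid.
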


\subsection{Technical Results} \label{section::technical_results}

In all of the following, $\mathcal P\p{\reals^d}$ denotes the set of probability distributions over $\reals^d$. $Y_1, ..., Y_n\iid G$ is a sample from $G\in\mathcal P\p{\reals^d}$ and $Y'\sim G$ an independent copy. Let $\mathbf Y_n = \set{Y_i}_{i=1}^n$ be the induced multiset. This definition may be simply denoted by $\mathbf Y_n:=\set{Y_i}_{i=1}^n\iid G$. For a measurable function $\psi\colon\tilde{\mathcal X}\p{\reals^d}\rightarrow \reals$, define the following conditions:
\begin{enumerate}[label=(E\arabic*), ref=E\arabic*]
	\item \label{condition::expectation} For a given $\mathcal C\subseteq \mathcal P\p{\reals^d}$ and some $a>2$, there exists $E_a<\infty$ such that
	\begin{equation} 
		\sup_{G\in \mathcal C}\sup_{n\in\nats}\E{\abs{\psi\p{\sqrt[d]{n}\p{\mathbf Y_n\cup \set{Y'}}}-\psi\p{\sqrt[d]{n}\mathbf Y_n}}^{a}}\leq E_a.
	\end{equation}
	\item \label{condition::local_count} For some $a>2$ and $R>0$, there exist $U_a>0$\ and $u_a>1$ satisfying the following property: For any $S\in \tilde {\mathcal X}\p{\reals^d}$ and $y\in \reals^d$,
	\begin{equation}
		\abs{\psi\p{S\cup \set{y}}-\psi\p{S}}^{a}\leq U_a\p{1+\#\set{S\cap B_{y}\p{R}}^{u_a}}.
	\end{equation}
\end{enumerate}

\eqref{condition::expectation} requires a moment bound that holds uniformly in the sample size and distribution $G\in \mathcal C$. Clearly, if \eqref{condition::expectation} is satisfied for $\mathcal C$, it is also satisfied for any subset of $\mathcal C$. In the context of the topological statistics considered in this work, \eqref{condition::expectation} is primarily useful for proof purposes, and is mainly established via \eqref{condition::local_count} (See Lemma~\ref{lemma::condition_expectation}). However, as will be seen with the case of the $k$-nearest neighbor graph, Corollary~\ref{theorem::bootstrap_knn}, there exist useful statistics which do not conform to \eqref{condition::local_count}, and the more general condition must be used. \eqref{condition::expectation} is related to the ``uniform bounded moments" condition, Definition 2.2 in \cite{Penrose2001}. Our version has been suitably generalized, the original definition considering only $a=4$. Let $\mathcal C_{p,M}\p{\reals^d}$ denote the class of probability distributions $G\in \mathcal P\p{\reals^d}$ admitting a density $g$ such that $\|g\|_p\leq M$. We have the following:

\begin{lemma} \label{lemma::condition_expectation}
	For $p>2$, let $\psi$ satisfy \eqref{condition::local_count} with $u_a\leq p-1$ for some $a>2$. Then for any $M<\infty$, $\psi$ satisfies \eqref{condition::expectation} for $\mathcal C_{p, M}\p{\reals^d}$.
\end{lemma}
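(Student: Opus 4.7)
Apply \eqref{condition::local_count} with $S = \sqrt[d]{n}\mathbf Y_n$ and $y = \sqrt[d]{n} Y'$. Since $|\sqrt[d]{n} Y_i - \sqrt[d]{n} Y'| \leq R$ is equivalent to $|Y_i - Y'| \leq R/\sqrt[d]{n}$, the expression inside the expectation in \eqref{condition::expectation} is bounded above by $U_a\p{1 + N^{u_a}}$, where $N := \#\set{i : |Y_i - Y'| \leq R/\sqrt[d]{n}}$ counts the sample points within the unscaled radius $r_n := R/\sqrt[d]{n}$ of $Y'$. It therefore suffices to show that $\E{N^{u_a}}$ is uniformly bounded over $n \in \nats$ and $G \in \mathcal C_{p,M}\p{\reals^d}$.

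Conditional on $Y'$, the variable $N$ is Binomial with parameters $n$ and $p_n(Y') := \int_{B_{Y'}(r_n)} g(x) \de{x}$. Using the Stirling-number expansion $\E{N^k | Y'} \leq B_k\p{\lambda + \lambda^k}$ for integer $k\geq 1$ (with $\lambda := np_n(Y')$ and $B_k$ the $k$-th Bell number), followed by conditional Jensen with $k = \lceil u_a \rceil$ (valid since $u_a/k \leq 1$) and the subadditivity $\p{A + B}^{u_a/k} \leq A^{u_a/k} + B^{u_a/k}$, one obtains
\[
\E{N^{u_a} | Y'} \leq B_k^{u_a/k}\p{\p{np_n(Y')}^{u_a/k} + \p{np_n(Y')}^{u_a}}.
\]
Taking outer expectations reduces the task to bounding $n^t \int p_n(y)^t g(y) \de{y}$ uniformly in $n$ and $G$ for both $t = u_a/k$ and $t = u_a$; crucially, both exponents lie in $(0, p-1]$ since $u_a \leq p-1$.

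For the key estimate, write $p_n(y) = \abs{B_y(r_n)} \cdot \bar g_n(y)$, where $\bar g_n(y)$ is the local average of $g$ on $B_y(r_n)$. Jensen's inequality applied to $x^p$ gives $\bar g_n(y)^p \leq \abs{B_y(r_n)}^{-1} \int_{B_y(r_n)} g(x)^p \de{x}$; integrating in $y$ and using Fubini yields $\norm[p]{\bar g_n} \leq \norm[p]{g} \leq M$, so $\norm[p]{p_n} \leq M \abs{B_y(r_n)}$. Applying Hölder with conjugate exponents $\alpha = p/t$ and $\beta = p/(p-t)$,
\[
\int p_n(y)^t g(y) \de{y} \leq \norm[p]{p_n}^t \cdot \norm[p/(p-t)]{g}.
\]
Since $t \leq p-1$, the exponent $p/(p-t)$ lies in $[1, p]$, so Lyapunov interpolation between $\norm[1]{g} = 1$ and $\norm[p]{g} \leq M$ bounds $\norm[p/(p-t)]{g}$ by a constant depending only on $p$, $t$, and $M$. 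Multiplying through by $n^t$ and using $n\abs{B_y(r_n)} = c_d R^d$ yields a bound of the form $C\p{c_d R^d}^t$, independent of $n$ and of the specific $G \in \mathcal C_{p,M}\p{\reals^d}$.

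\textbf{Main obstacle.} The principal subtlety is the non-integer exponent $u_a$. A naive approach would first bound $\E{N^{u_a}} \leq \E{N^{\lceil u_a \rceil}}$ and only then invoke the Binomial moment expansion; but this forces $\lceil u_a \rceil$ to appear as the top exponent in the inner integrals, and $\lceil u_a \rceil$ may exceed $p-1$ when $p$ is non-integer, in which case the relevant $L^r$ norm of $g$ is no longer controlled by $M$. Applying Jensen \emph{before} dealing with the Binomial moments keeps the effective exponents bounded by $u_a \leq p - 1$, which is precisely the regime in which the $\norm[p]{p_n} \leq M\abs{B}$ estimate combined with Hölder delivers an $n$-uniform bound.
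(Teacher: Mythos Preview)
Your argument is correct and reaches the same conclusion as the paper, but the route differs in one substantive place: the bound on the conditional $u_a$-th moment of the Binomial count. The paper invokes Lata{\l}a's moment inequality (Corollary~3 in \cite{Latala1997}) to obtain directly
\[
\E{I_n^{u_a}\,\big|\,Y'}\;\le\;\Bigl(K\,\tfrac{u_a}{\log u_a}\Bigr)^{u_a}\bigl(\lambda+\lambda^{u_a}\bigr),\qquad \lambda=n\,p_n(Y'),
\]
and then bounds $\E{\lambda}$ and $\E{\lambda^{u_a}}$ separately via a change of variables and the inequality $\|g\|_j^j\le\|g\|_p^{(j-1)p/(p-1)}$ for $j\le p$. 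You replace Lata{\l}a by the elementary combination of the Stirling--Bell bound on integer moments followed by conditional Jensen, which yields exponents $t=u_a/k$ and $t=u_a$ rather than $1$ and $u_a$; your unified H\"older estimate $\int p_n^t g\le\|p_n\|_p^t\,\|g\|_{p/(p-t)}$ together with $\|\bar g_n\|_p\le\|g\|_p$ then handles both at once. The trade-off: your approach is self-contained and avoids citing a nontrivial external result, at the cost of the extra Jensen step and the ``main obstacle'' discussion you correctly flag (bounding $\E{N^{\lceil u_a\rceil}}$ first would force the exponent past $p-1$). The paper's use of Lata{\l}a sidesteps that issue entirely by handling the fractional moment in one stroke, giving a slightly cleaner constant. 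Both are valid and roughly equally efficient.
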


For $d_{\text{TV}}$ the total variation distance between probability distributions and $B_F(\epsilon,d_{\text{TV}})$ the closed $\epsilon$-neighborhood of $F$ under $d_{\text{TV}}$, we have the following stabilization conditions:

\begin{enumerate}[label=(S\arabic*), ref=S\arabic*]
	\item \label{condition::stabilization} For a given $\mathcal C\subseteq \mathcal P\p{\reals^d}$, $F\in\mathcal C$, $b>0$, and some $\p{l_\epsilon}_{\epsilon>0}$ such that $\lim_{\epsilon\rightarrow 0}l_\epsilon\epsilon^{b}=0$, as $\epsilon \to 0,$
	\begin{equation*}
		\sup_{ G\in \mathcal C\cap B_F\p{\epsilon; d_{\text{TV}}}}\sup_{n\in\nats}\prob{D_{\sqrt[d]{n}Y'}\p{\p{\sqrt[d]{n}\mathbf Y_n}\cap B_{\sqrt[d]{n}Y'}\p{l_\epsilon}}\neq D_{\sqrt[d]{n}Y'}\p{\sqrt[d]{n}\mathbf Y_n}} \to 0 .
	\end{equation*}
	\item \label{condition::stabilization_radius} For $G\in\mathcal P\p{\reals^d}$, there exist locally-determined radii of stabilization $\p{\rho_z}_{z\in\reals^d}$ for $\psi$ satisfying
	\begin{equation}
		\lim_{L\to \infty}\sup_{n\in\nats}\probout{\rho_{\sqrt[d]{n}Y'}\p{\sqrt[d]{n}\mathbf Y_n}>L}=0.
	\end{equation}
\end{enumerate}

\eqref{condition::stabilization} and \eqref{condition::stabilization_radius} can be summarized as uniform stabilization conditions, either in probability or almost surely. \eqref{condition::stabilization} as stated is a technical condition mainly serving to weaken the necessary conditions providing for bootstrap consistency. As such, we have the following lemma linking \eqref{condition::stabilization} and \eqref{condition::stabilization_radius}.

\begin{lemma} \label{lemma::condition_stabilization}
	Let $\psi$ satisfy \eqref{condition::stabilization_radius} for $F\in C_{p, M}\p{\reals^d}$. Then $\psi$ satisfies \eqref{condition::stabilization} for $C_{p, M}\p{\reals^d}$, $F$, $b=\p{p-2}/\p{d\p{p-1}}$, and any $\p{l_\epsilon}_{\epsilon>0}$ such that $\lim_{\epsilon\rightarrow 0}l_\epsilon\epsilon^{\p{p-2}/\p{d\p{p-1}}}=0$ and $\lim_{\epsilon\rightarrow0}l_\epsilon=\infty$.
\end{lemma}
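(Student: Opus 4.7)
The plan is to reduce \eqref{condition::stabilization} to an outer-probability bound on $\{\rho_{\sqrt[d]{n}Y'}(\sqrt[d]{n}\mathbf{Y}_n)>l_\epsilon\}$ for the locally-determined radius $\rho$ supplied by \eqref{condition::stabilization_radius}, and then transfer the tail bound from $F$ to every $G\in\mathcal C_{p,M}(\reals^d)\cap B_F(\epsilon;d_{\text{TV}})$ via a total-variation comparison of the local configurations. The reduction is immediate from Definition~\ref{definition::radius_stability} and the finiteness of $\sqrt[d]{n}\mathbf{Y}_n$: on $\{\rho\leq l_\epsilon\}$ the truncated and full addition costs coincide, so the event in \eqref{condition::stabilization} is contained in $\{\rho_{\sqrt[d]{n}Y'}(\sqrt[d]{n}\mathbf{Y}_n)>l_\epsilon\}$.

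By local determination (Definition~\ref{definition::local_determined}), $\{\rho_z(S)>L\}$ depends on $S$ only through $S\cap B_z(L)$: if $S$ and $T$ agree inside $B_z(L)$ and $\rho(S)\leq L$, then applying Definition~\ref{definition::local_determined} at the level $\rho(S)$ gives $\rho(T)=\rho(S)\leq L$, and the reverse follows by symmetry. Writing $\mathcal L_H$ for the law of $(Y',\mathbf{Y}_n\cap B_{Y'}(r))$ with $r=l_\epsilon/\sqrt[d]{n}$ when $Y_1,\dots,Y_n,Y'\iid H$, a standard outer-measure argument then yields
\begin{equation*}
\probout[G]{\rho_{\sqrt[d]{n}Y'}(\sqrt[d]{n}\mathbf{Y}_n)>l_\epsilon}\leq \probout[F]{\rho_{\sqrt[d]{n}Y'}(\sqrt[d]{n}\mathbf{Y}_n)>l_\epsilon} + d_{\text{TV}}(\mathcal L_G,\mathcal L_F).
\end{equation*}
The first term on the right-hand side tends to $0$ uniformly in $n$ as $\epsilon\to 0$ by \eqref{condition::stabilization_radius} and $l_\epsilon\to\infty$, reducing the problem to bounding the TV term uniformly in both $n$ and $G$.

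The main obstacle is precisely this uniformity: a pairwise maximal coupling of the $n$ samples introduces a factor of order $n^{1/p}$ that cannot be absorbed into the hypothesis $l_\epsilon\epsilon^b\to 0$. The remedy combines Young's convolution inequality, H\"older's inequality, and $L^1$/$L^p$-interpolation for $|g-f|$. Decomposing $d_{\text{TV}}(\mathcal L_G,\mathcal L_F)$ along the $Y'$-marginal and noting that $n$ i.i.d.\ marked draws have joint TV distance at most $n$ times the single-draw TV distance (itself bounded by $\int_{B_{y'}(r)}|g-f|$) gives
\begin{equation*}
d_{\text{TV}}(\mathcal L_G,\mathcal L_F)\leq \epsilon + n\int|g-f|(y)\,F(B_y(r))\,dy.
\end{equation*}
Since $F(B_\cdot(r))=f*\mathbbm 1_{B_0(r)}$, Young's inequality gives $\|F(B_\cdot(r))\|_p\leq Mc_d r^d$. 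H\"older with conjugate exponents $(p/(p-1),p)$, together with the interpolation $\||g-f|\|_{p/(p-1)}\leq \||g-f|\|_1^{(p-2)/(p-1)}\||g-f|\|_p^{1/(p-1)}\leq 2M^{1/(p-1)}\epsilon^{(p-2)/(p-1)}$, then bounds the integral by a constant multiple of $\epsilon^{(p-2)/(p-1)}\,nr^d=\epsilon^{(p-2)/(p-1)}l_\epsilon^d$. With $b=(p-2)/(d(p-1))$ this is proportional to $(l_\epsilon\epsilon^b)^d$ and therefore tends to $0$ by hypothesis; combining with the $F$-tail bound completes the proof.
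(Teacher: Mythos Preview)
Your proof is correct and lands on exactly the same bound $\,\approx M^{p/(p-1)}V_d\,l_\epsilon^d\,\epsilon^{(p-2)/(p-1)}$ as the paper, but the route is genuinely different in presentation. The paper constructs an explicit pairwise maximal coupling $(X_i,Y_i)$ with $\prob{X_i\neq Y_i}\le\epsilon$, works on the coupled space, and splits into three events: $\{Y'\neq X'\}$, $\{\mathbf Y_n\text{ and }\mathbf X_n\text{ differ inside }B_{X'}(l_\epsilon/\sqrt[d]{n})\}$, and $\{\rho_{\sqrt[d]{n}X'}(\sqrt[d]{n}\mathbf X_n)>l_\epsilon\}$; the middle event is bounded by computing the expected number of mismatched points in the ball via the conditional densities $\tilde f,\tilde g$ of $X_i,Y_i$ given $\{X_i\neq Y_i\}$ (this calculation is in fact carried out inside the proof of Proposition~\ref{proposition::w2_general} and back-referenced). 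You instead stay at the level of laws: you compare $\probout[G]{\rho>l_\epsilon}$ and $\probout[F]{\rho>l_\epsilon}$ through the TV distance of the local configuration $(Y',\mathbf Y_n\cap B_{Y'}(r))$, and bound that TV via Young's inequality and $L^1$--$L^p$ interpolation of $|g-f|$. Your approach is tidier and self-contained (no conditional-density bookkeeping, no forward reference), and the interpolation step makes transparent why the exponent $(p-2)/(p-1)$ appears. The one place that deserves a sentence more is the ``standard outer-measure argument'': since $\rho$ need not be measurable, the cleanest justification is to take a maximal coupling of $\mathcal L_G$ and $\mathcal L_F$ on a common space, note that on the agreement event the two $\rho$-values coincide by local determination, and then use subadditivity of outer probability --- which is exactly what the paper's explicit coupling accomplishes.
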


We can often greatly simplify the addition costs and radii of stabilization required in \eqref{condition::stabilization} and \eqref{condition::stabilization_radius}. For example, given a translation-invariant function $\psi$ and any $D_0$, $\rho_0$ for $\psi$ centered at $0$, corresponding quantities can be constructed for any other center point. For $z\in\reals^d$, $D_z\colon\mathcal X\p{\reals^d}\rightarrow \reals$ where $D_z\p{S}=D_0\p{S-z}$ is an add-$z$ cost for $\psi$ centered at $z$. Likewise $\rho_z\colon\mathcal X\p{\reals^d}\rightarrow \brk{0,\infty}$ where $\rho_z\p{S}=\rho_0\p{S-z}$ is a radius of stabilization for $\psi$ centered at $z$. In the following, $\mathbf P_\lambda$ denotes a homogeneous Poisson process on $\reals^d$ with intensity $\lambda$.

\begin{lemma} \label{lemma::stabilization_poisson}
	Let $F\in C_{p, M}$ with $p>2$ and $M<\infty$. Let $\rho_0$ be a locally-determined radius of stabilization for $\psi$ centered at $0$. Suppose that for any given $a,b \in \p{0, \infty}$, and $\delta>0$, there exists an $L_{a, b, \delta}<\infty$ and a measurable set $A_{a, b, \delta}$ with $\rho_0^{-1}\p{\left(L_{a, b, \delta}, \infty\right]}\subseteq A_{a, b, \delta}$ such that
	\begin{equation}
		\sup_{\lambda\in\brk{a, b}}\probout{\rho_0\p{\mathbf P_{\lambda}}>L_{a,b,\delta}} \leq \sup_{\lambda\in\brk{a, b}}\prob{\mathbf P_{\lambda}\in A_{a, b, \delta}}\leq \delta.
	\end{equation}
	Then for any $\delta>0$ there exists an $n_\delta<\infty$ and $L_\delta<\infty$ such that
	\begin{equation}
		\sup\limits_{n\geq n_\delta}\probout{\rho_0\p{\mathbf X_n-X'}>L_\delta}\leq \delta.
	\end{equation}
\end{lemma}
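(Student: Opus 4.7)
The plan is to condition on $X'$ (equivalently on $Y'$, with $X' = \sqrt[d]{n}\,Y'$), truncate the density $f$ of $F$ to a compact range $[a,b]$ where the Poisson bound is controlled, and then transfer the assumed Poisson stabilization bound to the binomial process $\mathbf X_n = \sqrt[d]{n}\,\mathbf Y_n$ via local Poisson approximation. Because $\|f\|_p \leq M$ with $p>2$, Markov's inequality yields $\prob{f(Y')>b} \leq M^p/b^{p-1}$, and $\prob{f(Y')<a} \to 0$ as $a \to 0$ by dominated convergence; choose $a > 0$, $b < \infty$ with $\prob{f(Y') \notin [a,b]} < \delta/2$. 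Applying the hypothesis with $(a, b, \delta/2)$ yields $L_\delta := L_{a,b,\delta/2}$ and a measurable set $A := A_{a,b,\delta/2}$ containing $\rho_0^{-1}((L_\delta, \infty])$ with $\sup_{\lambda \in [a,b]} \prob{\mathbf P_\lambda \in A} \leq \delta/2$.

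The pivotal step is to reduce $A$ to a local cylinder form. By local determination, $\{\rho_0 > L_\delta\}$ depends only on $S \cap B_0(L_\delta)$. For each $y$ with $f(y) \in [a,b]$, define
\[
B_y := \{\, S_1 \in \mathcal X(B_0(L_\delta)) : \mu_{f(y),2}(A_{S_1}) = 1 \,\},
\]
where $\mu_{f(y),2}$ is the Poisson law on $B_0(L_\delta)^c$ of intensity $f(y)$ and $A_{S_1}$ is the $S_1$-slice of $A$. Under the product decomposition $\mathbf P_\lambda = (\mathbf P_\lambda \cap B_0(L_\delta)) \otimes (\mathbf P_\lambda \cap B_0(L_\delta)^c)$, Fubini shows (i) $B_y$ contains the cylinder base of $\{\rho_0 > L_\delta\}$ (since for any such $S_1$, $(S_1, S_2) \in \{\rho_0 > L_\delta\} \subseteq A$ for every $S_2$), and (ii) $\prob{\mathbf P_{f(y)} \cap B_0(L_\delta) \in B_y} \leq \prob{\mathbf P_{f(y)} \in A} \leq \delta/2$. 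Moreover the joint set $\{(\mathbf y, y') : \sqrt[d]{n}(\mathbf y - y') \cap B_0(L_\delta) \in B_{y'}\}$ is measurable and contains the event $\{\rho_0(\mathbf X_n - X') > L_\delta\}$. Conditioning on $Y'$ and splitting by $\{f(Y') \in [a,b]\}$ gives
\[
\probout{\rho_0(\mathbf X_n - X') > L_\delta} \leq \frac{\delta}{2} + \int_{\{f(y) \in [a,b]\}} \prob{\sqrt[d]{n}(\mathbf Y_n - y) \cap B_0(L_\delta) \in B_y}\, f(y)\,dy.
\]

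To finish, standard local Poisson approximation handles the integrand. For $G$-a.e.\ $y$ with $f(y) \in [a,b]$ (Lebesgue points of $f$), the distribution of $\sqrt[d]{n}(\mathbf Y_n - y) \cap B_0(L_\delta)$ converges in total variation to that of $\mathbf P_{f(y)} \cap B_0(L_\delta)$: the count is $\rbinom{n,p_n}$ with $n p_n \to f(y)|B_0(L_\delta)|$ by the Lebesgue differentiation theorem, giving $d_{\text{TV}}(\rbinom{n,p_n}, \rpois{f(y)|B_0(L_\delta)|}) \to 0$; conditional on count, positions are i.i.d.\ with density converging in $L^1$ to the uniform density on $B_0(L_\delta)$, so Scheffé gives total-variation convergence of positions. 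Hence $\prob{\sqrt[d]{n}(\mathbf Y_n - y) \cap B_0(L_\delta) \in B_y} \to \prob{\mathbf P_{f(y)} \cap B_0(L_\delta) \in B_y} \leq \delta/2$ for such $y$, and dominated convergence bounds the integral by $\delta/2 + o(1)$, so the total is at most $\delta$ for all $n \geq n_\delta$ sufficiently large. The main technical obstacle is precisely the cylinder reduction via $B_y$: the hypothesis supplies only a (possibly non-local) measurable hull of the non-measurable event $\{\rho_0 > L_\delta\}$, and the Fubini-based construction is what makes local Poisson approximation applicable.
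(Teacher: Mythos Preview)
Your argument is correct and follows a genuinely different route from the paper's. The paper proceeds via two explicit couplings: first it couples $\mathbf X_n$ to a nonhomogeneous Poisson process $\mathbf P_n$ of intensity $nf$ by Poissonization (bounding the probability that $\mathbf X_n$ and $\mathbf P_n$ differ inside $B_{X'}(L/\sqrt[d]{n})$ by a term of order $n^{-1/2}$), and then couples $\mathbf P_n$ to a conditionally homogeneous process $\mathbf H_n$ of intensity $nf(X')$ using a marked Poisson construction on $\reals^d\times[0,\infty)$, controlling the local symmetric difference by a Lebesgue--point integral. The measurable hull $A_{\eta,\zeta,\nu}$ is then applied directly to the homogeneous process $\sqrt[d]{n}(\mathbf H_n-X')$. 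You instead exploit local determination to note that $\{\rho_0>L_\delta\}$ is a cylinder over $\mathcal X(B_0(L_\delta))$, and use Fubini on the product decomposition of the Poisson law to extract from the global hull $A$ a \emph{local} measurable superset $B_y\subseteq\mathcal X(B_0(L_\delta))$; this reduces everything to the bounded window $B_0(L_\delta)$, where binomial-to-Poisson total-variation convergence is a single elementary step. Your route is conceptually cleaner and bypasses the two-stage coupling entirely; the paper's approach in exchange produces explicit quantitative error terms at each stage, which could in principle feed into rate calculations, although the paper does not pursue this.
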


Lemma~\ref{lemma::stabilization_poisson} provides a convenient tool for ``de-Poissonizing" a locally-determined radius of stabilization. Often it is easier to show stabilization properties for a homogeneous Poisson process than for a binomial process directly. Lemma~\ref{lemma::stabilization_poisson} allows for the extension of homogeneous Poisson results to the binomial setting, as is required for Lemma~\ref{lemma::stabilization_pbn} and Corollary~\ref{theorem::bootstrap_knn}. Note that the conclusion is not the same as the statement of \eqref{condition::stabilization}, only applying for $n\geq n_\delta$. Some extra effort is required for the conclusion to hold for all $n\in\nats$, depending on the specifics of the function $\psi$ considered. We come to the following important proposition, the main supporting result for our general bootstrap consistency theorem, Theorem~\ref{theorem::bootstrap_general}.

\begin{proposition} \label{proposition::w2_general}
	For $p>2$ and $M<\infty$, let $\psi$ satisfy \eqref{condition::expectation} and \eqref{condition::stabilization} for $\mathcal C_{p, M}\p{\reals^d}$, $F\in \mathcal C_{p, M}\p{\reals^d}$, and some $a>2$. Then for any $G\in\mathcal C_{p, M}\p{\reals^d}\cap B_F\p{\epsilon, d_{\text{TV}}}$, there exist iid coupled random variables $\p{\p{X_i, Y_i}}_{i\in\nats}$ such that $\mathbf X_n=\set{X_i}_{i=1}^n\iid F$, $\mathbf Y_n=\set{Y_i}_{i=1}^n\iid G$, and
	\begin{equation}
		\sup_{n\in \nats} \Var{\frac{1}{\sqrt{n}}\p{\psi\p{\sqrt[d]{n}\mathbf X_n}-\psi\p{\sqrt[d]{n}\mathbf Y_n}}}\leq \gamma_{\epsilon}.
	\end{equation}
	The value $\gamma_\epsilon$ does not depend on $G$ and satisfies $\lim_{\epsilon\rightarrow 0}\gamma_\epsilon=0$.
\end{proposition}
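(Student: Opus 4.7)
The plan is to construct an iid coupling of $F$ and $G$ via a maximal coupling, decompose the normalized difference telescopically, then use \eqref{condition::expectation} and \eqref{condition::stabilization} to bound the variance uniformly in $n$. First, let $(X_i, Y_i)$ be iid from the maximal coupling, so $\prob{X_i \neq Y_i} = d_{\text{TV}}(F, G) \leq \epsilon$. Define the interpolation $\mathbf Z_n^{(k)} := \set{Y_1, \ldots, Y_k, X_{k+1}, \ldots, X_n}$, which takes $\mathbf Z_n^{(0)} = \mathbf X_n$ to $\mathbf Z_n^{(n)} = \mathbf Y_n$, and set $W_k := \psi(\sqrt[d]{n}\mathbf Z_n^{(k)}) - \psi(\sqrt[d]{n}\mathbf Z_n^{(k-1)})$, which vanishes on $\set{X_k = Y_k}$. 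Telescoping gives $\psi(\sqrt[d]{n}\mathbf Y_n) - \psi(\sqrt[d]{n}\mathbf X_n) = \sum_{k=1}^n W_k$, and the target variance splits as $n^{-1}\sum_k \Var{W_k} + n^{-1}\sum_{k\neq l}\Cov{W_k}{W_l}$.

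For the diagonal, the set $\mathbf Z_n^{(k-1)}\setminus\set{X_k}$ is a mixture of iid draws from $F$ and $G$; since the $L^p$-norm of the density is convex, the mixture remains in $\mathcal C_{p,M}(\reals^d)$, and \eqref{condition::expectation} bounds $\E{|W_k|^a}$ uniformly over $k, n, G$. Since $W_k$ vanishes off $\set{X_k \neq Y_k}$, H\"older's inequality gives $\E{W_k^2} \leq \prob{X_k \neq Y_k}^{1-2/a}\E{|W_k|^a}^{2/a} \leq C\epsilon^{1-2/a}$, so the diagonal contribution to the normalized variance is $O(\epsilon^{1-2/a})$, vanishing as $\epsilon \to 0$ uniformly in $n$.

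For the off-diagonal, I apply \eqref{condition::stabilization} to replace $W_k$ by a localized surrogate $\tilde W_k$ depending on the base set only through its points in $B_{\sqrt[d]{n}X_k}(l_\epsilon) \cup B_{\sqrt[d]{n}Y_k}(l_\epsilon)$. By \eqref{condition::stabilization}, $\prob{W_k \neq \tilde W_k} \to 0$ uniformly in $n$ and $G$, and combined with \eqref{condition::expectation} and H\"older, $\E{(W_k - \tilde W_k)^2} \to 0$ uniformly. When the surrogate neighborhoods for indices $k$ and $l$ are disjoint, $\tilde W_k$ and $\tilde W_l$ depend on essentially independent portions of the coupled sample and their covariance is negligible; for overlapping pairs, a H\"older-type bound of the form $F(B_x(r)) \leq M (c_d r^d)^{(p-1)/p}$ on the density controls the expected count, and the per-pair covariance, of order $\epsilon^{2(1-2/a)}$ by the joint mismatch probability $\delta^2 \leq \epsilon^2$ and H\"older, is aggregated to give a bound that tends to zero with $\epsilon$ when $l_\epsilon$ is taken with the slow growth allowed by \eqref{condition::stabilization}. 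The residual covariances $\Cov{W_k}{W_l} - \Cov{\tilde W_k}{\tilde W_l}$ are absorbed via Cauchy--Schwarz and the $L^2$-closeness of $W_k$ and $\tilde W_k$.

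The hardest part will be the off-diagonal bookkeeping: because the density is controlled only in $L^p$ rather than $L^\infty$, clustering of points in a localized neighborhood can inflate overlap counts in a way that threatens uniformity in $n$; absorbing these factors requires careful use of both the joint-mismatch probability $\delta^2$ and the slow growth $l_\epsilon \epsilon^{b} \to 0$ permitted by \eqref{condition::stabilization}, balanced against the moment exponent $a > 2$ from \eqref{condition::expectation}. Once this is done, combining the diagonal bound, the surrogate off-diagonal bound, and the truncation error yields the uniform $\gamma_\epsilon$, depending only on $\epsilon, p, M, a$ and the constants from \eqref{condition::expectation} and \eqref{condition::stabilization}---independent of $G$---with $\lim_{\epsilon \to 0}\gamma_\epsilon = 0$.
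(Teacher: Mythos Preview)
Your Lindeberg interpolation is a natural first instinct, but it creates two difficulties that the paper's argument sidesteps. First, \eqref{condition::expectation} is stated for iid samples from a single $G\in\mathcal C_{p,M}$; your interpolated base $\{Y_1,\ldots,Y_{k-1},X_{k+1},\ldots,X_n\}$ is independent but not identically distributed (the split is deterministic, not a randomized mixture), and the added point $X_k\sim F$ need not match the base, so \eqref{condition::expectation} does not bound $\E{|W_k|^a}$ as written. Second and more seriously, the off-diagonal sum has $n(n-1)$ terms divided by $n$; the crude bound $|\Cov{W_k}{W_l}|\le C\epsilon^{2(1-2/a)}$ yields a contribution of order $n\epsilon^{2(1-2/a)}$, which is not uniform in $n$. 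Your remedy is to pass to localized surrogates $\tilde W_k$ and claim near-independence on disjoint neighborhoods, but for a \emph{binomial} sample the restrictions to disjoint spatial regions are not independent: there is $O(1/n)$ correlation from the fixed total count, and the random index set contributing to $\tilde W_k$ is coupled across $k$. Turning this into a bound that beats the $n^2$ count uniformly in $n$ and $G$, while simultaneously controlling the truncation error $\Cov{W_k}{W_l}-\Cov{\tilde W_k}{\tilde W_l}$ summed over all pairs, is the entire crux---and your sketch does not supply it.

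The paper avoids both issues by replacing the telescope with a martingale difference sequence in the filtration $\mathcal F_j=\sigma(\mathbf X_j,\mathbf Y_j)$. Orthogonality eliminates all cross terms outright, and Jensen's inequality reduces the normalized variance to the single quantity $4\,\E{\big|D_{\sqrt[d]{n}X'}(\sqrt[d]{n}\mathbf X_n)-D_{\sqrt[d]{n}Y'}(\sqrt[d]{n}\mathbf Y_n)\big|^2}$, where $\mathbf X_n\iid F$ and $\mathbf Y_n\iid G$ are the \emph{full} coupled samples and $(X',Y')$ is an independent coupled pair. Now \eqref{condition::expectation} applies directly to each add-one cost. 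On the high-probability event where $X'=Y'$, the two samples coincide inside $B_{\sqrt[d]{n}X'}(l_\epsilon)$, and both add-one costs have stabilized at radius $l_\epsilon$, the difference is identically zero; H\"older with exponent $a/2$ and \eqref{condition::expectation} handle the complement, whose probability is controlled by $\epsilon$, by \eqref{condition::stabilization}, and by a short computation bounding the chance that $\mathbf X_n$ and $\mathbf Y_n$ differ inside the ball. No off-diagonal bookkeeping, no mixed samples.
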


For any two distributions $\mathcal L_1$ and $\mathcal L_2$ over $\reals$, we may define the 2-Wasserstein distance between $\mathcal L_1$ and $\mathcal L_2$ as
\begin{equation}
	W_2\p{\mathcal L_1, \mathcal L_2}:=\sqrt{\inf_{U\sim \mathcal L_1, V\sim \mathcal L_2}\E{\p{U-V}^2}}
\end{equation}
where it is assumed that $U$ and $V$ follow a joint distribution with marginals $\mathcal L_1$ and $\mathcal L_2$. For $\mathcal L$ denoting the law or distribution of a random variable, the variance given in the conclusion of Proposition~\ref{proposition::w2_general} bounds above
\begin{align}
	&W_2^2\Biggl( \mathcal L\set{\frac{1}{\sqrt{n}}\p{\psi\p{\sqrt[d]{n}\mathbf X_n}-\E{\psi\p{\sqrt[d]{n}\mathbf X_n}}}},\\
	&\qquad\qquad\qquad\qquad \mathcal L\set{\frac{1}{\sqrt{n}}\p{\psi\p{\sqrt[d]{n}\mathbf Y_n}-\E{\psi\p{\sqrt[d]{n}\mathbf Y_n}}}} \Biggl) \nonumber.
\end{align}

Consequently, Proposition~\ref{proposition::w2_general} shows that this $W_2$ distance can be made arbitrarily small uniformly over a neighborhood of distributions around $F$. An appropriately smoothed empirical distribution falls within such a small neighborhood with high probability, given sufficiently large sample sizes.

Furthermore, it can be seen that Proposition~\ref{proposition::w2_general} extends directly to finite sums. Given any $\p{A_i}_{i=1}^k$ and $\p{B_i}_{i=1}^k$, we have that $\Var{\sum_{i=1}^kA_i-\sum_{i=1}^kB_i}\leq k\sum_{i=1}^k\Var{A_i-B_i}$. Thus, if the conclusion of Proposition~\ref{proposition::w2_general} holds for any finite set of functions, $\p{\psi_i}_{i=1}^k$, it also holds for $\sum_{i=1}^k\psi_i$, with rate depending on the worst case $\psi_i$.

It should be noted that \eqref{condition::stabilization} is slightly stronger than necessary to establish Proposition~\ref{proposition::w2_general}. As stated, $D_{\sqrt[d]{n}Y'}\p{\p{\sqrt[d]{n}\mathbf Y_n}\cap B_{\sqrt[d]{n}Y'}\p{l_\epsilon}}$ itself is compared to the terminal add-one cost $D_{\sqrt[d]{n}Y'}\p{\sqrt[d]{n}\mathbf Y_n}$. As could be useful for some statistics, it is only required that an appropriate bound displays the desired stabilization property, see the provided proof for details.

\subsection{Smoothed Bootstrap} \label{section::bootstrap}

The bootstrap is an estimation technique used to construct approximate confidence intervals for a given population parameter. In cases where asymptotic approximations for the sampling distribution of a statistic are inconvenient or unavailable, bootstrap estimation provides a general tool for constructing approximate confidence intervals. Bootstrap estimation is well-studied in the statistical literature, an introduction being provided in \cite{Politis1999}. In this section, we will show consistency for a smoothed bootstrap in estimating the limiting distribution of a standardized stabilizing statistic, $\psi$, in the multivariate setting. We describe the general procedure below:

Let $\mathbf X_n=\set{X_i}_{i=1}^n\iid F$. We estimate the sampling distribution of
\begin{equation}
		\frac{1}{\sqrt{n}} \p{\psi\p{\sqrt[d]{n}\mathbf X_n}-\E{\psi\p{\sqrt[d]{n}\mathbf X_n}}}
\end{equation}
using a plug-in estimator $\hat F_n$ for the underlying data distribution $F$. In the standard nonparametric bootstrap, we estimate $F$ by the empirical distribution, giving probability to each unique value of $\p{X_i}_{i=1}^n$, proportional to the number of repetitions within $\mathbf X_n$. We have the bootstrap statistic
\begin{equation}
	\frac{1}{\sqrt{m}} \p{\psi\p{\sqrt[d]{m}\mathbf X_{m}^*}-\E{\psi\p{\sqrt[d]{{m}}\mathbf X_{m}^*}\big| \mathbf X_n}},
\end{equation}
where $\mathbf X_{m}^*=\set{X_i^*}_{i=1}^{m}\iid \hat F_n|\mathbf X_n$, conditional on $\mathbf X_n$. The sampling distribution of the bootstrap version provides an estimate for the distribution of the original statistic, which in the ideal case converges to the truth in the large-sample limit. Confidence intervals for $\E{\psi\p{\sqrt[d]{n}\mathbf X_n}}$ are then constructed from the bootstrap distribution and $\psi\p{\sqrt[d]{n}\mathbf X_n}$.

However, as will be seen in Section~\ref{section::nonparametric_bootstrap}, for some classes of topological statistics the standard bootstrap may not directly replicate the correct sampling distribution asymptotically. Consequently, we instead estimate $F$ by a smoothed distribution approximation. Such a smoothed bootstrap procedure can be shown to provide consistent estimation, even when the standard nonparametric bootstrap may fail.

For the smoothed bootstrap sampling procedure outlined here, we require that $F$ has a density $f$. Let $\hat f_n$ be an estimator for the true density with corresponding distribution $\hat F_n$, each a function of the sample $\mathbf X_n$. Conditional on $\mathbf X_n$, we draw bootstrap samples $\mathbf X_m^*$ independently from $\hat F_n|\mathbf X_n$. A particular choice of $\hat f_n$ is given via kernel density estimation. For a kernel function $Q$ and bandwidth $h>0$, the kernel density estimator of $f(x)$ based on the sample $\p{X_i}_{i=1}^n$ is $\hat f_{n, h}\p{x} := 1/\p{nh^d}\sum_{i=1}^nQ\p{\p{x-X_i}/h}$.

In practice, when $Q$ corresponds to a probability density, the kernel density estimator allows for convenient sampling, as is required for implementation. Generating a sample from $\hat f_{n, h}$ is equivalent to first drawing from the empirical distribution on $\mathbf X_n$, then adding independent noise following the distribution defined by $Q$, scaled by the bandwidth $h$. Other density estimators, including those using higher-order kernels, may not facilitate efficient sampling. However, the theory established here supports the use of any density estimator which meets the required convergence criteria, computational factors aside. More complicated data-dependent estimators are also possible, falling under a similar sampling framework. See Sections~\ref{section::simulation_study} and \ref{section::data_analysis} for specifics on density estimation as pertains to this work from a practical perspective.

We now present our main result. The following theorem establishes consistency for the smoothed bootstrap in the multivariate setting. We give the result for a vector of stabilizing statistics. In the context of the topological statistics introduced in Section~\ref{section::simpicial_complex}, this can be the persistent Betti numbers or Euler characteristic evaluated at different filtration parameters.

\begin{theorem} \label{theorem::bootstrap_general}
	Let $F\in \mathcal P\p{\reals^d}$ with density $f$ such that $\|f\|_p<\infty$ for some $p>2$. Furthermore, let $F$ and $\hat f_n$ be such that $\| \hat f_n-f \|_1 \to 0$ and $\|\hat f_n-f \|_p \to 0$ in probability (resp. a.s.). Suppose $\vec{\psi}\colon \tilde{\mathcal X}\p{\reals^d}\rightarrow\reals^k$ has component functions $\psi_j\colon \tilde{\mathcal X}\p{\reals^d}\rightarrow\reals$, $1\leq j\leq k$ satisfying \eqref{condition::expectation} and \eqref{condition::stabilization} for $\mathcal C_{p, M}\p{\reals^d}$, $M>\|f\|_p$, $F$, and $b=\p{p-2}/\p{d\p{p-1}}$. Then for a sample $\mathbf X_n=\set{X_i}_{i=1}^n\iid F$, $\p{m_n}_{n\in\nats}$ such that $\lim_{n\rightarrow \infty }m_n=\infty$, a bootstrap sample $\mathbf X_{m_n}^*=\set{X_i^*}_{i=1}^{m_n}\iid \hat F_n|\mathbf X_n$, and a multivariate distribution $G$,
	\begin{equation*}
		\frac{1}{\sqrt{n}}\p{\vec{\psi}\p{\sqrt[d]{n}\mathbf X_n}-\E{\vec{\psi}\p{\sqrt[d]{n}\mathbf X_n}}}\overset{d}{\rightarrow}G
	\end{equation*}
	\centering{if and only if}
	\begin{equation*}
		\frac{1}{\sqrt{{m_n}}}\p{\vec{\psi}\p{\sqrt[d]{{m_n}}\mathbf X_{m_n}^*}-\E{\vec{\psi}\p{\sqrt[d]{{m_n}}\mathbf X_{m_n}^*}\big| \mathbf X_n}}\overset{d}{\rightarrow}G \text{ in probability (resp. a.s.)}.
	\end{equation*}
\end{theorem}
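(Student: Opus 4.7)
The plan is to apply Proposition~\ref{proposition::w2_general} conditionally on the sample $\mathbf X_n$ and then transfer the resulting $W_2$ bound into convergence of the conditional bootstrap law. First, I would rephrase the density hypotheses so that the plug-in $\hat F_n$ lands in the class to which Proposition~\ref{proposition::w2_general} applies. By Scheff\'e's identity, $d_{TV}(\hat F_n, F) = \tfrac12\|\hat f_n - f\|_1$, so $\epsilon_n := d_{TV}(\hat F_n, F) \to 0$ in probability (resp.\ almost surely). Since $\|\hat f_n - f\|_p \to 0$ and $\|f\|_p < M$, the triangle inequality yields $\|\hat f_n\|_p < M$ on an event $A_n$ with $\prob{A_n} \to 1$. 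On $A_n$, $\hat F_n \in \mathcal C_{p, M}(\reals^d) \cap B_F(\epsilon_n, d_{TV})$, which is precisely the setting to which Proposition~\ref{proposition::w2_general} applies.

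Next, I would condition on $\mathbf X_n$ restricted to $A_n$ and treat $\hat F_n$ as a deterministic target. For each component $\psi_j$, Proposition~\ref{proposition::w2_general} produces a coupling of the bootstrap sample $\mathbf X_{m_n}^* \iid \hat F_n$ with an auxiliary iid sample $\mathbf Y_{m_n}^{(j)} \iid F$ satisfying
\begin{equation*}
	\Var{\frac{1}{\sqrt{m_n}}\p{\psi_j\p{\sqrt[d]{m_n}\mathbf X_{m_n}^*} - \psi_j\p{\sqrt[d]{m_n}\mathbf Y_{m_n}^{(j)}}} \mid \mathbf X_n} \leq \gamma^{(j)}_{\epsilon_n},
\end{equation*}
where $\gamma^{(j)}_\epsilon \to 0$ as $\epsilon \to 0$. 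This variance dominates the squared 2-Wasserstein distance, and hence the bounded Lipschitz distance $d_{BL}$, between the conditional law $\mathcal L_{j, n}^*$ of the centered bootstrap component and the marginal law $\mathcal L_{j, m_n}$ of the same centered statistic computed on a fresh iid sample of size $m_n$ from $F$. Since $\gamma^{(j)}_{\epsilon_n} \to 0$ in probability (resp.\ a.s.), it follows that $d_{BL}\p{\mathcal L_{j, n}^*, \mathcal L_{j, m_n}} \to 0$ in probability (resp.\ a.s.).

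The equivalence then follows from a $d_{BL}$-triangle inequality applied to the deterministic sequence $\mathcal L_{j, m_n}$ and the random conditional law $\mathcal L_{j, n}^*$. For the ``if'' direction, convergence of the original sequence transfers to the subsequence $\mathcal L_{j, m_n}$ (since $m_n \to \infty$), and then to $\mathcal L_{j, n}^*$ via the coupling bound. The ``only if'' direction works modulo the usual subsequence identification: conditional bootstrap convergence forces the deterministic subsequence $\mathcal L_{j, m_n}$ to converge to $G$, which identifies the limit of $\mathcal L_{j, n}$ provided a limit exists. For the multivariate conclusion, I would apply the Cram\'er-Wold device: by the finite-sums remark just after Proposition~\ref{proposition::w2_general}, the coupling bound extends to any linear combination $c^\top\vec\psi = \sum_j c_j\psi_j$, so the univariate argument handles each $c$, and joint weak convergence in $\reals^k$ follows.

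The principal technical obstacle is the conditional application of Proposition~\ref{proposition::w2_general} to the \emph{random} target $\hat F_n$: the proposition supplies a coupling for each deterministic $G$, and a measurable selection---uniform enough over $A_n$ that the $W_2$ bound holds for the random $d_{BL}$ distance between conditional laws---is required. A secondary subtlety is the passage from convergence along the subsequence $m_n$ to convergence along the full sequence $n$, which I would address by exploiting that $(\mathcal L_n)$ is deterministic and using the standard every-subsequence-has-a-convergent-sub-subsequence argument combined with uniqueness of the weak limit $G$.
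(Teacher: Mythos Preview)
Your proposal is essentially correct and follows the same strategy as the paper: condition on $\mathbf X_n$, apply Proposition~\ref{proposition::w2_general} with $G=\hat F_n$ once $\|\hat f_n\|_p\le M$, and transfer the resulting variance bound into closeness of the bootstrap and true laws in a metric metrizing weak convergence.

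The paper differs from your write-up in two minor respects. First, instead of Cram\'er--Wold, the paper works directly with an arbitrary bounded Lipschitz $v\colon\reals^k\to\reals$ and bounds $\bigl|\E{v(\vec H_{m_n}(\mathbf X_{m_n}^*))\mid\mathbf X_n}-\E{v(\vec H_{m_n}(\mathbf X_{m_n}'))}\bigr|$ by splitting on the event $\|\vec H_{m_n}(\mathbf X_{m_n}^*)-\vec H_{m_n}(\mathbf X_{m_n}')\|\le\delta$, then controlling the complement via Chebyshev applied to each coordinate using the \emph{same} coupling. This avoids introducing separate auxiliary samples $\mathbf Y_{m_n}^{(j)}$ per component; the coupling in Proposition~\ref{proposition::w2_general} is at the level of the underlying points $(X_i,Y_i)$ and so serves all $\psi_j$ simultaneously. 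Your Cram\'er--Wold route via the finite-sums remark is equally valid but slightly more circuitous. Second, your concern about measurable selection is not a genuine obstacle: the coupling is the explicit maximal TV coupling, so for each realization of $\hat F_n$ the bound holds deterministically, and the quantities appearing (e.g.\ $\gamma^{(j)}_{\|\hat f_n-f\|_1/2}$) are measurable functions of $\mathbf X_n$. The paper simply writes the chain of inequalities as holding almost surely and multiplies by $\ind{\|\hat f_n\|_p\le M}$ to confine to $\mathcal C_{p,M}$. Your identification of the subsequence issue in the converse direction is accurate; the paper's proof likewise obtains $V_{m_n}(F)\to\int v\,dG$ and does not elaborate further on recovering convergence along the full sequence $n$.
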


Theorem~\ref{theorem::bootstrap_general} establishes the asymptotic validity of bootstrap estimation for a range of stabilizing statistics under fairly mild conditions on the underlying density. However, it should be noted that further restrictions on the density and density estimate may be required to satisfy \eqref{condition::expectation} and \eqref{condition::stabilization}, see Corollary~\ref{theorem::bootstrap_knn} for example. The conditions under which $\| \hat f_{n, h_n}-f \|_1 \to 0$ in probability or a.s. can be found in \cite{Devroye1979}. Proposition~\ref{theorem::lp_norm} considers the convergence of $\| \hat f_{n, h_n}-f \|_p$, either in probability or almost surely. This result is outside the main contribution of this paper, but is interesting in its own right. Notably, no conditions are placed on the density $f$ except $\|f\|_p < \infty$.

As a point of caution, it is known that kernel density estimators suffer from a curse of dimensionality. The convergence properties of the density estimator $\hat f_n$ appear implicitly within the necessary assumptions for Theorem~\ref{theorem::bootstrap_general}. In particular, diminishing performance can be expected in higher dimensions, as shown by the provided simulations of Section~\ref{section::simulation_study}.

The above result holds for any choice of $m_n$ such that $\lim_{n\rightarrow \infty }m_n=\infty$, and is stated as such for the sake of generality. In practical application, $m_n=n$ is standard, and will be used throughout the simulation and data analysis sections of this paper. However, given that the computational complexity of $\psi$ often grows quickly with $n$, using a smaller $m_n$ could prove more feasible from a computational perspective.

Strictly speaking, convergence to a limiting distribution is not required for the bootstrap to provide asymptotically valid confidence intervals. Proposition~\ref{proposition::w2_general} gives that, with high probability, the smoothed bootstrap and true sampling distributions become close in $2$-Wasserstein distance. Provided that the cumulative distribution function $F_{\vec\psi_n}$ of $(\vec{\psi}\p{\sqrt[d]{n}\mathbf X_n}-\mathbb{E}[\vec{\psi}\p{\sqrt[d]{n}\mathbf X_n}])/\sqrt{n}$ has the property
\begin{equation}
	\lim_{\delta\rightarrow 0}\limsup_{n\rightarrow\infty }\sup_{x\in\reals^d}\abs{F_{\vec\psi_n}\p{x+\delta}-F_{\vec\psi_n}\p{x}}\to 0,
\end{equation}
it can be shown that confidence intervals constructed from the bootstrap statistic still achieve the stated confidence level with high probability, given a sufficiently large sample. Convergence to a continuous limiting CDF is just one way of satisfying this condition. However, this extension is unavailable for the topological statistics considered here, as the behavior of the finite sample statistics is currently very poorly understood.

In the later sections, we will show that the necessary moment and stabilization conditions for Theorem~\ref{theorem::bootstrap_general} are satisfied for several specific statistics of interest, chiefly the Euler characteristic and persistent Betti numbers for a class of simplicial complexes.

\section{Simplicial Complexes and Persistence Homology} \label{section::simpicial_complex}
\subsection{Simplicial Complexes} \label{section::simplicial_complexes}

Let $\mathcal K = \{K^r\}_{r\in\reals}$ be a filtration of simplicial complexes, with $ K^r\subseteq K^t$ for $r<t$. Each complex is a collection of \textit{simplices}, subsets of the vertex multiset, $V$. Here any repeated vertices are considered distinct. For a collection of simplices $K$ to be a simplicial complex, for any two simplices $S\subset V$ and $T\subset S$, $S\in K$ only if $T\in K$. Here a simplex is only included along with all of its subsets. For a given simplicial complex $K$, $K_q$ denotes the subset of $K$ consisting of all $q$-simplices. $q$-simplices are those simplices consisting of $q+1$ vertices. Each $q$-simplex is said to have dimension $q$. A graph or network refers to a simplicial complex consisting of only $1$-simplices (edges) and $0$-simplices (vertices).

We will be looking at simplicial complexes constructed over point clouds in $\reals^d$. The two major examples are the \v{C}ech and Vietoris-Rips complexes:
\begin{align}
	K_\text{C}^r\p{S} & =\set{\sigma\subseteq S\colon \exists z\in\reals^d \st \|z-x\|\leq r \ \forall x\in\sigma} \\
	K_\text{VR}^r\p{S} & =\set{\sigma\subseteq S\colon \|x-y\|\leq 2r \ \forall x, y\in\sigma}.
\end{align}

Each of these complexes summarizes the geometric and topological properties within a given point cloud. The Vietoris-Rips complex can be considered a ``completion" of the \v{C}ech complex, in so much that the Vietoris-Rips complex is the largest simplicial complex with the same edge set as the \v{C}ech complex. While the primary motivation for the results given here is application to the \v{C}ech and Vietoris-Rips complexes, our main results apply for a range of possible complexes. For example, for computational reasons it is often convenient to limit the number of simplices present within the final complex. As such, we have two approximations, the alpha complex and its completion
\begin{align*}
	K_\alpha^r\p{S} & =\set{\sigma\subseteq S\colon \exists z\in\reals^d \st \|z-x\|\leq r \AND \|z-x\|\leq \|z-y\| \ \forall x\in\sigma \ \forall y\in S} \\
	K_{\alpha^*}^r\p{S} & =\set{\sigma\subseteq S\colon \set{x, y}\in K_{\alpha}^r\p{S} \ \forall x, y\in\sigma}.
\end{align*}

These complexes avoid adding simplices between disparate points, controlling the total size of the complex. It has been shown that the alpha and \v{C}ech complexes are both homotopy equivalent to a union of closed balls around the underlying point set, thus sharing equivalent homology groups. However, for the completion, denoted here as the alpha* complex, there is no such relationship. The alpha complex is a subcomplex of the \v{C}ech complex as well as the Delaunay complex
\begin{equation}
	K_\text{D}\p{S} = \set{\sigma\subseteq S\colon \exists z\in\reals^d \st \|z-x\|\leq \|z-y\| \ \forall x\in\sigma \ \forall y\in S}.
\end{equation}

\subsection{Persistent Homology} \label{section::homology}

Now, of chief interest are the topological properties for a given simplicial complex. Both the \v{C}ech and Vietoris-Rips complexes reflect the structure present within an underlying point cloud. As such the topology of each provides an effective summary statistic for describing the structural properties of a dataset in $\reals^d$. We provide below a short introduction to homology and persistence homology as used in topological data analysis.

Define $C\p{K}$ to be the free abelian group generated by the simplices in $K$. Elements of $C\p{K}$ are sums of the form $\sum_{i\in I}a_i\sigma_i$, where $\sigma_i\in K$ for $a_i$ an appropriate group element. If we further allow the coefficients to come from a field, then $C\p{K}$ is a vector space. For the purposes of this paper, coefficients are drawn from the two-element field $\mathbb F_2=\set{0, 1}$. $C\p{K}$ is equipped with a linear boundary operator $\partial\colon C\p{K} \rightarrow C\p{K}$ where $\partial\p {\{x_1, ..., x_{q+1}\}}=\sum_{i=1}^q\p{-1}^{i}\{x_1, ..., x_{i-1}, x_{i+1}, ..., x_{q+1}\}$. As a fundamental property, $\partial\circ\partial=0$. With coefficients in $\mathbb F_2$, the boundary of a simplex reduces to the sum of all its faces.  $C_q\p{K}=C\p{K_q}$ is the subspace spanned by the $q$-simplices of $K$, with the image of $C_q\p{K}$ under $\partial$ lying in $C_{q-1}\p{K}$. $\partial_q\colon C_q\p{K}\rightarrow C_{q-1}\p{K}$ denotes the restriction of $\partial$ to $C_q\p{K}$.

We now construct the homology groups of $K$. Let $Z\p{K}=\ker\p{\partial}$ be the subspace of $C\p{K}$ containing the cycles, those elements whose boundary under $\partial$ is $0$. $Z_q\p{K}=Z\p{K_q}=\ker\p{\partial_q}$ is the restriction of $Z\p{K}$ to dimension $q$. Let $B\p{K}=\im{\partial}$ denote the subspace of boundaries in $C\p{K}$. $B_q\p{K}=B\p{K_q}=\im{\partial_{q+1}}$ is the subspace consisting of the boundaries of elements in $C_{q+1}\p{K}$, lying in $C_q\p{K}$.

The \textit{homology groups} are given by $H_q\p K := Z_{q}\p{K}/B_{q}\p{K}$, the cycles $Z_q$ in dimension $q$ modulo the boundaries $B_q$. In words, the elements of the homology groups represent ``holes" within the simplicial complex, shown by closed loops whose interior is not filled by other elements in the complex. These homology groups provide a topological summary of the structure in the simplicial complex $K$. As stated previously, because we assume field coefficients for $C\p{K}$, each homology group is also a vector space. The \textit{Betti numbers} of the complex represent the degree or dimension of each homology space. We denote the $q$-th Betti number of $K$ by $\beta_{q}\p{K}=\dim\p{Z_q\p{K}/B_q\p{K}}=\dim\p{Z_q\p{K}}-\dim\p{B_q\p{K}}$. Moving forward, Betti numbers and their like will be of primary interest.

Homology provides a topological invariant constructed from a single simplicial complex. For a filtration of nested simplicial complexes, \textit{persistent homology} provides more detail. Given a filtration $\mathcal K=\{K^{r}\}_{r\in\reals}$, the homology groups for each complex, $H_q\p{K^r}$, are defined. However, due to the nested structure of the filtration, simplices are shared across complexes, and thus there exists a natural inclusion map between homology spaces. Cycles in $Z_q\p{K^r}$ are also cycles in $Z_q\p{K^t}$ if $r<t$. The boundary spaces behave similarly. For a given equivalence class $x+B_q\p{K^r}\in H_q\p {K^r}$, $x+B_q\p{K^r}\rightarrow x+B_q\p{K^t}$ specifies the inclusion map from $H_q\p{K^r}$ to $H_q\p{K^t}$.
	
If a given element $\tilde{x}\in H_q\p{K^r}$ maps to $\tilde{y}\in H_q\p{K^t}$ upon inclusion, with $\tilde{y}\neq B_q\p{K^t}$, we say that $\tilde{x}$ represents a persistent cycle across the filtration. Essentially the same underlying element is reflected in the homology groups over a range of simplicial complexes. The collection of homology groups and inclusion maps form a \textit{persistence module}. A wide body of work exists on the properties of these persistence modules, see \cite{Zomorodian2005} for an introduction. For any cycle feature in the filtration, there is a well defined death time, being the smallest parameter level for which the given element lies in the kernel. The Betti numbers of a filtration form a function in the filtration parameter, $r$. We use the notation $\beta_q^r\p{\mathcal K}:=\beta_q\p{K^r}$. The Betti numbers in this context count the number of persistent features extant at $r$.

It is a fundamental theorem of persistent homology that a sufficiently well-behaved persistence module can be represented by a \textit{persistence diagram}. A diagram $\mathcal D\p{\mathcal K}$ is a multiset in $\reals^2\times \ints$ of points $(b, d, q)$. Each point represents a single persistent feature in the module. $b$ denotes the birth time of the feature, being the smallest parameter level for which that feature is represented in the homology groups. Likewise $d$ gives the death time, and $q$ the dimension of the feature. The collection of persistent features represented by the diagram are a basis for the corresponding persistence module.

The persistence diagram is a simple summary statistic which condenses the complex topological information present within a filtration. An example of a persistence diagram is shown in Figure~\ref{figure::bias_plots}.

\subsection{Persistent Betti Numbers} 

We arrive at the main focus of this section. For $r \le s,$ define the \textit{persistent homology groups} of a filtration $\mathcal K=\set{K^r}_{r\in\reals}$ as 
\begin{equation}
		H_q^{r,s}\p{\mathcal K}\coloneqq Z_q\p{K^r}/\p{B_q\p{K^s}\cap Z_q\p{K^r}}.
\end{equation}
Nonzero elements in this group represent features born at or before time $r$ which persist until at least time $s$. The dimension of these spaces gives the \textit{persistent Betti numbers}
\begin{align}
	\beta_q^{r,s}\p{K} &\coloneqq \dim\p{Z_q\p{K^r}/B_q\p{K^s}\cap Z_q\p{K^r}} \\
	& =\dim\p{Z_q\p{K^r}}-\dim\p{B_q\p{K^s}\cap Z_q\p{K^r}}.
\end{align}
Persistent Betti numbers are in one-to-one correspondence with the respective persistence diagram. Here $\beta_q^{r,s}\p{\mathcal K}$ counts the number of points in $\mathcal D\p{\mathcal K}$ of feature dimension $q$ falling within $\left(-\infty, r\right]\times\left(s, \infty\right]$. When $s=r$, we recover the regular Betti numbers, $\beta_q^{r,r}\p{\mathcal K}=\beta_q\p{K^r}$. An important result for persistent Betti numbers is given in the following lemma.

\begin{lemma}[Geometric Lemma][Lemma~2.11 in \cite{Hiraoka2018}] \label{lemma::geometric}
	Let $\mathcal J=\set{J^r}_{r\in\reals}$ and $\mathcal K=\set{K^r}_{r\in\reals}$ be filtrations of simplicial complexes with with $J^r\subseteq K^r$ for all $r\in\reals$. Then
	\begin{align}
		\abs{\beta_q^{r,s}\p{\mathcal K} - \beta_q^{r,s}\p{\mathcal J}} \leq \ & \max\set{\#\set{K_q^r\setminus J_q^r}, \#\set{K_{q+1}^s\setminus J_{q+1}^s}}\\
		\leq \ & \#\set{K_q^r\setminus J_q^r}+\#\set{K_{q+1}^s\setminus J_{q+1}^s}.
	\end{align}
\end{lemma}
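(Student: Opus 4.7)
Since $\beta_q^{r,s}$ depends only on the complexes at times $r$ and $s$, I would treat the filtrations as two-step data and introduce the intermediate nested pair $\mathcal{L} := (J^r, K^s)$, which is valid because $J^r \subseteq K^r \subseteq K^s$. Telescoping gives
\[
\beta_q^{r,s}(\mathcal{K}) - \beta_q^{r,s}(\mathcal{J}) = \bigl[\beta_q^{r,s}(\mathcal{K}) - \beta_q^{r,s}(\mathcal{L})\bigr] + \bigl[\beta_q^{r,s}(\mathcal{L}) - \beta_q^{r,s}(\mathcal{J})\bigr],
\]
and I would aim to show the first bracket lies in $[0, \#(K_q^r \setminus J_q^r)]$ and the second in $[-\#(K_{q+1}^s \setminus J_{q+1}^s), 0]$. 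Because the two pieces then carry opposite signs, the total difference is controlled by the maximum rather than the sum of the two cardinalities, which is exactly the stated bound.

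For the first bracket only the time-$r$ complex changes. Writing $U := Z_q(K^r)$, $V := Z_q(J^r)$, and $W := B_q(K^s) \cap Z_q(K^r)$, and noting $B_q(K^s) \cap Z_q(J^r) = W \cap V$ (since $V \subseteq U$), the defining formula $\beta_q^{r,s} = \dim Z_q(\cdot^r) - \dim(B_q(\cdot^s) \cap Z_q(\cdot^r))$ together with the second isomorphism theorem $W/(W \cap V) \cong (V+W)/V$ collapses the difference to $\dim(U/(V+W))$, which is nonnegative and bounded above by $\dim(U/V)$. Since $Z_q(K^r) \cap C_q(J^r) = Z_q(J^r)$, the quotient $U/V$ injects into $C_q(K^r)/C_q(J^r)$, yielding the first bound $\#(K_q^r \setminus J_q^r)$.

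For the second bracket only the time-$s$ complex changes, and the same formula yields
\[
\beta_q^{r,s}(\mathcal{L}) - \beta_q^{r,s}(\mathcal{J}) = \dim\bigl(B_q(J^s) \cap V\bigr) - \dim(W \cap V) \leq 0,
\]
with absolute value $\dim\bigl((W \cap V)/(B_q(J^s) \cap V)\bigr)$. The natural map to $B_q(K^s)/B_q(J^s)$ is injective (the kernel computes to $B_q(J^s) \cap V$ using $V \subseteq U$ and $B_q(J^s) \subseteq B_q(K^s)$), and $\partial_{q+1}$ induces a surjection $C_{q+1}(K^s)/C_{q+1}(J^s) \to B_q(K^s)/B_q(J^s)$, so the latter has dimension at most $\#(K_{q+1}^s \setminus J_{q+1}^s)$. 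Combining the two brackets gives the max bound, and the sum bound is immediate. I expect the main technical obstacle to be the algebraic identity in the first step: it becomes clean only after the correct identification of $W/(W \cap V)$ with $(V+W)/V$ and the observation that $B_q(K^s) \cap Z_q(J^r) = W \cap V$, so carefully setting up the linear algebra — and tracking which inclusions into $U$ are used where — is really the only nontrivial work.
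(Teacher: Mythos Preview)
Your proof is correct. The paper does not actually prove this lemma---it is quoted from \cite{Hiraoka2018}---so there is no in-paper proof to compare against directly. That said, the paper does prove the $B$-bounded analogue (Lemma~\ref{lemma::geometric_bounded}), and that argument follows essentially the same skeleton as yours: rewrite $\beta_q^{r,s}(\mathcal K)-\beta_q^{r,s}(\mathcal J)$ as a difference of two nonnegative quantities, one controlled by the growth of the cycle space at time $r$ and the other by the growth of the boundary space at time $s$, then conclude the max bound from the opposite signs. Your device of inserting the intermediate two-step filtration $\mathcal L=(J^r,K^s)$ is simply a clean geometric way of naming that intermediate term $\dim V-\dim(W\cap V)$; algebraically the two arguments coincide. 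The only extra ingredient you use that the paper's $B$-bounded proof does not need is the identification $Z_q(J^r)=Z_q(K^r)\cap C_q(J^r)$ to pass from $\dim(U/V)$ to the simplex count, which is where the unbounded case is cleaner than the $B$-bounded one.
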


The Geometric Lemma~\ref{lemma::geometric} relates the change in persistent Betti numbers between two filtrations to the additional simplices gained moving between them. As a brief explanation of the lemma, simplices can be divided into two classes, positive and negative. For two simplicial complexes $J\subset K$, if we imagine adding the additional $q$-simplices in $K$ to $J$ one by one, a positive $q$-simplex will increase the dimension of $Z_q$ by one, and a negative $q$-simplex will increase the dimension of $B_{q-1}$ by one. Either change can affect the persistent Betti numbers. This dichotomy is a basic result from persistent homology, see \cite{Boissonnat2018}. The bound given in the Geometric Lemma describes a worst case, when all $q$-simplices at time $r$ are positive or all $(q+1)$-simplices at time $s$ are negative. The Geometric Lemma will be critical moving forward, as it allows us to control the change in persistent Betti numbers by counting appropriate simplices.

\subsection{Euler Characteristic}

For a given simplicial complex $K$, the \textit{Euler characteristic} is defined as
\begin{equation}
	\chi\p{K}:=\sum\limits_{k=0}^\infty\p{-1}^k\#\set{K_k}.
\end{equation}

Provided there is an $m\in\nats$ such that the Betti numbers $\beta_q\p{K}$ are $0$ for all $q> m$ (as in \eqref{condition::complex_higher} holds), it can be shown that the Euler characteristic has the following identity with the Betti numbers:
\begin{equation}
	\chi\p{K}=\sum\limits_{k=0}^\infty\p{-1}^k \beta_k\p{K}.
\end{equation}

This relationship with the Betti numbers makes the Euler characteristic an important topological invariant in its own right. Applications of the Euler characteristic and derivatives may be found in \cite{Pranav2019, Pranav2019a, Turner2014}.

\subsection{$k$-Nearest Neighbor Graph}

The $k$-nearest neighbor graph $\mathcal K_{\text{NN}, k}$ of a vertex set $S$ connects each point $x \in S$ with the $k$ closest vertices to $x$ within $\mathbf S\setminus x$. This graph may either be directed or undirected. $\mathcal K_{\text{NN}, k}$ is commonly used to analyze the clustering structure of a point cloud. Let the total length of the edges in this graph be denoted by $l_{\text{NN}, k}$. The total length of the $k$-nearest neighbor graph, when suitably scaled, provides a measure of the average local ``density'', or concentration of the points in $S$. In Section~\ref{section::bootstrap_knn}, we will show bootstrap consistency for $l_{\text{NN}, k}$ within the stabilization framework.

\section{Bootstrapping Topological Statistics} \label{section::topology_bootstrap}

\subsection{Nonparametric Bootstrap} \label{section::nonparametric_bootstrap}

In this section, we will argue that the standard nonparametric bootstrap may fail to reproduce the correct sampling distribution asymptotically when applied to common topological statistics.

For a wide class of simplicial complexes built over point sets in $\reals^d$, the corresponding persistence diagram is unaffected by the inclusion of repeated points within the vertex set. This behavior holds for both the Vietoris-Rips and \v{C}ech complexes, defined in Section~\ref{section::simplicial_complexes}. In the case of the \v{C}ech complex, this phenomenon is seen most directly. The \v{C}ech complex under the Euclidean metric is homologically equivalent to a union of closed balls centered on the vertex points in $\reals^d$. Additional repetitions of vertex points leave both this union and the derived persistence diagram unchanged.

In these cases where repetitions may be ignored in the calculation of statistics, the standard bootstrap behaves effectively like a subsampling technique. The size of a given subsample is random, equal to the number of unique points present in the corresponding bootstrap sample.

Given a random sample $\mathbf X_n = \set{X_1, ..., X_n}$, it can be shown using elementary arguments that a given bootstrap sample $\mathbf X_n^*$ of size $n$ from the empirical distribution over $\mathbf X_n$ is expected to contain $n(1-\p{1-1/n}^n)\approx \p{1-e^{-1}}n\approx 0.632n$ unique points. As such, $\mathbf X_n^*$ behaves similarly to a sample of size $0.632n$, but is not scaled accordingly within the statistic $\p{\beta_q^{r, s}\p{\sqrt[d]{n}\mathbf X_{n}^*}-\E{\beta_q^{r, s}\p{\sqrt[d]{{n}}\mathbf X_{n}^*}\big| \mathbf X_n}}/\sqrt[d]{n}$. This discrepancy in scaling introduces a non-negligible  asymptotic bias. The effect is illustrated in Figure~\ref{figure::bias_plots} for the Vietoris-Rips complex.

\begin{figure}
	\begin{center}
		\includegraphics[width = \textwidth]{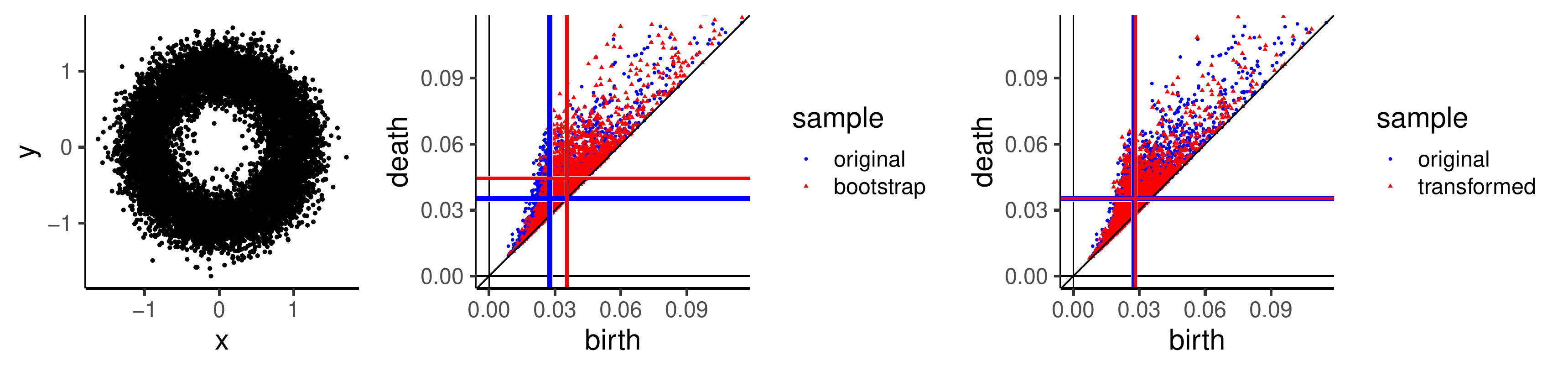}
	\end{center}
	\caption{{\em Left:} The original data set of size $n=10,000$, from which a single standard bootstrap sample is drawn. {\em Middle:} Persistence diagrams for both the original and bootstrap samples, along with lines denoting the median birth and death in each diagram. The asymptotic bias discussed in Section~\ref{section::nonparametric_bootstrap} can be clearly seen. {\em Right:} Persistence diagrams after application of a multiplicative correction factor of $\sqrt{1-e^{-1}}\approx 0.795$ to the bootstrap sample. Note that the median birth/death times correspond after transformation.\label{figure::bias_plots}}
\end{figure}

Furthermore, the standard nonparametric bootstrap results in a fundamentally different point process limit at small scales when compared to the original sample. For the original sample, when $\mathbf X_n$ is drawn from a distribution with density $f$, the shifted and rescaled sample $\sqrt[d]{n}\p{\mathbf X_n-z}$ approaches a homogeneous Poisson process $\mathbf P_z$ with intensity $f\p{z}$. From the preceeding stabilization literature (\cite{Penrose2001}, \cite{Krebs2019}), this limiting local point process drives the asymptotic sampling distribution of $\p{\beta_q^{r, s}\p{\sqrt[d]{n}\mathbf X_n}-\E{\beta_q^{r, s}\p{\sqrt[d]{n}\mathbf X_n}}}/\sqrt[d]{n}$. Considering the large-sample behavior of $\sqrt[d]{n}\p{\mathbf X_n^*-z}|\mathbf X_n$, the smoothed bootstrap sampling procedure described in Section~\ref{section::bootstrap} can be shown to reproduce the same local Poisson process $\mathbf P_z$ asymptotically.

However, the same is not true for the standard bootstrap when repeated points are ignored. In this case, $\sqrt[d]{n}\p{\mathbf X_n^*-z}|\mathbf X_n$ is restricted to the discrete set $\sqrt[d]{n}\p{\mathbf X_n-z}$, and thus cannot reproduce $\mathbf P_z$, whose domain is $\reals^d$. For this case, we describe the resulting point process limit $\mathbf Q_z$ in two steps. First, a homogenous Poisson process $\mathbf P_z$ is generated, representing $\sqrt[d]{n}\p{\mathbf X_n-z}$. Defined conditionally, $\mathbf Q_z|\mathbf P_z$ is a random subset of $\mathbf P_z$ such that $\prob{x\in\mathbf Q_z|\mathbf P_z}=1-e^{-1}\approx .632$, considering each point $x\in\mathbf P_z$ independently. We have $\sqrt[d]{n}\p{\mathbf X_n^*-z}\rightarrow \mathbf Q_z$.

This difference in local behavior, combined with the asymptotic bias effect illustrated earlier, are strong indicators that $\p{\beta_q^{r, s}\p{\sqrt[d]{n}\mathbf X^*_n}-\E{\beta_q^{r, s}\p{\sqrt[d]{n}\mathbf X^*_n}}\big| \mathbf X_n} / \sqrt[d]{n}$ and $\mybrk\p{\beta_q^{r, s}\p{\sqrt[d]{n}\mathbf X_n}-\E{\beta_q^{r, s}\p{\sqrt[d]{n}\mathbf X_n}}}/\sqrt[d]{n}$ likely do not share a weak limit. A technical treatment is omitted here, and is outlined merely to justify the use of our smoothed bootstrap procedure in place of the standard nonparametric bootstrap. The smoothed bootstrap procedure provides for bootstrap consistency (Corollaries~\ref{theorem::bootstrap_pbn} and \ref{theorem::bootstrap_pbn_ball}), and in the following sections we consider only this approach.

\subsection{General Conditions for Simplicial Complexes} \label{section::general_conditions}

The results presented in the following sections apply for a range of simplicial complexes constructed over point clouds in $\reals^d$. Here we will explain the specific conditions used, and for which common simplicial complexes they apply. Let $K$ be a function taking as input $S\in\tilde{\mathcal X}\p{\reals^d}$, giving as output a simplicial complex with vertices in $S$. For a given simplex $\sigma$, let the set diameter be $\diam{\sigma}$. We have the following conditions:

\begin{enumerate}[label=(K\arabic*), ref=K\arabic*]
	\item \label{condition::complex_increasing} For any $S\in \tilde{\mathcal X}\p{\reals^d}$ and $z\notin S$, $K\p{S}\subseteq K\p{S\cup\set{z}}$. Furthermore, $\sigma\in K\p{S\cup \set{z}}\setminus K\p{S}$ only if $z\in\sigma$.
	\item \label{condition::complex_translate} For any $S\in\tilde{\mathcal X}\p{\reals^d}$ and $z\in\reals^d$, $\sigma\in K\p{S}$ only if $\sigma-z\in K\p{S-z}$.
\end{enumerate}

\begin{enumerate}[label=(D\arabic*), ref=D\arabic*]
	\item \label{condition::complex_local_adj} There exists $\phi<\infty$ such that for any $S\in\tilde{\mathcal X}\p{\reals^d}$, $\sigma\in K^r\p{S}$ only if $\diam{\sigma}\leq \phi$.
	\item \label{condition::complex_local_ball} There exists $\phi<\infty$ such that for any $S\in\tilde{\mathcal X}\p{\reals^d}$ and $z\in\reals^d$, $\sigma\in K\p{S\cup\set{z}}\triangle K\p{S}$ only if $\sigma \subset B_z\p{\phi}$.
	\item \label{condition::complex_minimum} There exists an $\eta>0$ such that for any $S\in\tilde{\mathcal X}\p{\reals^d}$ and $x\in Z\p{K\p{S}}$, $\diam{x}\leq\eta$ only if $x\in B\p{K\p{S}}$.
	\item \label{condition::complex_higher} There exists an $m\in\nats$ such that for any $k>m$ and $S\in\tilde{\mathcal X}\p{\reals^d}$, $Z_k\p{K\p{S}}=B_k\p{K\p{S}}$.
\end{enumerate}

\eqref{condition::complex_increasing} means that the addition of a new point will not change the existing complex, only add new simplices. Furthermore, any new simplices gained must contain the added point as a vertex. \eqref{condition::complex_translate} gives that the complex is essentially translation invariant. \eqref{condition::complex_local_adj} sets a maximum diameter for any simplex in the complex. \eqref{condition::complex_local_ball} gives that the influence of a new point on the complex is confined to a local region around that point, within a fixed diameter. This condition allows for both the addition and removal of simplices from the complex, but only within the prescribed radius. It can be easily shown that if \eqref{condition::complex_local_ball} holds for $\phi$, \eqref{condition::complex_local_adj} holds for $2\phi$. Conversely if both \eqref{condition::complex_increasing} and \eqref{condition::complex_local_adj} hold for $\phi$, \eqref{condition::complex_local_ball} also holds for $\phi$. Finally, \eqref{condition::complex_minimum} gives that no small loops can exist with unfilled interiors, and \eqref{condition::complex_higher} gives that all Betti numbers are $0$ in sufficiently high feature dimensions.

Now, let $\mathcal K=\p{K^r}_{r\in\reals}$ be a function taking as input $S\in\tilde{\mathcal X}\p{\reals^d}$, giving as output a filtration of simplicial complexes with vertices in $S$. As a slight abuse, we will often refer to the function $\mathcal K$ as a filtration of simplicial complexes, even though it is a function defining more than a single filtration, depending on the underlying point cloud. We say that a given condition is satisfied for $\mathcal K$ if it is satisfied by $K^r$ for any $r\in\reals$. In the cases of \eqref{condition::complex_local_adj}, \eqref{condition::complex_local_ball}, and \eqref{condition::complex_minimum}, $\phi$ and $\eta$ may depend on $r$ as increasing functions $\phi\colon\reals\rightarrow\left[0, \infty\right)$ and $\eta\colon\reals\rightarrow\left[0, \infty\right)$.

It can be shown that all of \eqref{condition::complex_increasing}-\eqref{condition::complex_minimum} are satisfied for both the Vietoris-Rips and \v{C}ech complexes in $\reals^d$ using $\phi\p{r}=\eta\p{r}=2r$. The same functions apply for the alpha complex in $\reals^d$ and its completion $\mathcal K_{\alpha^*}$, with the notable exception that \eqref{condition::complex_increasing} is violated. Finally, it is known that \eqref{condition::complex_higher} is satisfied by the alpha, \v{C}ech, and Delauney complexes in $\reals^d$ for $m=d-1$. 

While covering a wide class of distance-based simplicial complexes, there are several complexes used in practice that may fail to satisfy any or all of these. For example, the addition of a new point to the Delaunay complex, Gabriel graph, witness complex, or $k$-nearest neighbor graph can both add and remove simplices, violating \eqref{condition::complex_increasing}. Furthermore, there is not any limit on the simplex diameter within any of these complexes, violating \eqref{condition::complex_local_adj}. Likewise, the addition of a single point can alter simplices at arbitrarily large distances, violating \eqref{condition::complex_local_ball}. As a special note, it is common in practice to consider the intersection of the Vietoris-Rips and Delaunay complexes, which unfortunately may violate all the assumptions here. It is unclear if an extension or special consideration could be made to incorporate these complexes.

\subsection{Stabilization of Persistent Betti Numbers} \label{section::stabilization_results}

To apply the general bootstrap theorem, we first require a technical lemma establishing a locally-determined radius of stabilization for persistent Betti numbers. The result given applies for general classes of simplicial complexes constructed over subsets of $\reals^d$, using the conditions listed previously. Reiterating, $\mathcal C_{p, M}\p{\reals^d}$ is the class of distributions $G$ on $\reals^d$ with densities $g$ such that $\|g\|_p\leq M$. We have the following:

\begin{lemma} \label{lemma::stabilization_pbn}
	Let $F\in\mathcal C_{p, M}\p{\reals^d}$ for some $p>2$ and $M<\infty$, and let $\mathcal K=\set{K^r}_{r\in\reals}$ be a filtration of simplicial complexes satisfying \eqref{condition::complex_translate}, \eqref{condition::complex_local_ball}, and \eqref{condition::complex_minimum}. Then for any $r\in \reals$, $s\in\reals$, and $q\geq0$, $\beta_q^{r, s}\p{\mathcal K}$ satisfies \eqref{condition::stabilization_radius} for $F$.
\end{lemma}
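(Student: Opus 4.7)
The plan is to construct a locally-determined radius of stabilization $\rho_z$ for $\beta_q^{r,s}$ and then invoke Lemma~\ref{lemma::stabilization_poisson} to establish the uniform-in-$n$ tail bound required by \eqref{condition::stabilization_radius}. By the translation invariance \eqref{condition::complex_translate}, it suffices to construct $\rho_0$ centered at the origin and take $\rho_z(S)=\rho_0(S-z)$. The main tools are the Geometric Lemma (Lemma~\ref{lemma::geometric}), conditions \eqref{condition::complex_local_ball} and \eqref{condition::complex_minimum}, and the de-Poissonization result of Lemma~\ref{lemma::stabilization_poisson}.

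For the construction, first observe via \eqref{condition::complex_local_ball} that every simplex in the symmetric difference $K^s(S\cup\{0\})\triangle K^s(S)$ lies within $B_0(\phi(s))$, and Lemma~\ref{lemma::geometric} bounds $|D_0(S)|$ by the number of such simplices in dimensions $q$ and $q+1$. Thus the \emph{set} of added simplices depends only on $S\cap B_0(\phi(s))$; the subtlety is that the signed contribution of each added simplex to $\beta_q^{r,s}$ is governed by its positive/negative classification, which \emph{a priori} can depend on the global complex. I would define $\rho_0(S)$ as the smallest $L\geq\phi(s)$ for which a ``blocking'' configuration of points appears inside $B_0(L)\setminus B_0(\phi(s))$, i.e.\ a sufficiently dense packing of $S\cap B_0(L)$ witnessing that any potentially relevant $z$-touching cycle can be decomposed into cycles of diameter at most $\eta(s)$, which by \eqref{condition::complex_minimum} are already boundaries. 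By construction $\rho_0(S)$ depends only on $S\cap B_0(\rho_0(S))$ and is therefore locally determined; the decomposition then gives that the positive/negative classification of each $0$-containing simplex is invariant under enlargement of the ball, yielding $D_0(S\cap B_0(l))=D_0(S\cap B_0(\rho_0(S)))$ for every $l\geq\rho_0(S)$.

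For the probabilistic tail bound $\sup_n \mathbb{P}(\rho_{\sqrt[d]{n}Y'}(\sqrt[d]{n}\mathbf{Y}_n)>L)\to 0$ I would apply Lemma~\ref{lemma::stabilization_poisson}. Its hypothesis is verified by showing that for a homogeneous Poisson process $\mathbf{P}_\lambda$ the probability of failing the blocking configuration in the annulus $B_0(L)\setminus B_0(\phi(s))$ decays geometrically in $L^d$, uniformly over $\lambda\in[a,b]\subset(0,\infty)$; this is standard Poisson concentration applied to counts in a fixed cover of the annulus by small balls. The measurable set $A_{a,b,\delta}$ is taken to be the blocking-failure event itself, which manifestly contains $\rho_0^{-1}((L,\infty])$. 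The $L_p$-bound $\|f\|_p\leq M$ enters through the density of $Y'$: regions of very high local intensity are absorbed by the large-$\lambda$ Poisson bound, while low and moderate intensities are controlled through a H\"older estimate against $f$. The conclusion of Lemma~\ref{lemma::stabilization_poisson} produces the bound for $n\geq n_\delta$; the finitely many smaller $n<n_\delta$ are handled by a direct estimate, since $\rho_0(\sqrt[d]{n}(\mathbf{Y}_n-Y'))$ is trivially bounded in terms of the finite sample diameter.

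The main obstacle, I expect, will be making the blocking configuration precise enough for persistent (as opposed to merely ordinary) Betti numbers: the positive/negative classification must be preserved at both filtration levels $r$ and $s$ simultaneously, which requires the blocking to handle $q$-cycles born at level $r$ whose fillings involve $(q+1)$-simplices only at level $s$. A secondary technical point is the passage from $n\geq n_\delta$ to all $n\in\nats$, which the paper itself flags after Lemma~\ref{lemma::stabilization_poisson} as requiring some additional, function-specific effort.
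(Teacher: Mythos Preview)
Your overall strategy matches the paper's: reduce to a radius at the origin via \eqref{condition::complex_translate}, verify the Poisson hypothesis of Lemma~\ref{lemma::stabilization_poisson}, and patch in the finitely many small $n$ separately. Two points of departure are worth noting.

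First, the paper does not construct the ``blocking configuration'' from scratch; it simply cites Theorem~4.3 of \cite{Krebs2019} (and the argument in its proof) for the existence of a locally-determined $\rho_0^*$ satisfying the uniform Poisson tail bound required by Lemma~\ref{lemma::stabilization_poisson}. Your sketch of the blocking idea is the right intuition for what that reference does, and you correctly identify the delicate step (handling both filtration levels $r$ and $s$ simultaneously), but you would be re-deriving a nontrivial external result rather than filling a gap in the present paper.

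Second, your handling of $n<n_\delta$ has a genuine gap. Your $\rho_0(S)$ is defined as the smallest $L$ at which a dense blocking shell appears; for a sparse finite sample no such shell may ever appear, so your $\rho_0$ can equal $\infty$ even though the add-one cost has trivially stabilized. Saying the radius is ``bounded in terms of the finite sample diameter'' does not follow from your definition. The paper fixes this by introducing a second, crude locally-determined radius based on the connected components of $K^r(S)$ and $K^s(S\cup\{0\})$ that meet $B_0(\phi)$: the add-$0$ cost depends only on those components, and on a sample of size $n$ their diameter is deterministically at most order $n\phi$. The final radius is then the \emph{minimum} of this crude component-diameter radius and the Krebs--Polonik radius $\rho_0^*$. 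This minimum is still locally determined, is finite on every finite sample, and inherits the Poisson tail bound from $\rho_0^*$ for $n\geq n_\delta$. You need this minimum-of-two-radii device (or an equivalent) to make the small-$n$ step rigorous.
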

	
\subsection{Bootstrap Results for Persistence Homology} \label{section::bootstrap_results}

Here we present the main applied results of this paper. Each is derived from Theorem~\ref{theorem::bootstrap_general} and the stabilization lemma for persistent Betti numbers (Lemma~\ref{lemma::stabilization_pbn}). For given vectors of birth and death times, $\vec r=\p{r_i}_{i=1}^k$ and $\vec s=\p{s_i}_{i=1}^k$, let $\beta_q^{\vec r, \vec s}=\p{\beta_q^{r_i, s_i}}_{i=1}^k$ denote the multivariate function whose components are the persistent Betti numbers evaluated at each pair of birth and death times. For a vector of filtration times $\vec{r}=\p{r_i}_{i=1}^k$, let $\chi^{\vec r}$ denote the function giving the Euler characteristic at each time $r_i$, with $\chi^{\vec r}:=\p{\chi\p{K^{r_i}}}_{i=1}^k$. 

The following apply for $F\in \mathcal P\p{\reals^d}$ with density $f$ such that $\|f\|_p<\infty$ for some $p>2$, as specified. $F$ and $\hat F_n$ are such that $\hat F_n$ has density $\hat f_n$, $\| \hat f_n-f\|_1 \to 0$, and $\| \hat f_n-f \|_p \rightarrow 0$ in probability (resp.\ $a.s.$). Let $\mathbf X_n=\set{X_i}_{i=1}^n\iid F$ and $\p{m_n}_{n\in\nats}$ such that $\lim_{n\rightarrow\infty}m_n=\infty$. $\mathbf X_{m_n}^*=\set{X_i^*}_{i=1}^{m_n}\iid \hat F_n\big|\mathbf X_n$ is a bootstrap sample and $G$ a multivariate distribution. Recalling the conclusion of Theorem~\ref{theorem::bootstrap_general}, for a multivariate statistic $\vec\psi$:
\begin{statement} \label{conclusion::bootstrap}
	\begin{equation*}
		\frac{1}{\sqrt{n}}\p{\vec{\psi}\p{\sqrt[d]{n}\mathbf X_n}-\E{\vec{\psi}\p{\sqrt[d]{n}\mathbf X_n}}}\overset{d}{\rightarrow}G 
	\end{equation*}
	\centering{if and only if}
	\begin{equation*}
		\frac{1}{\sqrt{{m_n}}}\p{\vec{\psi}\p{\sqrt[d]{{m_n}}\mathbf X_{m_n}^*}-\E{\vec{\psi}\p{\sqrt[d]{{m_n}}\mathbf X_{m_n}^*}\big| \mathbf X_n}  }\overset{d}{\rightarrow}G \text{ in probability (resp. a.s.)}.
	\end{equation*}
\end{statement}

For cases with a corresponding central limit theorem, $G$ is the limiting normal distribution of the original standardized statistic.

\begin{corollary}[\bf Persistent Betti Numbers] \label{theorem::bootstrap_pbn}
	Let $q\geq 0$ and $p>2q+3$. Let $\mathcal K$ be a filtration of simplicial complexes satisfying \eqref{condition::complex_increasing}, \eqref{condition::complex_translate}, \eqref{condition::complex_local_adj}, and \eqref{condition::complex_minimum}. Then for any given $\vec r$, $\vec s$, Statement~\ref{conclusion::bootstrap} holds for $\beta_q^{\vec r, \vec s}$.
\end{corollary}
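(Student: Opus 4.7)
The plan is to apply Theorem~\ref{theorem::bootstrap_general} with $\vec\psi = \beta_q^{\vec r, \vec s}$, so I only need to verify the moment condition \eqref{condition::expectation} and the stabilization condition \eqref{condition::stabilization} for each scalar component $\beta_q^{r_i, s_i}$ over $\mathcal C_{p, M}\p{\reals^d}$ for some $M > \|f\|_p$, with the exponent $b = (p-2)/(d(p-1))$. Both conditions will be reduced to the tractable auxiliary conditions \eqref{condition::local_count} and \eqref{condition::stabilization_radius} already introduced in the paper.

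For the stabilization side, I first observe that the assumptions \eqref{condition::complex_increasing} and \eqref{condition::complex_local_adj} together force \eqref{condition::complex_local_ball}: by \eqref{condition::complex_increasing} the symmetric difference $K\p{S \cup \set{y}} \triangle K\p{S}$ consists only of simplices containing $y$, and by \eqref{condition::complex_local_adj} each such simplex has diameter at most $\phi$, hence lies inside $B_y\p{\phi}$. Combined with the directly assumed \eqref{condition::complex_translate} and \eqref{condition::complex_minimum}, I can apply Lemma~\ref{lemma::stabilization_pbn} to obtain a locally-determined radius of stabilization for each $\beta_q^{r_i, s_i}$, verifying \eqref{condition::stabilization_radius} with respect to $F$. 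Lemma~\ref{lemma::condition_stabilization} then upgrades this to \eqref{condition::stabilization} over $\mathcal C_{p, M}\p{\reals^d}$ with exactly the exponent $b = (p-2)/(d(p-1))$ that Theorem~\ref{theorem::bootstrap_general} requires.

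For the moment side, the Geometric Lemma (Lemma~\ref{lemma::geometric}) yields the deterministic bound
\begin{equation*}
\abs{\beta_q^{r,s}\p{\mathcal K\p{S \cup \set{y}}} - \beta_q^{r,s}\p{\mathcal K\p{S}}} \leq \#\set{K_q^r\p{S \cup \set{y}} \setminus K_q^r\p{S}} + \#\set{K_{q+1}^s\p{S \cup \set{y}} \setminus K_{q+1}^s\p{S}}.
\end{equation*}
By \eqref{condition::complex_increasing} every simplex on the right contains $y$, and by \eqref{condition::complex_local_adj} its remaining vertices lie in $B_y\p{\phi\p{r}}$ or $B_y\p{\phi\p{s}}$. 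Setting $R = \max\set{\phi\p{r}, \phi\p{s}}$ and $N = \#\set{S \cap B_y\p{R}}$, the right side is at most $\binom{N}{q} + \binom{N}{q+1} \leq C_q\p{1 + N^{q+1}}$, so raising to the $a$-th power gives \eqref{condition::local_count} with $u_a = (q+1)a$. The hypothesis $p > 2q+3$ is exactly what is needed to choose some $a > 2$ with $(q+1)a \leq p-1$, after which Lemma~\ref{lemma::condition_expectation} converts \eqref{condition::local_count} into \eqref{condition::expectation} over $\mathcal C_{p, M}\p{\reals^d}$.

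With \eqref{condition::expectation} and \eqref{condition::stabilization} now holding for every component of $\beta_q^{\vec r, \vec s}$, Theorem~\ref{theorem::bootstrap_general} yields Statement~\ref{conclusion::bootstrap}. The only real obstacle in this plan is the exponent bookkeeping in the add-one cost: the Geometric Lemma forces a bound that grows like $N^{q+1}$, and \eqref{condition::local_count} in turn demands room for some $a > 2$ satisfying $(q+1)a \leq p-1$, which is exactly why the density regularity hypothesis scales linearly with the feature dimension as $p > 2q+3$. Everything else is an assembly of earlier lemmas.
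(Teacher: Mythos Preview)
Your proposal is correct and follows essentially the same approach as the paper's own proof: you verify \eqref{condition::local_count} via the Geometric Lemma together with \eqref{condition::complex_increasing} and \eqref{condition::complex_local_adj} to obtain $u_a = a(q+1)$, note that \eqref{condition::complex_increasing} and \eqref{condition::complex_local_adj} imply \eqref{condition::complex_local_ball} so that Lemma~\ref{lemma::stabilization_pbn} yields \eqref{condition::stabilization_radius}, and then invoke Lemmas~\ref{lemma::condition_expectation} and \ref{lemma::condition_stabilization} before applying Theorem~\ref{theorem::bootstrap_general}. The bookkeeping identifying $p > 2q+3$ with the existence of $a > 2$ satisfying $a(q+1) \leq p-1$ is exactly the paper's argument.
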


\begin{corollary}[\bf Persistent Betti Numbers - Alt.] \label{theorem::bootstrap_pbn_ball}
	Let $q\geq 0$ and $p>2q+5$. Let $\mathcal K$ be a filtration of simplicial complexes satisfying \eqref{condition::complex_translate}, \eqref{condition::complex_local_ball}, and \eqref{condition::complex_minimum}. Then for any given $\vec r$, $\vec s$, Statement~\ref{conclusion::bootstrap} holds for $\beta_q^{\vec r, \vec s}$.
\end{corollary}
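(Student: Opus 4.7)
The plan is to reduce Corollary~\ref{theorem::bootstrap_pbn_ball} to an application of Theorem~\ref{theorem::bootstrap_general}. For this, it suffices to verify the two hypotheses \eqref{condition::expectation} and \eqref{condition::stabilization} for each scalar component $\beta_q^{r_i,s_i}$ over $\mathcal C_{p,M}\p{\reals^d}$ with $M>\|f\|_p$ and $b=(p-2)/(d(p-1))$; the vector conclusion then follows from the finite-sum remark immediately after Proposition~\ref{proposition::w2_general}. The stabilization side is essentially immediate: Lemma~\ref{lemma::stabilization_pbn} applies under the assumed \eqref{condition::complex_translate}, \eqref{condition::complex_local_ball}, \eqref{condition::complex_minimum}, yielding a locally-determined radius of stabilization satisfying \eqref{condition::stabilization_radius}, and Lemma~\ref{lemma::condition_stabilization} then converts \eqref{condition::stabilization_radius} into \eqref{condition::stabilization} with exactly the required exponent $b=(p-2)/(d(p-1))$.

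The real work is to establish \eqref{condition::expectation}, which, via Lemma~\ref{lemma::condition_expectation}, will follow once we verify the pointwise bound \eqref{condition::local_count} with $u_a \leq p-1$ for some $a>2$. So I would bound the add-one cost $|D_y \beta_q^{r,s}(\mathcal{K}(S))| = |\beta_q^{r,s}(\mathcal{K}(S\cup\{y\})) - \beta_q^{r,s}(\mathcal{K}(S))|$ uniformly in $S$ and $y$. Unlike the setting of Corollary~\ref{theorem::bootstrap_pbn} where \eqref{condition::complex_increasing} guarantees $K^r(S)\subseteq K^r(S\cup\{y\})$ and the Geometric Lemma applies directly, here adding $y$ may both add and remove simplices, so the Geometric Lemma is not immediately available. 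The main obstacle is therefore to recover a clean Geometric-Lemma-type estimate without monotonicity.

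To handle this, I would introduce the common subcomplex filtration $\mathcal J=\set{J^r}_{r\in\reals}$ defined by $J^r := K^r(S)\cap K^r(S\cup\{y\})$; it is straightforwardly a filtration of simplicial complexes contained in both $\mathcal K(S)$ and $\mathcal K(S\cup\{y\})$. Applying Lemma~\ref{lemma::geometric} twice and the triangle inequality yields
\begin{equation*}
\abs{D_y\beta_q^{r,s}(\mathcal{K}(S))} \leq \#\p{K_q^r(S\cup\{y\})\triangle K_q^r(S)} + \#\p{K_{q+1}^s(S\cup\{y\})\triangle K_{q+1}^s(S)}.
\end{equation*}
By \eqref{condition::complex_local_ball}, every simplex in either symmetric difference is contained in $B_y(\phi(s))$ (taking $s\geq r$ WLOG) and thus has all vertices in $(S\cup\{y\})\cap B_y(\phi(s))$. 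Crudely counting all $(q{+}1)$- and $(q{+}2)$-subsets gives
\begin{equation*}
\abs{D_y\beta_q^{r,s}(\mathcal{K}(S))} \leq C_{q,r,s}\p{1+\#(S\cap B_y(\phi(s)))}^{q+2},
\end{equation*}
so \eqref{condition::local_count} holds with $R=\phi(s)$ and $u_a = a(q+2)$.

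The condition $u_a\leq p-1$ for some $a>2$ then becomes $a(q+2)\leq p-1$, which admits a valid $a>2$ exactly when $p>2q+5$, matching the hypothesis of the corollary. Lemma~\ref{lemma::condition_expectation} then delivers \eqref{condition::expectation} for $\mathcal C_{p,M}\p{\reals^d}$. With \eqref{condition::expectation} and \eqref{condition::stabilization} in hand for every component $\beta_q^{r_i,s_i}$, Theorem~\ref{theorem::bootstrap_general} applies and yields Statement~\ref{conclusion::bootstrap} for $\beta_q^{\vec r,\vec s}$.
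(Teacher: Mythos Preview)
Your proposal is correct and follows essentially the same route as the paper: verify \eqref{condition::local_count} with $u_a=a(q+2)$ via the Geometric Lemma and \eqref{condition::complex_local_ball}, invoke Lemma~\ref{lemma::condition_expectation} for \eqref{condition::expectation}, and obtain \eqref{condition::stabilization} from Lemma~\ref{lemma::stabilization_pbn} through Lemma~\ref{lemma::condition_stabilization}, then apply Theorem~\ref{theorem::bootstrap_general}. The only cosmetic difference is that you triangulate through the \emph{intersection} filtration $J^r=K^r(S)\cap K^r(S\cup\{y\})$, whereas the paper triangulates through the \emph{union} $K^r(S)\cup K^r(S\cup\{y\})$; both yield the same symmetric-difference simplex count and the same exponent $q+2$, hence the identical threshold $p>2q+5$.
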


The only differences between the above corollaries are the conditions satisfied by the underlying simplicial complex and the necessary norm bound on the density. The corresponding results for the Betti numbers follow as special cases of Corollaries~\ref{theorem::bootstrap_pbn} and \ref{theorem::bootstrap_pbn_ball}, when the given birth and death parameters are equal ($\beta^{\vec r}_q=\beta^{\vec r,\vec r}_q$). Also, although the statements of Corollaries~\ref{theorem::bootstrap_pbn} and \ref{theorem::bootstrap_pbn_ball} are given in terms of a fixed feature dimension $q$, a direct extension exists if $q=q_i$ is allowed to differ for each $\p{r_i, s_i}$. The form as given shows the dependence of the density norm assumption on the chosen feature dimension.

The higher value of $p$ required in Corollary~\ref{theorem::bootstrap_pbn_ball} compared to Corollary~\ref{theorem::bootstrap_pbn} can be explained intuitively based on the assumptions used. For the persistent Betti numbers, the main quantity controlling convergence is the expected number of simplices altered or introduced when a new datapoint is added to the sample. \eqref{condition::complex_local_ball} ensures that these simplices fall within a small ball around the new data point. The stated density norm conditions control the expected number of points, and by extension possible simplices, that can lie within that small ball. Introducing \eqref{condition::complex_increasing} further controls the number of possible simplices, and allows for a weakening of the necessary norm condition. \eqref{condition::complex_increasing} requires that, as the sample grows by a single point, any additional simplices must contain the new point as a vertex, and no deletion of simplices is possible. This means that every added simplex has one less ``free" vertex, and a weaker norm condition is required for control. The same intuition applies whenever \eqref{condition::complex_increasing} is assumed.

In the specific case of the alpha complex, both of the above Corollaries~\ref{theorem::bootstrap_pbn} and \ref{theorem::bootstrap_pbn_ball} apply. While the alpha complex does not satisy \eqref{condition::complex_increasing}, it has equal persistent Betti numbers to the \v{C}ech complex, which does. Thus, the weaker conditions of Corollary~\ref{theorem::bootstrap_pbn} are sufficient in this unique case.

\begin{corollary}[\bf Euler Characteristic] \label{theorem::bootstrap_euler_nontrunc}
	Let $m<\infty$ and $p>2m+3$. Let $\mathcal K$ be a filtration of simplicial complexes satisfying \eqref{condition::complex_increasing}, \eqref{condition::complex_translate}, \eqref{condition::complex_local_adj}, \eqref{condition::complex_minimum}, and \eqref{condition::complex_higher}. Then for any given $\vec r$, Statement~\ref{conclusion::bootstrap} holds for $\chi^{\vec r}$.
\end{corollary}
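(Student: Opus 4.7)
The plan is to apply Theorem~\ref{theorem::bootstrap_general} directly to $\vec\psi = \chi^{\vec r}$, reducing the verification of \eqref{condition::expectation} and \eqref{condition::stabilization} to the analogous verifications for persistent Betti numbers that already underlie Corollary~\ref{theorem::bootstrap_pbn}. The key new ingredient is that condition \eqref{condition::complex_higher} collapses the Euler characteristic into a \emph{finite} alternating sum of ordinary Betti numbers: for every $S \in \tilde{\mathcal X}\p{\reals^d}$ and every $r \in \reals$,
\begin{equation*}
\chi\p{K^r\p{S}} = \sum_{k=0}^{m}\p{-1}^k \beta_k\p{K^r\p{S}} = \sum_{k=0}^{m}\p{-1}^k \beta_k^{r,r}\p{\mathcal K}.
\end{equation*}
Hence each component of $\chi^{\vec r}$ is a fixed signed sum of at most $m+1$ persistent Betti numbers with coincident birth and death parameters, and the task reduces to verifying the two sufficient conditions for such sums.

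First I would establish \eqref{condition::expectation} and \eqref{condition::stabilization} for each summand $\beta_k^{r_i, r_i}$ by repeating the argument underlying Corollary~\ref{theorem::bootstrap_pbn}. Because \eqref{condition::complex_increasing} together with \eqref{condition::complex_local_adj} implies \eqref{condition::complex_local_ball} (see the remark in Section~\ref{section::general_conditions}), the hypotheses of Lemma~\ref{lemma::stabilization_pbn} are met, giving \eqref{condition::stabilization_radius}; Lemma~\ref{lemma::condition_stabilization} then upgrades this to \eqref{condition::stabilization} with $b = \p{p-2}/\p{d\p{p-1}}$. For \eqref{condition::expectation}, the Geometric Lemma combined with \eqref{condition::complex_local_adj} and \eqref{condition::complex_increasing} bounds the add-one cost of $\beta_k^{r,r}$ by a constant multiple of the number of $k$- and $\p{k+1}$-simplices incident to the newly added point, hence by a quantity of the form $C\p{1 + N^{k+1}}$, where $N$ counts the sample points inside a ball of radius $\phi\p{r_i}$ about that point. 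This verifies \eqref{condition::local_count} with $u_a = a\p{k+1}$, and the hypothesis $p > 2m+3$ permits a choice of $a > 2$ satisfying $a\p{k+1} \leq a\p{m+1} \leq p-1$ for every $k \leq m$; Lemma~\ref{lemma::condition_expectation} then yields \eqref{condition::expectation}.

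Next I would verify that both conditions pass to the fixed finite linear combination $\chi^{r_i}$. For \eqref{condition::stabilization}, the event that the truncated add-one cost of the sum disagrees with its terminal value is contained in the union of the corresponding events for the $m+1$ individual persistent Betti numbers, so a union bound preserves the required decay at the same scale $\p{l_\epsilon}_{\epsilon>0}$. For \eqref{condition::expectation}, the elementary inequality $\abs{\sum_{k=0}^m c_k a_k}^a \leq \p{m+1}^{a-1}\sum_{k=0}^m \abs{c_k a_k}^a$ lets the individual uniform moment bounds be summed, producing a uniform bound for $\chi^{r_i}$. With both conditions then holding for every component of $\chi^{\vec r}$, Theorem~\ref{theorem::bootstrap_general} applies directly and delivers Statement~\ref{conclusion::bootstrap}. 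I do not anticipate substantive obstacles here, as the genuine geometric stabilization work is already contained in Lemma~\ref{lemma::stabilization_pbn}; the only point demanding care is aligning the moment exponent $p > 2m+3$ with the maximal feature dimension $k = m$ that can appear in the Euler characteristic sum.
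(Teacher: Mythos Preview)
Your proposal is correct and follows essentially the same approach as the paper: use \eqref{condition::complex_higher} to write $\chi^{r_i}$ as a finite alternating sum of Betti numbers $\beta_k^{r_i,r_i}$, invoke the work underlying Corollary~\ref{theorem::bootstrap_pbn} for each summand, and pass to the sum. The only cosmetic difference is that the paper passes to the sum at the level of Proposition~\ref{proposition::w2_general} (using the variance-of-sums remark following it) and then observes the proof of Theorem~\ref{theorem::bootstrap_general} goes through, whereas you verify \eqref{condition::expectation} and \eqref{condition::stabilization} for the sum directly and apply Theorem~\ref{theorem::bootstrap_general} as a black box; both routes are valid.
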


\begin{corollary}[\bf Euler Characteristic - Alt.] \label{theorem::bootstrap_euler_nontrunc_ball}
	Let $m<\infty$ and $p>2m+5$. Let $\mathcal K$ be a filtration of simplicial complexes satisfying \eqref{condition::complex_translate}, \eqref{condition::complex_local_ball}, \eqref{condition::complex_minimum}, and \eqref{condition::complex_higher}. Then for any given $\vec r$, Statement~\ref{conclusion::bootstrap} holds for $\chi^{\vec r}$.
\end{corollary}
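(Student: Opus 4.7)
The plan is to invoke Theorem~\ref{theorem::bootstrap_general} with $\vec\psi = \chi^{\vec r}$, so it suffices to verify \eqref{condition::expectation} and \eqref{condition::stabilization} for each component $\chi^{r_i}$ over $\mathcal C_{p, M}\p{\reals^d}$ with $M > \|f\|_p$ and $b = (p-2)/(d(p-1))$. My first move is to trade Euler characteristics for Betti numbers: by \eqref{condition::complex_higher}, the Euler--Betti identity gives
\begin{equation*}
    \chi\p{K^{r_i}(S)} = \sum_{k=0}^{m}\p{-1}^k \beta_k\p{K^{r_i}(S)} = \sum_{k=0}^{m}\p{-1}^k \beta_k^{r_i, r_i}\p{\mathcal K(S)},
\end{equation*}
exhibiting each coordinate of $\chi^{\vec r}$ as a fixed signed sum of at most $m+1$ persistent Betti numbers with equal birth and death times. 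Both \eqref{condition::expectation} and \eqref{condition::stabilization} are preserved under such finite combinations: the first via the $L^a$-triangle inequality on the add-one cost, and the second because a locally-determined radius of stabilization for the sum can be taken as the maximum of the individual radii (and a union bound yields the required probability decay).

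\textbf{Stabilization.} For \eqref{condition::stabilization}, the hypotheses \eqref{condition::complex_translate}, \eqref{condition::complex_local_ball}, and \eqref{condition::complex_minimum} are exactly those of Lemma~\ref{lemma::stabilization_pbn}, which supplies locally-determined radii $(\rho_z)_{z\in\reals^d}$ for each $\beta_k^{r_i, r_i}$ satisfying \eqref{condition::stabilization_radius} at $F \in \mathcal C_{p,M}\p{\reals^d}$. Taking the maximum over the finitely many pairs $(k, i)$ yields a locally-determined radius of stabilization for $\chi^{r_i}$ verifying \eqref{condition::stabilization_radius}, and Lemma~\ref{lemma::condition_stabilization} then lifts this to \eqref{condition::stabilization} with precisely the required $b = (p-2)/(d(p-1))$.

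\textbf{Moment bound and main obstacle.} The remaining and most delicate step is \eqref{condition::expectation}, which I would obtain by verifying the pointwise bound \eqref{condition::local_count} and invoking Lemma~\ref{lemma::condition_expectation}. Because \eqref{condition::complex_increasing} is absent, adding $z$ can both create and destroy simplices, so the natural comparison point is the common subcomplex $J^{r_i}(S, z) := K^{r_i}(S) \cap K^{r_i}(S \cup \{z\})$, which sits below both $K^{r_i}(S)$ and $K^{r_i}(S \cup \{z\})$. Applying the Geometric Lemma~\ref{lemma::geometric} with $s = r = r_i$ to each inclusion and combining via the triangle inequality gives
\begin{equation*}
    \abs{\beta_k\p{K^{r_i}(S \cup \{z\})} - \beta_k\p{K^{r_i}(S)}} \leq \#\p{K_k^{r_i}(S) \triangle K_k^{r_i}(S \cup \{z\})} + \#\p{K_{k+1}^{r_i}(S) \triangle K_{k+1}^{r_i}(S \cup \{z\})}.
\end{equation*}
By \eqref{condition::complex_local_ball}, every symmetric-difference simplex lies inside $B_z(\phi(r_i))$, so with $N := \#\p{S \cap B_z(\phi(r_i))}$ the $k$-dimensional count is bounded by $\binom{N+1}{k+1}$. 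Summing the Betti-number bounds over $0 \leq k \leq m$ yields $\abs{D_z(S; \chi^{r_i})} \leq C(N+1)^{m+2}$ for a constant $C = C(m)$, so raising to the $a$-th power verifies \eqref{condition::local_count} with $u_a = a(m+2)$ and $R = \max_i \phi(r_i)$. Since $p > 2m+5$, I can pick $a > 2$ satisfying $a(m+2) \leq p - 1$, after which Lemma~\ref{lemma::condition_expectation} delivers \eqref{condition::expectation}, completing the hypotheses of Theorem~\ref{theorem::bootstrap_general}. The main obstacle is precisely this exponent: without \eqref{condition::complex_increasing}, one must control the \emph{full} symmetric difference of simplex sets (bounded by $\binom{N+1}{k+2}$) rather than only additions of the form ``contains $z$'' (bounded by $\binom{N}{k+1}$), pushing the polynomial degree from $m+1$ to $m+2$ and raising the density-norm threshold from the $p > 2m+3$ of Corollary~\ref{theorem::bootstrap_euler_nontrunc} up to $p > 2m+5$.
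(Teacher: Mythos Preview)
Your proposal is correct and follows essentially the same route as the paper: both reduce $\chi^{\vec r}$ to an alternating sum of Betti numbers via \eqref{condition::complex_higher}, and then inherit the required estimates from the persistent Betti case (Corollary~\ref{theorem::bootstrap_pbn_ball}, Lemma~\ref{lemma::stabilization_pbn}). The only cosmetic difference is that you reassemble \eqref{condition::expectation} and \eqref{condition::stabilization} for $\chi^{r_i}$ explicitly (via the $L^a$ triangle inequality and the maximum of the locally-determined radii), whereas the paper instead invokes the remark after Proposition~\ref{proposition::w2_general} that its conclusion is stable under finite sums and then notes that the proof of Theorem~\ref{theorem::bootstrap_general} goes through unchanged.
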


It is suspected that some of the simplicial complex assumptions can be relaxed in the persistent Betti number and Euler characteristic cases, but the extent to which this is possible is still unknown. Specifically, Corollary~\ref{theorem::bootstrap_pbn} requires a translation-invariant simplicial complex \eqref{condition::complex_translate}, along with the elimination of small loops via \eqref{condition::complex_minimum}. See Appendix~\ref{section::b_bounded} for altered ``$B$-bounded persistent Betti number'' and ``$q$-truncated Euler characteristic'' problem settings where these issues may be resolved.

To strengthen Corollaries~\ref{theorem::bootstrap_pbn}-\ref{theorem::bootstrap_euler_nontrunc_ball} with rates, we require more specific knowledge about the convergence to $G$ of the original statistic. For persistent Betti numbers in the multivariate setting, general central limit theorems have been shown in \cite{Krebs2019}, but little is known at this time with regards to rates of convergence. Proposition~\ref{proposition::w2_general} does allow for rates of convergence in 2-Wasserstein distance between the bootstrap and true sampling distributions for finite sample sizes, but is phrased in terms of a tail probability for the radius of stabilization. See the proofs of Corollaries~\ref{theorem::bootstrap_pbn}-\ref{theorem::bootstrap_euler_nontrunc_ball} for details. For persistent Betti numbers the tail behavior of the radius of stabilization is poorly understood. Owing to these difficulties, we may only conclude consistency of the smoothed bootstrap for the functions considered.

\subsection{Bootstrap Results for $k$-Nearest Neighbor Graphs} \label{section::bootstrap_knn}

In the following, let $\mathcal D_{\gamma, r_0}\p{C}$ be the class of distributions $G$ with support on a bounded $C\subset \reals^d$ such that $\int_{B_x\p{r}}\de G\geq \gamma r^d$ for all $r\leq r_0$ and $x\in C$.

\begin{corollary}[\bf Total Edge Length of the $k$-Nearest Neighbor Graph] \label{theorem::bootstrap_knn}
	Let $p>2$. Furthermore, let $F\in \mathcal D_{\gamma, r_0}\p{C}$ and $\ind{\hat F_n\in \mathcal D_{\gamma, r_0}\p{C}}\rightarrow 1$ in probability (resp. a.s.). Then Statement~\ref{conclusion::bootstrap} holds for $l_{\text{NN}, k}$.
\end{corollary}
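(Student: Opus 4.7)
The plan is to apply Theorem \ref{theorem::bootstrap_general} with the class $\mathcal{C} = \mathcal{D}_{\gamma, r_0}(C) \cap \mathcal{C}_{p, M}(\reals^d)$ for some $M > \|f\|_p$. Both $F$ and the density estimate $\hat F_n$ lie in this intersection with probability tending to one (by the corollary's hypothesis on $\ind{\hat F_n \in \mathcal D_{\gamma, r_0}(C)}$ and the $L_p$ convergence of $\hat f_n$), so the theorem's argument can be carried out on this high-probability event. The novelty here, relative to the topological statistics treated earlier, is that $l_{\text{NN}, k}$ does not obey the pointwise count bound \eqref{condition::local_count} --- a single inserted vertex may displace the $k$-th nearest neighbor at several existing vertices simultaneously --- so Lemma \ref{lemma::condition_expectation} is unavailable and \eqref{condition::expectation} must be verified directly. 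Stabilization will be obtained by establishing \eqref{condition::stabilization_radius} in the Poisson regime, transferring to binomial samples via Lemma \ref{lemma::stabilization_poisson}, and then invoking Lemma \ref{lemma::condition_stabilization} to conclude \eqref{condition::stabilization}.

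To verify \eqref{condition::expectation}, decompose the add-one cost when inserting $\sqrt[d]{n}Y'$ into $\sqrt[d]{n}\mathbf Y_n$ into (i) the $k$ new incident edges at $\sqrt[d]{n}Y'$, and (ii) a single edge swap at each existing vertex for which $\sqrt[d]{n}Y'$ enters the $k$-nearest-neighbor set. A classical geometric argument in $\reals^d$ bounds the cardinality of (ii) by a constant $C_{d, k}$ depending only on $d$ and $k$ --- any fixed point can be among the $k$-nearest neighbors of at most $C_{d, k}$ other points. The entire cost is therefore controlled by a constant multiple of the $(k + C_{d, k})$-th nearest neighbor distance of $\sqrt[d]{n}Y'$ in $\sqrt[d]{n}(\mathbf Y_n \cup \set{Y'})$. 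The lower bound $\int_{B_x(r)} \de G \geq \gamma r^d$ for $r \leq r_0$, applied at any $Y' \in C$, yields via binomial concentration an exponential-in-$r^d$ tail bound on this distance that is uniform over $G \in \mathcal D_{\gamma, r_0}(C)$ and $n \in \nats$. All moments of the add-one cost are therefore uniformly bounded, and \eqref{condition::expectation} holds for any $a > 2$.

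For \eqref{condition::stabilization_radius}, take $\rho_z(S)$ to be twice the smallest $r$ such that the $k$ nearest neighbors of $z$ in $S \cup \set{z}$ lie strictly inside $B_z(r/2)$ and every point of $S \cap B_z(r/2)$ has its $k$ nearest neighbors in $S$ strictly inside $B_z(r)$. By construction, the add-$z$ cost depends only on $S \cap B_z(\rho_z(S))$, so $\rho_z$ is locally determined in the sense of Definition \ref{definition::local_determined}. Verifying the hypothesis of Lemma \ref{lemma::stabilization_poisson} reduces to a tail bound on $\rho_0(\mathbf P_\lambda)$ uniformly for $\lambda$ in compact subintervals of $(0, \infty)$, which is immediate from the exponential tails of $k$-th nearest neighbor distances in a homogeneous Poisson process. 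Lemma \ref{lemma::stabilization_poisson} then yields the required uniform-in-$n$ (for $n \geq n_\delta$) tail bound; the finitely many remaining indices are handled trivially, as $\rho_0(\sqrt[d]{n}\mathbf Y_n - \sqrt[d]{n}Y')$ is almost surely bounded by $\sqrt[d]{n_\delta}\,\mathrm{diam}(C)$ on the bounded support. Lemma \ref{lemma::condition_stabilization} then upgrades \eqref{condition::stabilization_radius} to \eqref{condition::stabilization} with $b = (p - 2)/(d(p - 1))$.

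The principal obstacle is the long-range cascade effect absent in the topological setting: because a single insertion can simultaneously alter the $k$-NN assignments of many vertices, one must decouple the count of affected points (bounded by the geometric constant $C_{d, k}$) from the tails of the individual NN distances (controlled by the density lower bound). The class $\mathcal D_{\gamma, r_0}(C)$ is engineered precisely to deliver the second ingredient uniformly across the total-variation neighborhood of $F$ appearing in \eqref{condition::stabilization}, since the defining bound $\int_{B_x(r)} \de G \geq \gamma r^d$ is preserved throughout the class. With \eqref{condition::expectation} and \eqref{condition::stabilization} established, Theorem \ref{theorem::bootstrap_general}, applied on the event $\set{\hat F_n \in \mathcal{C}}$ of probability tending to one, yields Statement \ref{conclusion::bootstrap} for $l_{\text{NN}, k}$.
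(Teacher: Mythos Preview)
Your proposal is correct and follows essentially the same route as the paper: verify \eqref{condition::expectation} directly via the geometric bound $\#B_k\le C_{d,k}$ together with nearest-neighbor distance moments controlled by the uniform lower bound defining $\mathcal D_{\gamma,r_0}(C)$, establish \eqref{condition::stabilization_radius} on homogeneous Poisson processes and transfer it to binomial samples through Lemma~\ref{lemma::stabilization_poisson} (handling the finitely many small $n$ via the bounded support $C$), and then apply Theorem~\ref{theorem::bootstrap_general} on the restricted class. The only presentational differences are that the paper makes the triangle-inequality step explicit---showing each edge swap at an affected vertex is bounded by $R_{k+1,n}$, the $(k{+}1)$-st nearest-neighbor distance of the inserted point, rather than your looser $(k+C_{d,k})$-th---and cites Lemma~6.1 of \cite{Penrose2001} for Poisson stabilization where you construct $\rho_z$ by hand.
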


The conditions of Corollary~\ref{theorem::bootstrap_knn} are in particular satisfied when $C$ is known and convex, with $f$ bounded below on $C$ by a constant, provided further that $\|\hat f_n-f\|_{\infty}\rightarrow 0$ in probability (resp. a.s.). We include this final result to demonstrate the utility of stabilization as a general tool for proving bootstrap convergence theorems outside of topological data analysis. The $k$-nearest neighbor graph does not fall under the general simplicial complex conditions provided in Section~\ref{section::general_conditions}, thus special treatment is needed to show the required stabilization and moment conditions. Here we rely on previous results from the literature, see \cite{Penrose2001} for stabilization results and the corresponding central limit theorem.

\section{Simulation Study} \label{section::simulation_study}

In this section we present the results of a series of simulations illustrating the finite-sample properties of the smoothed bootstrap applied to persistent Betti numbers $\beta_q^{r,s}$ of the Vietoris-Rips complex constructed over point sets in $\reals^d$. Precise definitions and an introduction to the properties of these statistics may be found in Section~\ref{section::simpicial_complex}. Source code for this section, as well as for the data analysis of Section~\ref{section::data_analysis} is available at \mybrk \href{https://github.com/btroycraft/stabilizing_statistics_bootstrap}{github.com/btroycraft/stabilizing\_statistics\_bootstrap} \cite{Roycraft2021}.

We investigate the coverage probability of bootstrap confidence intervals on the expected persistent Betti numbers $\E{\beta_q^{r,s}\p{\sqrt[d]{n}\mathbf X_n}}$ for a variety of feature dimensions, sample sizes, data generating mechanisms, and bandwidth selectors. Table~\ref{table::distributions} lists brief descriptions of the data distributions considered. For more detailed explanations, see Appendix~\ref{appendix::simulation_details}. The results of the simulations are given in Table~\ref{table::sim_results}. For the persistent Betti numbers, a single choice of $\p{r, s}$ was made for each combination of distribution and feature dimension, chosen to lie within the main body of features in the corresponding persistence diagram. For computational reasons, only feature dimensions $q=1$ and $q=2$ are considered.

We consider five data-driven bandwidth selectors. First are the ``Hpi.diag'' (plug-in), ``Hlscv.diag'' (least-squares cross-validation), and ``Hscv.diag'' (smoothed cross-validation) selectors from the \textit{ks} package in R. Second, we include the adaptive bandwidth selector described in Section~\ref{section::data_analysis}. While this selector is tailored for the specifics of astronomical data, we include it here for completeness. Each of these four selectors are available for data dimension up to $d=6$. Last, we consider Silverman's rule of thumb (see \cite{Silverman1986}) via ``bw.silv'' from the \textit{kernelboot} package in R, which accepts data in any dimension.

For the two cross-validation selectors, note that a bandwidth is not always selected, throwing errors on some datasets. To accommodate the automatic setting of this simulation study, any error-producing data sets were simply rejected for each of these cases.
 
There is a noticeable drop-off in coverage as the data dimension increases. This is expected, as the kernel density estimator is known to suffer from a ``curse of dimensionality". For distribution $F_6$, which exhibits heavy tails, only the adaptive bandwidth selector performed well, because outliers are weighted much less heavily in this case. It is likely that performance will suffer generally in the presence of heavy tailed data when using one of the selectors with common bandwidth.

The coverage proportion is generally smaller than the nominal level of $95\%$. Therefore, it is recommended to use a larger than desired level, especially for limited sample sizes. In terms of general performance, we recommend any of ``Hpi.diag'', ``Hlscv.diag'', or ``Hscv.diag''. These selectors provide the most consistent coverage, and effectively replicate the nominal 95\% level in many cases, especially for the largest sample size $n=400$. Silverman's rule performs badly in several cases, and should only be used in the absence of better alternatives.

\begin{table}
	\centering
	\begin{tabular}{||c|c||}
		\hline
		Label & Description \\
		\hhline{#=|=#}
		$F_1$ & Rotationally symmetric in $\reals^2$, finite $L_8$ norm\\
		$F_2$ & Rotationally symmetric in $\reals^2$, finite $L_2$ norm, infinite $L_8$ norm\\
		$F_3$ & $\mathbb S^1$ embedded in $\reals^2$, additive Gaussian noise \\
		$F_4$ & Uniformly distributed over $B_0\p{1}$ in $\reals^3$, additive Gaussian noise\\
		$F_5$ & 5 clusters in $\reals^3$, additive exponential noise \\
		$F_6$ & $\mathbb S^2$ embedded in $\reals^5$, additive Cauchy noise \\
		$F_7$ & Flat figure-8 embedded in $\reals^{10}$, additive Gaussian noise \\
		\hline
	\end{tabular}
	
	\caption{Description of densities or distributions considered for the simulation study of Section~\ref{section::simulation_study}. For the distributions based on manifolds, we first draw uniformly from the manifold, then apply the prescribed additive noise. Detailed explanations of the distributions considered, along with precise definitions are available in Appendix~\ref{appendix::simulation_details}. \label{table::distributions}}
\end{table}

\begin{table}[]
	\begin{tabular}{||c||l|l|l|l|l|l|l||l|l|l|l||}
		\hline
		\multicolumn{1}{||c||}{Distr.} &
		$F_1$ & $F_2$ & $F_3$ & $F_4$ & $F_5$ & $F_6$ & $F_7$ & $F_4$ & $F_5$ & $F_6$ & $F_7$ \\ \hline
		& \multicolumn{7}{c||}{$q=1$} & \multicolumn{4}{c||}{$q=2$}    \\ \hline
		$r$ & 4.94 & 5.20 & 3.03 & 1.92 & 0.30 & 1.78 & 1.28 & 2.96 & 0.39 & 2.71 & 1.46 \\
		$s$ & 5.36 & 5.60 & 3.28 & 2.12 & 0.31 & 1.91 & 1.32 & 3.04 & 0.40 & 2.80 & 1.47 \\ \hhline{#=#=======#====#}
		$n=100$ & 0.896 & 0.965 & 0.921 & 0.859 & 0.954 & 0.19  &       & 0.908 & 0.705 & 0.038 &       \\
		        & 0.931 & 0.959 & 0.914 & 0.809 & 0.941 & 0.133 &       & 0.903 & 0.604 & 0.045 &       \\
		        & 0.903 & 0.97  & 0.91  & 0.859 & 0.927 & 0.049 &       & 0.902 & 0.363 & 0.002 &       \\
		        & 0.922 & 0.898 & 0.899 & 0.71  & 0.725 & 0.736 &       & 0.837 & 0.048 & 0.051 &       \\
		        & 0.359 & 0.931 & 0.942 & 0.864 & 0     & 0     & 0.656 & 0.902 & 0     & 0     & 0.045 \\ \hline
		$n=200$ & 0.908 & 0.971 & 0.94  & 0.898 & 0.942 & 0.159 &       & 0.878 & 0.795 & 0.125 &       \\
		        & 0.92  & 0.972 & 0.946 & 0.891 & 0.923 & 0.106 &       & 0.872 & 0.707 & 0.074 &       \\
		        & 0.888 & 0.975 & 0.959 & 0.906 & 0.892 & 0.06  &       & 0.908 & 0.277 & 0.031 &       \\
		        & 0.888 & 0.909 & 0.828 & 0.783 & 0.773 & 0.705 &       & 0.673 & 0.032 & 0.27  &       \\
		        & 0.299 & 0.954 & 0.903 & 0.899 & 0     & 0     & 0.766 & 0.882 & 0     & 0     & 0.537 \\ \hline
		$n=300$ & 0.9   & 0.971 & 0.926 & 0.921 & 0.94  & 0.183 &       & 0.854 & 0.906 & 0.225 &       \\
		        & 0.94  & 0.971 & 0.938 & 0.896 & 0.94  & 0.087 &       & 0.854 & 0.917 & 0.072 &       \\
		        & 0.913 & 0.971 & 0.94  & 0.896 & 0.922 & 0.054 &       & 0.855 & 0.964 & 0.074 &       \\
		        & 0.93  & 0.923 & 0.864 & 0.786 & 0.771 & 0.735 &       & 0.712 & 0.551 & 0.575 &       \\
		        & 0.283 & 0.956 & 0.925 & 0.906 & 0     & 0     & 0.835 & 0.856 & 0     & 0     & 0.508 \\ \hline
		$n=400$ & 0.918 & 0.961 & 0.947 & 0.934 & 0.96  & 0.175 &       & 0.851 & 0.883 & 0.259 &       \\
		        & 0.927 & 0.951 & 0.938 & 0.92  & 0.955 & 0.063 &       & 0.839 & 0.88  & 0.076 &       \\
		        & 0.908 & 0.976 & 0.933 & 0.924 & 0.939 & 0.062 &       & 0.863 & 0.958 & 0.099 &       \\
		        & 0.911 & 0.922 & 0.874 & 0.813 & 0.825 & 0.771 &       & 0.695 & 0.952 & 0.789 &       \\
		        & 0.266 & 0.961 & 0.909 & 0.922 & 0.114 & 0     & 0.891 & 0.859 & 0     & 0     & 0.584 \\ \hline
	\end{tabular}
	
	\caption{Coverage proportions for $95\%$ smoothed bootstrap confidence intervals on the mean persistent Betti numbers; coverage is estimated using $N=1,000$ independent base samples with $B=500$ bootstrap samples each. True mean persistent Betti numbers are estimated using a large ($N=100,000$) number of independent samples from the true distribution. For each case, the values from top to bottom: Coverage proportions using ``Hpi.diag'', ``Hlscv.diag'', ``Hscv.diag'', ``adaptive'', and ``bw.silv'' bandwidth selectors, respectively. (see Section~\ref{section::simulation_study}) \label{table::sim_results}}
\end{table}

\section{Data Analysis} \label{section::data_analysis}
In this section we show how smoothed bootstrap estimation performs on a real dataset. We consider a selection of galaxies from the Sloan Digital Sky Survey \cite{Blanton2017}, chosen from a selection of sky with right ascension values between $100^\circ$ and $270^\circ$ and declination between $-7^\circ$ and $70^\circ$. Three slices of galaxies were considered, separated by redshift, a measure of radial distance from the solar system. The selections consist of galaxies with red-shift within $\p{0.025, 0.026}$, $\p{0.027, 0.028}$, and $\p{0.029, 0.030}$, respectively. These slices were chosen to investigate the topological properties of the cosmic web across time. In this case, due to the rough homogeneity of the web at large scales, few significant topological deviations are expected.

Subset limits were chosen to maintain computational feasibility and avoid measurement gaps. In an initial cleaning step, each slice was flattened using an area-preserving cylindrical projection and trimmed so that the slices share a common boundary with the same number of galaxies ($2374$) per slice. Angular units are converted to distances in Megaparsecs (Mpc) based on the redshift and Hubble's constant.

The distribution of galaxies in each dataset is modeled by a random sample from some bivariate probability distribution, where the location of each galaxy is drawn independently from the overall distribution. As a part of the model framework, the effect of gravitational interaction manifests via a macroscopic change in the matter distribution, rather than as dependency between individual galaxies.

Following the recommendation of \cite{Ferdosi2011}, we estimate the density of the matter distribution using the adaptive bandwidth selector described in \cite{Breiman1977}. This adaptive bandwidth selector was chosen to accommodate for the large variations in density present within astronomy data. The selectors considered in Section~\ref{section::simulation_study} do not perform well in this context, often oversmoothing by a large margin. A pilot density estimator was constructed based on the ``Hpi.diag'' plug-in bandwidth selector and a Gaussian kernel.

Visualizations of the density estimates are provided in Figure~\ref{figure::galaxy_diagrams}. Generally, the fit adequately captures the filament structures present in the raw data. Within the persistence diagrams, the mass of features present close to the main diagonal represents small-scale holes between neighboring galaxies, whereas features farther from the diagonal represent the large-scale holes formed by relatively disparate galaxies.

We apply the Vietoris-Rips complex to each of the slices, and calculate a selection of persistent Betti numbers in dimensions $q=0$ and $q=1$. The $0$-dimensional features summarize cluster and filament structure, whereas the $1$-dimensional features describe voids and depressions. The transformed datasets and persistence diagrams in dimension $q=1$ can be seen in Figure~\ref{figure::galaxy_diagrams}. We consider the Betti numbers $\beta_0^r$ and $\beta_1^r$, as well as the persistent Betti numbers $\beta_1^{r, r+1}$ for $r=3, ..., 30$ Mpc. Filtration parameters for the persistent Betti numbers were chosen to lie close to the diagonal $r = s$, excluding features with a lifetime less than $1$ Mpc. We use bootstrap estimation to construct nominal $98\%$ confidence intervals for the population mean values, both pointwise and simultaneous within each regime across $r=3, ..., 30$ Mpc. The number of bootstrap replicates used was $B = 20,000$, with results seen in Figure~\ref{figure::galaxy_boot}.

In feature dimension $q=0$, the curves show similar behavior across the slices. Consistent with our empirical results, similar Betti curves are expected when the within-filament matter distribution and overall frequency of filaments for each sample are equal. For feature dimension $q=1$, more variation is present. However, as can be seen from the bootstrap confidence intervals, much of this variation is explained by random fluctuation. For example, while a notable depression around the scale of $8$ Mpc exists for the third slice, it is still within the margins of error provided. From this analysis, we do not find significant differences in the topological properties of the three samples over the range of filtration parameters considered. The difference in topological structure seen within each pair of Betti curves is within the margin of error provided by the bootstrap confidence intervals, especially considering the wider simultaneous intervals.

The consistency shown in Section~\ref{section::bootstrap_results} for bootstrap estimation applies only for those features within the ``body" of topological features, being those occurring at a local scale. Features with large persistence or ones that appear at large diameter are not accounted for in this, as their relative weight is small within the persistent Betti numbers. As such, our analysis does not preclude differences in topology at a large relative scale, describing the largest galactic structures.

\begin{figure}
	\begin{center}
		\includegraphics[width=\textwidth]{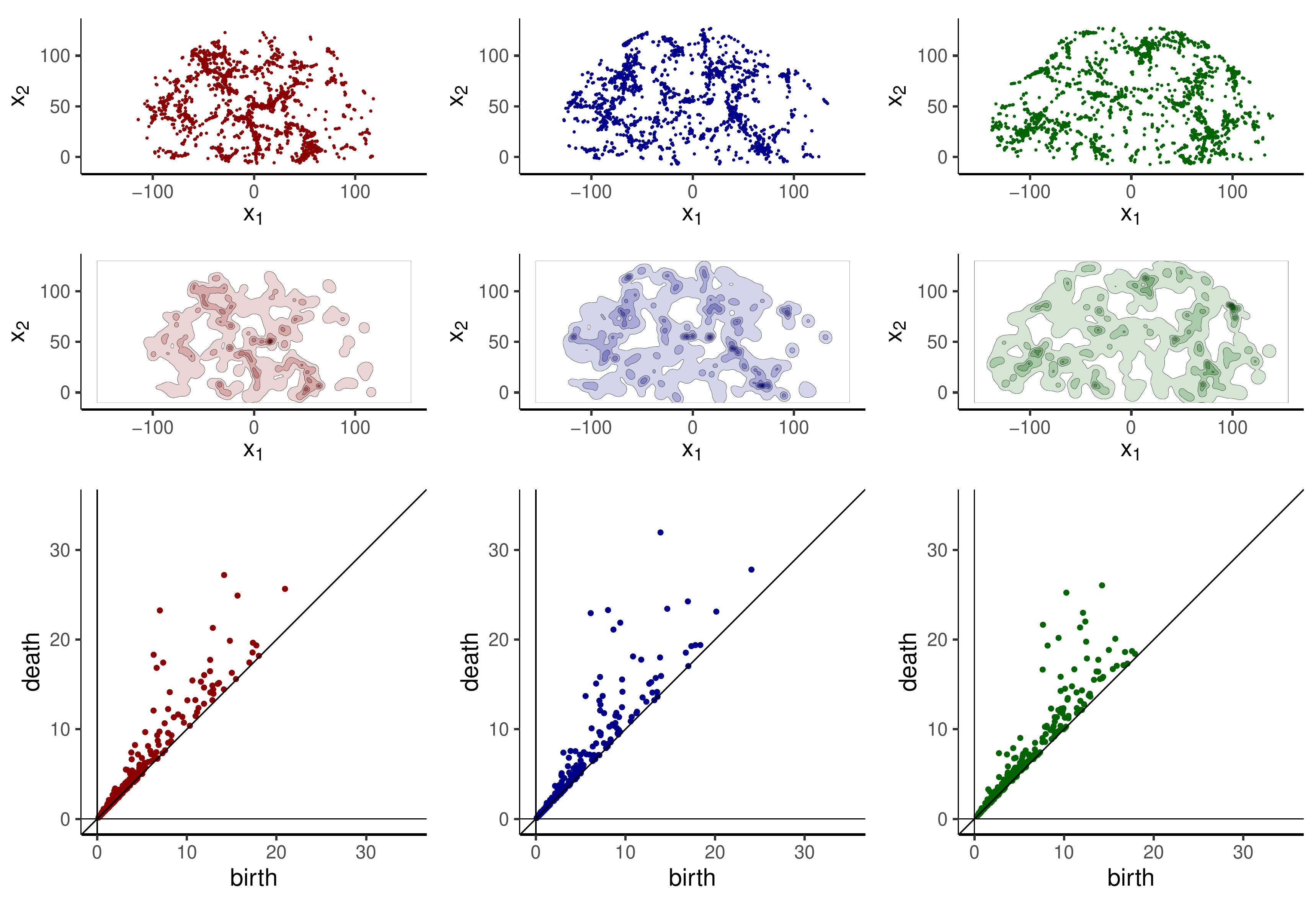}
	\end{center}
	\caption{Top row: Transformed point clouds. Middle row: Density estimates using adaptive bandwidth. Bottom row: Persistence diagrams in dimension $q=1$ for the Vietoris-Rips complex. Columns from left to right: Galaxies with redshifts within $\p{0.025, 0.026}$, $\p{0.027, 0.028}$, and $\p{0.029, 0.030}$, respectively. Axis units are given in Megaparsecs (Mpc). \label{figure::galaxy_diagrams}}
\end{figure}

\begin{figure}
	\begin{center}
		\includegraphics[width=\textwidth]{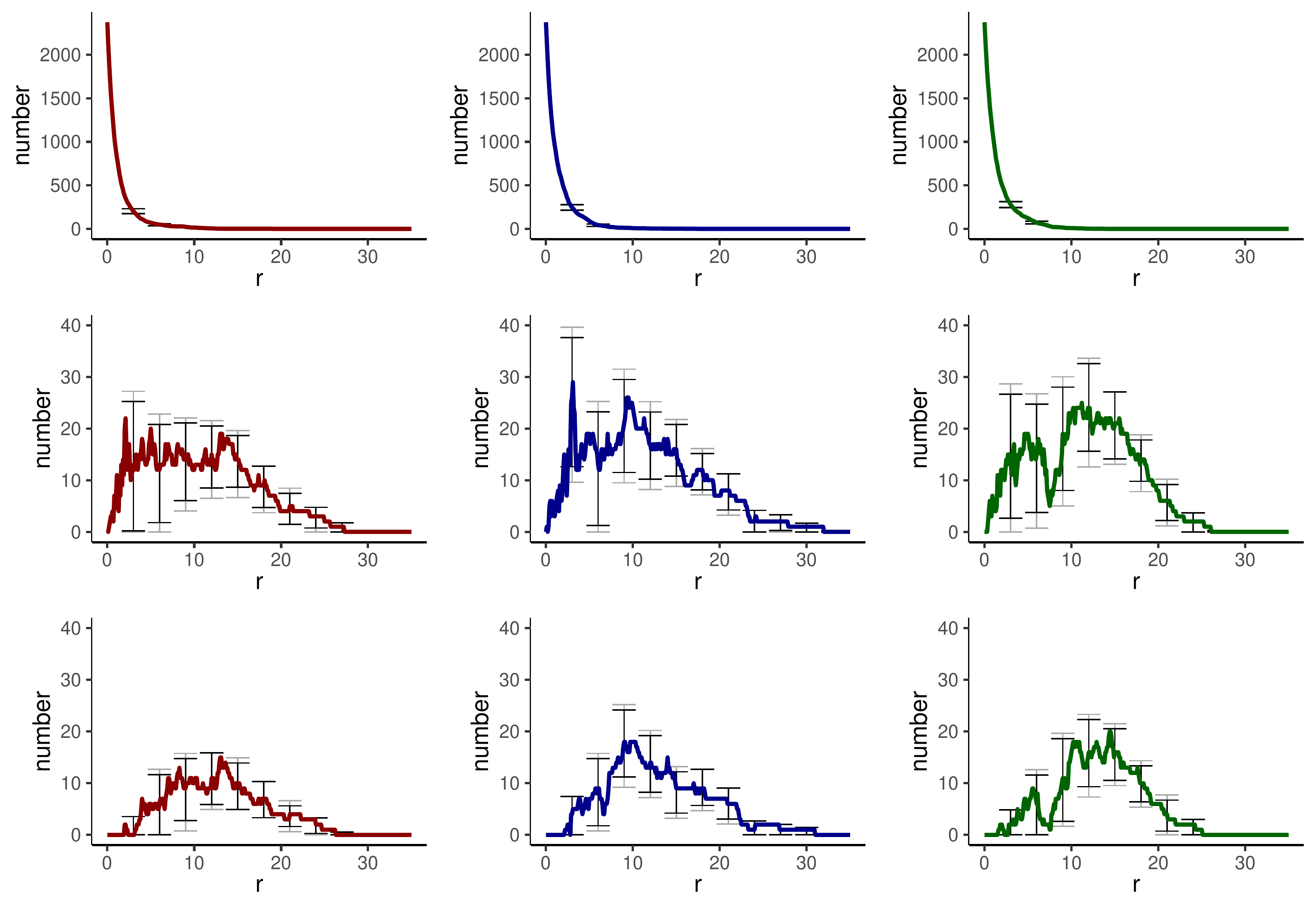}
	\end{center}
	\caption{Betti curves for the Vietoris-Rips complex. Top row: Betti numbers $\beta_0^r$.  Middle row: Betti numbers $\beta_1^r$. Bottom Row: persistent Betti numbers $\beta_1^{r, r+1}$. Columns correspond with those of Figure~\ref{figure::galaxy_diagrams}. Axis units are given in Megaparsecs (Mpc). For each of $r=3, ..., 30$ Mpc, simultaneous bootstrap confidence bands are given in gray, drawn from bootstrap samples of size $B=20,000$. Likewise, pointwise intervals are given in black. \label{figure::galaxy_boot}}
\end{figure}

\section{Discussion}
In this work we have shown the large-sample consistency of multivariate bootstrap estimation for a range of stabilizing statistics. This includes the persistent Betti numbers, the Euler characteristic, and the total edge length of the $k$-nearest neighbor graph. However, many open questions still remain.

In Section~\ref{section::nonparametric_bootstrap} it was argued that the standard nonparametric bootstrap may fail to directly reproduce the correct sampling distribution asymptotically for topological statistics like the persistent Betti numbers. However, there remains the possibility that a corrected version of the standard bootstrap could provide for consistency. As discussed in Section~\ref{section::nonparametric_bootstrap}, standard bootstrap sampling results in a fundamentally different point process limit at small scales. Previous stabilization results primarily consider Poisson and related processes, meaning a full theoretical treatment of the standard bootstrap would likely require reconstructing much of the previous stabilization and central limit theorem results for the alternative limiting process.

The results for the smoothed bootstrap presented here apply only in the multivariate setting, the obvious extension being to stochastic processes. Essential to a process-level result concerning the persistent Betti numbers would be a convenient tail bound for the radius of stabilization, which is yet unavailable. In the case of persistent Betti numbers, there is a strong relationship between the persistent Betti function and an empirical CDF in two dimensions. As such, there is much established theory in that regard which may be applied once stochastic equicontinuity is established.

In practice it is common that data comes not from a density in $\reals^d$, but instead from a manifold. It is suspected that a version of the results in this paper could apply in the manifold setting. However, this requires a bootstrap that adapts to a possibly unknown manifold structure, similar to that found in \cite{Kim2018}. Combined with the inherent challenges of working with manifolds, this extension presents many technical hurdles.

Furthermore, in this work we have shown only consistency for bootstrap estimation to a common limiting distribution. The rates of convergence in the $2$-Wasserstein distance regarding the persistent Betti numbers rely on the unknown tail properties of the corresponding radius of stabilization. Quantifying these tail properties is a challenging open problem, and seems to be a key step towards an eventual rate calculation, as well as the previously mentioned process-level result.

Finally, there are several statistics of interest, including those based on the Delaunay complex, which do not fit into the specific frameworks provided here. It may be that these statistics may still satisfy Theorem~\ref{theorem::bootstrap_general} in the general case, by techniques others than those provided here.

%%%%%%%%%%%%%%%%%%%%%%%%%%%%%%%%%%%%%%%%%%%%%%
%% Support information (funding), if any,   %%
%% should be provided in the                %%
%% Acknowledgements section.                %%
%%%%%%%%%%%%%%%%%%%%%%%%%%%%%%%%%%%%%%%%%%%%%%
\section*{Acknowledgements}
Thank you to the reviewers for their helpful comments and thorough examination of this work.

Benjamin Roycraft was partially supported by the National Science Foundation (NSF), grant number DMS-1148643. Johannes Krebs was partially supported by the German Research Foundation (DFG), grant number KR-4977/2-1. Wolfgang Polonik was partially supported by the National Science Foundation (NSF), grant number DMS-2015575.

Funding for the Sloan Digital Sky Survey IV has been provided by the Alfred P. Sloan Foundation, the U.S. Department of Energy Office of Science,  and the Participating Institutions. SDSS-IV acknowledges support and resources from the Center for High-Performance Computing at the University of Utah. The SDSS web site is www.sdss.org.

%%%%%%%%%%%%%%%%%%%%%%%%%%%%%%%%%%%%%%%%%%%%%%
%% Supplementary Material, if any, should   %%
%% be provided in {supplement} environment  %%
%% with title inside \textbf{} and short    %%
%% description below.                       %%
%%%%%%%%%%%%%%%%%%%%%%%%%%%%%%%%%%%%%%%%%%%%%%

%%%%%%%%%%%%%%%%%%%%%%%%%%%%%%%%%%%%%%%%%%%%%%%%%%%%%%%%%%%%%
%%                  The iography                       %%
%%                                                         %%
%%  imsart-???.bst  will be used to                        %%
%%  create a .BBL file for submission.                     %%
%%                                                         %%
%%  Note that the displayed Bibliography will not          %%
%%  necessarily be rendered by Latex exactly as specified  %%
%%  in the online Instructions for Authors.                %%
%%                                                         %%
%%  MR numbers will be added by VTeX.                      %%
%%                                                         %%
%%  Use \cite{...} to cite references in text.             %%
%%                                                         %%
%%%%%%%%%%%%%%%%%%%%%%%%%%%%%%%%%%%%%%%%%%%%%%%%%%%%%%%%%%%%%

%% if your bibliography is in bibtex format, uncomment commands:

\nocite{*}
\bibliographystyle{imsart-number} % Style BST file (imsart-number.bst or imsart-nameyear.bst)

% Bibliography file (usually '*.bib')

\appendix

\section{Altered Problem Settings} \label{section::b_bounded}

\subsection{$B$-Bounded Persistent Betti Numbers} \label{section::bounded_persistent_betti_numbers}

To effectively quantify the radius of stabilization for persistent Betti numbers, it is necessary to place controls on the size of possible cycles within a simplicial complex. Large loops extend the influence of a single point beyond the local region, and complicate statistical analysis. As such, we present the following definitions which eliminate any large loops. Note that the statistics of this appendix are presented for their convenient theoretical properties, not their practical significance. Let $S\in\tilde{\mathcal X}\p{\reals^d}$ and $K=K\p{S}$ be a simplicial complex with vertices in $S$. For a given chain of simplices $\sum_{i=1}^m\sigma_i\in C\p{K\p{S}}$, we have the diameter given by $\diam{\sum_{i=1}^m\sigma_i}:=\diam{\bigcup_{i=1}^m \sigma_i}$. Let the space of \textit{$B$-bounded cycles} of the complex $K$ be the vector space, denoted by $Z_{q,B}\p{K}$, spanned by cycles in $K$ with diameter no larger than $B$. We have $Z_{q,B}\p{K}:=\text{span}\set{x\in Z_q\p{K} \st \diam{x}\leq B}$. Likewise let the space of \textit{$B$-bounded boundaries} be $B_{q,B}\p{K}:=\text{span}\set{x\in B_q\p{K} \st \diam{x}\leq B}$.

The definitions presented here are directly inspired by a previous concept under the name ``$M$-bounded persistence" found in \cite{Biscio2020}, and may be viewed as a generalization thereof. In this previous work, it was shown that a diameter bound of this type is sufficient for establishing functional central limit theorems for persistent Betti numbers, and thus an extension is desireable. The original definition given in \cite{Biscio2020} is based on a correspondence between loops and connected components in the complement space, and does not apply to arbitrary simplicial complexes and feature dimensions.

The change in naming effected here is not meant to drawn a distinction between the two definitions, but purely to avoid overloading symbols within this paper. $M$ is used in this work to denote an upper bound for a density norm.

The $B$-bounded spaces obey many of the same properties as their original counterparts. We have $B_{q,M}\p{K}\subseteq Z_{q,M}\p{K}$. Thus we can define the \textit{$B$-bounded homology spaces} as $H_{q,B}\p{K}=Z_{q,B}\p{K}/B_{q,B}\p{K}$. It should be noted that these definitions allow for chains of unbounded diameter, so long as there exists a decomposition into a sum of bounded chains. Furthermore, for $B_{q,B}\p{K}$, the diameter control is on the chains $x\in B_q\p{K}$, not on a corresponding $y\in C_{q+1}\p{K}$ with $x=\partial y$. It is possible to have a chain with arbitrarily high diameter, whose boundary has diameter less than $B$.

We next define the analog of Betti numbers and persistent Betti numbers over a filtration of simplicial complexes in the bounded context. Given a filtration $\mathcal K=\set{K^r}_{r\in\reals}$, we have $B$-bounded analogs for the Betti numbers, persistent homology spaces, and persistent Betti numbers given by
\begin{align}
	\beta_{q, B}^r\p{\mathcal K} & := \dim\p{H_{q,B}\p{K^r}}\\
	& =\dim\p{Z_{q,B}\p{K^r}}-\dim\p{B_{q,B}\p{K^r}} \\
	H_{q,B}^{r,s}\p{\mathcal K} & := \frac{Z_{q,B}\p{K^r}}{Z_{q,B}\p{K^r}\cap B_{q,B}\p{K^s}} \\
	\beta_{q,B}^{r,s}\p{\mathcal K} & := \dim\p{H_{q,B}^{r,s}\p{\mathcal K}} \\
	& = \dim\p{Z_{q,B}\p{K^r}} - \dim\p{Z_{q,B}\p{K^r}\cap B_{q,B}\p{K^s}}.
\end{align}

Unfortunately, no direct analog of the Geometric Lemma~\ref{lemma::geometric} exists for $B$-bounded persistent Betti numbers. The addition of a positive simplex can add more than one dimension to $Z_{q,B}$. Consider $Z_q$ consisting of a single cycle with diameter above $B$ but below $2B$, meaning $Z_{q,B}=\set{0}$ initially. Now let the loop be split in two by a new simplex $\sigma$. Each piece may now be of diameter less than $B$, unlike the original. In this way a single simplex can increase the dimension of $Z_{q,B}$ by two or more. The same is true for the negative simplices. Consider the same setup, but now extend each simplex towards a distant point $x$ in a cone. In this case we have $Z_{q,B}=B_{q,B}=\set{0}$ initially. The inclusion of the simplex $\sigma\cup\set{x}$ will split the boundary space just as before into two bounded pieces.

Thus it becomes clear that we must utilize slightly different techniques when considering $B$-bounded persistence. We have the following inequality, the analog of the Geometric Lemma for $B$-bounded persistent Betti numbers.

\pagebreak

\begin{lemma} \label{lemma::geometric_bounded}
	Let $\mathcal J=\set{J^r}_{r\in\reals}$ and $\mathcal K=\set{K^r}_{r\in\reals}$ be filtrations of simplicial complexes with $J^r\subseteq K^r$ for all $r\in\reals$. Then
	\begin{align}
		\abs{\beta_{q,B}^{r,s}\p{\mathcal K}-\beta_{q,B}^{r,s}\p{\mathcal J}}  \leq \ & \max\set{\dim\p{\frac{Z_{q,B}\p{K^r}}{Z_{q,B}\p{J^r}}}, \dim\p{\frac{B_{q,B}\p{K^s}}{B_{q,B}\p{J^s}}}} \\
		\leq \ &\dim\p{\frac{Z_{q,B}\p{K^r}}{Z_{q,B}\p{J^r}}} + \dim\p{\frac{B_{q,B}\p{K^s}}{B_{q,B}\p{J^s}}}.
	\end{align}
\end{lemma}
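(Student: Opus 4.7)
The plan is to reduce everything to linear algebra via dimension counting, paralleling the proof of the original Geometric Lemma~\ref{lemma::geometric}. First I would set $V = Z_{q,B}\p{J^r}$, $V' = Z_{q,B}\p{K^r}$, $W = Z_{q,B}\p{J^r} \cap B_{q,B}\p{J^s}$, and $W' = Z_{q,B}\p{K^r} \cap B_{q,B}\p{K^s}$. Since $J^r \subseteq K^r$ and $J^s \subseteq K^s$ yield $V \subseteq V'$ and $B_{q,B}\p{J^s} \subseteq B_{q,B}\p{K^s}$, we also get $W \subseteq W'$. Then the definition of the $B$-bounded persistent Betti numbers gives
\begin{equation*}
\beta_{q,B}^{r,s}\p{\mathcal K} - \beta_{q,B}^{r,s}\p{\mathcal J} = \dim\p{V'/V} - \dim\p{W'/W},
\end{equation*}
so the task reduces to comparing these two quotient dimensions.

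The central step is to show $\dim\p{W'/W} \leq \dim\p{V'/V} + \dim\p{B_{q,B}\p{K^s}/B_{q,B}\p{J^s}}$. To this end, I would define the linear map
\begin{equation*}
\phi \colon W' \to \p{V'/V} \oplus \p{B_{q,B}\p{K^s}/B_{q,B}\p{J^s}}, \qquad x \mapsto \p{x + V,\ x + B_{q,B}\p{J^s}},
\end{equation*}
which is well-defined because $W' \subseteq V' \cap B_{q,B}\p{K^s}$. Its kernel consists of $x$ with $x \in V$ and $x \in B_{q,B}\p{J^s}$, hence equals $V \cap B_{q,B}\p{J^s} = W$. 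So $\phi$ descends to an injection $W'/W \hookrightarrow \p{V'/V} \oplus \p{B_{q,B}\p{K^s}/B_{q,B}\p{J^s}}$, which gives the claimed dimension bound.

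To finish, let $a = \dim\p{V'/V}$, $b = \dim\p{B_{q,B}\p{K^s}/B_{q,B}\p{J^s}}$, and $c = \dim\p{W'/W}$, so that $c \leq a + b$ from the previous step. A direct case split then yields the first inequality: if $a \geq c$, then $\abs{a - c} = a - c \leq a$; and if $a < c$, then $\abs{a - c} = c - a \leq b$. In either case $\abs{a - c} \leq \max\set{a, b}$. The second inequality follows immediately from $\max\set{a, b} \leq a + b$.

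I do not expect any serious obstacle here; the argument is clean linear algebra once the correct map $\phi$ is identified. The only minor subtlety is verifying that $\phi$ is genuinely well-defined on $W'$ and that its kernel is precisely $W$, both of which amount to unwinding the definitions of $Z_{q,B}$ and $B_{q,B}$. Notably, the fact that these are defined as \emph{spans} of bounded chains rather than as sets of bounded chains plays no role, since the argument operates at the level of the full subspaces rather than through any chosen generators.
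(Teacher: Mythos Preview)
Your proof is correct, and it reaches the same conclusion as the paper via a slightly different piece of linear algebra. The paper does not set up your map $\phi$; instead it rewrites the difference of the two persistent Betti numbers as
\[
\dim\p{\frac{Z_{q,B}\p{K^r}+B_{q,B}\p{K^s}}{Z_{q,B}\p{J^r}+B_{q,B}\p{K^s}}}-\dim\p{\frac{Z_{q,B}\p{J^r}\cap B_{q,B}\p{K^s}}{Z_{q,B}\p{J^r}\cap B_{q,B}\p{J^s}}},
\]
that is, as a difference of two \emph{nonnegative} quantities, the first bounded by $\dim\p{Z_{q,B}\p{K^r}/Z_{q,B}\p{J^r}}$ and the second by $\dim\p{B_{q,B}\p{K^s}/B_{q,B}\p{J^s}}$, from which the $\max$ bound is immediate without a case split. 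Your route instead bounds $c\leq a+b$ by the injection $W'/W\hookrightarrow (V'/V)\oplus\p{B_{q,B}\p{K^s}/B_{q,B}\p{J^s}}$ and then splits on the sign of $a-c$. Both arguments are short and rely only on finite-dimensional dimension arithmetic; the paper's decomposition is marginally sharper in that it exhibits the signed difference explicitly as $p-q$ with $0\leq p\leq a$ and $0\leq q\leq b$, whereas your argument packages the same information through the single inequality $c\leq a+b$.
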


\begin{proof}		
	\begin{align*}
		&\abs{\beta_{q,B}^{r,s}\p{\mathcal K}-\beta_{q,B}^{r,s}\p{\mathcal J}}  \\
		= \ & \abs{\dim\p{\frac{Z_{q,B}\p{K^r}}{Z_{q,B}\p{K^r}\cap B_{q,B}\p{K^s}}} - \dim\p{\frac{Z_{q,B}\p{J^r}}{Z_{q,B}\p{J^r}\cap B_{q,B}\p{J^s}}}} \\
		= \ & \abs{\dim\p{\frac{Z_{q,B}\p{K^r}+ B_{q,B}\p{K^r}}{Z_{q,B}\p{J^r}+ B_{q,B}\p{K^r}}}-\dim\p{\frac{Z_{q,B}\p{J^r}\cap B_{q,B}\p{K^s}}{Z_{q,B}\p{J^r}\cap B_{q,B}\p{J^s}}}} \\
		\leq \ & \max\set{\dim\p{\frac{Z_{q,B}\p{K^r}+ B_{q,B}\p{K^r}}{Z_{q,B}\p{J^r}+ B_{q,B}\p{K^r}}}, \dim\p{\frac{Z_{q,B}\p{J^r}\cap B_{q,B}\p{K^s}}{Z_{q,B}\p{J^r}\cap B_{q,B}\p{J^s}}}} \\
		\leq \ & \max\set{\dim\p{\frac{Z_{q,B}\p{K^r}}{Z_{q,B}\p{J^r}}}, \dim\p{\frac{B_{q,B}\p{K^s}}{B_{q,B}\p{J^s}}}} \\
		\leq \ & \dim\p{\frac{Z_{q,B}\p{K^r}}{Z_{q,B}\p{J^r}}} + \dim\p{\frac{B_{q,B}\p{K^s}}{B_{q,B}\p{J^s}}}.
	\end{align*}
\end{proof}

We make a note here about the difference between Lemma~\ref{lemma::geometric_bounded} and the Geometric Lemma~\ref{lemma::geometric}. While drawn from the same fundamental inequality, in the persistent Betti number case, we reduce to counting the simplices that are added when moving from one complex to the other. This reduction cannot be made in the $B$-bounded case, and we must count the number of additional linearly independent loops and boundaries. Different combinatorial techniques will be needed when applying each lemma, as can be seen in the proofs of Corollaries~\ref{theorem::bootstrap_pbn}, \ref{theorem::bootstrap_pbn_ball}, and \ref{theorem::bootstrap_bounded_pbn}.

\subsection{Stabilization Results}

We define the $q$-truncated Euler characteristics as
\begin{equation}
	\chi_q\p{K}:=\sum\limits_{k=0}^q\p{-1}^k \#\set{K_k}.
\end{equation}

We have the following stabilization lemmas for $B$-bounded persistent Betti numbers and $q$-truncated Euler characteristics. Since in both Lemmas~\ref{lemma::stabilization_bounded_pbn_increasing} and \ref{lemma::stabilization_bounded_pbn_ball} the radius of stabilization is a deterministic constant, \eqref{condition::stabilization_radius} is satisfied for any distribution $G$. The same is true in the following results for the truncated Euler characteristics.

\begin{lemma} \label{lemma::stabilization_bounded_pbn_increasing}
	Let $\mathcal K$ satisfy \eqref{condition::complex_increasing}. Then for any $B\geq 0$, $r\in \reals$, $s\in\reals$, $q\in \nats_0$, and $z\in\reals^d$, $\rho_z=2B$ is a locally determined radius of stabilization for $\beta_{q, B}^{r, s}\p{\mathcal K}$ centered at $z$.
\end{lemma}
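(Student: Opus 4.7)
The local-determination requirement is automatic, since $\rho_z \equiv 2B$ is a constant function: regardless of the inputs $S, T$, $\rho_z(T) = \rho_z(S) = 2B$. The substantive task is to verify the radius-of-stabilization property for the add-$z$ cost, namely that $D_z(S \cap B_z(l)) = D_z(S \cap B_z(2B))$ for every $S \in \mathcal X(\reals^d)$ and $l \geq 2B$. By adding the points of $S \cap (B_z(l) \setminus B_z(2B))$ to $S \cap B_z(2B)$ one at a time, this reduces to showing that for any finite $W \in \tilde{\mathcal X}(\reals^d)$ and any $p \in \reals^d$ with $\|p - z\| > 2B$, the second difference $D_z(W \cup \set{p}) - D_z(W)$ vanishes.

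The guiding observation is that any cycle of diameter $\leq B$ has its support contained in a single ball of diameter $\leq B$. Since $\|p-z\| > 2B > B$, no such ball can contain both $p$ and $z$, and by \eqref{condition::complex_increasing} any simplex of $\mathcal K(W \cup \set{p, z})^t$ lying outside $\mathcal K(W \cup \set{z})^t \cup \mathcal K(W \cup \set{p})^t$ must contain both $p$ and $z$, and hence has diameter exceeding $2B$. This yields the sum decomposition
\begin{equation*}
Z_{q,B}\p{\mathcal K(W \cup \set{p, z})^t} = Z_{q,B}\p{\mathcal K(W \cup \set{z})^t} + Z_{q,B}\p{\mathcal K(W \cup \set{p})^t}
\end{equation*}
at every filtration level $t$, since every generating diameter-$\leq B$ cycle is either contained in the subcomplex $\mathcal K(W \cup \set{z})^t$ (if its vertex set meets $z$) or in $\mathcal K(W \cup \set{p})^t$ (otherwise). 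The key intermediate claim is the matching intersection identity $Z_{q,B}\p{\mathcal K(W \cup \set{z})^t} \cap Z_{q,B}\p{\mathcal K(W \cup \set{p})^t} = Z_{q,B}\p{\mathcal K(W)^t}$. The plan is to establish this by taking an element of the left-hand side, splitting it via each of the two available decompositions into a near-$z$ piece supported in $B_z(B)$ and a near-$p$ piece supported in $B_p(B)$, and using the disjointness $B_z(B) \cap B_p(B) = \emptyset$ (which follows from $\|p-z\| > 2B$) to argue that each piece individually lies in $Z_{q,B}\p{\mathcal K(W)^t}$.

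Combining these two identities via the dimension formula $\dim(U + V) = \dim U + \dim V - \dim(U \cap V)$ produces
\begin{equation*}
\dim Z_{q,B}\p{\mathcal K(W \cup \set{p, z})^t} - \dim Z_{q,B}\p{\mathcal K(W \cup \set{p})^t} = \dim Z_{q,B}\p{\mathcal K(W \cup \set{z})^t} - \dim Z_{q,B}\p{\mathcal K(W)^t}.
\end{equation*}
An analogous sum-plus-intersection analysis applied to the space $Z_{q,B}(\cdot^r) \cap B_{q,B}(\cdot^s)$ that enters the definition of $\beta_{q,B}^{r,s}$, combined with the above applied at filtration levels $r$ and $s$, then collapses to the desired $D_z(W \cup \set{p}) = D_z(W)$.

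The main technical hurdle will be the $B_{q,B}$ side of the intersection identity, because a diameter-$\leq B$ boundary may be witnessed by a $(q+1)$-chain involving arbitrarily large simplices, so the cycle localization argument does not transfer directly. The key will be that any $(q+1)$-simplex in $\mathcal K(W \cup \set{p, z})^s$ not already contained in either of the two subcomplexes necessarily contains both $p$ and $z$ by \eqref{condition::complex_increasing}; its $q$-faces incident to both far vertices can be cancelled only by chains that themselves involve such simplices, so any diameter-$\leq B$ boundary admits a witnessing chain avoiding them, and the intersection identity transfers intact.
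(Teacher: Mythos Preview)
Your approach differs from the paper's: you add the outside points one at a time and aim to kill the second difference $D_z(W\cup\{p\})-D_z(W)$ via a sum-plus-intersection argument on $Z_{q,B}$. The paper instead compares the two radii $a\ge 2B$ and $2B$ directly, partitioning the \emph{generators} (diameter-$\le B$ cycles) of $Z_{q,B}\bigl(K^r((S\cap B_z(a))\cup\{z\})\bigr)$ into three subspaces: $U_z$ (generators containing $z$, supported in $B_z(B)$), $U_a$ (generators with a vertex in $B_z(a)\setminus B_z(2B)$, hence supported in $B_z(a)\setminus B_z(B)$), and $U_*$ (the remainder, supported in $B_z(2B)$). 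The four relevant $Z_{q,B}$-spaces are then identified as $U_z+U_a+U_*$, $U_a+U_*$, $U_z+U_*$, and $U_*$, and the dimension identity follows from $U_z\cap U_a=\{0\}$ together with a short vector-space computation; an analogous six-space argument (introducing $V_z,V_a,V_*$ for $B_{q,B}$) handles $Z_{q,B}\cap B_{q,B}$.

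Your sum identity is correct, but the intersection identity is where your sketch is incomplete. From $x=x_z+x_0=x_p+x_0'$ you obtain $x_z-x_p=x_0'-x_0\in Z_{q,B}(\mathcal K(W)^t)$ with $x_z$ supported in $B_z(B)$ and $x_p$ in $B_p(B)$. But having a sum of two disjointly supported cycles lie in $Z_{q,B}(\mathcal K(W)^t)$ does not force each summand into $Z_{q,B}(\mathcal K(W)^t)$: the diameter-$\le B$ cycles witnessing membership of $x_0'-x_0$ may individually carry simplices outside $B_z(B)\cup B_p(B)$ (which cancel in the sum) or straddle $\partial B_z(B)$, so a ``restrict to each ball'' argument does not go through. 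You flag the $B_{q,B}$ side as the main technical hurdle, but this obstruction already appears on the $Z_{q,B}$ side. What the paper's generator-level trichotomy buys is that every diameter-$\le B$ generator lands in exactly one of $U_z,U_a,U_*$ by vertex location, and the disjoint-support pair used is $(U_z,U_a)$ rather than $(U_z,\text{remainder})$; this is the structural difference you would need to import to close the gap.
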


\begin{proof} \label{proof::stabilization_bounded_pbn_increasing}
	Let $a\geq 2B$ and $S\in\mathcal X\p{\reals^d}$. We decompose $Z_{q, B}\p{K^r\p{\p{S\cap B_z\p{a}}\cup \set{z}}}$ into three spaces. Let $U_z$ be spanned by the generators of $Z_{q, B}\p{K^r\p{\p{S\cap B_z\p{a}}\cup \set{z}}}$ with $z$ as a vertex. Let $U_a$ be spanned by the generators with vertices within $B_z\p{a}\setminus B_z\p{2B}$. Finally, let $U_*$ be spanned by the generators without $z$ as a vertex and with no vertices within $B_z\p{a}\setminus B_z\p{2B}$. Since the generators of $Z_{q, B}\p{K^r\p{\p{S\cap B_z\p{a}}\cup \set{z}}}$ have diameter at most $B$, there are no generating cycles with vertices both at $z$ and in $B_z\p{a}\setminus B_z\p{2B}$.
	
	By \eqref{condition::complex_increasing} we have $Z_{q, B}\p{K^r\p{\p{S\cap B_z\p{a}}\cup \set{z}}}=U_z+ U_a+ U_*$, $Z_{q, B}\p{K^r\p{S\cap B_z\p{a}}} =U_a+ U_*$, $Z_{q, B}\p{K^r\p{\p{S\cap B_z\p{2B}}\cup \set{z}}}=U_z+ U_*$, and $Z_{q, B}\p{K^r\p{S\cap B_z\p{2B}}}=U_*$.
	
	Now for any cycle within $U_z$, the associated vertex set must lie within $B_z\p{B}$. Likewise, for any cycle in $U_a$, the associated vertex set must lie within $B_z\p{a}\setminus B_z\p{B}$. These vertex sets cannot intersect, thus $U_z\cap U_a=\set{0}$.
	
	Now, consider any vector spaces $X$, $Y$, and $Z$ such that $X\cap Y=\set{0}$. Because $X\cap Y\cap Z$ is a subspace of $X\cap Y$, it is also the trivial space $\set{0}$. We have 
	\begin{align}
		& \dim\p{X+Y+Z}-\dim\p{Y+Z} \\
		= \ & \dim\p{X}-\dim\p{X\cap Y}-\dim\p{X\cap Z}+\dim\p{X\cap Y\cap Z} \nonumber\\
		= \ & \dim\p{X}-\dim\p{X\cap Z} \nonumber\\
		= \ & \dim\p{X+Z}-\dim\p{Z}.
	\end{align}
	
	We use this result in each of the following. We have
	\begin{align}
		& \dim\p{Z_{q, B}\p{K^r\p{\p{S\cap B_z\p{a}}\cup \set{z}}}}-\dim\p{Z_{q, B}\p{K^r\p{S\cap B_z\p{a}}}} \\
		= \ & \dim\p{U_z+ U_a+ U_*}-\dim\p{U_a+ U_*} \nonumber\\
		= \ & \dim\p{U_z+ U_*}-\dim\p{U_*}\nonumber\\
		= \ & \dim\p{Z_{q, B}\p{K^r\p{\p{S\cap B_z\p{2B}}\cup \set{z}}}}-\dim\p{Z_{q, B}\p{K^r\p{S\cap B_0\p{2B}}}}.
	\end{align}
	
	A similar result holds for the boundaries. Let $V_z$, $V_a$, and $V_*$ be defined similarly to $U_z$, $U_a$, and $U_*$, respectively, instead using the generators of $B_{q, B}\p{K^s\p{\p{S\cap B_z\p{a}}\cup \set{z}}}$. Similarly $B_{q, B}\p{K^s\p{\p{S\cap B_z\p{a}}\cup \set{z}}}=V_z+V_a+V_*$, $B_{q, B}\p{K^s\p{\p{S\cap B_z\p{2B}}\cup \set{z}}}=V_z+V_*$, and we conclude $V_z\cap V_a=\set{0}$. Furthermore, we have $U_z\cap V_a= V_z\cap U_a=\set{0}$ by similar vertex-based arguments. Then $\p{U_z+V_z}\cap\p{U_a+V_a}=\set{0}$. We have
	\begin{align*}
		& \dim\p{Z_{q, B}\p{K^r\p{\p{S\cap B_z\p{a}}\cup \set{z}}}\cap B_{q, B}\p{K^s\p{\p{S\cap B_z\p{a}}\cup \set{z}}}} \\
		& - \dim\p{Z_{q, B}\p{K^r\p{S\cap B_z\p{a}}}\cap B_{q, B}\p{K^s\p{S\cap B_z\p{a}}}} \\
		& \\
		= \ & \dim\p{\p{U_z+U_a+U_*}\cap \p{V_z+V_a+V_*}} \\
		& - \dim\p{\p{U_a+U_*}\cap \p{V_a+V_*}} \\
		= \ & \dim\p{U_z+U_a+U_*}+\dim\p{V_z+V_a+V_*} - \dim\p{U_z+U_a+U_*+V_z+V_a+V_*}\\
		& - \dim\p{U_a+U_*}-\dim\p{V_a+V_*} + \dim\p{U_a+U_*+V_a+V_*} \\
		= \ & \dim\p{U_z+U_*} - \dim\p{U_*} + \dim\p{V_z+V_*} - \dim\p{V_*} \\
		& - \dim\p{U_z+U_*+V_z+V_*} + \dim\p{U_*+V_*} \\
		=\ & \dim\p{\p{U_z+U_*}\cap\p{V_z+V_*}} - \dim\p{U_*\cap V_*} \\
		= \ & \dim\p{Z_{q, B}\p{K^r\p{\p{S\cap B_z\p{2B}}\cup \set{z}}}\cap B_{q, B}\p{K^s\p{\p{S\cap B_z\p{2B}}\cup \set{z}}}} \\
		& - \dim\p{Z_{q, B}\p{K^r\p{S\cap B_z\p{2B}}}\cap B_{q, B}\p{K^s\p{S\cap B_z\p{2B}}}}.
	\end{align*}
	
	Combining these pieces, the $B$-bounded persistent Betti numbers must stabilize after a constant radius of $\rho_z=2B$.
\end{proof}

\begin{lemma} \label{lemma::stabilization_bounded_pbn_ball}
	Let $\mathcal K$ satisfy \eqref{condition::complex_local_ball}. Then for any $B\geq 0$, $r\in \reals$, $s\in\reals$, $q\geq 0$, and $z\in\reals^d$, $\rho_z=2\max\set{\phi\p{r}, \phi\p{s}}+2B$ is a locally determined radius of stabilization for $\beta_{q, B}^{r, s}\p{\mathcal K}$ centered at $z$.
\end{lemma}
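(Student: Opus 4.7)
Since $\rho_z := 2\phi^* + 2B$ (where $\phi^* := \max\set{\phi\p{r},\phi\p{s}}$) is a constant independent of the point set, local determination (Definition~\ref{definition::local_determined}) is automatic, so I need only verify Definition~\ref{definition::radius_stability}: for every $S \in \mathcal X\p{\reals^d}$ and every $a \geq \rho_z$, $D_z\p{S \cap B_z\p{a}} = D_z\p{S \cap B_z\p{\rho_z}}$, where $D_z\p{T} := \beta_{q,B}^{r,s}\p{\mathcal K\p{T\cup\set{z}}} - \beta_{q,B}^{r,s}\p{\mathcal K\p{T}}$. First I would peel off the points of $S\cap\p{B_z\p{a}\setminus B_z\p{\rho_z}}$ one at a time, so that it suffices to show $D_z\p{T\cup\set{y}} = D_z\p{T}$ whenever $\norm{y-z} > \rho_z$. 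Using the elementary commutation $D_z\p{T\cup\set{y}} - D_z\p{T} = D_y\p{T\cup\set{z}} - D_y\p{T}$ (immediate from regrouping the definition), the task becomes the symmetric claim: for any $T$ and any $y$ with $\norm{y-z} > \rho_z$, the add-$y$ cost $D_y$ for $\beta_{q,B}^{r,s}$ is unchanged by the presence of $z$ in the background.

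\textbf{Geometric separation.} To establish this, I would consider the four complexes $K^r_{ij}, K^s_{ij}$ for $i,j\in\set{0,1}$ (indicating inclusion of $y$ and $z$). By \eqref{condition::complex_local_ball}, the symmetric differences induced by $y$ (resp.\ $z$) are contained in $\set{\sigma : \sigma \subset B_y\p{\phi^*}}$ (resp.\ $\set{\sigma : \sigma \subset B_z\p{\phi^*}}$); since $\norm{y-z} > 2\phi^*$, these regions are disjoint. This partitions the simplex universe into five classes (present always, depending on $y$ only in one of two ways, and depending on $z$ only in one of two ways). Let $L^{\ast,y}_i$ and $L^{\ast,z}_j$ (for $\ast\in\set{r,s}$) denote the simplicial subcomplexes generated by the $E$-class together with the appropriate $y$- or $z$-class; these satisfy $K^\ast_{ij} = L^{\ast,y}_i\cup L^{\ast,z}_j$ with $L^{\ast,y}_i\cap L^{\ast,z}_j = \langle E^\ast\rangle$. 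Crucially, because $\norm{y-z} > 2\phi^* + 2B$, no diameter-$\leq B$ chain can use simplices from both sides at once (two vertices drawn from the two regions would be separated by more than $2B$).

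\textbf{Decomposition and inclusion-exclusion.} The geometric separation gives
\[
Z_{q,B}\p{K^r_{ij}} = Z_{q,B}\p{L^{r,y}_i} + Z_{q,B}\p{L^{r,z}_j}, \quad Z_{q,B}\p{L^{r,y}_i}\cap Z_{q,B}\p{L^{r,z}_j} = Z_{q,B}\p{\langle E^r\rangle},
\]
with parallel decompositions for $B_{q,B}\p{K^s_{ij}}$ and for the intersection space $I_{ij} := Z_{q,B}\p{K^r_{ij}} \cap B_{q,B}\p{K^s_{ij}}$. Applying inclusion-exclusion, each of $\dim Z_{q,B}\p{K^r_{ij}}$, $\dim B_{q,B}\p{K^s_{ij}}$, and $\dim I_{ij}$ splits additively into an $i$-dependent term, a $j$-dependent term, and a constant, so the second difference $(\cdot)_{11} - (\cdot)_{10} - (\cdot)_{01} + (\cdot)_{00}$ vanishes. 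This gives the required invariance of $D_y$ and, combined with Step 1, completes the proof.

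\textbf{Main obstacle.} The hardest part of the argument is justifying the decompositions for the boundary and intersection spaces with full rigour. A $(q+1)$-chain whose $B$-bounded boundary has $q$-simplices only on the $y$-side may nonetheless use $(q+1)$-simplices on the $z$-side, producing ``bridge cycles'' in $Z_q\p{\langle E^s\rangle}$ that become boundaries only via one side. I expect to resolve this by showing that $y$-side bridges are determined by the $y$-class alone (independent of $j$) and $z$-side bridges by the $z$-class alone—again using the geometric separation from Step 2—so that in the inclusion-exclusion the bridges still factor additively and do not contaminate the vanishing second difference.
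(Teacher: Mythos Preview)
Your proposal follows essentially the same route as the paper. Both arguments (i) peel off the points outside $B_z(\rho_z)$ one at a time to reduce to showing that the add-$z$ cost is unchanged by adjoining a single point $y$ with $\|y-z\|>2\phi^*+2B$, (ii) partition the generators of $Z_{q,B}$ and $B_{q,B}$ according to whether they carry a simplex inside $B_z(\phi^*)$, inside $B_y(\phi^*)$, or in neither, using the disjointness of these balls together with the diameter bound $B$, and (iii) conclude via a dimension identity that the second difference vanishes. The paper works directly with spans $U_z,U_z^*,U_y,U_y^*,U_*$ (and $V$-analogues for the boundary spaces) rather than your subcomplexes $L^{\ast,y}_i,L^{\ast,z}_j$, and it does not pass through your commutation $D_z(T\cup\{y\})-D_z(T)=D_y(T\cup\{z\})-D_y(T)$, but these are cosmetic differences.

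The ``bridge'' obstacle you single out is exactly the delicate point. The paper asserts the boundary-space decompositions (e.g.\ $B_{q,B}(K^s(T\cup\{z\}))=V_z^*+V_*+V_y$) \emph{by analogy} with the cycle case, without a separate justification that a $B$-bounded boundary supported away from $B_z(\phi)$ remains a boundary after removing~$z$; your explicit acknowledgment that a bounding $(q{+}1)$-chain may cross the far region, together with your plan to show such bridges factor by side and cancel in the second difference, is in fact more careful than the paper's treatment. Your intersection claim $Z_{q,B}(L^{r,y}_i)\cap Z_{q,B}(L^{r,z}_j)=Z_{q,B}(\langle E^r\rangle)$ may need tweaking (the paper instead uses only the trivial pairwise intersections $U_z^{(\ast)}\cap U_y^{(\ast)}=\{0\}$ coming from disjoint supports), but this does not affect the overall strategy.
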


\begin{proof} \label{proof::stabilization_bounded_pbn_ball}
	Denote by $\phi:=\max\p{\phi\p{r}, \phi\p{s}}$. Let $S\in\mathcal X\p{\reals^d}$. Furthermore, let $T$ be any finite multiset of points in $\reals^d$ with $S\cap B_z\p{2\phi+2B}\subseteq T$ and $y\notin B_{z}\p{2\phi+2B}$. We have the following partition. Let $U_z$, $U_y$, and $U_*$, respectively, be spanned by the generators of $Z_{q, B}\p{K^r\p{T}}$ having: a simplex within $B_z\p{\phi}$, a simplex within $B_y\p{\phi}$, or neither. Let $U_z^*$ be spanned by the generators of $Z_{q, B}\p{K^r\p{T\cup\set{z}}}$ have a simplex in $B_z\p{\phi}$. Finally, let $U_y^*$ be spanned by the generators of $Z_{q, B}\p{K^r\p{T\cup\set{y}}}$ have a simplex in $B_y\p{\phi}$.
	
	By \eqref{condition::complex_local_ball} we have $Z_{q, B}\p{K^r\p{T}}=U_z+ U_*+ U_y$, $Z_{q, B}\p{K^r\p{T\cup \set{z}}}=U_z^*+ U_*+ U_y$, $Z_{q, B}\p{K^r\p{T\cup \set{y}}}=U_z+ U_*+ U_y^*$, and $Z_{q, B}\p{K^r\p{T\cup \set{z, y}}}=U_z^*+ U_*+ U_y^*$.
	
	Now for any cycle within $U_z$, the associated vertex set must lie within $B_z\p{\phi+B}$. Likewise, for any cycle in $U_y$, the associated vertex set must lie within $B_y\p{\phi+B}$. Because $\|y-z\|>2\phi+2B$, these vertex sets cannot intersect, thus $U_z\cap U_y=\set{0}$. Likewise $U_z\cap U_y^*=U_z^*\cap U_y=U_z^*\cap U_y^*=\set{0}$.
	
	Now, for any vector spaces $X$, $X^*$, $Y$, and $Z$ such that $X\cap Y^*=X^*\cap Y^*=\set{0}$, we have
	\begin{align*}
		& \dim\p{X^*+Y^*+Z}-\dim\p{X+Y^*+Z} \\
		= \ & \dim\p{X}-\dim\p{X^*}+\dim\p{X\cap Y^*}+\dim\p{X\cap Z}-\dim\p{X^*\cap Y^*}-\dim\p{X^*\cap Z} \\
		& +\dim\p{X^*\cap Y^*\cap Z} - \dim\p{X\cap Y^*\cap Z}\\
		= \ & \dim\p{X}-\dim\p{X^*}+\dim\p{X\cap Z}-\dim\p{X^*\cap Z} \\
		= \ & \dim\p{X+Z}-\dim\p{X^*+Z}.
	\end{align*}
	
	Thus we have
	\begin{align*}
		& \dim\p{Z_{q, B}\p{K^r\p{T\cup \set{z, y}}}}-\dim\p{Z_{q, B}\p{K^r\p{T\cup \set{y}}}} \\
		= \ & \dim\p{U_z^*+ U_*+ U_y^*}-\dim\p{U_z+U_*+ U_y^*} \\
		= \ & \dim\p{U_z^*+ U_*}-\dim\p{U_z+U_*}\\
		= \ & \dim\p{U_z^*+ U_*+U_y}-\dim\p{U_z+U_*+U_y}\\
		= \ & \dim\p{Z_{q, B}\p{K^r\p{T\cup \set{z}}}}-\dim\p{Z_{q, B}\p{K^r\p{T}}}.
	\end{align*}
	
	A similar result holds for the boundaries. Let $V_z$, $V_z^*$, $V_y$, $V_y^*$, and $V_*$ be defined similarly to $U_z$, $U_z^*$, $U_y$, $U_y^*$, and $U_*$, respectively, instead using the generators of $B_{q, B}\p{K^r\p{T}}$,  $B_{q, B}\p{K^r\p{T\cup\set{y}}}$, and  $B_{q, B}\p{K^r\p{T\cup \set{y}}}$. Similarly $B_{q, B}\p{K^s\p{T}}=V_z+V_*+V_y$, $B_{q, B}\p{K^s\p{T\cup \set{z}}}=V_z^*+V_*+V_y$, $B_{q, B}\p{K^s\p{T\cup \set{y}}}=V_z+V_*+V_y^*$, and $\mybrk B_{q, B}\p{K^s\p{T\cup \set{z,y}}}=V_z^*+V_*+V_y^*$. We conclude $V_z\cap V_y=V_z\cap V_y^*=V_z^*\cap V_y=V_z^*\cap V_y^*=\set{0}$. Furthermore, we have $U_z\cap V_y= U_z\cap V_y^*=U_z^*\cap V_y=U_z^*\cap V_y^*=\set{0}$ and $V_z\cap U_y= V_z\cap U_y^*=V_z^*\cap U_y=V_z^*\cap U_y^*=\set{0}$ by similar vertex-based arguments. Thus $\p{U_z+V_z}\cap\p{U_y+V_y}=\p{U_z+V_z}\cap\p{U_y^*+V_y^*}=\p{U_z^*+V_z^*}\cap\p{U_y+V_y}=\p{U_z^*+V_z^*}\cap\p{U_y^*+V_y^*}=\set{0}$. We have
	\begin{align*}
		& \dim\p{Z_{q, B}\p{K^r\p{T\cup \set{z,y}}}\cap B_{q, B}\p{K^s\p{T\cup \set{z,y}}}} \\
		& - \dim\p{Z_{q, B}\p{K^r\p{T\cup\set{y}}}\cap B_{q, B}\p{K^s\p{T\cup\set{y}}}} \\
		= \ & \dim\p{\p{U_z^*+U_*+U_y^*}\cap \p{V_z^*+V_*+V_y^*}} - \dim\p{\p{U_z+U_*+U_y^*}\cap \p{V_z+V_*+V_y^*}} \\
		= \ & \dim\p{U_z^*+U_*+U_y^*}+\dim\p{V_z^*+V_*+V_y^*} - \dim\p{U_z^*+U_*+U_y^*+V_z^*+V_*+V_y^*}\\
		& - \dim\p{U_z+U_*+U_y^*} - \dim\p{V_z+V_*+V_y^*} + \dim\p{U_z+U_*+U_y^*+V_z+V_*+V_y^*} \\
		= \ & \dim\p{U_z^*+U_*+U_y} - \dim\p{U_z+U_*+U_y} +\dim\p{V_z^*+V_*+V_y} - \dim\p{V_z+V_*+V_y} \\
		& - \dim\p{U_z^*+U_*+U_y+V_z^*+V_*+V_y} + \dim\p{U_z+U_*+U_y+V_z+V_*+V_y} \\
		=\ & \dim\p{\p{U_z^*+U_*+U_y}\cap\p{V_z^*+V_*+V_y}} - \dim\p{\p{U_z+U_*+U_y}\cap\p{V_z+V_*+V_y}} \\
		= \ & \dim\p{Z_{q, B}\p{K^r\p{T\cup \set{z}}}\cap B_{q, B}\p{K^s\p{T\cup \set{z}}}} \\
		& - \dim\p{Z_{q, B}\p{K^r\p{T}}\cap B_{q, B}\p{K^s\p{T}}}.
	\end{align*}
	
	Thus, the addition of $y$ to $T$ does not change the add-$z$ cost. We proceed inductively. Starting with $S\cap B_z\p{2\phi +2B}$, for any $a>2\phi+2B$, the finitely many points of $\p{S\cap B_z\p{a}}\setminus \p{S\cap B_z\p{2\phi+2B}}$ may be added one at a time, while leaving the add-$z$ cost unchanged. Thus, the $B$-bounded persistent Betti numbers must stabilize after a constant radius of $\rho_z=2\phi+2B$.
\end{proof}

\begin{lemma} \label{lemma::stabilization_euler_adj}
	Let $K$ satisfy \eqref{condition::complex_increasing} and \eqref{condition::complex_local_adj}. Then for any $z\in\reals^d$ and $q\geq 0$, $\rho_z=\phi$ is a locally determined radius of stabilization for $\chi_q\p{K}$ centered at $z$.
\end{lemma}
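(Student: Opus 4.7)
The plan is to verify directly that the constant function $\rho_z \equiv \phi$ satisfies Definition~\ref{definition::radius_stability} for the add-$z$ cost of $\chi_q$, since local determination (Definition~\ref{definition::local_determined}) is then automatic: the hypothesis $T\cap B_z(\phi) = S\cap B_z(\phi)$ trivially forces $\rho_z(T) = \phi = \rho_z(S)$. So the real content is the stabilization equality $D_z(S\cap B_z(l)) = D_z(S\cap B_z(\phi))$ for all $l \geq \phi$, where $D_z(S') = \chi_q(K(S'\cup\{z\})) - \chi_q(K(S'))$.

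The key observation is that \eqref{condition::complex_increasing} tells us the two complexes $K(S')$ and $K(S'\cup\{z\})$ differ only by simplices containing $z$ as a vertex, and \eqref{condition::complex_local_adj} forces every such new simplex to have diameter at most $\phi$ — hence its remaining vertices all lie in $B_z(\phi)$. Thus, whether we use $S' = S\cap B_z(l)$ or $S' = S\cap B_z(\phi)$, the newly added simplices are candidates from the same finite family: simplices $\sigma$ with $z \in \sigma$ and $\sigma\setminus\{z\}\subseteq S\cap B_z(\phi)$.

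The main step I would then carry out is a two-sided inclusion showing that the set of newly added simplices is literally identical in the two cases. One direction follows from monotonicity \eqref{condition::complex_increasing} going from the smaller to larger vertex set. For the converse, suppose $\sigma$ is new at scale $l$ but not present in $K(S\cap B_z(\phi)\cup\{z\})$; then adding the finitely many points of $(S\cap B_z(l))\setminus(S\cap B_z(\phi))$ one at a time and applying \eqref{condition::complex_increasing} at each step forces $\sigma$ to acquire a vertex in the annulus $B_z(l)\setminus B_z(\phi)$, contradicting the diameter bound. Since $\chi_q$ is the alternating count of simplices up to dimension $q$, equality of these new-simplex sets immediately yields $D_z(S\cap B_z(l)) = D_z(S\cap B_z(\phi))$.

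The only subtlety I anticipate — and it is a mild one — is bookkeeping in the case where $z$ already lies in the multiset $S$; this is handled transparently by treating the appended $z$ as a distinct copy in $S\cup\{z\}$, which is the standard convention for the multiset formalism of $\tilde{\mathcal X}(\reals^d)$. Aside from this, the argument is purely combinatorial and does not require any of the heavier machinery (homology, Geometric Lemma) used elsewhere in the appendix, because the truncated Euler characteristic depends only on simplex counts.
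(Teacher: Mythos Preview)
Your proposal is correct and follows essentially the same route as the paper. The paper partitions the full complex $K\p{\p{S\cap B_z\p{a}}\cup\set{z}}$ into three pieces $U$ (simplices containing $z$), $V$ (simplices in $B_z\p{\phi}\setminus\set{z}$), and $W$ (simplices meeting the annulus), then cancels the $W$-contributions in the alternating sum; your argument instead isolates the difference set $K\p{S'\cup\set{z}}\setminus K\p{S'}$ directly and shows it is independent of $l\geq\phi$ via the same one-point-at-a-time application of \eqref{condition::complex_increasing} combined with the diameter bound \eqref{condition::complex_local_adj}. The underlying observation---new simplices must contain $z$ and hence lie entirely in $B_z\p{\phi}$---is identical, and your version is arguably a bit more streamlined since it avoids tracking the pieces $V$ and $W$ that cancel anyway.
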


\begin{proof} \label{proof::stabilization_euler_adj}
	Let $a\geq\phi$. By \eqref{condition::complex_increasing} and \eqref{condition::complex_local_adj}, we can partition $K\p{\p{S\cap B_{z}\p{a}}\cup \set{z}}$ into the sets \begin{align}
		U:= \ & \set{\sigma\in K\p{\p{S\cap B_{z}\p{a}}\cup \set{z}}\st z\in \sigma} \\
		V:= \ &\set{\sigma\in K\p{\p{S\cap B_{z}\p{a}}\cup \set{z}}\st \sigma\subset B_{z}\p{\phi}\setminus \set{z}} \\
		W:= \ & \set{\sigma\in K\p{\p{S\cap B_{z}\p{a}}\cup \set{z}}\st \sigma\cap B_{z}\p{a}\setminus B_z\p{\phi}\neq \emptyset}
	\end{align}
	
	Condition \eqref{condition::complex_local_adj} gives that no simplices may simultaneously have $z$ as a vertex and intersect $B_{z}\p{a}\setminus B_z\p{\phi}$, thus $U$, $V$, and $W$ indeed partition $K\p{\p{S\cap B_{z}\p{a}}\cup \set{z}}$. Condition \eqref{condition::complex_increasing} gives that the addition of $\set{z}$ and $S\cap\p{B_z\p{a}\setminus B_z\p{\phi}}$ to $S\cap B_z\p{\phi}$ may only introduce simplices to $K\p{S\cap B_z\p{\phi}}$ with vertices somewhere within $S\cap\p{B_z\p{a}\setminus B_z\p{\phi}}\cup \set{z}$, and thus not included in $V$. Therefore, $V\subseteq K\p{S\cap B_z\p{\phi}}$. Furthermore, since $S\cap B_z\p{\phi}\subset \p{S\cap B_z\p{a}}\cup \set{z}$, $K\p{S\cap B_z\p{\phi}}\subseteq V$. Thus we have $V=K\p{S\cap B_z\p{\phi}}$. Using similar arguments, condition \eqref{condition::complex_increasing} also gives $K\p{\p{S\cap B_z\p{\phi}}\cup \set{z}}=U\cup V$ and $K\p{S\cap B_z\p{a}}=V\cup W$.
	
	For $U_k$, $V_k$, and $W_k$ denoting the set of $k$-simplices contained in $U$, $V$, and $W$, respectively, the add-$z$ cost for the $q$-truncated Euler characteristic becomes
	\begin{align}
		& \chi_q\p{K\p{\p{S\cap B_z\p{a}}\cup \set{z}}}-\chi_q\p{K\p{S\cap B_z\p{a}}} \\
		= \ & \sum_{k=0}^q \p{-1}^k\#\set{K_k\p{\p{S\cap B_z\p{a}}\cup \set{z}}} - \sum_{k=0}^q\p{-1}^k\#\set{K_k\p{S\cap B_z\p{a}}} \\
		= \ & \sum_{k=0}^q \p{-1}^k\p{\#\set{U_k}+\#\set{V_k}+\#\set{W_k}} - \sum_{k=0}^q\p{-1}^k\p{\#\set{V_k}+\#\set{W_k}} \\
		= \ & \sum_{k=0}^q \p{-1}^k\p{\#\set{U_k}+\#\set{V_k}} - \sum_{k=0}^q\p{-1}^k\#\set{V_k} \\
		= \ & \sum_{k=0}^q \p{-1}^k\#\set{K_k\p{\p{S\cap B_z\p{\phi}}\cup \set{z}}} - \sum_{k=0}^q\p{-1}^k\#\set{K_k\p{S\cap B_z\p{\phi}}} \\
		= \ & \chi_q\p{K\p{\p{S\cap B_z\p{\phi}}\cup \set{z}}}-\chi_q\p{K\p{S\cap B_z\p{\phi}}}.
	\end{align}
	
	We see that $\chi_q\p{K}$ stabilizes after a constant radius of $\rho_z=\phi$, thus the local-determination criterion is immediately satisfied.
\end{proof}

\begin{lemma} \label{lemma::stabilization_euler_ball}
	Let $K$ satisfy \eqref{condition::complex_local_ball}. Then for any $z\in\reals^d$ and $q\geq0$, $\rho_z=2\phi$ is a locally determined radius of stabilization for $\chi_q\p{K}$ centered at $z$.
\end{lemma}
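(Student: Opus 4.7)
The plan is to mirror the structure of Lemma~\ref{lemma::stabilization_euler_adj}, but since condition~\eqref{condition::complex_local_ball} allows a single point's addition to both introduce and remove simplices (unlike~\eqref{condition::complex_increasing}), I cannot simply identify $K\p{T\cup\set{z}}$ with a clean three-part union. Instead I will show directly that for any $S\in\mathcal X\p{\reals^d}$ and $a\geq 2\phi$, the add-$z$ costs at $S\cap B_z\p{a}$ and $S\cap B_z\p{2\phi}$ agree, by adding back the ``far'' points of $S\cap (B_z(a)\setminus B_z(2\phi))$ one at a time and showing each addition leaves the add-$z$ cost unchanged.

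Fix a finite $R\subseteq\reals^d$ and a point $y$ with $\|y-z\|>2\phi$. The central claim is that
\[
    \chi_q\p{K\p{R\cup\set{y,z}}}-\chi_q\p{K\p{R\cup\set{y}}}
    =\chi_q\p{K\p{R\cup\set{z}}}-\chi_q\p{K\p{R}}.
\]
Expanding $\chi_q$ as an alternating sum of simplex counts, it suffices to show for every simplex $\sigma$ and every $k$ that
\[
    \ind{\sigma\in K_k\p{R\cup\set{y,z}}}-\ind{\sigma\in K_k\p{R\cup\set{y}}}
    -\ind{\sigma\in K_k\p{R\cup\set{z}}}+\ind{\sigma\in K_k\p{R}}=0.
\]
Here the key geometric fact is that $\|y-z\|>2\phi$ forces $B_y\p{\phi}\cap B_z\p{\phi}=\emptyset$, so no nonempty simplex is contained in both balls. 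I will split into three cases. If $\sigma\not\subset B_y\p{\phi}$, then~\eqref{condition::complex_local_ball} applied to the addition of $y$ gives $\ind{\sigma\in K_k\p{T\cup\set{y}}}=\ind{\sigma\in K_k\p{T}}$ for any $T$, applied to both $T=R$ and $T=R\cup\set{z}$, so the four indicators cancel in pairs. If instead $\sigma\subset B_y\p{\phi}$, then $\sigma\not\subset B_z\p{\phi}$, so~\eqref{condition::complex_local_ball} applied to the addition of $z$ gives the cancellation in the perpendicular direction. Either way the alternating sum vanishes.

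Having established this incremental invariance, I iterate: enumerate $S\cap (B_z\p{a}\setminus B_z\p{2\phi})=\set{y_1,\dots,y_m}$ (finite by local finiteness of $S$), set $T_0:=S\cap B_z\p{2\phi}$ and $T_i:=T_{i-1}\cup\set{y_i}$, and apply the claim with $R=T_{i-1}$ (noting $\|y_i-z\|>2\phi$) to conclude
\[
    \chi_q\p{K\p{T_i\cup\set{z}}}-\chi_q\p{K\p{T_i}}
    =\chi_q\p{K\p{T_{i-1}\cup\set{z}}}-\chi_q\p{K\p{T_{i-1}}}.
\]
Chaining yields $D\p{S\cap B_z\p{a}}=D\p{S\cap B_z\p{2\phi}}$, establishing that $\rho_z=2\phi$ is a radius of stabilization centered at $z$. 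Local determination (Definition~\ref{definition::local_determined}) is automatic because $\rho_z$ is the constant $2\phi$, so the hypothesis $T\cap B_z\p{\rho_z\p{S}}=S\cap B_z\p{\rho_z\p{S}}$ trivially implies $\rho_z\p{T}=\rho_z\p{S}$.

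The main obstacle is purely bookkeeping: keeping the four-way indicator argument clean and making sure that the ``local ball'' control supplied by~\eqref{condition::complex_local_ball} is invoked with the correct base set in each case. No delicate analytic estimates are needed—the result is essentially combinatorial, relying only on the disjointness $B_y\p{\phi}\cap B_z\p{\phi}=\emptyset$ and on applying~\eqref{condition::complex_local_ball} twice (once for $y$, once for $z$).
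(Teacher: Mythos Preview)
Your proof is correct and follows essentially the same approach as the paper: both establish that adding a far point $y$ with $\|y-z\|>2\phi$ leaves the add-$z$ cost unchanged (using the disjointness $B_y(\phi)\cap B_z(\phi)=\emptyset$ together with \eqref{condition::complex_local_ball}), then iterate over the points of $S\cap(B_z(a)\setminus B_z(2\phi))$. The only cosmetic difference is that the paper packages the argument via a five-set partition $U,U^*,V,V^*,W$ of simplices and counts within each class, whereas you carry out the equivalent computation simplex-by-simplex with a four-term indicator identity; the underlying combinatorics are identical.
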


\begin{proof} \label{proof::stabilization_euler_ball}
	Let $z\in\reals^d$ and $S\in\mathcal X\p{\reals^d}$. Furthermore, let $T$ be a finite multiset of points in $\reals^d$ such that $S\cap B_z\p{2\phi}\subseteq T$. Let $y\notin B_z\p{2\phi}$. Consider the partition
	\begin{align}
		U:= \ & \set{\sigma\in K\p{T}\st \sigma\subset B_z\p{\phi}} \\
		U^*:= \ & \set{\sigma\in K\p{T\cup\set{z}}\st \sigma\subset B_z\p{\phi}} \\
		V:= \ & \set{\sigma\in K\p{T}\st \sigma\subset B_y\p{\phi}} \\
		V^*:= \ & \set{\sigma\in K\p{T\cup\set{y}}\st \sigma\subset B_y\p{\phi}} \\
		W:= \ & \set{\sigma\in K\p{T}\st \sigma\nsubseteq B_z\p{\phi} \AND \sigma\nsubseteq B_y\p{\phi}}.
	\end{align}
	
	Condition \eqref{condition::complex_local_ball} limits the influence of a single additional point on the complex to the ball of radius $\phi$ around it. $B_z\p{\phi}\cap B_z\p{\phi}=\emptyset$ because $\|y-z\|>2\phi$. Thus we have $K\p{T}=U\cup W\cup V$, $K\p{T\cup\set{z}}=U^*\cup W\cup V$, $K\p{T\cup\set{y}}=U\cup W\cup V^*$, and $K\p{T\cup\set{y, z}}=U^*\cup W\cup V^*$.
	
	For $U_{k}$, $U_{k}^*$, $V_{k}$, $V_{k}^*$, and $W_k$ denoting the set of $k$-simplices contained in $U$, $U^*$, $V$, $V^*$, and $W$, respectively, the add-$z$ cost for the $q$-truncated Euler characteristic becomes
	\begin{align*}
		& \chi_q\p{K\p{T\cup \set{y,z}}}-\chi_q\p{K\p{T\cup \set{y}}} \\
		= \ & \sum_{k=0}^q \p{-1}^k\#\set{K_k\p{T\cup\set{y, z}}} - \sum_{k=0}^q\p{-1}^k\#\set{K_k\p{T\cup\set{y}}} \\
		= \ & \sum_{k=0}^q \p{-1}^k\p{\#\set{U_{k}^*}+\#\set{W_k}+\#\set{V_{k}^*}} - \sum_{k=0}^q\p{-1}^k\p{\#\set{V^*_k}+\#\set{W_k}} \\
		= \ & \sum_{k=0}^q \p{-1}^k\p{\#\set{U^*_k}+\#\set{W_k}+\#\set{V_k}} - \sum_{k=0}^q\p{-1}^k\p{\#\set{V_k}+\#\set{W_k}} \\
		= \ & \sum_{k=0}^q \p{-1}^k\#\set{K_k\p{T\cup\set{z}}} - \sum_{k=0}^q\p{-1}^k\#\set{K_k\p{T}} \\
		= \ & \chi_q\p{K\p{T\cup \set{z}}}-\chi_q\p{K\p{T}}.
	\end{align*}
	
	We see that the addition of $\set{y}$ does not change the add-$z$ cost. Starting with $T=S\cap B_z\p{\phi}$, for any radius $a>2\phi$, $S\cap\p{B_z\p{a}\setminus B_z\p{\phi}}$ consists of finitely many points, which may be added to $S\cap B_z\p{\phi}$ in succession while leaving the add-$z$ cost unchanged. We conclude that $\rho_z=2\phi$ is a radius of stabilization for $\chi_q\p{K}$, and is locally-determined by virtue of being constant.
\end{proof}

\subsection{Bootstrap Results}

Here we give bootstrap convergence results for the altered statistics defined in Appendix~\ref{section::bounded_persistent_betti_numbers}. For given vectors of birth and death times, $\vec r=\p{r_i}_{i=1}^k$ and $\vec s=\p{s_i}_{i=1}^k$, let $\beta_{q, B}^{\vec r, \vec s}=\p{\beta_{q,B}^{r_i, s_i}}_{i=1}^k$ denote the multivariate function whose components are the $B$-bounded persistent Betti numbers evaluated at each pair of birth and death times. Likewise, for a vector of filtration times $\vec{r}=\p{r_i}_{i=1}^k$, let $\chi_q^{\vec r}$ denote the multivariate function giving the $q$-truncated Euler characteristic at each time $r_i$, with $\chi^{\vec r}_q\p{\mathcal K}:=\p{\chi_q\p{K^{r_i}}}_{i=1}^k$.

The following apply for $F\in \mathcal P\p{\reals^d}$ with density $f$ such that $\|f\|_p<\infty$ for some $p>2$, as specified. $F$ and $\hat F_n$ are such that $\hat F_n$ has density $\hat f_n$, $\| \hat f_n-f\|_1 \to 0$, and $\| \hat f_n-f \|_p \rightarrow 0$ in probability (resp.\ $a.s.$). Let $\mathbf X_n=\set{X_i}_{i=1}^n\iid F$ and $\p{m_n}_{n\in\nats}$ such that $\lim_{n\rightarrow\infty}m_n=\infty$. $\mathbf X_{m_n}^*=\set{X_i^*}_{i=1}^{m_n}\iid \hat F_n\big|\mathbf X_n$ is a bootstrap sample and $G$ a multivariate distribution. Recalling the conclusion of Theorem~\ref{theorem::bootstrap_general}, for a multivariate statistic $\vec\psi$:
\begin{statement} \label{conclusion::bootstrap_appendix}
	\begin{equation*}
		\frac{1}{\sqrt{n}}\p{\vec{\psi}\p{\sqrt[d]{n}\mathbf X_n}-\E{\vec{\psi}\p{\sqrt[d]{n}\mathbf X_n}}}\overset{d}{\rightarrow}G 
	\end{equation*}
	\centering{if and only if}
	\begin{equation*}
		\frac{1}{\sqrt{{m_n}}}\p{\vec{\psi}\p{\sqrt[d]{{m_n}}\mathbf X_{m_n}^*}-\E{\vec{\psi}\p{\sqrt[d]{{m_n}}\mathbf X_{m_n}^*}\big| \mathbf X_n}  }\overset{d}{\rightarrow}G \text{ in probability (resp. a.s.)}.
	\end{equation*}
\end{statement}

\begin{corollary}\label{theorem::bootstrap_bounded_pbn}
	Let $q\geq 0$ and $p>2q+3$. Let $\mathcal K$ be a filtration of simplicial complexes satisfying \eqref{condition::complex_increasing}. Then for any given $\vec r$, $\vec s$, and $B>0$, Statement~\ref{conclusion::bootstrap_appendix} holds for $\beta_{q, B}^{\vec r, \vec s}$.
\end{corollary}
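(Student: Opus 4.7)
The plan is to apply Theorem~\ref{theorem::bootstrap_general} to the vector $\beta_{q,B}^{\vec r,\vec s}$, which reduces to verifying the moment condition \eqref{condition::expectation} and the stabilization condition \eqref{condition::stabilization} on $\mathcal{C}_{p,M}(\reals^d)$ for each component $\beta_{q,B}^{r_i,s_i}$, with $b=(p-2)/(d(p-1))$.

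The stabilization side is essentially free. Since \eqref{condition::complex_increasing} is assumed, Lemma~\ref{lemma::stabilization_bounded_pbn_increasing} supplies the \emph{constant} locally-determined radius $\rho_z\equiv 2B$. In particular $\rho_{\sqrt[d]{n}Y'}(\sqrt[d]{n}\mathbf Y_n)$ is identically $2B$, so \eqref{condition::stabilization_radius} holds for $F$, and Lemma~\ref{lemma::condition_stabilization} then yields \eqref{condition::stabilization} on $\mathcal C_{p,M}(\reals^d)$ with the required $b$.

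For the moment condition the strategy is to verify \eqref{condition::local_count} and then invoke Lemma~\ref{lemma::condition_expectation}. Because the radius of stabilization is the constant $2B$, the add-$y$ cost reduces to a local quantity,
\begin{equation*}
\psi(S\cup\{y\})-\psi(S)=\beta_{q,B}^{r,s}\bigl(K((S\cap B_y(2B))\cup\{y\})\bigr)-\beta_{q,B}^{r,s}\bigl(K(S\cap B_y(2B))\bigr).
\end{equation*}
Each term on the right is at most $\dim Z_{q,B}\le\#K_q^{r}\le\binom{\#(S\cap B_y(2B))+1}{q+1}$, since cycles are dominated by $q$-simplices and the local vertex set has at most $\#(S\cap B_y(2B))+1$ points. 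Thus
\begin{equation*}
\bigl|\psi(S\cup\{y\})-\psi(S)\bigr|\le C\bigl(1+\#(S\cap B_y(2B))^{q+1}\bigr),
\end{equation*}
which gives \eqref{condition::local_count} with $R=2B$ and exponent $u_a=a(q+1)$. The requirement $u_a\le p-1$ with $a>2$ is solvable precisely when $(p-1)/(q+1)>2$, i.e.\ when $p>2q+3$, which is the corollary's hypothesis. Applying Lemma~\ref{lemma::condition_expectation} for such $a$ then yields \eqref{condition::expectation} on $\mathcal{C}_{p,M}(\reals^d)$ for every $M<\infty$, and in particular for $M>\|f\|_p$.

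Having verified \eqref{condition::expectation} and \eqref{condition::stabilization} for each component $\beta_{q,B}^{r_i,s_i}$, Theorem~\ref{theorem::bootstrap_general} applies and delivers Statement~\ref{conclusion::bootstrap_appendix}. There is no serious obstacle here: the constant radius of stabilization afforded by \eqref{condition::complex_increasing} trivializes \eqref{condition::stabilization_radius}, and the only nontrivial step is the uniform polynomial-in-local-count bound on the add-one cost, which follows from the Geometric Lemma~\ref{lemma::geometric_bounded} together with the simplex count $\binom{n+1}{q+1}$. Matching $u_a=a(q+1)$ against $p-1$ is precisely what forces the assumption $p>2q+3$.
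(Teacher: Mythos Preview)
Your proposal is correct and follows the paper's overall template (verify \eqref{condition::local_count}, invoke Lemma~\ref{lemma::condition_expectation} for \eqref{condition::expectation}, use the constant radius $2B$ from Lemma~\ref{lemma::stabilization_bounded_pbn_increasing} for \eqref{condition::stabilization_radius}, then Lemma~\ref{lemma::condition_stabilization} and Theorem~\ref{theorem::bootstrap_general}). The one genuine difference is in how you establish the local-count bound.

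The paper does not localize via the radius of stabilization. Instead it works directly with Lemma~\ref{lemma::geometric_bounded}, bounding $\dim\bigl(Z_{q,B}(K^r(\cdot\cup\{Y'\}))/Z_{q,B}(K^r(\cdot))\bigr)$ by an explicit spanning argument: every $B$-bounded $q$-cycle through $\sqrt[d]{n}Y'$ can be written as a sum of ``cone boundaries'' $\partial(\sigma\cup\{\sqrt[d]{n}Y'\})$ with $\sigma$ a $q$-simplex on the $I_n$ points inside $B_{\sqrt[d]{n}Y'}(B)$, giving the sharper count $\binom{I_n}{q+1}$ with $R=B$. You instead use the constant radius $2B$ to reduce the add-one cost to the difference of two $\beta_{q,B}^{r,s}$ values on a complex with at most $\#(S\cap B_y(2B))+1$ vertices, and then bound each value crudely by $\#K_q^r\le\binom{\#(S\cap B_y(2B))+1}{q+1}$. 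This is more elementary and avoids the spanning argument entirely (so your closing reference to Lemma~\ref{lemma::geometric_bounded} is actually unnecessary). Both routes produce $u_a=a(q+1)$ and hence the same threshold $p>2q+3$; the paper's route has the advantage of tighter constants ($R=B$ rather than $2B$), which it exploits to write down an explicit rate for $\gamma_\epsilon$ in Proposition~\ref{proposition::w2_general}.
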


Note the difference in necessary conditions between Corollaries~\ref{theorem::bootstrap_pbn} and \ref{theorem::bootstrap_bounded_pbn}. Corollary~\ref{theorem::bootstrap_pbn} notably requires a translation-invariant simplicial complex, along with the elimination of small loops via \eqref{condition::complex_minimum}. Corollary~\ref{theorem::bootstrap_bounded_pbn} imposes relatively few assumptions on the underlying simplicial complex. As a general statement, it can be seen that the $B$-bounded persistent Betti numbers defined here are better behaved than the unbounded persistent Betti numbers. Furthermore, the $B$-bounded persistent Betti numbers allow for an explicit rate calculation for the 2-Wasserstein metric in Proposition~\ref{proposition::w2_general}, see Appendix~\ref{appendix::bootstrap_applied} for details. For the unbounded persistent Betti numbers, this rate is stated implicitly in terms of the unknown tail probability for the radius of stabilization.

\begin{proof} \label{proof::bootstrap_bounded_pbn}
	Let $\mathbf Y_n=\set{Y_i}_{i=1}^n$ be iid and $Y'$ an independent copy. For a given $q\geq 0$, $B\geq 0$, and $r,s\in\reals$, we will show that $B_{q, B}^{r,s}\p{\mathcal K}$ satisfies assumption \eqref{condition::local_count}.
	
	Applying Lemma~\ref{lemma::geometric_bounded}, we must bound above the number of linearly independent $B$-bounded $q$-cycles and $q$-boundaries added when $\set{\sqrt[d]{n}Y'}$ is included with the sample $\sqrt[d]{n}\mathbf Y_n$. We start by considering the cycles. By \eqref{condition::complex_increasing} the addition of $\sqrt[d]{n}Y'$ will only introduce simplices to the complex having $\sqrt[d]{n}Y'$ as a vertex. As such, any $B$-bounded cycles in $Z_q\p{K^r\p{\sqrt[d]{n}\p{\mathbf Y_n\cup\set{Y'}}}}$ not having $\sqrt[d]{n}Y'$ as a vertex must already be in $Z_q\p{K^r\p{\sqrt[d]{n}\mathbf Y_n}}$, and thus in $Z_{q, B}\p{K^r\p{\sqrt[d]{n}\mathbf Y_n}}$. Thus, we must only bound the possible number of linearly independent $B$-bounded cycles within $K^r\p{\sqrt[d]{n}\p{\mathbf \cup \set{Y'}}}$ which have $\sqrt[d]{n}Y'$ as a vertex.
	
	Let $I_n:=\sum_{i=1}^n\ind{\|Y_i-Y'\|\leq B/\sqrt[d]{n}}$ be the number of sample points falling within $B$ of $\sqrt[d]{n}$. 
	
	We will construct a worst-case scenario. For any simplicial complexes $J\subseteq K$, we have that $Z_{q, B}\p{J}\subseteq Z_{q, B}\p{K}$. The addition of more simplices to $K^r\p{\sqrt[d]{n}\p{\mathbf Y_n\cup \set{Y'}}}$ having $\sqrt[d]{n}Y'$ as a vertex may increase the dimension of $Z_{q,B}\p{K^r\p{\sqrt[d]{n}\p{\mathbf Y_n\cup \set{Y'}}}}$, but will not alter $Z_{q,B}\p{K^r\p{\sqrt[d]{n}\mathbf Y_n}}$. As a worst case, we assume $K^r$ is such that all possible simplices containing $\sqrt[d]{n}Y'$ are included. Thus, for any simplex $\sigma\in K^r\p{\sqrt[d]{n}\mathbf Y_n}$ such that $\sigma\subseteq \p{\sqrt[d]{n}\mathbf Y_n}\cap B_{\sqrt[d]{n}Y'}\p{B}$ and $\diam{\sigma}\leq B$, $\partial\p{\sigma\cup\set{\sqrt[d]{n}Y'}}$ has diameter at most $B$, contains $\sqrt[d]{n}Y'$ as a vertex, and is a cycle within $Z_r\p{K^r\p{\sqrt[d]{n}\p{\mathbf Y_n\cup\set{Y'}}}}$. Let
	\begin{equation*}
		U:=\set{\partial\p{\sigma\cup\set{\sqrt[d]{n}Y'}}\st \sigma\subseteq \p{\sqrt[d]{n}\mathbf Y_n}\cap B_{\sqrt[d]{n}Y'}\p{B}\AND \#\set{\sigma}=q+1}.
	\end{equation*}
	
	Now consider $x$ to be any cycle in $Z_q\p{K^r\p{\sqrt[d]{n}\p{\mathbf Y_n\cup \set{Y'}}}}$ with diameter at most $B$ and a vertex at $\sqrt[d]{n}Y'$. For every simplex $\sigma$ of $x$ not containing $\sqrt[d]{n}Y'$ as a vertex, we add $\partial\p{\sigma\cup\set{\sqrt[d]{n}Y'}}$ to $x$, $\partial\p{\sigma\cup\set{\sqrt[d]{n}Y'}}$ necessarily having diameter less than $B$. This operation cannot add any new vertices to $x$, and thus cannot increase the total cycle diameter. What remains after completing these additions is either $0$ or a cycle $x'$ whose simplices all contain $\sqrt[d]{n}Y'$ as a vertex, the latter being an impossibility. Thus, any $B$-bounded cycle in $Z_q\p{K^r\p{\sqrt[d]{n}\p{\mathbf Y_n\cup \set{Y'}}}}$ having a vertex at $\sqrt[d]{n}Y'$ can be written as a linear combination of $B$-bounded elements from $U$. For $I_n:=\sum_{i=1}^n\ind{\|Y_i-Y'\|\leq B/\sqrt[d]{n}}$, we arrive at a worst case bound of
	\begin{equation}
		\dim\p{\frac{Z_{q,B}\p{K^r\p{\sqrt[d]{n}\p{\mathbf Y_n\cup\set{Y'}}}}}{Z_{q, B}\p{K^r\p{\sqrt[d]{n}\mathbf Y_n}}}}\leq \#\set{U}=\binom{I_n}{q+1}.
	\end{equation}
	
	A similar argument for the boundaries yields
	\begin{equation}
		\dim\p{\frac{B_{q,B}\p{K^s\p{\sqrt[d]{n}\p{\mathbf Y_n\cup\set{Y'}}}}}{B_{q, B}\p{K^s\p{\sqrt[d]{n}\mathbf Y_n}}}}\leq \#\set{U}=\binom{I_n}{q+1}.
	\end{equation}
	
	For any $a>2$, via Lemma~\ref{lemma::geometric_bounded} we have
	\begin{align*}
		& \abs{\beta_{q, B}^{r, s}\p{\mathcal K\p{\sqrt[d]{n}\p{\mathbf Y_n\cup \set{Y'}}}}-\beta_{q, B}^{r, s}\p{\mathcal K\p{\sqrt[d]{n}\mathbf Y_n}}}^a \nonumber \\
		\leq \ & \max\set{\dim\p{\frac{Z_{q,B}\p{K^r\p{\sqrt[d]{n}\p{\mathbf Y_n\cup\set{Y'}}}}}{Z_{q, B}\p{K^r\p{\sqrt[d]{n}\mathbf Y_n}}}}, \dim\p{\frac{B_{q,B}\p{K^s\p{\sqrt[d]{n}\p{\mathbf Y_n\cup\set{Y'}}}}}{B_{q, B}\p{K^s\p{\sqrt[d]{n}\mathbf Y_n}}}}}^a \\
		\leq \ & \binom{I_n}{q+1}^a \nonumber \\
		\leq \ & \frac{1}{\p{\p{q+1}!}^{a}}I_n^{a\p{q+1}} \nonumber \\
		\leq \ & \frac{1}{\p{\p{q+1}!}^{a}}\p{I_n^{a\p{q+1}}+1}. \nonumber
	\end{align*}
	
	Here $R=B$, $U_a=1/\p{\p{q+1}!}^{a}$, and $u_a=a\p{q+1}$. \eqref{condition::expectation} is then satisfied via Lemma~\ref{lemma::condition_expectation}. \eqref{condition::stabilization_radius} is satisfied via Lemma~\ref{lemma::stabilization_bounded_pbn_increasing}, in this case with a constant radius of stabilization of $2B$. An application of Theorem~\ref{theorem::bootstrap_general} gives the desired result.
	
	In this case, given that the radius of stabilization is a known constant, an explicit rate for $\gamma_\epsilon$ in Proposition~\ref{proposition::w2_general} can be calculated. Details omitted, from the proof of Proposition~\ref{proposition::w2_general} we have $\delta_\epsilon=B^{d}\epsilon^{\frac{p-2}{p-1}}$ up to constant factors. For $p<\infty$, using $a=\p{p-1}/\p{q+1}$ we achieve an optimal rate for $\gamma_\epsilon$ of
	\begin{equation}
		O\p{B^{d\p{1-\frac{2q+2}{p-1}}}\p{1+B^{d\p{2q+2}}}\epsilon^{\frac{p-2}{p-1}\p{1-\frac{2q+2}{p-1}}}}.
	\end{equation}
	
	For $p=\infty$, using $a_\epsilon=2-\log\p{\delta_\epsilon}$ we achieve an optimal rate of
	\begin{equation}
		O\p{\epsilon B^{d\p{2q+3}}\p{\frac{-\log\p{B^d\epsilon}}{\log\p{-\log\p{B^d\epsilon}}}}^{2q+2}}.
	\end{equation}
\end{proof}

\begin{corollary} \label{theorem::bootstrap_euler}
	Let $q\geq 0$ and $p>2q+1$. Let $\mathcal K$ be a filtration of simplicial complexes satisfying \eqref{condition::complex_increasing} and \eqref{condition::complex_local_adj}. Then for any given $\vec r$, Statement~\ref{conclusion::bootstrap_appendix} holds for $\chi_q^{\vec r}$.
\end{corollary}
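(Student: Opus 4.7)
The plan is to verify the hypotheses of Theorem~\ref{theorem::bootstrap_general} for each scalar component $\chi_q\p{K^{r_i}}$ and then appeal to the finite-sum remark immediately following Proposition~\ref{proposition::w2_general} to extend the result to the vector-valued statistic $\chi_q^{\vec r}$. Concretely, I will establish \eqref{condition::local_count} (from which \eqref{condition::expectation} follows via Lemma~\ref{lemma::condition_expectation}) and \eqref{condition::stabilization_radius} (from which \eqref{condition::stabilization} follows via Lemma~\ref{lemma::condition_stabilization}) for the scalar functional $\psi=\chi_q\p{K^r}$.

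For \eqref{condition::local_count}, fix $S\in\tilde{\mathcal X}\p{\reals^d}$ and $y\in\reals^d$. Condition \eqref{condition::complex_increasing} implies that every simplex in $K^r\p{S\cup\set{y}}\setminus K^r\p{S}$ has $y$ as a vertex, and \eqref{condition::complex_local_adj} forces each such simplex to lie inside $B_y\p{\phi\p{r}}$. Therefore, writing $N=\#\set{S\cap B_y\p{\phi\p{r}}}$, the number of newly introduced $k$-simplices is at most $\binom{N}{k}$, and hence
\begin{equation*}
\abs{\chi_q\p{K^r\p{S\cup\set{y}}}-\chi_q\p{K^r\p{S}}}\leq \sum_{k=0}^{q}\binom{N}{k}\leq C_q\p{1+N^{q}}
\end{equation*}
for a combinatorial constant $C_q$. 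Raising to the $a$-th power yields \eqref{condition::local_count} with $R=\phi\p{r}$, some $U_a<\infty$, and $u_a=aq$. Since $p>2q+1$, we may choose $a>2$ with $aq\leq p-1$; Lemma~\ref{lemma::condition_expectation} then gives \eqref{condition::expectation} for $\mathcal C_{p,M}\p{\reals^d}$.

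For the stabilization side, Lemma~\ref{lemma::stabilization_euler_adj} asserts that $\rho_z\equiv \phi\p{r}$ is a (constant, hence locally-determined) radius of stabilization for $\chi_q\p{K^r}$ centered at any $z\in\reals^d$. Constancy of $\rho_z$ trivially yields \eqref{condition::stabilization_radius}, and then Lemma~\ref{lemma::condition_stabilization} delivers \eqref{condition::stabilization} for $\mathcal C_{p,M}\p{\reals^d}$, $F$, and $b=\p{p-2}/\p{d\p{p-1}}$. Thus Theorem~\ref{theorem::bootstrap_general} applies to each $\chi_q\p{K^{r_i}}$, and by the finite-sum remark after Proposition~\ref{proposition::w2_general} the $W_2$-type bound passes to the multivariate functional $\chi_q^{\vec r}=\p{\chi_q\p{K^{r_i}}}_{i=1}^{k}$, yielding Statement~\ref{conclusion::bootstrap_appendix}.

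The only place any real work happens is the combinatorial bound $\binom{N}{k}\leq C_q\p{1+N^q}$ for $k\leq q$; this is what explains the sharp threshold $p>2q+1$ (rather than the $p>2m+3$ or $p>2m+5$ seen for the full Euler characteristic in Corollaries~\ref{theorem::bootstrap_euler_nontrunc} and \ref{theorem::bootstrap_euler_nontrunc_ball}), since truncating at dimension $q$ cuts off the largest-exponent contributions. No genuine obstacle beyond careful bookkeeping is anticipated, as both the moment bound and the (constant) radius of stabilization have already been isolated as Lemmas~\ref{lemma::condition_expectation} and \ref{lemma::stabilization_euler_adj}.
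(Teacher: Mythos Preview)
Your proposal is correct and follows essentially the same route as the paper: verify \eqref{condition::local_count} via the combinatorial bound $\sum_{k=0}^q\binom{N}{k}\le C_q(1+N^q)$ (the paper uses the explicit constant $e$), invoke Lemma~\ref{lemma::condition_expectation} with $u_a=aq\le p-1$, and obtain \eqref{condition::stabilization} from the constant radius $\phi(r)$ in Lemma~\ref{lemma::stabilization_euler_adj} together with Lemma~\ref{lemma::condition_stabilization}. One small simplification: the detour through the finite-sum remark after Proposition~\ref{proposition::w2_general} is unnecessary, since Theorem~\ref{theorem::bootstrap_general} is already stated for a vector $\vec\psi$ whose components individually satisfy \eqref{condition::expectation} and \eqref{condition::stabilization}, so it applies directly to $\chi_q^{\vec r}$.
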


\begin{proof} \label{proof::bootstrap_euler}
	We will show that assumption \eqref{condition::local_count} is satisfied for $\psi=\chi^r_q\p{\mathcal K}:=\chi_q\p{K^r}$ given $r\in\reals$. Let $\mathbf Y_n=\set{Y_i}_{i=1}^n$ be an iid sample in $\reals^d$, with $Y'$ an independent copy.
	
	By \eqref{condition::complex_increasing}, and \eqref{condition::complex_local_adj} it suffices to count those simplices within $B_{\sqrt[d]{n}Y'}\p{\phi\p{r}}$ having a vertex at $\sqrt[d]{n}Y'$. Let $I_n=\sum_{i=1}^n\ind{\|Y_i-Y'\|\leq \phi\p{r}/\sqrt[d]{n}}$. For any $a>2$, we have
	\begin{align*}
		& \abs{\chi^r_q\p{\mathcal K\p{\sqrt[d]{n}\p{\mathbf Y_n\cup\set{Y'}}}}-\chi^r_q\p{\mathcal K\p{\sqrt[d]{n}\mathbf Y_n}}}^a \\
		= \ & \abs{\sum\limits_{k=0}^q\p{-1}^k\#\set{K^r_k\p{\sqrt[d]{n}\p{\mathbf Y_n\cup\set{Y'}}}}-\sum\limits_{k=0}^q\p{-1}^k\#\set{K^r_k\p{\sqrt[d]{n}\mathbf Y_n}}}^a \\
		= \ & \abs{\sum\limits_{k=0}^q\p{-1}^k\p{\#\set{K^r_k\p{\sqrt[d]{n}\p{\mathbf Y_n\cup\set{Y'}}}}-\#\set{K^r_k\p{\sqrt[d]{n}\mathbf Y_n}}}}^a \\
		= \ & \abs{\sum\limits_{k=0}^q\p{-1}^k\#\set{K^r_k\p{\sqrt[d]{n}\p{\mathbf Y_n\cup\set{Y'}}}\setminus K_k^r\p{\sqrt[d]{n}\mathbf Y_n}}}^a \\
		\leq \ & \p{\sum\limits_{k=0}^q \binom{I_n}{k}}^a \\
		\leq \ & \p{\sum\limits_{k=0}^q \frac{I_n^k}{k!}}^a \\
		\leq \ & \p{\sum\limits_{k=0}^q \frac{I_n^q}{k!}}^a \\
		\leq \ & \p{eI_n^q}^a \\
		\leq \ & e^a\p{1+I_n^{aq}}.
	\end{align*}
	
	Here $R=\phi\p{r}$, $U_a=e^a$, and $u_a=aq$, satisfying \eqref{condition::local_count}. \eqref{condition::expectation} then follows from Lemma~\ref{lemma::condition_expectation} for $p\geq qa+1>2q+1$. \eqref{condition::stabilization_radius} is satisfied via Lemma~\ref{lemma::stabilization_euler_adj} with a constant radius of stabilization $\phi\p{r}$. \eqref{condition::stabilization} is satisfied via Lemma~\ref{lemma::condition_stabilization}. An application of Theorem~\ref{theorem::bootstrap_general} gives the final result.
	
	For the rate in Proposition~\ref{proposition::w2_general}, for $p<\infty$ we have that $\delta_\epsilon=\epsilon^{\frac{p-2}{p-1}}$ up to constant factors. Using $a=\p{p-1}/q$ we achieve a final rate for $\gamma_\epsilon$ of
	\begin{equation}
		O\p{\epsilon^{{\frac{p-2}{p-1}}\p{1-\frac{2q}{p-1}}}}.
	\end{equation}
	
	For $p=\infty$, using $a=a_\epsilon=2-\log\p{\epsilon}$ we achieve a final rate of
	\begin{equation}
		O\p{\epsilon\p{\frac{-\log\p{\epsilon}}{\log\p{-\log\p{\epsilon}}}}^{2q}}.
	\end{equation}
\end{proof}

\begin{corollary} \label{theorem::bootstrap_euler_ball}
	Let $q\geq 0$ and $p>2q+3$. Let $\mathcal K$ be a filtration of simplicial complexes satisfying \eqref{condition::complex_local_ball}. Then for any given $\vec r$, Statement~\ref{conclusion::bootstrap_appendix} holds for $\chi_q^{\vec r}$.
\end{corollary}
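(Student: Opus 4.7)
The plan is to follow the template of Corollary~\ref{theorem::bootstrap_euler}, replacing the counting argument with one suited to \eqref{condition::complex_local_ball}, and swapping the underlying stabilization lemma. Let $\mathbf Y_n=\{Y_i\}_{i=1}^n$ be an iid sample and $Y'$ an independent copy, and fix $r\in\reals$.

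First, I would verify \eqref{condition::local_count} for $\psi = \chi_q^r(\mathcal K)$. Under \eqref{condition::complex_local_ball}, every simplex in the symmetric difference $K^r(\sqrt[d]{n}(\mathbf Y_n\cup\{Y'\}))\triangle K^r(\sqrt[d]{n}\mathbf Y_n)$ is contained in $B_{\sqrt[d]{n}Y'}(\phi(r))$. Unlike the \eqref{condition::complex_increasing} case, these simplices need not contain $\sqrt[d]{n}Y'$ as a vertex, and simplices may be removed as well as added. Letting $I_n := \sum_{i=1}^n\ind{\|Y_i-Y'\|\leq \phi(r)/\sqrt[d]{n}}$, each $k$-simplex in the symmetric difference has all $k+1$ of its vertices drawn from a set of at most $I_n+1$ points, giving at most $\binom{I_n+1}{k+1}$ possibilities. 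Expanding the truncated sum,
\begin{equation*}
\abs{\chi_q^r(\mathcal K(\sqrt[d]{n}(\mathbf Y_n\cup\{Y'\})))-\chi_q^r(\mathcal K(\sqrt[d]{n}\mathbf Y_n))}^a \leq \p{\sum_{k=0}^q \binom{I_n+1}{k+1}}^a \leq C_{a,q}\p{1+I_n^{a(q+1)}},
\end{equation*}
for some constant $C_{a,q}<\infty$. This establishes \eqref{condition::local_count} with $R=\phi(r)$, $U_a=C_{a,q}$, and $u_a=a(q+1)$.

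Next, I would invoke Lemma~\ref{lemma::condition_expectation}, which requires $u_a \leq p-1$, i.e., $a(q+1) \leq p-1$. The hypothesis $p>2q+3$ is exactly what guarantees $(p-1)/(q+1)>2$, so one may choose some $a>2$ with $a \leq (p-1)/(q+1)$, yielding \eqref{condition::expectation} on $\mathcal C_{p,M}(\reals^d)$. For the stabilization side, Lemma~\ref{lemma::stabilization_euler_ball} provides the constant locally-determined radius $\rho_z = 2\phi(r)$, trivially giving \eqref{condition::stabilization_radius} for any distribution. Lemma~\ref{lemma::condition_stabilization} then converts this into \eqref{condition::stabilization} with $b=(p-2)/(d(p-1))$.

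With \eqref{condition::expectation} and \eqref{condition::stabilization} now verified for each component $\chi_q^{r_i}$ of the vector $\chi_q^{\vec r}$, a direct application of Theorem~\ref{theorem::bootstrap_general} in the multivariate form yields Statement~\ref{conclusion::bootstrap_appendix}. The only subtle point, and the main thing to get right, is the bookkeeping in the simplex count: because \eqref{condition::complex_local_ball} does not force affected simplices to contain the new vertex, one loses one factor of $I_n$'s combinatorial saving relative to the \eqref{condition::complex_increasing} case, which is precisely why the norm hypothesis worsens from $p>2q+1$ to $p>2q+3$. Everything else reduces to the machinery already developed.
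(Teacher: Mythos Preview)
Your proposal is correct and follows essentially the same route as the paper: verify \eqref{condition::local_count} by counting simplices in the symmetric difference within $B_{\sqrt[d]{n}Y'}(\phi(r))$, deduce \eqref{condition::expectation} via Lemma~\ref{lemma::condition_expectation}, obtain \eqref{condition::stabilization_radius} from the constant stabilization radius of Lemma~\ref{lemma::stabilization_euler_ball}, upgrade to \eqref{condition::stabilization} via Lemma~\ref{lemma::condition_stabilization}, and conclude with Theorem~\ref{theorem::bootstrap_general}. Your simplex count is in fact slightly cleaner than the paper's: the paper bounds added and removed $k$-simplices separately by $\binom{I_n+1}{k+1}$ and $\binom{I_n}{k+1}$ and sums them, whereas you observe directly that the symmetric difference is a set of $k$-simplices on at most $I_n+1$ vertices, hence of size at most $\binom{I_n+1}{k+1}$; this saves an immaterial factor of $2$. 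You also cite the correct stabilization lemma (Lemma~\ref{lemma::stabilization_euler_ball}), while the paper's printed proof references Lemma~\ref{lemma::stabilization_euler_adj}, which requires \eqref{condition::complex_increasing} and is evidently a typo.
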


\begin{proof} \label{proof::bootstrap_euler_ball}
	The proof follows exactly that of Corollary~\ref{theorem::bootstrap_euler}. Let $\mathbf Y_n=\set{Y_i}_{i=1}^n$ be an iid sample in $\reals^d$, with $Y'$ an independent copy.
	
	By \eqref{condition::complex_local_ball} it suffices to consider simplices within $B_{\sqrt[d]{n}Y'}\p{\phi\p{r}}$. Let
	\begin{equation*}
		I_n=\sum_{i=1}^n\ind{\|Y_i-Y'\|\leq \phi\p{r}/\sqrt[d]{n}}.
	\end{equation*}
	
	For any $a>2$, we have
	\begin{align*}
		& \chi^r_q\p{\mathcal K\p{\sqrt[d]{n}\p{\mathbf Y_n\cup\set{Y'}}}}-\chi_q^r\p{\mathcal K\p{\sqrt[d]{n}\mathbf Y_n}} \\
		= \ & \sum\limits_{k=0}^q\p{-1}^k\#\set{K^r_k\p{\sqrt[d]{n}\p{\mathbf Y_n\cup\set{Y'}}}}-\sum\limits_{k=0}^q\p{-1}^k\#\set{K^r_k\p{\sqrt[d]{n}\mathbf Y_n}} \\
		= \ & \sum\limits_{k=0}^q\p{-1}^k\p{\#\set{K^r_k\p{\sqrt[d]{n}\p{\mathbf Y_n\cup\set{Y'}}}}-\#\set{K^r_k\p{\sqrt[d]{n}\mathbf Y_n}}} \\
		= & \sum\limits_{k=0}^q\p{-1}^k\p{\#\set{K^r_k\p{\sqrt[d]{n}\p{\mathbf Y_n\cup\set{Y'}}}\setminus K^r_k\p{\sqrt[d]{n}\mathbf Y_n}}} \\
		&  - \sum\limits_{k=0}^q\p{-1}^k\p{\#\set{K^r_k\p{\sqrt[d]{n}\mathbf Y_n}\setminus K^r_k\p{\sqrt[d]{n}\p{\mathbf Y_n\cup\set{Y'}}}}}.
	\end{align*}
	
	Any simplices added by the inclusion of $\sqrt[d]{n}Y'$ may contain $\sqrt[d]{n}Y'$ as a vertex, and any removed simplices must only have vertices within $\sqrt[d]{n}\mathbf Y_n$. We bound the possible simplices in each dimension. Thus for any $a>2$
	\begin{align*}
		& \abs{\chi_q^r\p{\mathcal K\p{\sqrt[d]{n}\p{\mathbf Y_n\cup\set{Y'}}}}-\chi_q^r\p{\mathcal K\p{\sqrt[d]{n}\mathbf Y_n}}}^a \\
		\leq \ & \p{\sum\limits_{k=0}^q \binom{I_n}{k+1} + \sum\limits_{k=0}^{q} \binom{I_n+1}{k+1}}^a \\
		\leq \ & \p{2\sum\limits_{k=0}^{q} \binom{I_n+1}{k+1}}^a \\
		\leq \ & \p{2\sum\limits_{k=0}^q\frac{\p{I_n+1}^{k+1}}{\p{k+1}!}}^a \\
		\leq \ & \p{2\sum\limits_{k=0}^q\frac{\p{I_n+1}^{q+1}}{\p{k+1}!}}^a \\
		\leq \ & \p{2\p{e-1}}^a\p{I_n+1}^{a\p{q+1}}\\
		\leq \ & 2^{a\p{q+2}-1}\p{e-1}^a\p{1+I_n^{a\p{q+1}}}.
	\end{align*}
	
	Here $R=\phi\p{r}$, $U_a\leq 2^{a\p{q+2}-1}\p{e-1}^a$, and $u_a=a\p{q+1}$, satisfying \eqref{condition::local_count}. \eqref{condition::expectation} is then satisfied via Lemma~\ref{lemma::condition_expectation} for $p\geq a\p{q+1}+1>2q+3$. \eqref{condition::stabilization_radius} is satisfied via Lemma~\ref{lemma::stabilization_euler_adj} with a constant radius of stabilization $\phi\p{r}$. \eqref{condition::stabilization} is satisfied via Lemma~\ref{lemma::condition_stabilization}. An application of Theorem~\ref{theorem::bootstrap_general} gives the final result.
	
	For the rate in Proposition~\ref{proposition::w2_general}, for $p<\infty$ we have that $\delta_\epsilon=\epsilon^{\frac{p-2}{p-1}}$ up to constant factors. Using $a=\p{p-1}/\p{q+1}$ we achieve a final rate for $\gamma_\epsilon$ of
	\begin{equation}
		O\p{\epsilon^{{\frac{p-2}{p-1}}\p{1-\frac{2q+2}{p-1}}}}.
	\end{equation}
	
	For $p=\infty$, using $a=a_\epsilon=2-\log\p{\epsilon}$ we achieve a final rate of
	\begin{equation}
		O\p{\epsilon\p{\frac{-\log\p{\epsilon}}{\log\p{-\log\p{\epsilon}}}}^{2q+2}}.
	\end{equation}
\end{proof}

\section{Proofs of Main Results} \label{appendix::proofs}

\subsection{Necessary Inequalities} \label{appendix::inequalities}

Throughout these proofs, we will make ample use of the H\"older, Jensen, and Minkowsky inequalities, along with the following. For brevity, these inequalities may be used implicitly and in combination. For $m\in\nats$, $\set{x_i}_{i=1}^m\subset\reals$, and $k\geq 1$,
\begin{equation}
	\abs{\sum_{i=1}^m x_i}^k\leq m^{k-1}\p{\sum_{i=1}^m\abs{x_i}^k}.
\end{equation}
Likewise for $0\leq k\leq 1$
\begin{equation}
	\abs{\sum_{i=1}^m x_i}^k\leq \sum_{i=1}^m\abs{x_i}^k.
\end{equation}
Next, for any density $f$ and $1\leq j\leq k\leq\infty$
\begin{equation}
	\|f\|_j^j\leq \|f\|_k^{\p{j-1}\frac{k}{k-1}}.
\end{equation}
Finally, for any set $A\subseteq \reals^d$ with $\abs{A}$ the Lebesgue measure of $A$ and $k\geq 1$
\begin{equation}
	\abs{\int_{A}f\p{x}\de x}^k\leq \abs{A}^{k-1}\int_{A}\abs{f\p{x}}^k\de x.
\end{equation}

Furthermore, in each of the following, we use the simplified notation \begin{equation}
	H_n\p{\mathbf S, \mathbf T}=\psi\p{\sqrt[d]{n}\mathbf S} -\psi\p{\sqrt[d]{n}\mathbf T}
\end{equation}
for the change in the statistic $\psi$ when the underlying scaled point cloud is altered. In the multivariate case, given $\vec \psi=\p{\psi_j}_{j=1}^k$ we use the notation $\vec H_n\p{\mathbf S, \mathbf T}=\p{H_{n,j}\p{\mathbf S, \mathbf T}}_{j=1}^k$, where $H_{n,j}=\psi_j\p{\sqrt[d]n\mathbf S}-\psi_j\p{\sqrt[d]{n}\mathbf T}$.

\subsection{Proofs of Section~\ref{section::stabilization}}

\begin{proposition}[Proposition~\ref{proposition::almost_sure_in_probability}]
	For $\mathbf S$ a simple point process taking values in $\mathcal X\p{\reals^d}$, let $\psi$ stabilize on $\mathbf S$ almost surely. Then $\psi$ stabilizes on $\mathbf S$ in probability. 
\end{proposition}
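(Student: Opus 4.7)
The plan is to use the given radius of stabilization $\rho$ to construct a terminal addition cost $D^\infty$ explicitly, and then show that the bad event for stabilization in probability at radius $l$ is contained in the event $\{\rho(\mathbf S)>l\}$, which has vanishing outer probability by hypothesis.

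First, let $\rho$ be a radius of stabilization for $\psi$ centered at $z$ witnessing the almost-sure stabilization on $\mathbf S$. Following the remark after Definition~\ref{definition::radius_stability}, define
\begin{equation*}
D^\infty(S) := D\p{S\cap B_z\p{\rho(S)}}
\end{equation*}
whenever $\rho(S)<\infty$, and for $\rho(S)=\infty$ set $D^\infty(S)$ equal to $\lim_{l\to\infty}D(S\cap B_z(l))$ if this limit exists, and to an arbitrary constant (say $0$) otherwise. I would then verify that $D^\infty$ is a terminal addition cost in the sense of Definition~\ref{definition::terminal_addition}: if the limit exists and $\rho(S)<\infty$, Definition~\ref{definition::radius_stability} forces the addition cost to be constant on $[\rho(S),\infty)$, so the limit equals $D^\infty(S)$; the remaining cases hold by construction.

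Next, for any $l\in[0,\infty)$, Definition~\ref{definition::radius_stability} yields the pointwise inclusion
\begin{equation*}
\set{\omega : \rho(\mathbf S(\omega))\leq l}\subseteq \set{\omega : D(\mathbf S(\omega)\cap B_z(l))=D^\infty(\mathbf S(\omega))},
\end{equation*}
since on the left-hand event the addition cost has already stabilized by radius $l$. Taking complements gives
\begin{equation*}
\set{D(\mathbf S\cap B_z(l))\neq D^\infty(\mathbf S)}\subseteq \set{\rho(\mathbf S)>l}.
\end{equation*}
Monotonicity of outer probability then yields
\begin{equation*}
\probout{D(\mathbf S\cap B_z(l))\neq D^\infty(\mathbf S)}\leq \probout{\rho(\mathbf S)>l},
\end{equation*}
and the right-hand side tends to $0$ as $l\to\infty$ by the assumption of almost-sure stabilization.

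There is no serious obstacle here beyond bookkeeping. The only delicate point is that $\rho$ and $D^\infty$ need not be measurable, which is precisely why the definitions are phrased in terms of outer probability; monotonicity of $\mathbb{P}^*$ with respect to set inclusion (which holds for arbitrary, not necessarily measurable, sets) handles this cleanly, and the pointwise inclusion above is genuinely pointwise, so no further measurability argument is required.
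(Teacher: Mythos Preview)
Your proof is correct and follows essentially the same approach as the paper: define $D^\infty$ via the radius of stabilization, establish the pointwise inclusion $\set{D(\mathbf S\cap B_z(l))\neq D^\infty(\mathbf S)}\subseteq \set{\rho(\mathbf S)>l}$, and conclude by monotonicity of outer probability. You are slightly more explicit than the paper in handling the $\rho(S)=\infty$ case and in flagging the measurability issue, but the argument is the same.
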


\begin{proof}
	Let $\rho$ be a radius of stabilization satisfying Definition~\ref{definition::stabilization_almost_sure}. Likewise, let $D^\infty$ be a corresponding terminal addition cost. For any $\rho\p{\mathbf S}\leq l<\infty$, $D\p{\mathbf S\cap B_z\p{l}} =D\p{\mathbf S\cap B_z\p{\rho\p{\mathbf S}}} \mybrk =D^\infty\p{\mathbf S}$. Thus $\set{D\p{\mathbf S\cap B_z\p{l}}\neq D^\infty\p{\mathbf S}}\subseteq \set{\rho\p{\mathbf S}>l}$, and consequently $\mybrk \probout{D\p{\mathbf S\cap B_z\p{l}} \allowbreak \neq D^\infty\p{\mathbf S}}\leq\probout{\rho\p{\mathbf S}>l}\rightarrow 0$. We see that $\psi$ stabilizes in probability on $\mathbf S$ with terminal addition cost $D^\infty\p{\mathbf S}$.
\end{proof}

\begin{proposition}[Proposition~\ref{proposition::locally_determinined_minimum}]
	For $\mathcal R$ the space of locally-determined radii of stabilization for $\psi$ centered at $z\in\reals^d$, let $\rho^*\colon\mathcal X\p{\reals^d}\rightarrow \brk{0, \infty}$ such that $\rho^*\p{S}=\inf_{\rho\in \mathcal R}\rho\p{S}$. Then $\rho^*$ is a locally determined radius of stabilization for $\psi$ centered at $z$.
\end{proposition}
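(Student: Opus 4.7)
The plan is to verify both defining properties in turn. That $\rho^*$ is a radius of stabilization follows from the fact that every $\rho\in\mathcal R$ satisfies $\rho\p{S}\geq r_{\min}\p{S}$, where $r_{\min}\p{S}$ is the unique minimal radius at $S$ (well-defined because $\psi\p{S\cap B_z\p{l}}$ is piecewise constant in $l$ and $S$ has no accumulation points, as noted right before Definition~\ref{definition::stabilization_almost_sure}). Taking the pointwise infimum preserves this bound, so $\rho^*\p{S}\geq r_{\min}\p{S}$, and hence $D\p{S\cap B_z\p{l}}=D\p{S\cap B_z\p{\rho^*\p{S}}}$ for every $\rho^*\p{S}\leq l<\infty$.

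For local determination, I would fix $S,T\in\mathcal X\p{\reals^d}$ with $T\cap B_z\p{\rho^*\p{S}}=S\cap B_z\p{\rho^*\p{S}}$ and prove $\rho^*\p{T}=\rho^*\p{S}$ via two inequalities. The direction $\rho^*\p{T}\geq\rho^*\p{S}$ is immediate: if instead $\rho^*\p{T}<\rho^*\p{S}$, pick $\rho\in\mathcal R$ with $\rho\p{T}<\rho^*\p{S}$; then $B_z\p{\rho\p{T}}\subseteq B_z\p{\rho^*\p{S}}$ forces $S\cap B_z\p{\rho\p{T}}=T\cap B_z\p{\rho\p{T}}$, and the local determination of $\rho$, applied with the roles of $S$ and $T$ swapped, yields $\rho\p{S}=\rho\p{T}<\rho^*\p{S}$, contradicting $\rho\p{S}\geq\rho^*\p{S}$. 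For the reverse bound I would construct an explicit witness $\rho_S\in\mathcal R$ attaining the infimum at $S$: set $\rho_S\p{S'}:=\rho^*\p{S}$ on the class $\set{S':S'\cap B_z\p{\rho^*\p{S}}=S\cap B_z\p{\rho^*\p{S}}}$ and $\rho_S\p{S'}:=\infty$ elsewhere. Its local determination then holds by construction and immediately yields $\rho_S\p{T}=\rho^*\p{S}$, so $\rho^*\p{T}\leq \rho_S\p{T}=\rho^*\p{S}$.

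The hard part will be verifying that $\rho_S$ is genuinely in $\mathcal R$, which amounts to the fixed-point inequality $\rho^*\p{S}\geq M\p{S,\rho^*\p{S}}$ where $M\p{S,r}:=\sup\set{r_{\min}\p{S'}:S'\cap B_z\p{r}=S\cap B_z\p{r}}$. Every $\rho\in\mathcal R$ satisfies $\rho\p{S}\geq M\p{S,\rho\p{S}}$ by combining its local determination with the stabilization bound on its equivalence class. Taking a sequence $\rho_n\in\mathcal R$ with $\rho_n\p{S}\downarrow\rho^*\p{S}$, the desired inequality follows by passing to the limit, provided $M\p{S,\rho_n\p{S}}\to M\p{S,\rho^*\p{S}}$. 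I would derive this semicontinuity from the no-accumulation-points property of $\mathcal X\p{\reals^d}$: any $S'$ agreeing with $S$ on $B_z\p{\rho^*\p{S}}$ has only finitely many points in the annulus $B_z\p{\rho^*\p{S}+1}\setminus B_z\p{\rho^*\p{S}}$, giving a positive gap to the nearest differing point and showing $S'$ agrees with $S$ on some $B_z\p{r}$ with $r>\rho^*\p{S}$. The degenerate case $\rho^*\p{S}=\infty$ is trivial, since then $B_z\p{\rho^*\p{S}}=\reals^d$ and the hypothesis forces $T=S$.
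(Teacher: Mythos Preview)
Your proof is correct, and the decisive technical input---the positive gap created by the no-accumulation-points property---is the same as the paper's. The organization differs, however. The paper handles both inequalities with one symmetric $\epsilon$-argument applied directly to the pair $\p{S,T}$: since $S$ and $T$ agree on the closed ball $B_z\p{\rho^*\p{S}}$ and are locally finite, they agree on $B_z\p{\rho^*\p{S}+\epsilon}$ for all small $\epsilon$; choosing $\rho\in\mathcal R$ with $\rho\p{S}\leq\rho^*\p{S}+\epsilon$ forces $\rho\p{T}=\rho\p{S}$, whence $\rho^*\p{T}\leq\rho^*\p{S}+\epsilon$, and then the roles of $S$ and $T$ can be swapped. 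You instead get $\rho^*\p{T}\geq\rho^*\p{S}$ by a clean contradiction (arguably more direct than the paper for that direction) and treat $\rho^*\p{T}\leq\rho^*\p{S}$ by constructing an explicit witness $\rho_S\in\mathcal R$. Checking $\rho_S\in\mathcal R$ reduces to your fixed-point inequality $\rho^*\p{S}\geq M\p{S,\rho^*\p{S}}$, whose proof---once the semicontinuity step is unpacked---is precisely the paper's gap argument applied to $S$ and an arbitrary $S'$ rather than to $S$ and the specific $T$. So the detour through $M$ and the limiting sequence $\rho_n$ is correct but heavier than necessary; its one dividend is that you exhibit a concrete member of $\mathcal R$ attaining the infimum on the whole equivalence class of $S$, which the paper's argument does not isolate.
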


\begin{proof}
	If all possible radii are infinite, the result follows trivially. Else for $S, T\in \mathcal X\p{\reals^d}$ suppose $\rho^*\p{S}<\infty$ with $S \cap B_z\p{\rho^*\p{S}}=T \cap B_z\p{\rho^*\p{S}}$. Since $S$ and $T$ have no accumulation points, for any $\epsilon>0$ sufficiently small, we have $S \cap B_z\p{\rho^*\p{S}+\epsilon}=T \cap B_z\p{\rho^*\p{S}+\epsilon}$. There exists a locally determined radius of stabilization $\rho$ such that $\rho\p{S} \leq \rho^*\p{S}+\epsilon$. As $S \cap B_z\p{\rho^*\p{S}+\epsilon}=T \cap B_z\p{\rho^*\p{S}+\epsilon}$ with $\rho\p{S}\leq \rho^*\p{S}+\epsilon$, we have that $S \cap B_z\p{\rho\p{S}}=T \cap B_z\p{\rho\p{S}}$. Thus $\rho\p{S}=\rho\p{T}$ by the local-determination criterion. Then $\rho^*\p{T}\leq \rho\p{T}=\rho\p{S}\leq \rho^*\p{S}+\epsilon$. Since the choice of $\epsilon$ was arbitrary, we have $\rho^*\p{T}\leq \rho^*\p{S}$. Thus, $S \cap B_z\p{\rho^*\p{T}}=T \cap B_z\p{\rho^*\p{T}}$. By similar arguments, $\rho^*\p{S}\leq \rho^*\p{T}$. Combining, $\rho^*\p{S}=\rho^*\p{T}$ must hold, and the result follows.
\end{proof}

\subsection{Proofs of Section~\ref{section::technical_results}}

\begin{lemma}[Lemma~\ref{lemma::condition_expectation}]
	For $p>2$, let $\psi$ satisfy \eqref{condition::local_count} with $u_a\leq p-1$ for some $a>2$. Then for any $M<\infty$, $\psi$ satisfies \eqref{condition::expectation} for $\mathcal C_{p, M}\p{\reals^d}$.
\end{lemma}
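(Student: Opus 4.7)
My plan is to use \eqref{condition::local_count} to replace the $L^a$-moment of the add-one cost with a moment of a local point count, and then control that count via a Rosenthal-type moment inequality combined with a Hölder/Young/interpolation argument on the sampling density.

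First I would apply \eqref{condition::local_count} pointwise with $S = \sqrt[d]{n}\mathbf Y_n$ and $y = \sqrt[d]{n}Y'$ to obtain
\begin{equation*}
	\abs{\psi\p{\sqrt[d]{n}\p{\mathbf Y_n\cup\set{Y'}}}-\psi\p{\sqrt[d]{n}\mathbf Y_n}}^a\leq U_a\p{1+N_n^{u_a}},
\end{equation*}
where $N_n:=\#\set{i\colon \abs{Y_i-Y'}\leq R/\sqrt[d]{n}}$. It then suffices to bound $\E{N_n^{u_a}}$ uniformly in $n\in\nats$ and in $G\in\mathcal C_{p,M}\p{\reals^d}$. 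Conditional on $Y'$, $N_n\sim\rbinom{n, p_n(Y')}$ with $p_n(y):=\int_{B_y\p{R/\sqrt[d]{n}}}g(x)\de x$. Next I would invoke a Rosenthal-type moment bound: for $\rbinom{n,p}$ and $r\geq 1$, $\E{B^r}\leq C_r\p{np+(np)^r}$. This can be assembled from the direct multinomial expansion (integer $r$), Rosenthal's inequality applied to $B-np$ (noninteger $r\geq 2$), and Lyapunov log-convexity of $r\mapsto \log\E{B^r}$ interpolating between $r=1$ and $r=2$ (noninteger $r\in(1,2)$). The residual case $u_a<1$ is dispatched by Jensen, $\E{N_n^{u_a}}\leq\E{N_n}^{u_a}$. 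Taking expectation over $Y'$ reduces the task to bounding $n^r\E{p_n(Y')^r}$ uniformly for $r\in\set{1,u_a}$.

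For this key step, I would write $p_n=g*\mathbbm 1_{B_0(R/\sqrt[d]{n})}$ and apply Hölder followed by Young's convolution inequality with exponents $(rp',1)$ (where $p'=p/(p-1)$):
\begin{equation*}
	\E{p_n(Y')^r}\leq \norm[p]{g}\norm[rp']{p_n}^r\leq \norm[p]{g}\norm[rp']{g}^r\p{V_d R^d/n}^r.
\end{equation*}
The factor $n^r$ cancels exactly, leaving $n^r\E{p_n(Y')^r}\leq V_d^r R^{dr}\norm[p]{g}\norm[rp']{g}^r$. The hypothesis $u_a\leq p-1$ is precisely what makes $rp'\leq p$ for both $r=1$ (using $p>2$) and $r=u_a$, at which point the density interpolation $\norm[q]{g}\leq \norm[1]{g}^\theta\norm[p]{g}^{1-\theta}\leq M^{1-\theta}$ for $1\leq q\leq p$ controls $\norm[rp']{g}$ uniformly in $G\in\mathcal C_{p,M}\p{\reals^d}$. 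Assembling the pieces yields \eqref{condition::expectation} with $E_a$ depending only on $a$, $u_a$, $U_a$, $R$, $M$, $p$, and $d$.

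The main delicacy will be obtaining the Rosenthal-type bound at the possibly noninteger exponent $r=u_a$ while retaining $u_a$ itself (and not $\lceil u_a\rceil$) as the exponent on $np$. The crude bound $N_n^{u_a}\leq N_n^{\lceil u_a\rceil}$ would reduce the problem to $\E{p_n(Y')^{\lceil u_a\rceil}}$, but $\lceil u_a\rceil$ can exceed $p-1$ when $p$ is not an integer, which would break the Young/interpolation step; routing through Lyapunov and Rosenthal preserves the exponent $u_a$ and matches the admissible range $\norm[u_a p/(p-1)]{g}\leq \norm[p]{g}$ afforded by $u_a\leq p-1$.
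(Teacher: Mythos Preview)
Your proposal is correct and follows essentially the same architecture as the paper: reduce via \eqref{condition::local_count} to a moment of the local count $N_n$, bound $\E{N_n^{u_a}}$ by a Rosenthal-type inequality of the form $C(np+(np)^{u_a})$, and then control $n^r\E{p_n(Y')^r}$ for $r\in\{1,u_a\}$ using H\"older-type estimates on the density. The paper invokes Lata{\l}a's Corollary~3 for the binomial moment bound directly (avoiding your Rosenthal/Lyapunov patchwork), and for the density integral it works by hand: change of variables $y\mapsto x+t/\sqrt[d]{n}$, Jensen to pull the power inside, Fubini, and then H\"older with exponents $((u_a+1)/u_a,\,u_a+1)$ to land on $\|g\|_{u_a+1}^{u_a+1}$. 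Your route---writing $p_n=g*\mathbbm 1_{B_0(R/\sqrt[d]{n})}$, applying H\"older with $(p,p')$, then Young's convolution inequality---is a clean abstract substitute that yields $\|g\|_p\|g\|_{u_ap'}^{u_a}$ instead; both are controlled by $M$ once $u_a\leq p-1$. Note that \eqref{condition::local_count} already stipulates $u_a>1$, so your separate treatment of $u_a<1$ is unnecessary.
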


\begin{proof} \label{proof::condition_expectation}
	Let $R>0$ and $a>2$ be as given such that $u_a\leq p-1$. Define $I_n := \mybrk \#\set{\mathbf Y_n\cap B_{Y'}\p{R/\sqrt[d]{n}}}=\#\set{\p{\sqrt[d]{n}\mathbf Y_n}\cap B_{\sqrt[d]{n}Y'}\p{R}}$. Conditional on $Y'$, $I_n$ follows a binomial distribution with expectation $n\int_{B_{Y'}\p{R/\sqrt[d]{n}}}g\p{y}\de y$, where $g$ is a density of $G$. By \eqref{condition::local_count}, we have that
	\begin{align}
		& \E{\abs{\psi\p{\sqrt[d]{n}\p{\mathbf Y_n\cup\set{Y'}}}-\psi\p{\sqrt[d]{n}\mathbf Y_n}}^{a}} \\
		\leq \ & \E{U_a\p{1+I_n^{u_a}}} \\
		\leq \ & U_a\p{1+\E{I_n^{u_a}}}.
	\end{align}
	
	Via Corollary~3 in \cite{Latala1997}, there is a universal constant $K$ such that the conditional $u_a$-th moment of $I_n$ is at most
	\begin{align}
		& \p{K\frac{u_a}{\log\p{u_a}}}^{u_a}\max\set{n\int_{B_{Y'}\p{\frac{R}{\sqrt[d]{n}}}}g\p{y}\de y , \p{n\int_{B_{Y'}\p{\frac{R}{\sqrt[d]{n}}}}g\p{y}\de y}^{u_a}} \nonumber \\
		\leq \ & \p{K\frac{u_a}{\log\p{u_a}}}^{u_a}\p{n\int_{B_{Y'}\p{\frac{R}{\sqrt[d]{n}}}}g\p{y}\de y + \p{n\int_{B_{Y'}\p{\frac{R}{\sqrt[d]{n}}}}g\p{y}\de y}^{u_a}}. \nonumber
	\end{align}
	
	Removing the conditioning on $Y'$, for $V_d$ the volume of a unit ball in $\reals^d$, we have
	\begin{align}
		& \int_{\reals^d} n\p{\int_{B_x\p{\frac{R}{\sqrt[d]{n}}}}g\p{y}\de y}g\p{x}\de x \nonumber \\
		= \ & \int_{\reals^d} \int_{B_0\p{R}}g\p{x+\frac{t}{\sqrt[d]{n}}}g\p{x}\de t\de x \nonumber \\
		= \ &  \int_{B_0\p{R}}\int_{\reals^d}g\p{x+\frac{t}{\sqrt[d]{n}}}g\p{x}\de x\de t \nonumber \\
		\leq \ & V_dR^d\|g\|_2^2 \nonumber \\
		\leq \ & V_dR^d\|g\|_p^{\frac{p}{p-1}} \nonumber \\
		\leq \ & V_dR^dM^{\frac{p}{p-1}} \nonumber
	\end{align}
	
	and
	\begin{align}
		& \int_{\reals^d} \p{n\int_{B_x\p{\frac{R}{\sqrt[d]{n}}}}g\p{y}\de y}^{u_a}g\p{x}\de x \nonumber \\
		= \ & \int_{\reals^d}\p{\int_{B_0\p{R}}g\p{x+\frac{t}{\sqrt[d]{n}}}\de t}^{u_a}g\p{x}\de x \nonumber \\
		\leq \ & \p{V_d R^d}^{u_a-1}\int_{\reals^d}\int_{B_0\p{R}}g\p{x+\frac{t}{\sqrt[d]{n}}}^{u_a}g\p{x}\de t\de x \nonumber \\
		= \ & \p{V_d R^d}^{u_a-1}\int_{B_0\p{R}}\int_{\reals^d}g\p{x+\frac{t}{\sqrt[d]{n}}}^{u_a}g\p{x}\de x\de t \nonumber \\
		\leq \ & \p{V_dR^d}^{u_a}\|g\|_{u_a+1}^{u_a+1} \nonumber \\
		\leq \ & \p{V_dR^d}^{u_a}\|g\|_p^{\frac{p}{p-1}u_a} \nonumber \\
		\leq \ & \p{V_dR^dM^{\frac{p}{p-1}}}^{u_a}.
	\end{align}
	
	Combining, we have
	\begin{align}
		& \E{\abs{\psi\p{\sqrt[d]{n}\p{\mathbf Y_n\cup\set{Y'}}}-\psi\p{\sqrt[d]{n}\mathbf Y_n}}^{a}} \\
		\leq \ & U_a\p{1+\p{K\frac{u_a}{\log\p{u_a}}}^{u_a}\p{V_dR^dM^{\frac{p}{p-1}} + \p{V_dR^dM^{\frac{p}{p-1}}}^{u_a}}}.
	\end{align}
	
	Since this bound does not depend on $G$ or $n$, \eqref{condition::expectation} is satisfied by $\psi$ for $\mathcal C_{p, M}\p{\reals^d}$.
\end{proof}

\begin{lemma}[Lemma~\ref{lemma::condition_stabilization}]
	Let $\psi$ satisfy \eqref{condition::stabilization_radius} for $F\in C_{p, M}\p{\reals^d}$. Then $\psi$ satisfies \eqref{condition::stabilization} for $C_{p, M}\p{\reals^d}$, $F$, $b=\p{p-2}/\p{d\p{p-1}}$, and any $\p{l_\epsilon}_{\epsilon>0}$ such that $\lim_{\epsilon\rightarrow 0}l_\epsilon\epsilon^{\p{p-2}/\p{d\p{p-1}}} =0$ and $\lim_{\epsilon\rightarrow0}l_\epsilon=\infty$.
\end{lemma}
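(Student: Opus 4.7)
The plan is to reduce the addition-cost event in \eqref{condition::stabilization} to a tail event for the radius of stabilization, then use a maximal coupling to transfer this tail bound from $F$ to every $G$ in a small TV-neighborhood. The reduction is immediate from Definition~\ref{definition::radius_stability}: for a finite $S\in\tilde{\mathcal X}\p{\reals^d}$, $l\mapsto D_z\p{S\cap B_z\p{l}}$ is constant for $l\geq \rho_z\p{S}$ and coincides with $D_z\p{S}$ once $l$ covers $S$; hence $\set{\rho_z\p{S}\leq l}$ forces $D_z\p{S\cap B_z\p{l}}=D_z\p{S}$. So it suffices to show
\begin{equation*}
\sup_{G\in\mathcal C_{p,M}\p{\reals^d}\cap B_F\p{\epsilon,d_{\text{TV}}}}\sup_{n\in\nats}\prob{\rho_{\sqrt[d]{n}Y'_G}\p{\sqrt[d]{n}\mathbf Y_n^G}>l_\epsilon}\to 0.
\end{equation*}

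For the coupling step, for each $i$ take an independent maximal coupling $\p{Y_i^F,Y_i^G}$ of $F$ and $G$, and similarly for $Y'$, so that $\prob{Y_i^F\neq Y_i^G}=d_{\text{TV}}\p{F,G}\leq \epsilon$ and $\prob{Y_1^F\neq Y_1^G,Y_1^F\in A}=\int_A\p{f-f\wedge g}\p{y}\de y$. Let $E_1=\set{Y'_F=Y'_G}$ and
\begin{equation*}
E_2=\set{Y_i^F=Y_i^G \text{ for every } i \text{ with } \set{Y_i^F,Y_i^G}\cap B_{Y'_G}\p{l_\epsilon/\sqrt[d]{n}}\neq \emptyset}.
\end{equation*}
On $E_1\cap E_2$, writing $Y':=Y'_F=Y'_G$, the scaled samples agree on $B_{\sqrt[d]{n}Y'}\p{l_\epsilon}$. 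If additionally $\rho_{\sqrt[d]{n}Y'}\p{\sqrt[d]{n}\mathbf Y_n^F}\leq l_\epsilon$, the two point sets also agree on the smaller ball $B_{\sqrt[d]{n}Y'}\p{\rho_{\sqrt[d]{n}Y'}\p{\sqrt[d]{n}\mathbf Y_n^F}}$, and local determination (Definition~\ref{definition::local_determined}) forces the radii at the common centerpoint to coincide. Contrapositively,
\begin{equation*}
\set{\rho_{\sqrt[d]{n}Y'_G}\p{\sqrt[d]{n}\mathbf Y_n^G}>l_\epsilon}\subseteq E_1^c\cup E_2^c\cup \set{\rho_{\sqrt[d]{n}Y'_F}\p{\sqrt[d]{n}\mathbf Y_n^F}>l_\epsilon}.
\end{equation*}

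The outer two pieces are straightforward: $\prob{E_1^c}\leq \epsilon$ and the $F$-side tail vanishes as $l_\epsilon\to\infty$ uniformly in $n$ by \eqref{condition::stabilization_radius}. The heart of the proof is $\prob{E_2^c}$: a union bound over $i$, the maximal-coupling identities (including the symmetric one for $Y_1^G$), and the substitution $s=\sqrt[d]{n}t$ eliminate the factor of $n$ and produce a bound by two integrals of the form
\begin{equation*}
\int_{B_0\p{l_\epsilon}}\int_{\reals^d}\p{f-f\wedge g}\p{y'+s/\sqrt[d]{n}}g\p{y'}\de y'\de s
\end{equation*}
(the second one with $g-f\wedge g$ in place of $f-f\wedge g$), both $n$-independent. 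The main obstacle is extracting $\epsilon$-decay from these integrals, since a direct Hölder estimate against $\norm[p]{f-f\wedge g}\leq M$ only yields a constant. The resolution is Hölder with conjugate exponents $p$ and $p/\p{p-1}$, followed by interpolation of the intermediate norm between $L^1$ and $L^p$:
\begin{equation*}
\norm[p/\p{p-1}]{f-f\wedge g}\leq \norm[1]{f-f\wedge g}^{\p{p-2}/\p{p-1}}\norm[p]{f-f\wedge g}^{1/\p{p-1}}\leq \epsilon^{\p{p-2}/\p{p-1}}M^{1/\p{p-1}},
\end{equation*}
using $\norm[1]{f-f\wedge g}=d_{\text{TV}}\p{F,G}\leq \epsilon$. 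Combined with $\norm[p]{g}\leq M$ and integration over $B_0\p{l_\epsilon}$, this yields $\prob{E_2^c}=O\p{l_\epsilon^d\epsilon^{\p{p-2}/\p{p-1}}}$, which vanishes as $\epsilon\to 0$ precisely under the hypothesis $l_\epsilon\epsilon^{\p{p-2}/\p{d\p{p-1}}}\to 0$. This choice of interpolation exponent is what fixes $b=\p{p-2}/\p{d\p{p-1}}$ in the conclusion.
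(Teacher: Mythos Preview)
Your proof is correct and follows essentially the same route as the paper's: reduce to the radius tail via Definition~\ref{definition::radius_stability}, couple $F$ and $G$ maximally, use local determination to transfer the radius bound, and control the probability that the coupled samples disagree inside the ball of radius $l_\epsilon$ by a H\"older/interpolation argument yielding the $l_\epsilon^d\epsilon^{\p{p-2}/\p{p-1}}$ rate. The paper phrases the disagreement density via the conditional densities $\tilde f,\tilde g$ of $X_i,Y_i$ given $\set{X_i\neq Y_i}$, whereas you work directly with $f-f\wedge g$ and $g-f\wedge g$; these are the same objects up to the normalizing factor $\prob{X_i\neq Y_i}$, so the two computations coincide.
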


\begin{proof} \label{proof::condition_stabilization}
	
	Let $\set{X_i}_{i\in\nats}\iid F$ with $X'\sim F$ an independent copy. Likewise, for $G\in\mathcal C_{p, M}\p{\reals^d}\cap B_F\p{\epsilon; d_{\text{TV}}}$, let $\set{Y_i}_{i\in\nats}\iid G$ with $Y'\sim G$ an independent copy. Denote $\mathbf X_n:=\set{X_i}_{i=1}^n$. As $d_{\text{TV}}\p{F, G}\leq \epsilon$, it may be assumed that $\set{\p{X_i, Y_i}}_{i\in\nats}$ are iid with $\prob{X_i\neq Y_i}\leq \epsilon$ for all $i\in\nats$.
	
	Let $\p{l_\epsilon}_{\epsilon>0}$ be such that $\lim_{\epsilon\rightarrow0}l_\epsilon\epsilon^{\p{p-2}/\p{d\p{p-1}}}=0$. Define the following sets:
	\begin{align}
		A_Y & :=\set{Y'=X'} \\
		B_{Y, l_\epsilon} & :=\set{\mathbf Y_n\cap B_{X'}\p{\frac{l_\epsilon}{\sqrt[d]{n}}} = \mathbf X_n\cap B_{X'}\p{\frac{l_\epsilon}{\sqrt[d]{n}}}} \\
		C_{l_\epsilon} & :=\set{\rho_{\sqrt[d]{n}X'}\p{\sqrt[d]{n}\mathbf X_n}\leq l_\epsilon}.
	\end{align}
	
	By the local-definition criterion, Definition~\ref{definition::local_determined}, we have the following inclusion:
	\begin{equation*}
		A_Y\cap B_{Y, l_\epsilon}\cap C_{X, l_\epsilon}\subseteq\set{\rho_{\sqrt[d]{n}Y'}\p{\sqrt[d]{n}\mathbf Y_n}\leq l_\epsilon}.
	\end{equation*}
	Then
	\begin{align}
		& \probout{\rho_{\sqrt[d]{n}Y'}\p{\sqrt[d]{n}\mathbf Y_n}> l_\epsilon} \nonumber\\
		\leq \ & \probout{A_Y^c\cup B_{Y, l_\epsilon}^c\cup C_{l_\epsilon}^c} \nonumber\\
		\leq \ & \prob{A_Y^c}+\prob{B_{Y, l_\epsilon}^c}+\probout{C_{l_\epsilon}^c}.
	\end{align}
	
	Bounding each piece, $\prob{A_Y^c}=\prob{X'\neq Y'}\leq \epsilon$. Likewise, by \eqref{condition::stabilization_radius} we have $\probout{C_{l_\epsilon}^c}=\probout{\rho_{\sqrt[d]{n}X'}\p{\sqrt[d]{n}\mathbf X_n}> l_\epsilon}\leq p_\epsilon$, with $p_\epsilon$ not depending on $G$ or $n$ such that $\lim_{\epsilon\rightarrow0}p_\epsilon=0$. It thus remains to be shown that $B_{Y, l_\epsilon}^c$ occurs with small probability, uniformly in $n$ and $G$.
	
	As in \eqref{proof::condition_stabilization::1} in the proof of Proposition~\ref{proposition::w2_general}, the probability that $\mathbf X_n$ and $\mathbf Y_n$ coincide within $B_{X'}\p{l_\epsilon/\sqrt[d]{n}}$ is at most $2M^{\frac{p}{p-1}}V_d{l_\epsilon}^d\epsilon^{\frac{p-2}{p-1}}$. Thus we have that $\prob{B_{Y, l_\epsilon}^c}\leq \mybrk 2M^{\frac{p}{p-1}}V_d{l_\epsilon}^d\epsilon^{\frac{p-2}{p-1}}$. The bound does not depend on $G$ or $n$, with
	\begin{equation}
		\lim_{\epsilon\rightarrow 0}2M^{\frac{p}{p-1}}V_dl_\epsilon^d\epsilon^{\frac{p-2}{p-1}}=2M^{\frac{p}{p-1}}V_d\p{\lim_{\epsilon\rightarrow0}l_\epsilon\epsilon^{\frac{p-2}{d\p{p-1}}}}^d=0.
	\end{equation}
	
	Finally, by the definition of a radius of stabilization we have that
	\begin{align}
		& \prob{D_{\sqrt[d]{n}Y'}\p{\p{\sqrt[d]{n}\mathbf Y_n}\cap B_{\sqrt[d]{n}}\p{l_\epsilon}}\neq D_{\sqrt[d]Y'}\p{\mathbf Y_n}} \\
		\leq \ & \probout{\rho_{\sqrt[d]{n}Y'}\p{\sqrt[d]{n}\mathbf Y_n}> l_\epsilon} \\
		\leq \ & \epsilon + p_\epsilon + 2M^{\frac{p}{p-1}}V_dl_\epsilon^d\epsilon^{\frac{p-2}{p-1}}.
	\end{align}
	
	Here the final quantity does not depend on $G$ or $n$, and goes to $0$ as $\epsilon\rightarrow 0$. Thus \eqref{condition::stabilization} is satisfied.
	
\end{proof}

\begin{lemma}[Lemma~\ref{lemma::stabilization_poisson}]
	Let $F\in C_{p, M}$ with $p>2$ and $M<\infty$. Let $\rho_0$ be a locally-determined radius of stabilization for $\psi$ centered at $0$. Suppose that for any given $a,b \in \p{0, \infty}$, and $\delta>0$, there exists an $L_{a, b, \delta}<\infty$ and a measurable set $A_{a, b, \delta}$ with $\mybrk \rho_0^{-1}\p{\left(L_{a, b, \delta}, \infty\right]}\subseteq A_{a, b, \delta}$ such that
	\begin{equation}
		\sup_{\lambda\in\brk{a, b}}\probout{\rho_0\p{P_{\lambda}}>L_{a,b,\delta}} \leq \sup_{\lambda\in\brk{a, b}}\prob{\mathbf P_{\lambda}\in A_{a, b, \delta}}\leq \delta.
	\end{equation}
	Then for any $\delta>0$ there exists an $n_\delta<\infty$ and $L_\delta<\infty$ such that
	\begin{equation}
		\sup\limits_{n\geq n_\delta}\probout{\rho_0\p{\mathbf X_n-X'}>L_\delta}\leq \delta.
	\end{equation}
\end{lemma}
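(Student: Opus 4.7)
The plan is to condition on $X'$ and locally couple the scaled binomial $\sqrt[d]n(\mathbf X_n - X') \cap B_0(L)$ to a homogeneous Poisson process $\mathbf P_{f(X')}$ to which the hypothesis directly applies. The structural enabler is that by local determination, $\{S : \rho_0(S) \le L\}$ is $\sigma(S \cap B_0(L))$-measurable: if $T \cap B_0(L) = S \cap B_0(L)$ and $\rho_0(S) \le L$, then $T \cap B_0(\rho_0(S)) = S \cap B_0(\rho_0(S))$, so local determination forces $\rho_0(T) = \rho_0(S) \le L$. This reduces the comparison to a bounded region where binomial-to-Poisson coupling errors vanish.

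First, using $\|f\|_p \le M$ and $f \in L^1$, I would truncate $f(X')$: choose $0 < a < b$ with $\prob{f(X') \notin [a, b]} \le \delta/4$ by Markov for the upper tail, $\prob{f(X') > b} \le M^p/b^{p-1}$, and dominated convergence for $\prob{0 < f(X') < a} \to 0$. Invoke the hypothesis at $(a, b, \delta/4)$ to obtain $L_\delta := L_{a, b, \delta/4}$ with $\sup_{\lambda \in [a, b]} \probout{\rho_0(\mathbf P_\lambda) > L_\delta} \le \delta/4$. Then couple $\mathbf X_n$ (conditionally on $X'$) with a Poisson process $\Pi_n$ on $\reals^d$ of intensity $nf$: setting $p_n(X') := \int_{B_{X'}(L_\delta/\sqrt[d]n)} f$, the in-ball counts $\mathrm{Binom}(n, p_n(X'))$ and $\mathrm{Pois}(np_n(X'))$ admit a Le Cam coupling whose failure probability is at most $np_n(X')^2$; conditional on equal counts, the positions are coupled identically (both are conditionally iid with density proportional to $f$). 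H\"older's inequality yields the deterministic bound $p_n(X') \le M(V_d L_\delta^d/n)^{(p-1)/p}$, hence $np_n(X')^2 \le M^2(V_d L_\delta^d)^{2(p-1)/p} n^{-(p-2)/p} \to 0$ since $p > 2$.

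The final step compares the inhomogeneous Poisson $\sqrt[d]n(\Pi_n - X')$ — conditionally on $X'$ this is Poisson with intensity density $y \mapsto f(X' + y/\sqrt[d]n)$ — with the homogeneous $\mathbf P_{f(X')}$ on $B_0(L_\delta)$. The $L^1$-bound on Poisson total variation followed by a change of variables gives $d_{TV, n}(X') \le n\int_{B_{X'}(L_\delta/\sqrt[d]n)} |f(w) - f(X')|\,dw$. Averaging over $X' \sim F$ by Fubini and substituting $h = w - X'$ produces $n\int_{|h| \le L_\delta/\sqrt[d]n} \int f(x_0)|f(x_0 + h) - f(x_0)|\,dx_0\,dh$; Cauchy--Schwarz on the inner integral bounds it by $\|f\|_2\,\|f(\cdot + h) - f\|_2$, and $L^2$-continuity of translation (valid since $f \in L^p \subset L^2$) forces $\sup_{|h| \le L_\delta/\sqrt[d]n} \|f(\cdot + h) - f\|_2 \to 0$, so the whole expectation vanishes. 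Assembling the four $\delta/4$ losses (truncation, count coupling, Poisson hypothesis, and intensity TV) via the locality of $\{\rho_0 > L_\delta\}$ yields $\probout{\rho_0(\sqrt[d]n(\mathbf X_n - X')) > L_\delta} \le \delta$ for $n$ sufficiently large. The main obstacle is precisely this last step: pointwise Lebesgue differentiation gives the intensity convergence only at Lebesgue points of $f$ without a uniform rate in $X'$, so one must integrate over $X'$ and exploit $L^2$-translation continuity rather than rely on a pointwise rate.
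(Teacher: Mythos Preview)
Your proposal is correct and follows the same three-stage architecture as the paper: truncate $f(X')$ to a compact interval, couple the binomial process to an inhomogeneous Poisson process of intensity $nf$ inside the ball, then homogenize the intensity to $f(X')$ and invoke the hypothesis. The differences are in the implementation of the two coupling steps. For the binomial-to-Poisson comparison, the paper Poissonizes the sample size ($N\sim\rpois{n}$, shared atoms $U_1,\dots,U_{N\wedge n}$) and bounds the expected in-ball discrepancy by $M^{p/(p-1)}V_dL^d/\sqrt n$; you instead apply Le~Cam's inequality directly to the in-ball count and obtain the deterministic bound $n p_n(X')^2\le C\,n^{-(p-2)/p}$, which is equally sufficient. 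For the homogenization step, the paper decomposes over the Lebesgue-point sets $C_{\Delta,\gamma}$ of $f$ and uses a somewhat delicate measurability and H\"older argument to control $\int f(x)\,n\!\int_{B_x(L/\sqrt[d]n)}|f(y)-f(x)|\,dy\,dx$; your route---integrate over $X'$, change variables to the translation $h$, apply Cauchy--Schwarz, and invoke $L^2$-continuity of translations (valid since $f\in L^1\cap L^p\subset L^2$)---is shorter and avoids the Lebesgue-point bookkeeping entirely. Both arguments land on the same conclusion; yours trades the pointwise Lebesgue differentiation viewpoint for a global $L^2$ estimate, which is the cleaner choice here.
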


\begin{proof}
	We consider $n\geq n_0$. Define two independent sets of random variables $\p{U_i}_{i=1}^\infty\iid F$ and $\p{U_i^*}_{i=1}^\infty\iid F$. For $N\sim\rpois{n}$, denote by $\mathbf P_n$ the Poisson process given by $\set{U_i}_{i=1}^N$, having intensity $nf$ over $\reals^d$. We will couple this Poisson process to $\mathbf X_n$. $\set{U_i}_{i=1}^{N\vee n}\cup \set{U_i^*}_{i=1}^{\p{n-N}^+}$ has the same distribution as $\mathbf X_n$, thus we assume that the two random variables are equal. For a given random variable $U_i$ or $U_i^*$ and $L>0$, the probability of falling within $B_{X'}\p{L/\sqrt[d]{n}}$ is bounded, as shown below. Applying the Cauchy-Schwartz inequality, we have
	\begin{align}
		& \int_{\reals^d}\int_{B_x\p{\frac{L}{\sqrt[d]{n}}}}f\p{y}f\p{x}\de y\de x \nonumber \\
		= \ & \int_{\reals^d}\int_{B_0\p{\frac{L}{\sqrt[d]{n}}}}f\p{x+t}f\p{x}\de t\de x \nonumber \\
		= \ & \int_{B_0\p{\frac{L}{\sqrt[d]{n}}}}\int_{\reals^d}f\p{x+t}f\p{x}\de x\de t \nonumber \\
		\leq \ & \frac{V_dL^d}{n}\|f\|_2^2 \nonumber \\
		\leq \ & \frac{V_dL^d}{n}\|f\|_p^{\frac{p}{p-1}} \nonumber\\
		\leq \ & \frac{V_dL^dM^{\frac{p}{p-1}}}{n}. \nonumber
	\end{align}
	
	The expected number of points within $B_{X'}\p{L/\sqrt[d]{n}}$ that contribute to $\mathbf P_n\triangle \mathbf X_n$ is then at most
	\begin{equation}
		\E{\abs{N-n}\frac{V_dL^dM^{\frac{p}{p-1}}}{n}}\leq\frac{V_dL^dM^{\frac{p}{p-1}}}{n}\sqrt{\Var{N}}\leq \frac{M^{\frac{p}{p-1}}V_dL^d}{\sqrt{n}}. \label{proof::stabilization_poisson::1}
	\end{equation}
	As the number of differing points is an integer-valued random variable, this expectation bounds the probability that $\mathbf X_n$ and $\mathbf P_n$ differ within $B_{X'}\p{L/\sqrt[d]{n}}$. For a fixed value of $L$ and sufficiently large $n$, the bound can be made arbitrarily small.
	
	Next, we will couple the Poisson process $\mathbf P_n$ with a conditionally homogeneous approximation. We construct the following coupling: Let $\mathbf T$ be a homogeneous Poisson process on $\reals^d\times \left[0, \infty\right)$ with unit intensity. The point process given by $\set{U_i\st \p{U_i, T_i}\in \mathbf T, T_i\leq nf\p{U_i}}$ is then a nonhomogeneous Poisson process with intensity $nf$. We can safely assume that this process equals $\mathbf P_n$. Define the point process $\mathbf H_n:=\set{U_i\st \p{U_i, T_i}\in\mathbf T\text{ and } T_i\leq nf\p{X'}}$.
	
	Conditional on $X'$, $\mathbf H_n$ is a homogeneous Poisson process with intensity $nf\p{X'}$. The number of observations within $B_{X'}\p{L/\sqrt[d]{n}}$ that contribute to $\mathbf P_n\triangle \mathbf H_n$ follows a Poisson distribution with rate parameter
	\begin{equation}
		\int_{B_{X'}\p{\frac{L}{\sqrt[d]{n}}}}\abs{nf\p{y}-nf\p{X'}}\de y
	\end{equation}
	Removing the conditioning on $X'$, the expected number is
	\begin{equation}
		\int_{\reals^d}\p{n\int_{B_{x}\p{\frac{L}{\sqrt[d]{n}}}}\abs{f\p{y}-f\p{x}}\de y}f\p{x}	\de x \label{proof::stabilization_poisson::2}
	\end{equation}
	
	As the expectation above is an upper bound for the probability that $\mathbf P_n$ and $\mathbf H_n$ fail to coincide within $B_{X'}\p{L/\sqrt[d]{n}}$, we show that this quantity can be made arbitrarily small. Consider $C$, the set of Lebesgue points of $f$. We have that $C^c$ has Lebesgue measure $0$ by the Lebesgue differentiation theorem. By the definition of a Lebesgue point, we may write
	\begin{equation}
		C=\bigcap\limits_{\gamma>0}\bigcup\limits_{\Delta>0}\bigcap\limits_{\delta\leq\Delta}\set{x\in\reals\st \frac{\int_{B_x\p{\delta}}\abs{f\p{y}-f\p{x}}\de y}{V_d\delta^d}\leq \gamma}
	\end{equation}
	
	Here $V_d$ denotes the volume of a unit ball in $\reals^d$. Now as $f$ is a density, it may be shown that $\int_{B_x\p{\delta}}\abs{f\p{y}-f\p{x}}\de y/V_d\delta^d$ is a jointly continuous function of $x$ and $\delta$, and therefore it is measurable. Via the continuity with respect to $\delta$, we need only consider rational $\delta\leq\Delta$, because the rationals are dense in the reals. Thus,
	\begin{equation*}
		C_{\Delta, \gamma}:=\bigcap\limits_{\delta\leq \Delta}\set{x\in\reals\st \frac{\int_{B_x\p{\delta}}\abs{f\p{y}-f\p{x}}\de y}{V_d\delta^d}\leq \gamma}
	\end{equation*}
	is a countable intersection of measurable sets. Finally, by the Archimedean principle and other standard calculus arguments, we may assume $\gamma$ and $\Delta$ also come from a countable class, $\set{1/n: n\in\nats}$, for example. Let $C_{\gamma}:=\cup_{\Delta>0}C_{\Delta, \gamma}$. We have that $C_{\delta, \gamma}$ and $C_\gamma$ are measurable with $\lim_{\Delta\rightarrow 0}C_{\delta, \gamma}^c=C_{\gamma}^c$ and $\lim_{\gamma\rightarrow 0}C_{ \gamma}^c=C^c$. By continuity of measure, the Lebesgue measure of $C_\gamma^c$ must go to $0$, as well for $\int_{C_\gamma^c}f\p{x}\de x$. We decompose the integral in \eqref{proof::stabilization_poisson::2} as follows. For any integer $1 < a \leq p-1$, an application of H\"older's inequality gives
	
	\begin{align}
		& \int_{\reals^d}\p{n\int_{B_{x}\p{\frac{L}{\sqrt[d]{n}}}}\abs{f\p{y}-f\p{x}}\de y}f\p{x}\de x \nonumber \\
		= \ & \int_{\reals^d}\p{n\int_{B_{x}\p{\frac{L}{\sqrt[d]{n}}}}\abs{f\p{y}-f\p{x}}\de y}f\p{x}\ind{x\in C_{\frac{L}{\sqrt[d]{n}},\gamma}}f\p{x}\de x \nonumber \\
		& + \int_{\reals^d}\p{n\int_{B_{x}\p{\frac{L}{\sqrt[d]{n}}}}\abs{f\p{y}-f\p{x}}\de y}f\p{x}\ind{x\in C_{\frac{L}{\sqrt[d]{n}},\gamma}^c}f\p{x}\de x \nonumber \\
		\leq \ & \gamma V_d L^d \label{proof::stabilization_poisson::3}\\
		& + \p{\int_{\reals^d}\p{\int_{B_{0}\p{L}}\abs{f\p{x+\frac{t}{\sqrt[d]{n}}}-f\p{x}} \de t}^a f\p{x}\de x}^{\frac1a}\prob{X'\in C_{\frac{L}{\sqrt[d]{n}},\gamma}^c}^{1-\frac1a}\nonumber.
	\end{align}
	
	For the integral above
	\begin{align}
		& \int_{\reals^d}\p{\int_{B_{0}\p{L}}\abs{f\p{x+\frac{t}{\sqrt[d]{n}}}-f\p{x}}	\de t}^a f\p{x}\de x \nonumber \\
		\leq & \p{V_dL^d}^{a-1}\int_{\reals^d}\int_{B_{0}\p{L}}\abs{f\p{x+\frac{t}{\sqrt[d]{n}}}-f\p{x}}^a	f\p{x}\de t\de x \nonumber \\
		= \ & \p{V_dL^d}^{a-1}\int_{B_{0}\p{L}}\int_{\reals^d}\abs{f\p{x+\frac{t}{\sqrt[d]{n}}}-f\p{x}}^a	f\p{x}\de t\de x \nonumber \\
		\leq \ & 2^{a-1}\p{V_dL^d}^{a-1}\int_{B_{0}\p{L}}\int_{\reals^d}\p{f\p{x+\frac{t}{\sqrt[d]{n}}}^a+f\p{x}^a}f\p{x}\de t\de x \nonumber \\
		\leq \ & \p{2V_dL^d}^a\|f\|_{a+1}^{a+1} \nonumber \\
		\leq \ & \p{2V_dL^d}^a\|f\|_p^{\frac{p}{p-1}a} \nonumber \\
		\leq \ & \p{2V_dL^dM^{\frac{p}{p-1}}}^a. \nonumber
	\end{align}
	
	Thus \eqref{proof::stabilization_poisson::3} is at most
	\begin{align}
		& \gamma V_d L^d + 2V_dL^dM^{\frac{p}{p-1}}\prob{X'\in C_{\frac{L}{\sqrt[d]{n}},\gamma}^c}^{1-\frac1a} \\
		\leq \ &\gamma V_d L^d + 2V_dL^dM^{\frac{p}{p-1}}\prob{X'\in C_{\frac{L}{\sqrt[d]{n}},\gamma}^c}^{\frac{p-2}{p-1}}. \label{proof::stabilization_poisson::4}
	\end{align}
	This provides a bound for the probability that $\mathbf P_n$ and $\mathbf H_n$ fail to coincide within $B_{X'}\p{L/\sqrt[d]{n}}$. The bound holds in the limiting $p=\infty$ case and can be made arbitrarily small for $\gamma$ sufficiently small and $n$ sufficiently large. $\gamma$ can be chosen as a function of $F$, $L$, and $n$ to provide the tightest bound, but this requires specific knowledge of $f$. Combining with the previous steps, we have coupled $\mathbf X_n$ and $\mathbf H_n$ to be equal with arbitrarily high probability.
	
	Now for $\eta, \zeta>0$ define $D_{*,\eta}=f^{-1}\p{\left[\eta, \infty\right)}$ and $D^*_{\zeta}=f^{-1}\p{\left[0, \zeta\right]}$. For $\eta$ sufficiently small and $\zeta$ sufficiently large, $\prob{X'\in D_{*,\eta}^c}$ and $\prob{X'\in {D^*_{\zeta}}^c}$ can be made arbitrarily small.
	
	By assumption, for any given $\eta$, $\zeta$, and $\nu>0$ there is an $L_{\eta, \zeta, \nu}$ and a measurable set $A_{\eta,\zeta,\nu}$ such that for any homogenous Poisson process $\mathbf Q_\lambda$ on $\reals^d$ with intensity $\lambda$ bounded between $\eta$ and $\zeta$, be have $\p{\rho_0}^{-1}\p{\left(L_{\eta, \zeta, \nu}, \infty\right]}\subseteq A_{\eta,\zeta,\nu}$ and $\probout{\rho_0\p{\mathbf Q_\lambda}> L_{\eta,\zeta,\nu}}\leq \prob{\mathbf Q_\lambda\in A_{\eta,\zeta,\nu}}\leq\nu$. $L_{\eta,\zeta,\nu}$ is possibly increasing as $\eta\rightarrow 0$, $\zeta\rightarrow\infty$, and $\nu\rightarrow 0$.
	
	As $\sqrt[d]{n}\p{\mathbf H_n-X'}$ is a homogeneous Poisson process, conditional on $X'$, $\in D_{*, \eta}\cup D^*_\zeta$, we have
	\begin{align}
		& \probout{\rho_0\p{\sqrt[d]{n}\p{\mathbf H_n-X'}}> L_{\eta,\zeta,\nu}\big| \ X'\in D_{*, \eta}\cup D^*_\zeta} \\
		\leq \ & \prob{\sqrt[d]{n}\p{\mathbf H_n-X'}\in A_{\eta,\zeta,\nu}\big| \ X'\in D_{*, \eta}\cup D^*_\zeta} \\
		= \ & \E{\prob{\sqrt[d]{n}\p{\mathbf H_n-X'}\in A_{\eta,\zeta,\nu}|X'}\big| \ X'\in D_{*, \eta}\cup D^*_\zeta} \\
		\leq \ & \nu.
	\end{align}
	
	Combining the pieces and letting $L=L_{\eta,\zeta,\nu}$, we have that
	\begin{align}
		& \probout{\rho_0\p{\mathbf X_n-X'}>L_{\eta,\zeta,\nu}} \\
		= \ & \probout{\rho_0\p{\mathbf H_n-X'}>L_{\eta,\zeta,\nu}|X'\in A_{*, \eta}\cup A^*_\zeta}\prob{X'\in D_{*, \eta}\cup D^*_\zeta} \\
		& + \prob{\mathbf X_n \cap B_{X'}\p{L_{\eta,\zeta,\nu}}\neq \mathbf P_n \cap B_{X'}\p{L_{\eta,\zeta,\nu}}} \\
		& + \prob{\mathbf P_n \cap B_{X'}\p{L_{\eta,\zeta,\nu}}\neq \mathbf H_n \cap B_{X'}\p{L_{\eta,\zeta,\nu}}} + \prob{X'\in D_{*,\eta}^c} + \prob{X'\in {D^*_\zeta}^c} \\
		\leq \ & \nu + \frac{M^{\frac{p}{p-1}}V_dL_{\eta,\zeta,\nu}^d}{\sqrt{n}} + \gamma V_d L_{\eta,\zeta,\nu}^d + 2V_dL_{\eta,\zeta,\nu}^dM^{\frac{p}{p-1}}\prob{X'\in C_{\frac{L_{\eta,\zeta,\nu}}{\sqrt[d]{n}},\gamma}^c}^{\frac{p-2}{p-1}} \\
		& + \prob{X'\in D_{*,\eta}^c} + \prob{X'\in {D^*_\zeta}^c}.
	\end{align}
	
	As $\eta, \nu \rightarrow 0$ and $\zeta\rightarrow\infty$, $L_{\eta,\zeta,\nu}$ can become unbounded. Let $\gamma\rightarrow 0$ and choose $n_0$ suitably large to ensure that the entire expression goes to $0$. The result follows.
	
\end{proof}

\pagebreak

\begin{proposition}[Proposition~\ref{proposition::w2_general}]
	For $p>2$ and $M<\infty$, let $\psi$ satisfy \eqref{condition::expectation} and \eqref{condition::stabilization} for $\mathcal C_{p, M}\p{\reals^d}$, $F\in \mathcal C_{p, M}\p{\reals^d}$, and some $a>2$. Then for any $G\in\mathcal C_{p, M}\p{\reals^d}\cap B_F\p{\epsilon, d_{\text{TV}}}$, there exist iid coupled random variables $\p{\p{X_i, Y_i}}_{i\in\nats}$ such that $\mathbf X_n=\set{X_i}_{i=1}^n\iid F$, $\mathbf Y_n=\set{Y_i}_{i=1}^n\iid G$, and
	\begin{equation}
		\sup_{n\in \nats} \Var{\frac{1}{\sqrt{n}}\p{\psi\p{\sqrt[d]{n}\mathbf X_n}-\psi\p{\sqrt[d]{n}\mathbf Y_n}}}\leq \gamma_{\epsilon}.
	\end{equation}
	The value $\gamma_\epsilon$ does not depend on $G$ and satisfies $\lim_{\epsilon\rightarrow 0}\gamma_\epsilon=0$.
\end{proposition}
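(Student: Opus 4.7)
The plan is to construct an explicit coupling between $\mathbf X_n$ and $\mathbf Y_n$ via the total variation bound, reduce the variance to a single replace-one-pair term via the Efron--Stein inequality, and then show that this term vanishes on a high-probability event built from the coupling together with condition \eqref{condition::stabilization}. Since $d_{\text{TV}}(F,G) \leq \epsilon$, I would construct iid pairs $(X_i, Y_i)$ with marginals $F$ and $G$ and $\prob{X_i \neq Y_i} \leq \epsilon$, together with an independent copy $(X', Y')$ of $(X_1, Y_1)$. Writing $Z := \psi(\sqrt[d]{n}\mathbf X_n) - \psi(\sqrt[d]{n}\mathbf Y_n)$ and letting $Z^{(1)}$ denote $Z$ with $(X_1, Y_1)$ replaced by $(X', Y')$, the Efron--Stein inequality applied to the iid pair structure gives
\begin{equation*}
	\Var{Z/\sqrt{n}} = \Var{Z}/n \leq \tfrac{1}{2}\,\E{(Z - Z^{(1)})^2},
\end{equation*}
so the entire problem reduces to bounding this single second moment uniformly in $n$ and $G$.

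Expanding the replacement via add-one costs, with $\mathbf X_n^\star := \mathbf X_n \setminus \set{X_1}$ and $\mathbf Y_n^\star := \mathbf Y_n \setminus \set{Y_1}$,
\begin{equation*}
	Z - Z^{(1)} = \brk{D_{\sqrt[d]{n}X_1}(\sqrt[d]{n}\mathbf X_n^\star) - D_{\sqrt[d]{n}Y_1}(\sqrt[d]{n}\mathbf Y_n^\star)} - \brk{D_{\sqrt[d]{n}X'}(\sqrt[d]{n}\mathbf X_n^\star) - D_{\sqrt[d]{n}Y'}(\sqrt[d]{n}\mathbf Y_n^\star)}.
\end{equation*}
I would define the ``good event'' $A_\epsilon$ as the intersection of: (i) $X_1 = Y_1$ and $X' = Y'$ (complement probability at most $2\epsilon$); (ii) each of the four add-one costs agrees with its localized counterpart computed on the intersection with the $l_\epsilon$-ball around the added point, which by \eqref{condition::stabilization} applied to both $F$ and $G$ fails with $n$- and $G$-uniform probability $o(1)$; and (iii) $\mathbf X_n^\star$ and $\mathbf Y_n^\star$ coincide inside both $B_{X_1}(l_\epsilon/\sqrt[d]{n})$ and $B_{X'}(l_\epsilon/\sqrt[d]{n})$. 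On $A_\epsilon$ the four localized add-one costs match pairwise, so $Z - Z^{(1)} = 0$ identically.

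The crux is bounding $\prob{A_\epsilon^c}$ uniformly. For (iii), a first-moment bound on the number of disagreements in the ball yields $n\int A(x)f(x)g(x)\de x$ with $A(x) := \prob{X_1 \neq Y_1 \mid X_1 = x}$ and $g(x) := \int_{B_x(l_\epsilon/\sqrt[d]{n})} f$; the substitution $u = \sqrt[d]{n}(y-x)$ absorbs the factor of $n$, after which H\"older with exponents $(p/(p-1),\,p)$, together with $\int A f \de x \leq \epsilon$ and $\|f\|_p \leq M$, produces the $n$-uniform estimate
\begin{equation*}
	\prob{\text{(iii) fails}} \leq 2 M^{p/(p-1)} V_d\, l_\epsilon^d\, \epsilon^{(p-2)/(p-1)},
\end{equation*}
which vanishes with $\epsilon$ under the hypothesis $l_\epsilon \epsilon^b \to 0$ of \eqref{condition::stabilization}. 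Combining all three pieces gives $\prob{A_\epsilon^c} \to 0$ uniformly, and a final H\"older step invoking the $a$-th moment from \eqref{condition::expectation},
\begin{equation*}
	\E{(Z - Z^{(1)})^2} \leq \E{\abs{Z - Z^{(1)}}^{a}}^{2/a} \prob{A_\epsilon^c}^{1 - 2/a} \leq C_{a,p,M} \, \prob{A_\epsilon^c}^{1 - 2/a} =: \gamma_\epsilon,
\end{equation*}
completes the bound, with $\gamma_\epsilon$ independent of $G$ and $n$ and $\gamma_\epsilon \to 0$ as $\epsilon \to 0$.

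The main obstacle is the H\"older calculation in (iii): getting an $l_\epsilon^d \epsilon^{(p-2)/(p-1)}$ bound uniform in $n$ requires carefully balancing the $L^p$-norm of $f$ against the disagreement mass $\int A f \de x \leq \epsilon$, and the resulting exponent $(p-2)/(p-1)$ is precisely what pairs with the stabilization scale to yield $b = (p-2)/(d(p-1))$ later in Lemma~\ref{lemma::condition_stabilization} and Theorem~\ref{theorem::bootstrap_general}. A secondary, minor issue is that \eqref{condition::stabilization} is stated for samples of size $n$ at scaling $\sqrt[d]{n}$, whereas $\mathbf X_n^\star$ has $n-1$ points at the same scale; this discrepancy is absorbed by enlarging $l_\epsilon$ by the factor $(n/(n-1))^{1/d} \leq 2^{1/d}$ before invoking the condition.
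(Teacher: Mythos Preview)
Your approach is essentially the same as the paper's: construct the maximal total-variation coupling, reduce the variance to a single replace-one term, show that term vanishes on a good event built from the coupling and the stabilization condition, and close with H\"older against the moment bound \eqref{condition::expectation}. The paper phrases the variance reduction as a martingale-difference expansion rather than Efron--Stein, but these are equivalent devices and yield the same bound.

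The one substantive difference is in how the sample-size/scale mismatch is handled. You work with add-one costs on the $(n-1)$-point set $\mathbf X_n^\star$ at scale $\sqrt[d]{n}$, whereas both \eqref{condition::expectation} and \eqref{condition::stabilization} are stated with matched sample size and scale $(\mathbf Y_n$ at scale $\sqrt[d]{n})$. Your proposed fix of rescaling $l_\epsilon$ by $(n/(n-1))^{1/d}$ does not actually close this gap, because $\psi$ is not assumed scale-equivariant: rewriting $\sqrt[d]{n}\mathbf X_n^\star$ as $(n/(n-1))^{1/d}\cdot\sqrt[d]{n-1}\,\mathbf X_n^\star$ does not let you transfer the conclusion of \eqref{condition::stabilization} at index $n-1$. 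The paper sidesteps this cleanly by adding and subtracting $\psi(\sqrt[d]{n}(\mathbf X_n\cup X'))$ and $\psi(\sqrt[d]{n}(\mathbf Y_n\cup Y'))$, so that every add-one cost that appears is from an $n$-point sample to an $(n+1)$-point sample at scale $\sqrt[d]{n}$, matching \eqref{condition::expectation} and \eqref{condition::stabilization} verbatim; an exchangeability argument then reduces the two resulting terms to a single one (at the cost of an extra factor of $2$). You could graft exactly this trick onto your Efron--Stein reduction with no other changes.
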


\begin{proof} \label{proof::w2_general}
	We refer to Appendix~\ref{appendix::inequalities} for a reference list of the general inequalities used here. Our proof technique is inspired by that of Proposition~5.4 in \cite{Krebs2019}. We expand using a martingale difference sequence (MDS). Let $\set{\p{X_i, Y_i}}_{i=1}^\infty$ be iid such that $\set{X_i}_{i=1}^\infty\iid F$ and $\set{Y_i}_{i=1}^\infty\iid G$. Each pair $\p{X_i, Y_i}$ can be identically coupled such that $\prob{X_i\neq Y_i}=d_{TV}\p{F, G} \leq \epsilon$. For $\mathbf X_j:=\set{X_i}_{i=1}^j$, $\mathbf Y_j:=\set{Y_i}_{i=1}^j$, and $\sigma$ denoting a generated sigma algebra, let $\mathcal F_j:=\sigma\set{\mathbf X_j, \mathbf Y_j}$ with $\mathcal F_0:=\set{\Omega, \emptyset}$. For $\p{X', Y'}$ an independent copy of the $\p{X_i, Y_i}$, let
	\begin{align*}
		\mathbf X_{n,j}'&:=\set{X_1,..., X_{j-1}, X', X_{j+1}, ..., X_n}\\
		\mathbf Y_{n,j}'&:=\set{Y_1,..., Y_{j-1}, Y', Y_{j+1}, ..., Y_n}.
	\end{align*}
	
	We apply the condensed notation $H_n\p{\mathbf S, \mathbf T}=\psi\p{\sqrt[d]{n}\mathbf S} -\psi\p{\sqrt[d]{n}\mathbf T}$. Using the orthogonality of a MDS and the conditional version of Jensen's inequality,
	\begin{align}
		& \Var{\frac{1}{\sqrt{n}}H_n\p{\mathbf X_n, \mathbf Y_n}} \label{proof::w2_general::1} \\
		= \ & \frac1n\E{\abs{\sum_{j=1}^n \E{H_n\p{\mathbf X_n, \mathbf Y_n}|\mathcal F_j}-\E{H_n\p{\mathbf X_n, \mathbf Y_n}\big|\mathcal F_{j-1}}}^2} \nonumber \\
		= \ & \frac1n\E{\abs{\sum_{j=1}^n \E{H_n\p{\mathbf X_n, \mathbf Y_n} - H_n\p{\mathbf X_{n,j}', \mathbf Y_{n,j}'}\big|\mathcal F_j}}^2} \nonumber \\
		= \ & \frac1n\sum_{j=1}^n \E{\E{H_n\p{\mathbf X_n, \mathbf Y_n} - H_n\p{\mathbf X_{n,j}', \mathbf Y_{n,j}'}\big|\mathcal F_j}^2} \nonumber \\
		\leq \ & \E{\abs{H_n\p{\mathbf X_n, \mathbf Y_n} - H_n\p{\mathbf X_{n,j}', \mathbf Y_{n,j}'}}^2}. \label{proof::w2_general::3} 	
	\end{align}
	
	The above holds for any $1\leq j\leq n$. We have an upper bound for \eqref{proof::w2_general::3} of
	\begin{align}
		& 2\E{\abs{H_n\p{\mathbf X_n\cup X', \mathbf X_n} - H_n\p{\mathbf Y_n\cup Y', \mathbf Y_n}}^2} \nonumber \\
		& + 2\E{\abs{H_n\p{\mathbf X_n\cup X', \mathbf X_{n,j}'} - H_n\p{\mathbf Y_n\cup Y', \mathbf Y_{n,j}'}}^2} \nonumber \\
		=\ & 4\E{\abs{H_n\p{\mathbf X_n\cup X', \mathbf X_n} - H_n\p{\mathbf Y_n\cup Y', \mathbf Y_n}}^2}. \label{proof::w2_general::2}
	\end{align}
	
	\pagebreak
	
	We will decompose the expectation in \eqref{proof::w2_general::2} using the stabilization of $\psi$. Let $L>0$, and define the following sets. Note that when all four are satisfied, $H_n\p{\mathbf X_n\cup X', \mathbf X_n}=H_n\p{\mathbf Y_n\cup Y', \mathbf Y_n}$. 
	\begin{align}
		A_Y & :=\set{Y'=X'} \\
		B_{Y, L} & :=\set{\mathbf Y_n\cap B_{X'}\p{\frac{L}{\sqrt[d]{n}}} = \mathbf X_n\cap B_{X'}\p{\frac{L}{\sqrt[d]{n}}}} \\
		C_{X, L} & :=\set{D^\infty_{\sqrt[d]{n}X'}\p{\p{\sqrt[d]{n}\mathbf X_n}\cap B_{\sqrt[d]{n}X'}\p{L}}=D_{\sqrt[d]{n}X'}\p{\sqrt[d]{n}\mathbf X_n}} \\
		C_{Y, L} & :=\set{D^\infty_{\sqrt[d]{n}Y'}\p{\p{\sqrt[d]{n}\mathbf Y_n}\cap B_{\sqrt[d]{n}Y'}\p{L}}=D_{\sqrt[d]{n}Y'}\p{\sqrt[d]{n}\mathbf Y_n}}.
	\end{align}
	
	Let $C_{X, L*}\subseteq C_{X, L}$ and $C_{Y, L*}\subseteq C_{Y, L}$ be measurable. We decompose the expectation in \eqref{proof::w2_general::2} along these events into
	
	\begin{align}
		& \E{\abs{H_n\p{\mathbf X_n\cup X', \mathbf X_n} - H_n\p{\mathbf Y_n\cup Y', \mathbf Y_n}}^2\ind{A_Y\cap B_{Y, L}\cap C_{X, L*}\cap C_{Y, L*}}} \nonumber \\
		& \ + \E{\abs{H_n\p{\mathbf X_n\cup X', \mathbf X_n} - H_n\p{\mathbf Y_n\cup Y', \mathbf Y_n}}^2\ind{A_Y^c\cup B_{Y, L}^c\cup C_{X,L*}^c\cup C_{Y,L*}^c}} \nonumber \\
		= \ & \E{\abs{H_n\p{\mathbf X_n\cup X', \mathbf X_n} - H_n\p{\mathbf Y_n\cup Y', \mathbf Y_n}}^2\ind{A_Y^c\cup B_{Y, L}^c\cup C_{X,L*}^c\cup C_{Y,L*}^c}}. \nonumber
	\end{align}
	
	Let $a>2$ satisfy \eqref{condition::expectation}. H\"older's inequality gives an upper bound of
	\begin{align}
		\norm[\frac{a}{2}]{\abs{H_n\p{\mathbf X_n\cup X', \mathbf X_n} - H_n\p{\mathbf Y_n\cup Y', \mathbf Y_n}}^2}\prob{A_Y^c\cup B_{Y, L}^c\cup	C_{X,L*}^c\cup C_{Y,L*}^c}^{1-\frac{2}{a}}. \nonumber
	\end{align}
	
	As the choice of $C_{X, L*}$ and $C_{Y, L*}$ was arbitrary, the expectation in \eqref{proof::w2_general::2} is at most
	\begin{align}
		& \norm[\frac{a}{2}]{\abs{H_n\p{\mathbf X_n\cup X', \mathbf X_n} - H_n\p{\mathbf Y_n\cup Y', \mathbf Y_n}}^{2}}\probout{A_Y^c\cup B_{Y, L}^c\cup	C_{X,L}^c\cup C_{Y,L}^c}^{1-\frac{2}{a}} \nonumber \\
		\leq & \norm[\frac{a}{2}]{\abs{H_n\p{\mathbf X_n\cup X', \mathbf X_n} - H_n\p{\mathbf Y_n\cup Y', \mathbf Y_n}}^{2}} \nonumber \\
		& \times\max\set{\prob{A_Y^c}+\prob{B_{Y, L}^c}+\probout{C_{X,L}^c}+\probout{C_{Y,L}^c}, 1}^{1-\frac{2}{a}}. \nonumber
	\end{align}
	
	Consider the norm in the final expression above. We have an upper bound of
	\begin{align}
		& 2\p{\norm[\frac{a}{2}]{H_n\p{\mathbf X_n\cup X', \mathbf X_n}^2} + \norm[\frac{a}{2}]{H_n\p{\mathbf Y_n\cup Y', \mathbf Y_n}^2}} \leq 4E_a^{\frac{2}{a}}.
	\end{align}
	
	This final quantity does not depend on $\epsilon$, $G$, or $n$. It thus remains to show that, for a certain choice of $L$ and as $\epsilon\rightarrow 0$, that each of the events $A_Y^c$, $B_{Y, L}^c$, $C_{X,L}^c$, and $C_{Y,L}^c$ can be made to occur with small outer probability, uniformly in $G$ and $n$. For $A^c$, this is satisfied because $\prob{X'\neq Y'}\leq\epsilon$.
	
	We then consider $B_{Y, L}^c$. The sample pairs which contribute to $\mathbf X_n\cap B_{X'}\p{L/\sqrt[d]{n}}$ but not $\mathbf Y_n\cap B_{X'}\p{L/\sqrt[d]{n}}$ are those $\p{X_i, Y_i}$ for which $X_i\neq Y_i$ and either $\|X_i-X'\|\leq L/\sqrt[d]{n}$ or $\|Y_i-X'\|\leq L/{\sqrt[d]{n}}$. Conditional on $X'$, their count follows a binomial distribution with expectation at most $n\prob{X_i\neq Y_i}\int_{B_{X'}\p{L/\sqrt[d]{n}}}\tilde f\p{y}+\tilde{g}\p{y}\de y$. Here $\tilde f$ and $\tilde g$ are the densities of $X_i$ and $Y_i$ conditional on the event $\set{X_i\neq Y_i}$. These densities can be shown to exist via the absolute continuity of $F$ and $G$ with respect to the Lebesgue measure on $\reals^d$. Subsequently, we have that $\|\tilde f\|_p\leq \|f\|_p/\prob{X_i\neq Y_i}\leq M/\prob{X_i\neq Y_i}$ and $\|\tilde g\|_p\leq M/\prob{X_i\neq Y_i}$. Removing the conditioning on $X'$, via H\"older's inequality the expected number of pairs which contribute to $\mathbf X_n\triangle \mathbf Y_n$ within $B_{X'}\p{L/\sqrt[d]{n}}$ is at most
	\begin{align}
		& \int_{\reals^d}\p{n\prob{X_i\neq Y_i}\int_{B_{x}\p{\frac{L}{\sqrt[d]{n}}}}\tilde f\p{y}+\tilde g\p{y}\de y}f\p{x}\de x  \nonumber \\
		= \ & \prob{X_i\neq Y_i}\int_{\reals^d}\int_{B_0\p{L}}\p{\tilde f\p{x+\frac{t}{\sqrt[d]{n}}}+\tilde g\p{x+\frac{t}{\sqrt[d]{n}}}}f\p{x}\de t\de x \nonumber \\
		= \ & \prob{X_i\neq Y_i}\int_{B_0\p{L}}\int_{\reals^d}\p{\tilde f\p{x+\frac{t}{\sqrt[d]{n}}}+\tilde g\p{x+\frac{t}{\sqrt[d]{n}}}}f\p{x}\de x\de t \nonumber \\
		\leq \ & \prob{X_i\neq Y_i}V_d L^d\p{\|\tilde f\|_{\frac{p}{p-1}}+\|\tilde g\|_{\frac{p}{p-1}}}\|f\|_p \nonumber \\
		\leq \ & 2\prob{X_i\neq Y_i}MV_d L^d\p{\frac{M}{\prob{X_i\neq Y_i}}}^{\frac{1}{p-1}} \nonumber \\
		\leq \ & 2M^{\frac{p}{p-1}}V_dL^d\epsilon^{\frac{p-2}{p-1}}. \label{proof::condition_stabilization::1}
	\end{align}
	
	This final expression provides an upper bound on $\prob{B_{Y, L}^c}$. Let $\p{l_\epsilon}_{\epsilon>0}$ satisfy \eqref{condition::stabilization} and $L=l_\epsilon$. We have that $\prob{B_{Y, l_\epsilon}^c}\leq 2M^{\frac{p}{p-1}}V_d{l_\epsilon}^d\epsilon^{\frac{p-2}{p-1}}\rightarrow 0$. By \eqref{condition::stabilization}, both $\probout{C_{X, l_\epsilon}^c}$ and $\probout{C_{Y, l_\epsilon}^c}$ are bounded above by a quantity $p_\epsilon$ such that $\lim_{\epsilon\rightarrow 0}p_\epsilon=0$. 
	
	Let $\delta_\epsilon=\min\set{\epsilon+2M^{\frac{p}{p-1}}V_d{l_\epsilon}^d\epsilon^{\frac{p-2}{p-1}}+2p_\epsilon, 1}$ be the derived upper bound for $\mybrk \probout{A_Y^c\cup B_{Y, {l_\epsilon}}^c\cup C_{X, l_\epsilon}^c\cup C_{Y, l_\epsilon}^c}$. We achieve a final upper bound for \eqref{proof::w2_general::1} of
	\begin{equation}
		16E_a^{\frac{2}{a}}\delta_\epsilon^{1-\frac{2}{a}}.
	\end{equation}
	
	If \eqref{condition::stabilization} is satisfied for many $\p{l_\epsilon}_{\epsilon>0}$ such that $\lim_{\epsilon\rightarrow0}l_\epsilon\epsilon^{\p{p-2}/\p{d\p{p-1}}}=0$, $l_\epsilon$ can be further chosen to optimize the rate of $\delta_\epsilon$, provided a rate for $p_\epsilon$. Furthermore, if \eqref{condition::expectation} is satisfied for more than one $a>2$, $a=a_\epsilon$ may be chosen to optimize the final rate as $\epsilon\rightarrow 0$. Such considerations depend on the specifics of the statistic $\psi$ and the density assumptions used.
\end{proof}

\pagebreak

\subsection{Proofs of Section~\ref{section::bootstrap}}

\begin{theorem}[Theorem~\ref{theorem::bootstrap_general}]
	Let $F\in \mathcal P\p{\reals^d}$ with density $f$ such that $\|f\|_p<\infty$ for some $p>2$. Furthermore, let $F$ and $\hat f_n$ be such that $\| \hat f_n-f \|_1 \to 0$ and $\|\hat f_n-f \|_p \to 0$ in probability (resp. a.s.). Suppose $\vec{\psi}\colon \tilde{\mathcal X}\p{\reals^d}\rightarrow\reals^k$ has component functions $\psi_j\colon \tilde{\mathcal X}\p{\reals^d}\rightarrow\reals$, $1\leq j\leq k$ satisfying \eqref{condition::expectation} and \eqref{condition::stabilization} for $\mathcal C_{p, M}\p{\reals^d}$, $M>\|f\|_p$, $F$, and $b=\p{p-2}/\p{d\p{p-1}}$. Then for a sample $\mathbf X_n=\set{X_i}_{i=1}^n\iid F$, $\p{m_n}_{n\in\nats}$ such that $\lim_{n\rightarrow \infty }m_n=\infty$, a bootstrap sample $\mathbf X_{m_n}^*=\set{X_i^*}_{i=1}^{m_n}\iid \hat F_n|\mathbf X_n$, and a multivariate distribution $G$,
	\begin{equation*}
		\frac{1}{\sqrt{n}}\p{\vec{\psi}\p{\sqrt[d]{n}\mathbf X_n}-\E{\vec{\psi}\p{\sqrt[d]{n}\mathbf X_n}}}\overset{d}{\rightarrow}G
	\end{equation*}
	\centering{if and only if}
	\begin{equation*}
		\frac{1}{\sqrt{{m_n}}}\p{\vec{\psi}\p{\sqrt[d]{{m_n}}\mathbf X_{m_n}^*}-\E{\vec{\psi}\p{\sqrt[d]{{m_n}}\mathbf X_{m_n}^*}\big| \mathbf X_n}}\overset{d}{\rightarrow}G \text{ in probability (resp. a.s.)}.
	\end{equation*}
\end{theorem}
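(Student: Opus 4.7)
The strategy is to apply Proposition~\ref{proposition::w2_general} conditionally on $\mathbf X_n$, with $\hat F_n$ playing the role of the perturbation $G$ of $F$. The resulting $W_2$-bound will transfer the distributional limit $G$ between the bootstrap statistic and the statistic based on an auxiliary iid $F$-sample of size $m_n$, and the assumption $m_n\to\infty$ together with tightness will close the iff.

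First I would verify that $\hat F_n$ meets the hypotheses of Proposition~\ref{proposition::w2_general} with vanishing perturbation parameter. Set $\epsilon_n := \max\set{\tfrac{1}{2}\|\hat f_n-f\|_1,\, \|\hat f_n-f\|_p}$, which tends to $0$ in probability (resp.\ a.s.) by hypothesis. Since $d_{\text{TV}}(F,\hat F_n)=\tfrac{1}{2}\|\hat f_n-f\|_1 \leq \epsilon_n$ and $\|\hat f_n\|_p \leq \|f\|_p + \epsilon_n \leq M$ for all sufficiently large $n$ (using $M>\|f\|_p$), we obtain $\hat F_n \in \mathcal C_{p,M}\p{\reals^d}\cap B_F\p{\epsilon_n,d_{\text{TV}}}$ on an event $E_n$ with $\prob{E_n}\to 1$ (resp.\ $E_n$ holding a.s.\ for large $n$).

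On $E_n$ and conditionally on $\mathbf X_n$, Proposition~\ref{proposition::w2_general} supplies, for each component $\psi_j$, a coupling of $\mathbf X_{m_n}^*\iid \hat F_n$ with an iid $F$-sample $\tilde{\mathbf X}_{m_n}$ such that
\[
\Var{\tfrac{1}{\sqrt{m_n}}\p{\psi_j\p{\sqrt[d]{m_n}\mathbf X_{m_n}^*}-\psi_j\p{\sqrt[d]{m_n}\tilde{\mathbf X}_{m_n}}}\,\Big|\,\mathbf X_n} \leq \gamma_{\epsilon_n},
\]
with $\gamma_{\epsilon_n}\to 0$. Crucially, the coupling in Proposition~\ref{proposition::w2_general} is a coupling of the underlying point sets, so the same pair $(\mathbf X_{m_n}^*,\tilde{\mathbf X}_{m_n})$ yields the bound simultaneously for every component $1\leq j\leq k$. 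Summing over $j$ and translating into the $W_2$-metric on $\reals^k$ between the centered statistics gives $W_2^2\p{\mathcal L\set{T_{m_n}^*\mid \mathbf X_n},\,\mathcal L\set{T_{m_n}}} \leq k\gamma_{\epsilon_n}$, where $T_n$ denotes the centered, normalized vector statistic and $T_{m_n}^*$ its bootstrap analogue. Since, conditionally on $\mathbf X_n$, $\tilde{\mathbf X}_{m_n}$ remains iid $F$, the right-hand conditional law here coincides with the unconditional law of $T_{m_n}$.

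The iff then follows from the vanishing of this $W_2$-distance. Forward direction: $T_n\overset{d}{\to} G$ together with $m_n\to\infty$ yields $T_{m_n}\overset{d}{\to} G$, and the $W_2$-bound transfers this to $T_{m_n}^*\overset{d}{\to} G$ in probability (resp.\ a.s.). Backward direction: $T_{m_n}^*\overset{d}{\to} G$ in probability (resp.\ a.s.) combined with the same $W_2$-bound gives $T_{m_n}\overset{d}{\to} G$; the moment condition \eqref{condition::expectation} yields tightness of $\set{T_n}$, and a standard subsequence argument (any weak subsequential limit of $T_n$ is shared along a matching subsequence of $T_{m_n}$, hence equals $G$) gives $T_n\overset{d}{\to} G$. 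The main obstacle I foresee is the careful translation of ``$W_2\to 0$ in probability'' into ``conditional weak convergence in probability'' for the full $\reals^k$-valued statistic $T_{m_n}^*$; this is standard via bounded Lipschitz test functions but must be executed uniformly in $n$ and invoked jointly for all $k$ components through the shared point-set coupling.
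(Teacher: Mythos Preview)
Your approach is essentially the paper's: both proofs invoke Proposition~\ref{proposition::w2_general} conditionally on $\mathbf X_n$ with $\hat F_n$ playing the role of the perturbation of $F$, and then transfer weak convergence via a bounded-Lipschitz/$W_2$ comparison between the bootstrap statistic and an auxiliary iid $F$-sample $\mathbf X_{m_n}'$ of size $m_n$. The paper carries out the bounded-Lipschitz argument explicitly (splitting on $\|\vec H_{m_n}(\mathbf X_{m_n}^*)-\vec H_{m_n}(\mathbf X_{m_n}')\|\leq\delta$ and applying Chebyshev componentwise, together with the indicator $\ind{\|\hat f_n\|_p\leq M}$), while you package the same estimate as a $W_2$-bound; these are equivalent formulations.

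There is one genuine gap in your backward direction. From $T_{m_n}\overset{d}{\to} G$ and tightness of $\{T_n\}_{n\in\nats}$ you cannot conclude $T_n\overset{d}{\to} G$: an arbitrary convergent subsequence $T_{n_k}$ need not be ``matched'' by any subsequence of $(m_n)$, since the range of $(m_n)$ may miss $\{n_k\}$ entirely (e.g.\ $m_n=2n$, $n_k=2k+1$). The paper's own proof of the converse in fact stops at $V_{m_n}(F)\to\int v\,dG$, i.e.\ it too establishes only $T_{m_n}\overset{d}{\to} G$ and does not bridge to the full sequence $T_n$. So this is a subtlety in the theorem statement shared by both arguments rather than a defect specific to yours; for the standard choice $m_n=n$ it disappears. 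Your tightness observation from \eqref{condition::expectation} (via the martingale-difference variance bound underlying Proposition~\ref{proposition::w2_general}) is correct, but the subsequence step you build on it does not close the gap.
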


\begin{proof} \label{proof::bootstrap_general}
	For any bounded, Lipschitz function $v\colon\reals^k\rightarrow\reals$, consider the functional given by $V_{m_n}:=\E{v\p{\vec{H}_{m_n}\p{\mathbf Y_{m_n}}}}$, where $\mathbf Y_{m_n}=\set{Y_i}_{i=1}^{m_n}$ is an iid sample, and the functional takes as input the shared distribution of the $Y_i$. Let $v$ be bounded within $\brk{-L,L}$ with a Lipschitz constant of $L$. First assuming that $\vec{H}_n\p{\mathbf X_n}\overset{d}{\rightarrow} G$, we have $V_n\p{F}\rightarrow \int_{\reals} v\de G$.
	
	Now, let $\mathbf X_{m_n}'=\set{X_{{m_n},i}'}_{i=1}^{m_n}\iid F$ be an independent copy of $\mathbf X_{m_n}=\set{X_i}_{i=1}^{m_n}$. Furthermore, as in the proof of Proposition~\ref{proposition::w2_general}, $\mathbf X_{m_n}'$ and $\mathbf X_{m_n}^*$ can be coupled so that $\mybrk \prob{X_{{m_n},i}'\neq X_i^*} =d_{TV}\p{F, \hat F_n}=\|\hat f_n-f\|_1/2$, conditional on $\mathbf X_n$. Via Proposition~\ref{proposition::w2_general} and Chebyshev's inequality, for $\delta>0$ we have almost surely that
	\begin{align}
		& V_{m_n}\p{\hat F_n} \nonumber\\
		= \ & \E{v\p{\vec{H}_{m_n}\p{\mathbf X_{m_n}^*}}\big|\mathbf X_n} \nonumber\\
		= \ & \E{v\p{\vec{H}_{m_n}\p{\mathbf X_{m_n}^*}}\ind{\|\vec{H}_{m_n}\p{\mathbf X_{m_n}^*}-\vec{H}_{m_n}\p{\mathbf X_{m_n}'}\|\leq \delta}\big|\mathbf X_n} \nonumber\\
		& + \E{v\p{\vec{H}_{m_n}\p{\mathbf X_{m_n}^*}}\ind{\|\vec{H}_{m_n}\p{\mathbf X_{m_n}^*}-\vec{H}_{m_n}\p{\mathbf X_{m_n}'}\|> \delta}\big|\mathbf X_n} \nonumber\\
		\leq \ & \E{v\p{\vec{H}_{m_n}\p{\mathbf X_{m_n}'}}+L\delta\big|\mathbf X_n} \nonumber\\
		& + L\p{2-\delta}\sum_{j=1}^{m_n}\prob{\abs{H_{{m_n}, j}\p{\mathbf X_{m_n}^*}-H_{{m_n}, j}\p{\mathbf X_{m_n}'}}> \frac{\delta}{\sqrt{k}}\big|\mathbf X_n} \nonumber\\
		\leq \ & \E{v\p{\vec{H}_{m_n}\p{\mathbf X_{m_n}'}}+L\delta\big|\mathbf X_n} \nonumber\\
		& + L\p{2-\delta}\p{\sum_{j=1}^k\prob{\abs{H_{{m_n}, j}\p{\mathbf X_{m_n}^*}-H_{{m_n}, j}\p{\mathbf X_{m_n}'}}> \frac{\delta}{\sqrt{k}}\big|\mathbf X_n}}\ind{\|\hat f_n\|_p\leq M} \nonumber\\
		& +L\p{2-\delta}\ind{\|\hat f_n\|_p> M} \nonumber\\
		\leq \ & \E{v\p{\vec{H}_{m_n}\p{\mathbf X_{m_n}'}}}+L\delta \nonumber\\
		& + L\p{2-\delta}\p{\p{\sum_{j=1}^k\frac{k\gamma_{\|\hat f_n-f\|_1/2, j}}{\delta^2}}\ind{\|\hat f_n\|_p\leq M}+ \ind{\|\hat f_n\|_p> M}}\nonumber.
	\end{align}
	
	Here $\gamma_{\|\hat f_n-f\|_1/2, j}$ is as given in Proposition~\ref{proposition::w2_general} applied to $\psi_j$ for $\epsilon=\|\hat f_n-f\|_1/2$. Similarly, almost surely
	\begin{align}
		& V_{m_n}\p{\hat F_n} \\
		\geq \ & \E{v\p{\vec{H}_{m_n}\p{\mathbf X_{m_n}'}}}-L\delta \\
		& - L\p{2-\delta}\p{\p{\sum_{i=1}^k\frac{k\gamma_{\|\hat f_n-f\|_1/2, j}}{\delta^2}}\ind{\|\hat f_n\|_p\leq M}+ \ind{\|\hat f_n\|_p> M}}.
	\end{align}
	
	As $\|\hat f_n-f\|_p\rightarrow 0$ and $M>\|f\|_p$, we have that the lower bound for $V_{m_n}\p{\hat F_n}$ converges to $\int_{\reals}v\de G - L\delta$ and the upper bound converges to $\int_{\reals}v\de G + L\delta$, either in probability or a.s., depending on assumptions. Since this holds for any $\delta>0$, we have that $V_{m_n}\p{\hat F_n}\rightarrow \int_{\reals}v\de G$ in probability (or a.s.).
	
	Now we will show the converse direction. Let $\mathbf X_{m_n}^*$ and $\mathbf X_{m_n}'$ be as previously defined. We have
	\begin{align}
		& V_{m_n}\p{F} \nonumber\\
		= \ & \E{\E{v\p{\vec{H}_{m_n}\p{\mathbf X_{m_n}'}}\big|\mathbf X_n}} \nonumber\\
		\leq \ & \E{\E{v\p{\vec{H}_{m_n}\p{\mathbf X_{m_n}^*}}\big|\mathbf X_n}}+L\delta \nonumber\\
		& + L\p{2-\delta}\E{\min\set{\sum_{i=1}^k\frac{k\gamma_{\|\hat f_n-f\|_1/2, j}}{\delta^2}, 1}\ind{\|\hat f_n\|_p\leq M}+ \ind{\|\hat f_n|_p> M}}\nonumber
	\end{align}
	and
	\begin{align}
		& V_{m_n}\p{F} \nonumber\\
		\geq \ & \E{\E{v\p{\vec{H}_{m_n}\p{\mathbf X_{m_n}^*}}\big|\mathbf X_n}}-L\delta \nonumber\\
		& - L\p{2-\delta}\E{\min\set{\sum_{i=1}^k\frac{k\gamma_{\|\hat f_n-f\|_1/2, j}}{\delta^2}, 1}\ind{\|\hat f_n\|_p\leq M}+ \ind{\|\hat f_n\|_p> M}}\nonumber.
	\end{align}
	
	Each expectation involves only bounded random variables, thus the lower bound converges to $\int_\reals v\de G-L\delta$ and the upper bound to $\int_\reals v\de G+L\delta$, assuming $\E{v\p{\vec{H}_{m_n}\p{\mathbf X_{m_n}^*}}\big|\mathbf X_n}\rightarrow \int_{\reals}v\de G$. This holds if the assumed convergence is either in probability or almost sure. Since this holds for any $\delta>0$, we have $V_{m_n}\p{F}\rightarrow \int_\reals v\de G$. Since our initial choice of $v$ was arbitrary, the desired result follows.
\end{proof}

\subsection{Proofs of Section~\ref{section::stabilization_results}}

\begin{lemma}[Lemma~\ref{lemma::stabilization_pbn}]
	Let $F\in\mathcal C_{p, M}\p{\reals^d}$ for some $p>2$ and $M<\infty$, and let $\mathcal K=\set{K^r}_{r\in\reals}$ be a filtration of simplicial complexes satisfying \eqref{condition::complex_translate}, \eqref{condition::complex_local_ball}, and \eqref{condition::complex_minimum}. Then for any $r\in \reals$, $s\in\reals$, and $q\geq0$, $\beta_q^{r, s}\p{\mathcal K}$ satisfies \eqref{condition::stabilization_radius} for $F$.
\end{lemma}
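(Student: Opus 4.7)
The plan has three main ingredients: construct a locally-determined radius of stabilization $\rho_z$ for $\beta_q^{r,s}\p{\mathcal K}$; obtain a tail estimate for $\rho_0$ under a homogeneous Poisson reference process; and de-Poissonize via Lemma~\ref{lemma::stabilization_poisson}.

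By translation invariance \eqref{condition::complex_translate}, it suffices to construct $\rho_0$ and set $\rho_z\p{S}:=\rho_0\p{S-z}$. Write $\phi := \max\set{\phi\p{r}, \phi\p{s}}$ and $\eta := \min\set{\eta\p{r}, \eta\p{s}}$ for the common local-ball and minimum-loop radii from \eqref{condition::complex_local_ball} and \eqref{condition::complex_minimum}. By \eqref{condition::complex_local_ball}, the simplicial symmetric difference $K^r\p{S\cup\set{0}}\triangle K^r\p{S}$ (and likewise at filtration level $s$) is supported inside $B_0\p{\phi}$, so the add-$0$ cost depends only on how these finitely many altered simplices interact with cycle and boundary classes in the ambient complex. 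Define the candidate
\begin{equation*}
\rho_0\p{S}:=\inf\set{L\geq 2\phi : D_0\p{\p{S\cap B_0\p{L}}\cup T'}=D_0\p{S\cap B_0\p{L}} \text{ for every finite } T'\subset \reals^d\setminus B_0\p{L}}.
\end{equation*}
By construction $\rho_0\p{S}$ depends only on $S\cap B_0\p{\rho_0\p{S}}$, so it is locally determined; taking $T'=S\cap\p{B_0\p{L}\setminus B_0\p{\rho_0\p{S}}}$ for $L\geq \rho_0\p{S}$ confirms it is a radius of stabilization for $\beta_q^{r,s}$.

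The main obstacle is obtaining a tail bound for $\rho_0\p{\mathbf P_\lambda}$ uniform over $\lambda$ in a bounded interval, as required by the hypothesis of Lemma~\ref{lemma::stabilization_poisson}. By the Geometric Lemma~\ref{lemma::geometric} combined with \eqref{condition::complex_local_ball}, a distant configuration $T'\subset \reals^d\setminus B_0\p{L}$ fails to change $D_0$ unless some cycle or boundary passing through a simplex in $B_0\p{\phi}$ requires $T'$ as a witness. Condition \eqref{condition::complex_minimum} forces any non-trivial such cycle to have diameter at least $\eta$, so the witness chain must span the annulus $B_0\p{L}\setminus B_0\p{2\phi}$. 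For a homogeneous Poisson process of intensity $\lambda\in\brk{a,b}$ with $a>0$, a continuum-percolation/coverage argument shows that the event ``no obstructing chain exists in $B_0\p{L}\setminus B_0\p{2\phi}$'' holds with probability at least $1-\delta$ for $L$ sufficiently large, uniformly in $\lambda$. This event is measurable in the point-process sigma algebra since it depends only on the configuration of $\mathbf P_\lambda$ within $B_0\p{L}$, and serves as the measurable set $A_{a,b,\delta}$ containing $\set{\rho_0\p{\mathbf P_\lambda}>L}$.

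With this Poisson tail bound in hand, Lemma~\ref{lemma::stabilization_poisson} applied to $F\in\mathcal C_{p,M}\p{\reals^d}$ yields $n_\delta<\infty$ and $L_\delta<\infty$ with $\sup_{n\geq n_\delta}\probout{\rho_0\p{\sqrt[d]{n}\mathbf Y_n-\sqrt[d]{n}Y'}>L_\delta}\leq \delta$. For the remaining finitely many $n<n_\delta$, the multiset $\sqrt[d]{n}\mathbf Y_n$ is a.s.\ finite and $\rho_0$ is a.s.\ bounded by $2\phi+\diam{\sqrt[d]{n}\mathbf Y_n}$; a Chebyshev-type bound using $\|f\|_p<\infty$ allows us to enlarge $L_\delta$ finitely to absorb these cases, giving the uniform-in-$n$ tail bound required by \eqref{condition::stabilization_radius}. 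The hardest step by far is the continuum-percolation/coverage argument underlying the Poisson tail bound; the remaining pieces amount to assembly of tools already established in the paper.
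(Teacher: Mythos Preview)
Your outline matches the paper's proof: reduce to center $0$ via \eqref{condition::complex_translate}, establish a uniform Poisson tail bound for a locally-determined radius, feed this into Lemma~\ref{lemma::stabilization_poisson}, and then handle the finitely many $n<n_\delta$ separately. The paper likewise does not prove the Poisson step from scratch; it cites Theorem~4.3 of \cite{Krebs2019} (noting that the proof there already yields the uniform-in-$\lambda$ statement needed by Lemma~\ref{lemma::stabilization_poisson}). Your $\rho_0$ is the strong-stabilization radius, whereas the paper takes the minimum of a connected-component radius and the Krebs--Polonik radius $\rho^*$; both are valid locally-determined radii, and the Krebs--Polonik barrier event bounds yours as well.

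One correction on the small-$n$ step: your appeal to ``a Chebyshev-type bound using $\|f\|_p<\infty$'' is wrong, since $\|f\|_p<\infty$ gives no moment control on $Y_i$ and hence no Chebyshev inequality. The conclusion you want still holds---for each fixed $n$ the sample diameter is an a.s.\ finite random variable and therefore tight---but the paper's device is both simpler and deterministic. The connected component of $K^s$ (or $K^r$) touching $B_z(\phi)$ has at most $n+1$ vertices joined by simplices of diameter $\le 2\phi$, so its diameter is at most $O(n\phi)$ regardless of where the sample points land; this gives $\rho_{\sqrt[d]{n}X'}(\sqrt[d]{n}\mathbf X_n)\le (n_\delta+1)\phi$ for all $n<n_\delta$ with no tail estimate on $F$ required at all. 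Taking the minimum with $\rho^*$ then produces a single locally-determined radius that works uniformly in $n$.
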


\begin{proof} \label{proof::stabilization_pbn}
	We start by defining a crude locally-determined radius of stabilization. Let $K$ be either $K^r$ or $K^s$. Denote $\phi=\max\set{\phi\p{r}, \phi\p{s}}$ as given by \eqref{condition::complex_local_ball}. For $z\in\reals^d$, $S\in\mathcal X\p{\reals^d}$, and $a>\phi$, consider the connected components in $K\p{S\cap B_z\p{a}}$ and $K\p{\p{S\cap B_z\p{a}}\cup\set{z}}$ with at least one simplex entirely contained within $B_z\p{\phi}$. By \eqref{condition::complex_local_ball}, if these components are entirely contained within $B_z\p{a-\phi}$, no simplices will be added or removed from them within $K\p{S\cap B_z\p{b}}$ or $K\p{\p{S\cap B_z\p{b}}\cup\set{z}}$ for any $b>a$. This property holds for both $K^s$ or $K^r$. The persistent Betti numbers are additive with respect to connected components, thus the add-$z$ cost is entirely defined by those components altered by the inclusion of $z$, which necessarily must include one simplex within $B_z\p{\phi}$. As such, $a$ is a locally determined radius of stabilization for $S$ in this case. Any changes to the simplices outside of $a$ must contribute to different connected components, and thus do not influence the add-$z$ cost.
	
	Now, $\mathbf X_n$ contains $n$ total points. Including one point within $B_z\p{\phi}$, the longest possible chain of $n$ connected points reaches at most a radius of $n\phi$. Therefore, $\rho_{\sqrt[d]{n}X'}\p{\sqrt[d]{n}\mathbf X_n}=\p{n+1}\phi$ is a locally-determined radius of stabilization on $\sqrt[d]{n}\mathbf X_n$ centered at $\sqrt[d]{n}X'$, as shown in the previous paragraph. However, since this radius grows with $n$, it alone cannot provide for the desired result.
	
	Given \eqref{condition::complex_local_ball} and \eqref{condition::complex_minimum}, by Theorem~4.3 in \cite{Krebs2019} and the proof thereof, there exists a locally-determined radius of stabilization $\rho_0^*$ for $\beta_{q}^{r, s}\p{\mathcal K}$ centered at $0$ such that the conditions of Lemma~\ref{lemma::stabilization_poisson} are satisfied. It must be noted that the original statement of the referenced lemma does not give this result directly. However, a careful analysis of the provided proof yields this more general result with minimal additions, and is not restated here. By \eqref{condition::complex_translate}, we may define a radius of stabilization $\rho_z^*$ for $\beta_q^{r, s}$ centered at $z\in\reals^d$ with $\rho_z^*\p{S}=\rho_0^*\p{S-z}$. Thus, for any $\delta>0$, there exists an $L_\delta<\infty$ and $n_\delta<\infty$ such that $\prob{\rho^*_{\sqrt[d]{n}X'}\p{\sqrt[d]{n}\mathbf X_n}}\leq \delta$ for all $n\geq N_\delta$.
	
	Denote by $P_z\p{S}$ the union of all connected components in either $K\p{S}$ or $K\p{S\cup\set{0}}$ with at least one simplex entirely contained within $B_z\p{\phi}$. For any center point $z\in\reals^d$, define $\rho_z:\mathcal X\rightarrow \brk{0, \infty}$ with $\rho_z\p{S}=\min\set{\diam{P_z\p{S}}+\phi, \rho^*\p{S-z}}$. We have that $\rho_z$ is a locally-determined radius of stabilization.
	
	For $n< n_\delta$, we have that $\rho_{\sqrt[d]{n}X'}\p{\sqrt[d]{n}\mathbf X_n}\leq \p{n_\delta+1}\phi$ almost surely. For $n\geq n_\delta$, $\rho_{\sqrt[d]{n}X'}\p{\sqrt[d]{n}\mathbf X_n}\leq \rho^*_{\sqrt[d]{n}X'}\p{\sqrt[d]{n}\mathbf X_n}\leq L_{\delta}$ with probability at least $1-\delta$. Therefore $\mybrk \sup_{n\in\nats}\prob{\rho_{\sqrt[d]{n}X'}\p{\sqrt[d]{n}\mathbf X_n}> \max\set{L_\delta, \p{n_\delta+1}\phi}}\leq \delta$, and the result follows.
\end{proof}

\subsection{Proofs of Section~\ref{section::bootstrap_results}} \label{appendix::bootstrap_applied}

\begin{corollary}[Corollary~\ref{theorem::bootstrap_pbn}]
	Let $q\geq 0$ and $p>2q+3$. Let $\mathcal K$ be a filtration of simplicial complexes satisfying \eqref{condition::complex_increasing}, \eqref{condition::complex_translate}, \eqref{condition::complex_local_adj}, and \eqref{condition::complex_minimum}. Then for any given $\vec r$, $\vec s$, Statement~\ref{conclusion::bootstrap} holds for $\beta_q^{\vec r, \vec s}$.
\end{corollary}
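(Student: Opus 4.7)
The plan is to apply Theorem~\ref{theorem::bootstrap_general} componentwise. Concretely, for each pair $(r_i, s_i)$ I must verify that $\beta_q^{r_i, s_i}$ satisfies \eqref{condition::expectation} and \eqref{condition::stabilization} for $\mathcal C_{p,M}(\reals^d)$, $M > \|f\|_p$, $F$, and $b = (p-2)/(d(p-1))$. Once each scalar component is handled, the vector statement follows because \eqref{condition::expectation} and \eqref{condition::stabilization} are single-function conditions.

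For the stabilization assumption, I would first observe that \eqref{condition::complex_increasing} together with \eqref{condition::complex_local_adj} implies \eqref{condition::complex_local_ball} (a fact recorded in Section~\ref{section::general_conditions}), so the hypotheses \eqref{condition::complex_translate}, \eqref{condition::complex_local_ball}, \eqref{condition::complex_minimum} of Lemma~\ref{lemma::stabilization_pbn} all hold. That lemma then yields \eqref{condition::stabilization_radius} for $\beta_q^{r_i, s_i}$ at every center, and an appeal to Lemma~\ref{lemma::condition_stabilization} converts this into \eqref{condition::stabilization} with exactly the exponent $b = (p-2)/(d(p-1))$ required by Theorem~\ref{theorem::bootstrap_general}.

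The more combinatorial step is \eqref{condition::expectation}. Here I will verify \eqref{condition::local_count} and then invoke Lemma~\ref{lemma::condition_expectation}. By the Geometric Lemma~\ref{lemma::geometric},
\[
\bigl|\beta_q^{r,s}\bigl(\mathcal K(S \cup \{y\})\bigr) - \beta_q^{r,s}\bigl(\mathcal K(S)\bigr)\bigr|
\leq \#\{K^r_q(S\cup\{y\})\setminus K^r_q(S)\} + \#\{K^s_{q+1}(S\cup\{y\})\setminus K^s_{q+1}(S)\}.
\]
By \eqref{condition::complex_increasing}, every new simplex contains $y$ as a vertex, and by \eqref{condition::complex_local_adj} its remaining vertices lie in $B_y(\phi(s))$ for $\phi(s) \geq \phi(r)$. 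Thus each difference is bounded by $\binom{N}{q}$ or $\binom{N}{q+1}$ respectively, where $N = \#\{S \cap B_y(\phi(s))\}$, giving a polynomial bound of the form $U_a(1 + N^{a(q+1)})$. This places us in the setting of \eqref{condition::local_count} with $R = \phi(s)$ and $u_a = a(q+1)$. The hypothesis $u_a \leq p-1$ of Lemma~\ref{lemma::condition_expectation} translates to $a \leq (p-1)/(q+1)$; since the assumption $p > 2q+3$ is equivalent to $(p-1)/(q+1) > 2$, I can choose some $a > 2$ in this range, and Lemma~\ref{lemma::condition_expectation} delivers \eqref{condition::expectation} for $\mathcal C_{p,M}(\reals^d)$.

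The hard part is really the bookkeeping around the density exponent: one must ensure the count bound matches what Lemma~\ref{lemma::condition_expectation} can absorb, which is precisely where the threshold $p > 2q+3$ enters and cannot be weakened without either strengthening \eqref{condition::complex_increasing} further or exploiting additional structure. With \eqref{condition::expectation} and \eqref{condition::stabilization} in hand for every component $\beta_q^{r_i, s_i}$, Theorem~\ref{theorem::bootstrap_general} applies to the vector $\beta_q^{\vec r, \vec s}$ and yields Statement~\ref{conclusion::bootstrap}.
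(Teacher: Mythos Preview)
Your proposal is correct and follows essentially the same route as the paper: verify \eqref{condition::local_count} via the Geometric Lemma together with \eqref{condition::complex_increasing} and \eqref{condition::complex_local_adj} to get $u_a = a(q+1)$, invoke Lemma~\ref{lemma::condition_expectation} using $p > 2q+3$ to select an admissible $a>2$, and handle stabilization by passing from \eqref{condition::complex_increasing}+\eqref{condition::complex_local_adj} to \eqref{condition::complex_local_ball}, then through Lemma~\ref{lemma::stabilization_pbn} and Lemma~\ref{lemma::condition_stabilization} before applying Theorem~\ref{theorem::bootstrap_general}. The only cosmetic difference is that the paper writes $R=\max\{\phi(r),\phi(s)\}$ rather than your $R=\phi(s)$, which coincide since $\phi$ is increasing and $r\le s$.
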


\begin{proof} \label{proof::bootstrap_pbn}
	For given $r,s\in\reals$, we will verify that assumption \eqref{condition::local_count} is satisfied for $\psi=\beta_q^{r, s}\p{\mathcal K}$. Let $\mathbf Y_n=\set{Y_i}_{i=1}^n$ be iid and $Y'$ an independent copy. By the Geometric Lemma~\ref{lemma::geometric}, a bound for the change in persistent Betti numbers when $\set{\sqrt[d]{n}Y'}$ is added to $\sqrt[d]{n}\mathbf Y_n$ is given by the number of new simplices introduced to the corresponding complexes. By \eqref{condition::complex_increasing}, \eqref{condition::complex_local_adj}, it suffices to count the number of points within $\phi:=\max\set{\phi\p{r}, \phi\p{s}}$ of $\sqrt[d]{n}Y'$, the combinations of which include any possible new simplices. Let $I_n=\sum_{i=1}^n \ind{\|Y_i-Y'\|\leq \phi/\sqrt[d]{n}}$. For any $a>2$ we have
	\begin{align*}
		& \abs{\beta_{q}^{r, s}\p{\mathcal K\p{\sqrt[d]{n}\p{\mathbf Y_n\cup \set{Y'}}}}-\beta_{q}^{r, s}\p{\mathcal K\p{\sqrt[d]{n}\mathbf Y_n}}}^a \nonumber \\
		\leq \ & \big\lvert\#\set{K_q^r\p{\sqrt[d]{n}\p{\mathbf Y_n\cup \set{Y'}}}\setminus K_q^r\p{\sqrt[d]{n}\mathbf Y_n}} \\
		& +\#\set{K_{q+1}^s\p{\sqrt[d]{n}\p{\mathbf Y_n\cup \set{Y'}}}\setminus K_{q+1}^s\p{\sqrt[d]{n}\mathbf Y_n}}\big\rvert^a \\
		\leq \ & \abs{\binom{I_n}{q}+\binom{I_n}{q+1}}^a \\
		= \ & \binom{I_n+1}{q+1}^a \\
		\leq \ & \frac{1}{\p{\p{q+1}!}^a}\p{I_n+1}^{a\p{q+1}} \\
		\leq \ & \frac{2^{a\p{q+1}-1}}{\p{\p{q+1}!}^a}\p{I_n^{a\p{q+1}}+1}.
	\end{align*}
	
	In this case $R=\phi$, $U_a\leq 2^{a\p{q+1}-1}/\p{\p{q+1}!}^{a}$ and $u_a=a\p{q+1}$. \eqref{condition::expectation} then follows from Lemma~\ref{lemma::condition_expectation} for $p\geq a\p{q+1}+1>2q+3$. As \eqref{condition::complex_increasing} and \eqref{condition::complex_local_adj} together imply \eqref{condition::complex_local_ball}, \eqref{condition::stabilization_radius} is satisfied as shown in Lemma~\ref{lemma::stabilization_pbn}. Then \eqref{condition::stabilization} follows from Lemma~\ref{lemma::condition_stabilization}. Finally an application of Theorem~\ref{theorem::bootstrap_general} gives the desired result.
	
	Referring to Proposition~\ref{proposition::w2_general} and the proof thereof, for $p<\infty$, using $a=\p{p-1}/\p{q+1}$ we achieve an optimal rate for $\gamma_\epsilon$ of
	\begin{equation}
		O\p{\delta_\epsilon^{1-\frac{2q+2}{p-1}}}.
	\end{equation}
	
	Details of the calculation are omitted here. For $p=\infty$, using $a=a_\epsilon=2-\log\p{\delta_\epsilon}$ we achieve an optimal rate of
	\begin{equation}
		O\p{\delta_\epsilon \p{\frac{-\log\p{\delta_\epsilon}}{\log\p{-\log\p{\delta_\epsilon}}}}^{2q+2}}.
	\end{equation}
	
	Both of these rates depend on $\delta_\epsilon$, the upper bound for the total probability found in the proof of Proposition~\ref{proposition::w2_general}. The techniques found in the proofs of Lemma~\ref{lemma::condition_expectation} and Proposition~\ref{proposition::w2_general} allow for a bound on $\delta_\epsilon$, provided a tail bound for $\sup_{n\in\nats}\rho_0\p{\sqrt[d]{n}\p{\mathbf Y_n-Y'}}$. At this time, such a bound is unavailable, thus no explicit rate calculation is possible. 
\end{proof}

\begin{corollary}[Corollary~\ref{theorem::bootstrap_pbn_ball}]
	Let $q\geq 0$ and $p>2q+5$. Let $\mathcal K$ be a filtration of simplicial complexes satisfying \eqref{condition::complex_translate}, \eqref{condition::complex_local_ball}, and \eqref{condition::complex_minimum}. Then for any given $\vec r$, $\vec s$, Statement~\ref{conclusion::bootstrap} holds for $\beta_q^{\vec r, \vec s}$.
\end{corollary}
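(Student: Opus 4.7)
The proof plan closely mirrors that of Corollary~\ref{theorem::bootstrap_pbn}: verify \eqref{condition::local_count}, invoke Lemma~\ref{lemma::condition_expectation} to obtain \eqref{condition::expectation}, apply Lemma~\ref{lemma::stabilization_pbn} and Lemma~\ref{lemma::condition_stabilization} to obtain \eqref{condition::stabilization}, and finish via Theorem~\ref{theorem::bootstrap_general}. The nontrivial ingredient is establishing a sharp add-one moment bound using only \eqref{condition::complex_translate}, \eqref{condition::complex_local_ball}, and \eqref{condition::complex_minimum}, since without \eqref{condition::complex_increasing} the inclusion of a new vertex may both create and destroy simplices, so the Geometric Lemma~\ref{lemma::geometric} cannot be applied directly to the pair of complexes before and after insertion.

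To get around this, I would introduce an intermediate filtration. For fixed $n$ and sample $\mathbf Y_n=\{Y_i\}_{i=1}^n$ with independent copy $Y'$, set $\mathcal K_1:=\mathcal K(\sqrt[d]{n}\mathbf Y_n)$ and $\mathcal K_2:=\mathcal K(\sqrt[d]{n}(\mathbf Y_n\cup\{Y'\}))$, and define the componentwise-union filtration $\mathcal K^*$ by $K^{*,r}:=K_1^r\cup K_2^r$. One checks readily that $K^{*,r}$ is a simplicial complex (the union of two subset-closed families is subset-closed) and that $\{K^{*,r}\}_r$ is nested in $r$, so $\mathcal K^*$ is a valid filtration with $K_1^r\subseteq K^{*,r}$ and $K_2^r\subseteq K^{*,r}$. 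Two applications of Lemma~\ref{lemma::geometric} plus the triangle inequality then yield
\begin{align*}
&\bigl|\beta_q^{r,s}(\mathcal K_2)-\beta_q^{r,s}(\mathcal K_1)\bigr| \\
&\qquad\leq \#\{K^{*,r}_q\setminus K^r_{1,q}\}+\#\{K^{*,s}_{q+1}\setminus K^s_{1,q+1}\}+\#\{K^{*,r}_q\setminus K^r_{2,q}\}+\#\{K^{*,s}_{q+1}\setminus K^s_{2,q+1}\}.
\end{align*}
By \eqref{condition::complex_local_ball}, every simplex appearing in any of these four symmetric-difference sets lies inside the ball $B_{\sqrt[d]{n}Y'}(\phi)$ with $\phi:=\max\{\phi(r),\phi(s)\}$, and so its vertices are drawn from $(\sqrt[d]{n}\mathbf Y_n\cup\{\sqrt[d]{n}Y'\})\cap B_{\sqrt[d]{n}Y'}(\phi)$.

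Letting $I_n:=\#\{i\leq n:\|Y_i-Y'\|\leq\phi/\sqrt[d]{n}\}$, the number of potential $k$-simplices inside $B_{\sqrt[d]{n}Y'}(\phi)$ is at most $\binom{I_n+1}{k+1}$, so the displayed bound is at most $2\binom{I_n+1}{q+1}+2\binom{I_n+1}{q+2}\leq 2\binom{I_n+2}{q+2}$. Raising to the $a$-th power for any $a>2$ produces a constant multiple of $1+I_n^{a(q+2)}$, which gives \eqref{condition::local_count} with $R=\phi$ and $u_a=a(q+2)$. The density-norm hypothesis $p>2q+5$ is precisely what is needed so that $a>2$ can be chosen with $u_a\leq p-1$, whence Lemma~\ref{lemma::condition_expectation} yields \eqref{condition::expectation}. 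Because the hypotheses \eqref{condition::complex_translate}, \eqref{condition::complex_local_ball}, and \eqref{condition::complex_minimum} are exactly those of Lemma~\ref{lemma::stabilization_pbn}, we obtain \eqref{condition::stabilization_radius}, which Lemma~\ref{lemma::condition_stabilization} upgrades to \eqref{condition::stabilization}. With both hypotheses in hand for each component of $\beta_q^{\vec r,\vec s}$, Theorem~\ref{theorem::bootstrap_general} delivers Statement~\ref{conclusion::bootstrap}.

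The main obstacle, relative to the proof of Corollary~\ref{theorem::bootstrap_pbn}, is precisely the failure of monotonicity under vertex insertion: absent \eqref{condition::complex_increasing}, one must account for simplices that may disappear in addition to those that appear, and this is what forces the use of an intermediate complex (here $\mathcal K^*$) and the appearance of the extra factor $I_n$ in the bound. That extra factor is exactly what shifts the threshold from $p>2q+3$ to $p>2q+5$, explaining the discrepancy between the two corollaries and confirming the intuition given in the paragraph following Corollary~\ref{theorem::bootstrap_pbn_ball}. Rate control in the $2$-Wasserstein distance would, as in the proof of Corollary~\ref{theorem::bootstrap_pbn}, remain inaccessible because it depends on tail bounds for the radius of stabilization from Lemma~\ref{lemma::stabilization_pbn}, which are currently unavailable.
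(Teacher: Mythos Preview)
Your proposal is correct and matches the paper's proof essentially step for step: introduce the union filtration $\mathcal K^*$, apply the Geometric Lemma~\ref{lemma::geometric} twice with the triangle inequality, bound the four simplex-difference terms by binomials in $I_n+1$ to arrive at $2\binom{I_n+2}{q+2}$, deduce \eqref{condition::local_count} with $u_a=a(q+2)$, and finish via Lemmas~\ref{lemma::condition_expectation}, \ref{lemma::stabilization_pbn}, \ref{lemma::condition_stabilization} and Theorem~\ref{theorem::bootstrap_general}. The only cosmetic difference is that the paper distinguishes removed simplices (vertices in $\sqrt[d]{n}\mathbf Y_n$, bounded by $\binom{I_n}{\cdot}$) from added ones (bounded by $\binom{I_n+1}{\cdot}$), whereas you uniformly use $\binom{I_n+1}{\cdot}$; both routes yield the same final bound $2\binom{I_n+2}{q+2}$.
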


\begin{proof} \label{proof::bootstrap_pbn_ball}
	
	The proof follows exactly that of Corollary~\ref{theorem::bootstrap_pbn}, thus we will omit many replicated details. Let $\mathbf Y_n=\set{Y_i}_{i=1}^n$ be iid and $Y'$ an independent copy. Define $\phi:=\max\set{\phi\p{r}, \phi\p{s}}$. 
	
	Since we do not assume \eqref{condition::complex_increasing} in this case, the addition of $\sqrt[d]{n}Y'$ to the complex may both add and remove simplices, but only within  $B_{\sqrt[d]{n}Y'}\p{\phi}$ by \eqref{condition::complex_local_ball}. Any additional simplices may have $\sqrt[d]{n}Y'$ as a vertex, whereas any removed simplices may only have vertices within $\sqrt[d]{n}\mathbf Y_n$. For $I_n=\sum_{i=1}^n \ind{\|Y_i-Y'\|\leq \phi/\sqrt[d]{n}}$, via the Geometric Lemma~\ref{lemma::geometric} we have
	\begin{align*}
		& \abs{\beta_{q}^{r, s}\p{\mathcal K\p{\sqrt[d]{n}\p{\mathbf Y_n\cup \set{Y'}}}}-\beta_{q}^{r, s}\p{\mathcal K\p{\sqrt[d]{n}\mathbf Y_n}}} \\
		\leq \ & \abs{\beta_{q}^{r, s}\p{\mathcal K\p{\sqrt[d]{n}\p{\mathbf Y_n\cup \set{Y'}}}\cup \mathcal K\p{\sqrt[d]{n}\mathbf Y_n}}-\beta_{q}^{r, s}\p{\mathcal K\p{\sqrt[d]{n}\p{\mathbf Y_n\cup \set{Y'}}}}}\\
		& + \abs{\beta_{q}^{r, s}\p{\mathcal K\p{\sqrt[d]{n}\p{\mathbf Y_n\cup \set{Y'}}}\cup \mathcal K\p{\sqrt[d]{n}\mathbf Y_n}}-\beta_{q}^{r, s}\p{\mathcal K\p{\sqrt[d]{n}\mathbf Y_n}}} \\
		\leq \ & \#\set{K_q^r\p{\sqrt[d]{n}\mathbf Y_n}\setminus K_q^r\p{\sqrt[d]{n}\p{\mathbf Y_n\cup \set{Y'}}}} \\
		& + \#\set{K_{q+1}^s\p{\sqrt[d]{n}\mathbf Y_n}\setminus K_{q+1}^s\p{\sqrt[d]{n}\p{\mathbf Y_n\cup \set{Y'}}}} \\
		& + \#\set{K_q^r\p{\sqrt[d]{n}\p{\mathbf Y_n\cup \set{Y'}}}\setminus K_q^r\p{\sqrt[d]{n}\mathbf Y_n}} \\
		& + \#\set{K_{q+1}^s\p{\sqrt[d]{n}\p{\mathbf Y_n\cup \set{Y'}}}\setminus K_{q+1}^s\p{\sqrt[d]{n}\mathbf Y_n}} \\
		\leq \ & \binom{I_n}{q+1} + \binom{I_n}{q+2} + \binom{I_n+1}{q+1} + \binom{I_n+1}{q+2} \\
		\leq \ & 2\binom{I_n+2}{q+2} \\
		\leq \ & \frac{2^{q+3}}{\p{q+2}!}\p{I_n+1}^{q+2}.
	\end{align*}
	
	Thus for any $a>2$,
	\begin{equation*}
		\abs{\beta_{q}^{r, s}\p{\mathcal K\p{\sqrt[d]{n}\p{\mathbf Y_n\cup \set{Y'}}}}-\beta_{q}^{r, s}\p{\mathcal K\p{\sqrt[d]{n}\mathbf Y_n}}}^a \leq \frac{2^{\p{a+1}\p{q+2}}}{\p{\p{q+2}!}^a}\p{I_n^{a\p{q+2}}+1}.
	\end{equation*}
	
	\eqref{condition::local_count} is satisfied for $R=\phi$, $U_a=\p{2^{\p{a+1}\p{q+2}}}/{\p{\p{q+2}!}^a}$, and $u_a=a\p{q+2}$. Thus for $p\geq a\p{q+2}+1>2q+5$, \eqref{condition::expectation} follows by Lemma~\ref{lemma::condition_expectation}. \eqref{condition::stabilization_radius} and thus \eqref{condition::stabilization} follow from Lemmas~\ref{lemma::stabilization_pbn} and \ref{lemma::condition_stabilization}, respectively. An application of Theorem~\ref{theorem::bootstrap_general} gives the final result.
	
	For the rate in Proposition~\ref{proposition::w2_general}, for $p<\infty$, using $a=\p{p-1}/\p{q+2}$ we achieve an optimal rate for $\gamma_\epsilon$ of
	\begin{equation}
		O\p{\delta_\epsilon^{1-\frac{2q+4}{p-1}}}.
	\end{equation}
	
	For $p=\infty$, using $a_\epsilon=2-\log\p{\delta_\epsilon}$ we achieve an optimal rate of
	\begin{equation}
		O\p{\delta_\epsilon \p{\frac{-\log\p{\delta_\epsilon}}{\log\p{-\log\p{\delta_\epsilon}}}}^{2q+4}}.
	\end{equation}
\end{proof}

\begin{corollary}[Corollary~\ref{theorem::bootstrap_euler_nontrunc}]
	Let $m<\infty$ and $p>2m+3$. Let $\mathcal K$ be a filtration of simplicial complexes satisfying \eqref{condition::complex_increasing}, \eqref{condition::complex_translate}, \eqref{condition::complex_local_adj}, \eqref{condition::complex_minimum}, and \eqref{condition::complex_higher}. Then for any given $\vec r$, Statement~\ref{conclusion::bootstrap} holds for $\chi^{\vec r}$.
\end{corollary}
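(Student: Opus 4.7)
The plan is to reduce the problem to Corollary~\ref{theorem::bootstrap_pbn} via the fundamental identity that expresses the Euler characteristic as an alternating sum of Betti numbers. Specifically, by condition \eqref{condition::complex_higher}, for any $S \in \tilde{\mathcal X}(\reals^d)$ we have $\beta_k(K^r(S)) = 0$ for all $k > m$, so
\begin{equation*}
\chi(K^r(S)) \;=\; \sum_{k=0}^{m} (-1)^k \beta_k(K^r(S)) \;=\; \sum_{k=0}^{m} (-1)^k \beta_k^{r,r}(\mathcal K(S)).
\end{equation*}
Thus each component of $\chi^{\vec r}$ is a fixed, finite linear combination of (ordinary) Betti numbers, each of which is the $q=k$ special case of a persistent Betti number with $r_i = s_i$. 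The structural hypotheses \eqref{condition::complex_increasing}, \eqref{condition::complex_translate}, \eqref{condition::complex_local_adj}, and \eqref{condition::complex_minimum} are exactly those of Corollary~\ref{theorem::bootstrap_pbn}, so every ingredient of that corollary is available for each individual $\beta_k^{r_i, r_i}$.

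Next, I would verify \eqref{condition::expectation} for $\chi^{r_i}$ directly from the identity. By the triangle inequality followed by the power-mean bound,
\begin{equation*}
\bigl|\chi^{r_i}(\sqrt[d]{n}(\mathbf Y_n \cup \{Y'\})) - \chi^{r_i}(\sqrt[d]{n}\mathbf Y_n)\bigr|^a
\;\leq\; (m+1)^{a-1}\sum_{k=0}^{m} \bigl|\beta_k^{r_i,r_i}(\sqrt[d]{n}(\mathbf Y_n \cup \{Y'\})) - \beta_k^{r_i,r_i}(\sqrt[d]{n}\mathbf Y_n)\bigr|^a,
\end{equation*}
and the proof of Corollary~\ref{theorem::bootstrap_pbn} establishes \eqref{condition::local_count} for each $\beta_k^{r_i, r_i}$ with $u_a = a(k+1)$. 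Applying Lemma~\ref{lemma::condition_expectation} term by term, the bound $p > 2m+3 \geq 2k+3$ suffices for every $k \leq m$, so each moment is controlled uniformly in $G \in \mathcal C_{p,M}(\reals^d)$ and $n$, and hence so is the sum. For the stabilization condition \eqref{condition::stabilization_radius}, Lemma~\ref{lemma::stabilization_pbn} (whose hypotheses are satisfied, since \eqref{condition::complex_increasing} together with \eqref{condition::complex_local_adj} imply \eqref{condition::complex_local_ball}) supplies a locally-determined radius $\rho_{z,k}$ for each $\beta_k^{r_i,r_i}$; taking $\rho_{z}^{\chi}(S) := \max_{0 \leq k \leq m} \rho_{z,k}(S)$ yields a locally-determined radius of stabilization for $\chi^{r_i}$, since coincidence of two point configurations on $B_z(\rho_{z}^{\chi}(S))$ forces equality of each individual $\rho_{z,k}$ and therefore of every add-cost summand. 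The uniform tightness of $\rho_{z,k}(\sqrt[d]{n}\mathbf X_n)$ from Lemma~\ref{lemma::stabilization_pbn} then transfers to the maximum, giving \eqref{condition::stabilization_radius} for $\chi^{r_i}$; Lemma~\ref{lemma::condition_stabilization} delivers \eqref{condition::stabilization} with the stated exponent $b = (p-2)/(d(p-1))$.

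Having established \eqref{condition::expectation} and \eqref{condition::stabilization} for each component $\chi^{r_i}$ of $\chi^{\vec r}$, Theorem~\ref{theorem::bootstrap_general} applies directly to the multivariate statistic $\chi^{\vec r}$, yielding Statement~\ref{conclusion::bootstrap}. The main potential obstacle is purely bookkeeping: ensuring that the various constants $U_a$, $u_a$ and the stabilization rate $l_\epsilon$ produced for each $k$ can be combined uniformly over $k \in \{0,\dots,m\}$ and over the finitely many components of $\vec r$. Because the index sets are finite, this is resolved simply by taking maxima of the finitely many constants and the worst-case rate, which is also the reason the hypothesis $p > 2m+3$ exactly matches the $q=m$ case of Corollary~\ref{theorem::bootstrap_pbn}.
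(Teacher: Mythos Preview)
Your proposal is correct and follows essentially the same approach as the paper: both rely on the identity $\chi(K^r)=\sum_{k=0}^m(-1)^k\beta_k^{r,r}$ furnished by \eqref{condition::complex_higher}, then leverage the already-established moment and stabilization properties of the individual Betti numbers from Corollary~\ref{theorem::bootstrap_pbn}. The only minor difference is packaging: the paper invokes the remark following Proposition~\ref{proposition::w2_general} (that its conclusion extends to finite sums via $\Var{\sum A_i-\sum B_i}\leq k\sum\Var{A_i-B_i}$) and then says the proof of Theorem~\ref{theorem::bootstrap_general} carries over, whereas you verify \eqref{condition::expectation} and \eqref{condition::stabilization_radius} for $\chi^{r_i}$ directly by taking termwise maxima of the constants and radii --- but the two routes are logically equivalent and equally concise.
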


\begin{corollary}[Corollary~\ref{theorem::bootstrap_euler_nontrunc_ball}]
	Let $m<\infty$ and $p>2m+5$. Let $\mathcal K$ be a filtration of simplicial complexes satisfying \eqref{condition::complex_translate}, \eqref{condition::complex_local_ball}, \eqref{condition::complex_minimum}, and \eqref{condition::complex_higher}. Then for any given $\vec r$, Statement~\ref{conclusion::bootstrap} holds for $\chi^{\vec r}$.
\end{corollary}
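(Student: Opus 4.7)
The plan is to leverage the Euler--Poincar\'e identity to reduce the Euler characteristic to a finite linear combination of Betti numbers, and then inherit the verification of \eqref{condition::expectation} and \eqref{condition::stabilization} from the arguments already established for persistent Betti numbers in the proof of Corollary~\ref{theorem::bootstrap_pbn_ball}. First, I would invoke the Euler--Poincar\'e formula for finite simplicial complexes together with \eqref{condition::complex_higher}, which gives $\beta_k\p{K^r\p{S}} = 0$ for $k > m$, to write
\begin{equation}
\chi\p{K^r\p{S}} = \sum_{k=0}^m \p{-1}^k \beta_k^{r,r}\p{\mathcal K\p{S}}
\end{equation}
for every finite $S \in \tilde{\mathcal X}\p{\reals^d}$. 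This reduces the problem for $\chi^r$ to the Betti numbers $\beta_k^{r,r}$ for $0 \leq k \leq m$, each of which satisfies the hypotheses of Corollary~\ref{theorem::bootstrap_pbn_ball} since $p > 2k + 5$ whenever $p > 2m + 5$.

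Next, for \eqref{condition::stabilization}, I would invoke Lemma~\ref{lemma::stabilization_pbn} (available under \eqref{condition::complex_translate}, \eqref{condition::complex_local_ball}, \eqref{condition::complex_minimum}) to produce a locally-determined radius of stabilization $\rho_z^{(k)}$ for each $\beta_k^{r,r}$ verifying \eqref{condition::stabilization_radius}. Setting $\rho_z^{\chi} := \max_{0 \leq k \leq m} \rho_z^{(k)}$ yields a locally-determined radius of stabilization for $\chi^r$: stabilization of each summand outside $\rho_z^{(k)}$ propagates to stabilization of the signed sum outside $\rho_z^{\chi}$, and the local-determination property is preserved under componentwise maxima. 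A finite union bound together with \eqref{condition::stabilization_radius} for each $\beta_k^{r,r}$ then yields the uniform-in-$n$ tail control required by \eqref{condition::stabilization_radius} for $\chi^r$, from which \eqref{condition::stabilization} follows via Lemma~\ref{lemma::condition_stabilization}.

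For \eqref{condition::expectation}, I would use the triangle inequality on the decomposition together with the polynomial bound obtained in the proof of Corollary~\ref{theorem::bootstrap_pbn_ball}: for $I_n := \#\set{\mathbf Y_n \cap B_{Y'}\p{\phi\p{r}/\sqrt[d]{n}}}$ and any $a > 2$,
\begin{equation}
\abs{\chi^r\p{\mathcal K\p{\sqrt[d]{n}\p{\mathbf Y_n \cup \set{Y'}}}} - \chi^r\p{\mathcal K\p{\sqrt[d]{n}\mathbf Y_n}}}^a \leq \p{m+1}^{a-1} \sum_{k=0}^m C_{a, k}\p{1 + I_n^{a\p{k+2}}},
\end{equation}
which verifies \eqref{condition::local_count} with $R = \phi\p{r}$ and $u_a = a\p{m+2}$. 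Lemma~\ref{lemma::condition_expectation} then gives \eqref{condition::expectation} for $\mathcal C_{p, M}\p{\reals^d}$ whenever $a\p{m+2} \leq p - 1$; since $p > 2m + 5$, any $a > 2$ sufficiently close to $2$ suffices. The vector case $\chi^{\vec r}$ is handled componentwise, using a common parameter $R = \max_i \phi\p{r_i}$ via the monotonicity of $\phi$. A final application of Theorem~\ref{theorem::bootstrap_general} yields the conclusion.

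The main technical subtlety lies in verifying that the polynomial bound from the proof of Corollary~\ref{theorem::bootstrap_pbn_ball}, established there for general $\beta_q^{r,s}$, specializes cleanly to $s = r$ with the same exponent $a\p{k+2}$, and that the maximum construction for $\rho_z^{\chi}$ preserves both local determination and the uniform tail bound uniformly across $k$. Both are essentially bookkeeping once the structural reduction via Euler--Poincar\'e is in place, so no genuinely new stabilization or moment machinery is required beyond what is already developed for the persistent Betti numbers.
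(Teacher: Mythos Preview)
Your proposal is correct and follows essentially the same approach as the paper: use the Euler--Poincar\'e identity together with \eqref{condition::complex_higher} to write $\chi^r$ as a finite alternating sum of Betti numbers $\beta_k^{r,r}$ for $0 \leq k \leq m$, and then inherit the required conditions from the proof of Corollary~\ref{theorem::bootstrap_pbn_ball}. The paper's own proof is terser, invoking the remark after Proposition~\ref{proposition::w2_general} that its conclusion extends directly to finite sums rather than re-verifying \eqref{condition::expectation} and \eqref{condition::stabilization_radius} for $\chi^r$ explicitly as you do, but the underlying idea is identical.
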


\begin{proof}
	We prove together Corollaries~\ref{theorem::bootstrap_euler_nontrunc} and \ref{theorem::bootstrap_euler_nontrunc_ball}. Recall that the Euler characteristic $\chi$ can be written as an alternating (finite) sum of the Betti numbers when \eqref{condition::complex_higher} holds. As mentioned after the proposition statement, since Proposition~\ref{proposition::w2_general} holds for the Betti numbers in dimensions $0\leq q\leq m$ under the assumed conditions (see the proofs of Corollaries~\ref{theorem::bootstrap_pbn} and \ref{theorem::bootstrap_pbn_ball}), then the same holds for their (alternating) sum, namely the Euler characteristic. The proof of Theorem~\ref{theorem::bootstrap_general} applies without alteration.
\end{proof}

\begin{corollary}[Corollary~\ref{theorem::bootstrap_knn}]
	Let $p>2$. Furthermore, let $F\in \mathcal D_{\gamma, r_0}\p{C}$ and $\mybrk\ind{\hat F_n\in \mathcal D_{\gamma, r_0}\p{C}}\rightarrow 1$ in probability (resp. a.s.). Then Statement~\ref{conclusion::bootstrap} holds for $l_{\text{NN}, k}$.
\end{corollary}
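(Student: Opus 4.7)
The plan is to apply Theorem~\ref{theorem::bootstrap_general} to $\psi = l_{\text{NN}, k}$. The paper already points out that, unlike the topological statistics handled earlier, the add-one cost of the $k$-nearest neighbor total edge length is \emph{not} controlled by a local count, so condition \eqref{condition::local_count} fails and the more general moment bound \eqref{condition::expectation} must be verified directly. Together with the uniform stabilization condition \eqref{condition::stabilization}, these are the two hypotheses to check, and I would verify them on the subclass $\mathcal C = \mathcal D_{\gamma, r_0}(C)$ (which lies inside some $\mathcal C_{p,M}(\reals^d)$ for any $p$ whenever the density exists, since $C$ is bounded).

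For \eqref{condition::expectation} on $\mathcal D_{\gamma, r_0}(C)$, I would decompose the add-one cost incurred by attaching $\sqrt[d]{n}Y'$ to $\sqrt[d]{n}\mathbf Y_n$ into (i) the $k$ new edges emanating from $Y'$ to its own nearest neighbors, and (ii) the rewiring of vertices whose $k$-th nearest neighbor is displaced by $Y'$. A standard Penrose--Yukich geometric argument (a cone-covering of $\reals^d$) shows that only a $k,d$-dependent constant number of existing vertices can be affected, and the total change is bounded by a constant multiple of the largest $k$-nearest-neighbor distance among points in a bounded neighborhood of $Y'$. The lower bound $\int_{B_x(r)} dG \geq \gamma r^d$ for $G \in \mathcal D_{\gamma, r_0}$ forces the count of $Y_i$ inside $B_{Y'}(R/\sqrt[d]{n})$ to stochastically dominate a binomial with mean $\gtrsim \gamma R^d$, so that $\sqrt[d]{n}$ times the $k$-NN distance has uniformly (in $n$, and in $G \in \mathcal D_{\gamma, r_0}$) exponential-type tails. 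This yields uniform-in-$n$, uniform-in-$G$ bounds on every moment of the add-one cost, and in particular \eqref{condition::expectation} for any $a > 2$.

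For \eqref{condition::stabilization} I would rely on the classical result that, for a homogeneous Poisson process $\mathbf P_\lambda$, a radius of stabilization for $l_{\text{NN}, k}$ is given by the maximum over a fixed number of angular cones of the distance to the $k$-th Poisson point in each cone, and this radius has exponentially decaying tails uniformly for $\lambda$ in a compact subinterval of $(0, \infty)$; the corresponding exceedance event is determined purely by $\mathbf P_\lambda$. Lemma~\ref{lemma::stabilization_poisson} then de-Poissonizes this to give $\sup_{n \geq n_\delta} \probout{\rho_{\sqrt[d]{n}X'}(\sqrt[d]{n}\mathbf X_n) > L_\delta} \leq \delta$, while the trivial deterministic bound $\rho \leq \sqrt[d]{n}\, \diam{C}$ handles the finitely many $n < n_\delta$ since $C$ is bounded. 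Together these give \eqref{condition::stabilization_radius} for any $F \in \mathcal D_{\gamma, r_0}(C)$, and Lemma~\ref{lemma::condition_stabilization} upgrades this to the total-variation neighborhood form \eqref{condition::stabilization}.

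The main obstacle is the direct verification of \eqref{condition::expectation}, since the absence of a local count means one cannot rely on the convenient Lemma~\ref{lemma::condition_expectation} shortcut used for persistent Betti numbers and the Euler characteristic; instead the tail of the add-one cost must be bounded through the $k$-NN distance itself, which forces one to exploit the full strength of the $\mathcal D_{\gamma, r_0}$ lower bound uniformly in $n$ and over a total-variation neighborhood of $F$. Once \eqref{condition::expectation} and \eqref{condition::stabilization} are established, Theorem~\ref{theorem::bootstrap_general} applies directly to $l_{\text{NN}, k}$; the hypothesis $\ind{\hat F_n \in \mathcal D_{\gamma, r_0}(C)} \to 1$ in probability (resp.\ a.s.) is exactly what ensures that the bootstrap density eventually lies in the class where the moment and stabilization hypotheses are valid, which is what the conclusion of Theorem~\ref{theorem::bootstrap_general} requires.
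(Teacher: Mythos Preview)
Your proposal is correct and follows essentially the same route as the paper: bound the add-one cost by a constant multiple of the $(k{+}1)$-st nearest-neighbor distance $R_{k+1,n}$ via a cone-covering argument (the paper uses cones of angle $\pi/6$), control $\E{R_{k+1,n}^a}$ uniformly over $\mathcal D_{\gamma,r_0}(C)$ using the mass lower bound (the paper invokes a bound in the style of Theorem~7 of \cite{Singh2016}), and for stabilization cite Lemma~6.1 of \cite{Penrose2001} for homogeneous Poisson processes, de-Poissonize via Lemma~\ref{lemma::stabilization_poisson}, and patch the finitely many small $n$ with the deterministic bound $\sqrt[d]{n}\,\diam{C}$. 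One small caution: your parenthetical that $\mathcal D_{\gamma,r_0}(C)\subset\mathcal C_{p,M}$ ``for any $p$ whenever the density exists, since $C$ is bounded'' is not automatic---the $L_p$ bound on $f$ is a separate standing assumption in Statement~\ref{conclusion::bootstrap}, and the paper explicitly notes that Proposition~\ref{proposition::w2_general} and Theorem~\ref{theorem::bootstrap_general} must be applied on the intersection $\mathcal C_{p,M}\cap\mathcal D_{\gamma,r_0}(C)$ with a minor modification to accommodate the indicator condition on $\hat F_n$.
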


\begin{proof}
	
	First, we will show that $\E{\abs{l_{\text{NN}, k}\p{\sqrt[d]{n}\p{\mathbf Y_n\cup\set{Y'}}}-l_{\text{NN}, k}\p{\sqrt[d]{n}\mathbf Y_n}}^a}$ is uniformly bounded for $G\in\mathcal D_{\gamma, r_0}\p{C}$ and $Y', Y_1, ..., Y_n\iid G$. Denote by $A_{k+1}$ the $k+1$ nearest neighbors of $\sqrt[d]{n}Y'$ in $\sqrt[d]{n}{\mathbf Y_n}$. Denote by $B_k$ the set of points in $\sqrt[d]{n}{\mathbf Y_n}$ for which $\sqrt[d]{n}Y'$ is among the $k$ nearest neighbors.
	
	It may be shown that $\#\set{B_k}\leq C_{d,k}$, where $C_{d, k}$ is a constant depending only on the dimension $d$ and $k$. To show this, consider a cone of angle $\pi/6$ whose point lies on $\sqrt[d]{n}Y'$. For $y_1, ..., y_k$ the $k$ closest points of $B_k$ to $\sqrt[d]{n}Y'$ within the cone, it follows from basic geometric arguments that any point lying within the cone, but outside a radius of $\max\set{\|y_i-\sqrt[d]{n}Y'\|}_{i=1}^k$ from $\sqrt[d]{n}Y'$ must be closer to each of $y_1, ..., y_k$ than to $\sqrt[d]{n}Y'$. Thus, any cone of this type may contain at most $k$ points of $B_n$. Since $\reals^d$ may be covered by finitely many of these cones, there must exist the required bound $C_{d, k}$.
	
	Now, consider the points of $A_{k+1}$ and $B_k$. Let $R_{k+1, n}:=\max\set{\|y-\sqrt[d]{n}Y'\|:y\in A_n}$. For any point $y$ in $B_n$, the distance to each point of $A_n$ is at most $\|y-\sqrt[d]{n}Y'\|+R_{k+1, n}$ by the triangle inequality. In this case, the introduction of $\sqrt[d]{n}Y'$ to the sample may reduce the contribution to $l_{\text{NN}, k}$ from the points in $B_n$ by at most
	\begin{equation*}
		l_{\text{NN}, k}\p{\sqrt[d]{n}\mathbf Y_n}-l_{\text{NN}, k}\p{\sqrt[d]{n}\p{\mathbf Y_n\cup\set{Y'}}}\leq C_{d, k}R_{k+1, n}.
	\end{equation*}
	
	Likewise, the contribution of $\sqrt[d]{n}Y'$ is bounded by
	\begin{equation*}
		l_{\text{NN}, k}\p{\sqrt[d]{n}\p{\mathbf Y_n\cup\set{Y'}}}-l_{\text{NN}, k}\p{\sqrt[d]{n}\mathbf Y_n}\leq k R_{k, n}\leq k R_{k+1, n}.
	\end{equation*}
	
	Thus, we proceed by bounding $\E{R_{k+1, n}^a}$. For any $G\in\mathcal D_{\gamma, r_0}$
	\begin{equation*}
		\E{\int\limits_{r_0}^\infty \prob{\|Y_j-Y'\|>\sqrt[a]{r}\big| \ Y'}^n\de r} \leq \diam{C}\p{1-\gamma r_0^{\frac{d}{a}}}^n.
	\end{equation*}
	
	We apply a bound similar to Theorem~7 in \cite{Singh2016}. In the statement of the referenced theorem, it is assumed that the above quantity is bounded by $C_T/n$ for an appropriate constant $C_T$. Here, we may improve that to an exponential bound. Consequently, we have 
	
	\begin{align}
		&\E{R_{k+1, n}^a}\leq \p{\frac{k+1}{\gamma}}^{\frac{a}{d}} +\diam{C}n^{\frac{a}{d}}\p{1-\gamma r_0^{\frac{d}{a}}}^n + \frac{a\p{e/\p{k+1}}^{k+1}}{d\p{\gamma }^{\frac{a}{d}}}\int\limits_{k+1}^\infty e^{-y}y^{k+\frac{a}{d}}\de y.
	\end{align}
	
	For any $a<\infty$, this quantity limits to a constant with $n\rightarrow\infty$, thus admitting a constant upper bound which holds for all $n\in\nats$, satisfying \eqref{condition::expectation}.
	
	The required stabilization properties \eqref{definition::stabilization_almost_sure} are first established for a unit-intensity homogeneous Poisson process via Lemma~6.1 in \cite{Penrose2001}. Let $\rho$ denote the minimal locally-determined radius of stabilization for $l_{\text{NN}, k}$. Let $\mathbb P_\lambda$ denote a homogeneous Poisson process with intensity $\lambda$. By the scaling properties of $l_{\text{NN}, k}$, we have $\rho_0\p{\mathbb P_\lambda}=\rho_0\p{\mathbb P_1/\sqrt[d]{\lambda}}=\rho_0\p{\mathbb P_1}/\sqrt[d]{\lambda}$. Thus, $\probout{\rho_0\p{\mathbb P_\lambda}>L}=\probout{\rho_0\p{\mathbb P_1}>\sqrt[d]{\lambda} L}$. For any $\lambda>1$, $\probout{\rho_0\p{\mathbb P_\lambda}>L}\leq\probout{\rho_0\p{\mathbb P_1}>L}$. Likewise, for any $\lambda_*<1$, we may choose $L_\delta$ such that $\probout{\rho_0\p{\mathbb P_1}>\sqrt[d]{\lambda_*}L_\delta}\leq \delta$. Then $\prob{\rho_0\p{\mathbb P_\lambda}>L_\delta}\leq \delta$ for all $\lambda\in\left[\lambda_*,\infty\right)$. Stabilization then extends to the binomial sampling setting via Lemma~\ref{lemma::stabilization_poisson} and the translation invariance of $l_{\text{NN}, k}$. We have for any $\delta>0$ that there exists an $n_\delta<\infty$ and $L_\delta^*<\infty$ such that $\probout{\rho_{\sqrt[d]{n}Y'}\p{\sqrt[d]{n}\mathbf Y_n}>L_\delta^*}\leq \delta$. Both quantities do not depend specifically on $G$.
	
	When restricted to $C$, we have an absolute upper bound of $\diam{C}\sqrt[d]{n}$ for the radius of stabilization, as all points will fall inside of $C$ almost surely. We set $L_\delta= \mybrk \max\{\diam{C}\sqrt[d]{n_\delta}, L_\delta^*\}$. Then $\probout{\rho_{\sqrt[d]{n}Y'}\p{\sqrt[d]{n}\mathbf Y_n}>L_\delta}\leq \delta$ for all $n\in\nats$, satisfying \eqref{condition::stabilization_radius}.
	
	We now have the required pieces to prove bootstrap convergence. Although $\mathcal C_{p, M}\cap\mathcal D_{\gamma,r_0}\p{C}$ is only a subset of $\mathcal C_{p, M}$, the proof and conclusion of Proposition~\ref{proposition::w2_general} still apply. Likewise, the proof of Theorem~\ref{theorem::bootstrap_general} is easily altered to include the additional condition $\ind{\hat F_n\in D_{\gamma, r_0}\p{C}}\rightarrow 1$. We omit details here.
\end{proof}

\section{$L_p$ Consistency of Kernel Density Estimators} \label{appendix::lp_norm}

In this section we discuss the $L_p$-norm consistency of the kernel density estimator under very mild conditions. To the best of our knowledge, the exact proof of this result could not be found in the kernel density literature, though it employs well-known results from probability theory. In the context of our smoothed bootstrap procedure, the $L_p$-norm convergence assumption of the KDE follows as a direct consequence of the following theorem. Notably, the necessary assumptions for $L_p$-norm convergence for the KDE are strictly weaker than those of Theorem~\ref{theorem::bootstrap_general}.

For $Q$ a kernel with $\int_{\reals^d} Q\p{x}\de x = 1$, define $Q_h\p{x}:=Q\p{x/h}/h^d$. Let $F$ be a probability distribution on $\reals^d$ with corresponding density $f$ and $\set{X_i}_{i\in\nats}\iid F$. The kernel density estimator for $f$ with bandwidth $h$ is
\begin{equation}
	\hat f_{n, h}\p{x} := \frac{1}{n}\sum\limits_{i=1}^n Q_h\p{x-X_i}
\end{equation}

\begin{proposition} \label{theorem::lp_norm}
	Given $p\geq 2$, let $\norm[p]{Q}<\infty$ and $\norm[p]{f}<\infty$. Then for any $h_n$ such that $\lim_{n\rightarrow\infty} h_n = \infty$ and $\lim_{n\rightarrow\infty}n^{p/\p{2d\p{p-1}}}h_n = \infty$
	\begin{equation}
		\norm[p]{\hat f_{n, h_n}-f}\overset{p}{\rightarrow} 0
	\end{equation}
\end{proposition}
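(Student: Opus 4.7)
The plan is a standard bias--variance decomposition
\begin{equation*}
\norm[p]{\hat f_{n,h_n}-f} \leq \norm[p]{\E{\hat f_{n,h_n}}-f} + \norm[p]{\hat f_{n,h_n}-\E{\hat f_{n,h_n}}},
\end{equation*}
with the two pieces handled separately. (The stated hypothesis $\lim h_n=\infty$ appears to be a typo for $\lim h_n=0$, the natural bandwidth condition for a KDE; I proceed under that reading.)

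For the deterministic bias, $\E{\hat f_{n,h_n}(x)} = (Q_{h_n}*f)(x)$. Since $Q\in L^1$ with $\int Q=1$ and $f\in L^p$ with $p<\infty$, $Q_{h_n}$ is an approximate identity, so the classical convergence $\norm[p]{Q_{h_n}*f - f}\to 0$ holds by continuity of translation in $L^p$ and a density argument (as in Folland). No assumption beyond $\norm[p]{f}<\infty$ is needed.

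For the stochastic term I would apply Fubini to write
\begin{equation*}
\E{\norm[p]{\hat f_{n,h_n}-\E{\hat f_{n,h_n}}}^p} = \int \E{\abs{\tfrac{1}{n}\textstyle\sum_{i=1}^n Y_i(x)}^p}\de{x},
\end{equation*}
with $Y_i(x) := Q_{h_n}(x-X_i)-\E{Q_{h_n}(x-X_1)}$ iid centered, and then invoke Rosenthal's inequality (valid for $p\geq 2$). After bounding the integrand by the $p$-th moment of a single $Q_{h_n}(x-X_1)$ (see next paragraph for the key step) and swapping orders using the scaling $\norm[p]{Q_{h_n}}^p = h_n^{-d(p-1)}\norm[p]{Q}^p$, the resulting bound takes the form
\begin{equation*}
\E{\norm[p]{\hat f_{n,h_n}-\E{\hat f_{n,h_n}}}^p} \leq C_p\,\norm[p]{Q}^p\brk{(nh_n^d)^{-(p-1)} + n^{-p/2}h_n^{-d(p-1)}}.
\end{equation*}
For $p\geq 2$ the second term dominates, and under the assumed condition $n^{p/(2d(p-1))}h_n\to\infty$ it vanishes. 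Markov's inequality then converts convergence in $p$-th mean to convergence in probability, finishing the argument.

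The one subtle point, and what makes the bound sharp under only $\norm[p]{Q}<\infty$, is controlling the variance term $(n\E{Y_1(x)^2})^{p/2}$ in Rosenthal's bound without an ancillary assumption like $\norm[2]{Q}<\infty$. I would apply Jensen's inequality to the convex function $u\mapsto u^{p/2}$ against the probability measure $f(y)\de{y}$ to obtain $(\E{Q_{h_n}(x-X_1)^2})^{p/2}\leq \E{\abs{Q_{h_n}(x-X_1)}^p}$, after which both terms in Rosenthal collapse onto the same $\norm[p]{Q_{h_n}}^p$ scaling. This single convexity step is both the main obstacle (everything else is routine) and the reason for the precise exponent $p/(2d(p-1))$ in the bandwidth hypothesis.
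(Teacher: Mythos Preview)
Your proposal is correct and follows essentially the same route as the paper: the same bias--variance split, the same approximate-identity argument (Folland) for the bias, Fubini plus a moment inequality for the stochastic term, the same Jensen step $(\E{|Y_1|^2})^{p/2}\le \E{|Y_1|^p}$ to collapse the variance term onto the $p$-th moment, and Markov to finish. The only cosmetic difference is that the paper symmetrizes and applies Lata{\l}a's moment bound (Corollary~3 in \cite{Latala1997}) in place of Rosenthal's inequality; both yield the same $n^{-p/2}h_n^{-d(p-1)}$ rate, and your observation about the $h_n\to\infty$ typo is correct.
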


If further $\sum_{n\in\nats} 1/\p{n^{p/2}h_n^{d\p{p-1}}}<\infty$
\begin{equation}
	\norm[p]{\hat f_{n, h_n} - f} \overset{a.s.}{\rightarrow} 0
\end{equation}

\begin{proof} \label{proof::lp_norm}
	
	The expectation of $\hat f_{n, h_n}$ is $Q_{h_n}*f$, where $*$ denotes the convolution operator. We expand the $L_p$-norm using the triangle inequality.
	\begin{equation}
		\norm[p]{\hat f_{n, h_n}-f}\leq \norm[p]{\hat f_{n, h_n} - Q_{h_n}*f}+\norm[p]{Q_{h_n}*f-f} \label{proof::lp_norm::1}
	\end{equation}
	
	Because $\int_{\reals^d}Q_{h_n}\p{x}\de x = 1$ and $\norm[p]{f}<\infty$, the second term goes to $0$ with $h_n\rightarrow 0$ via Theorem 8.14 in \cite{folland_1999}. We focus on the first term of \eqref{proof::lp_norm::1}.
	\begin{align}
		\E{\int\abs{\hat f_{n, h_n}\p{x} - \p{Q_{h_n}*f}\p{x}}^p\de x} & = \int\E{\abs{\hat f_{n, h_n}\p{x}- \p{Q_{h_n}*f}\p{x}}^p}\de x \\
		& = \frac1{n^p}\int\E{\abs{\sum\limits_{i=1}^n Y_i\p{x}}^p}\de x
	\end{align}
	where $Y_i\p{x}:=Q_{h_n}\p{x-X_i}-\p{Q_{h_n}*f}\p{x}$ are iid mean-zero random variables.
	
	We symmetrize using independent Radamacher random variables $\set{e_i}_{i\in\nats}$, letting $Z_i\p{x}:=e_iY_i\p{x}$. We have that $\E{\abs{\sum_{i=1}^nY_i\p{x}}^p}\leq 2^p\E{\abs{\sum_{i=1}^n Z_i\p{x}}^p}$. By Corollary 3 in \cite{Latala1997}, there exists a universal constant $C<\infty$ such that, for any $j\in\nats$
	\begin{align}
		\E{\abs{\sum_{i=1}^n Z_i\p{x}}^p} \leq \ &  C^p\p{\frac{p}{\log p}}^p\max\set{\p{n\E{\abs{Z_j\p{x}}^2}}^{\frac{p}{2}}, n\E{\abs{Z_j\p{x}}^p}} \\
		= \ & C^p\p{\frac{p}{\log p}}^p\max\set{\p{n\E{\abs{Y_j\p{x}}^2}}^{\frac{p}{2}}, n\E{\abs{Y_j\p{x}}^p}} \nonumber \\
		\leq \ & C^p\p{\frac{p}{\log p}}^p\max\set{n^{\frac{p}{2}}\E{\abs{Y_j\p{x}}^p}, n\E{\abs{Y_j\p{x}}^p}} \nonumber \\
		= \ & C^p\p{\frac{p}{\log p}}^pn^{\frac{p}{2}}\E{\abs{Y_j\p{x}}^p}.
	\end{align}
	
	Then
	\begin{equation}
		\E{\int\abs{\hat f_{n, h_n}\p{x} - \p{Q_{h_n}*f}\p{x}}^p\de x} \leq \frac{2^pC^p}{n^{\frac p2}}\p{\frac{p}{\log p}}^p\int\E{\abs{Y_j\p{x}}^p}\de x.
	\end{equation}
	
	\begin{align}
		\int\E{\abs{Y_j\p{x}}^p}\de x= \ & \E{\int \abs{Y_j\p{x}}^p\de x} \\
		= \ & \int\int\abs{Q_{h_n}\p{x-y}-\p{Q_{h_n}*f}\p x}^p f\p{y}\de x\de y \nonumber \\
		\leq \ & 2^{p-1}\int\int\p{\abs{Q_{h_n}\p{x-y}}^p+\abs{\p{Q_{h_n}*f}\p{x}}^p}f\p{y}\de x \de y \nonumber \\
		=\ &  2^{p-1}\p{\norm[p]{Q_{h_n}}^p+\norm[p]{Q_{h_n}*f}^p} \nonumber \\
		\leq &  2^{p}\norm[p]{Q_{h_n}}^p \nonumber \\
		= \ & \frac{2^p}{\p{h_n^d}^{p-1}}\norm[p]{Q}^p.
	\end{align}
	
	The last inequality follows from Young's inequality for convolutions, given that $\norm[1]{f} = 1$, $f$ being a probability density.
	
	\begin{align}
		\E{\int\abs{\hat f_{n, h_n}\p{x} - \p{Q_{h_n}*f}\p{x}}^p\de x} \leq \ 4^pC^p\p{\frac{p}{\log p}}^p\frac{\norm[p]{Q}^p}{\p{n^{\frac{p}{2d\p{p-1}}}h_n}^{d\p{p-1}}}
	\end{align}
	
	As $\lim_{n\rightarrow\infty}n^{p/\p{2d\p{p-1}}}h_n = \infty$ by assumption, this final bound goes to $0$ with $n\rightarrow \infty$. For any $\epsilon>0$, Markov's inequality gives
	\begin{align}
		\prob{\norm[p]{\hat f_{n, h_n} - Q_{h_n}*f}\geq\epsilon} = \ & \prob{\norm[p]{\hat f_{n, h_n} - Q_{h_n}*f}^p\geq\epsilon^p}\\
		\leq \ &  \frac{\E{\norm[p]{\hat f_{n, h_n} - Q_{h_n}*f}^p}}{\epsilon^p}
	\end{align}
	
	As was shown earlier, the right hand side goes to $0$, thus $\norm[p]{\hat f_{n, h_n} - Q_{h_n}*f}\overset{p}{\rightarrow} 0$. As $\norm[p]{Q_{h_n}*f-f}\rightarrow 0$, an application of Slutsky's theorem gives the final result. If $\sum_{n\in\nats} 1/\p{n^{p/2}h_n^{d\p{p-1}}}<\infty$, the almost sure result follows from Borel-Cantelli. $h_n=n^{-(p-1)/2}\p{\log\p{n}}^2$ satisfies this criterion.
	
\end{proof}

\section{Details of Simulation Study} \label{appendix::simulation_details}

Provided here are the data generating functions, written in pseudocode, for the simulation study of Section~\ref{section::simulation_study}. Each generator below corresponds to a distribution $F_1$-$F_7$ in Table~\ref{table::distributions}. A description is included, explaining each case in more detail. In all of the following, $\mathbb S^{d-1}$ denotes the unit sphere in $\reals^{d}$, $B_z\p{r}$ the ball with radius $r$ around $z$, and $\runif{S}$ the uniform distribution on the set $S$. $\rnorm{\mu, \sigma^2}$ denotes the normal distribution with mean $\mu$ and variance $\sigma^2$, and $\rexp{\lambda}$ is the exponential distribution with rate parameter $\lambda$. $\text{Cauchy}\p{\lambda}$ denotes the Cauchy distribution with scale parameter $\lambda$, and $\p{\cdot}$ is used to show vector concatenation.

\begin{figure}[H]
	\textbf{Generator 1:}
	\vspace{.5\baselineskip}
	\hrule
	\vspace{.5\baselineskip}
	\begin{algorithmic}[1]
		\State $\theta\sim\runif{\mathbb S^1}$
		\State $S\sim\runif{\set{-1, 1}}$
		\State $R\sim\runif{\brk{0, 1}}$
		
		\Return $X=\theta R^{.9S}$
	\end{algorithmic}
	\vspace{.5\baselineskip}
	\hrule
	\vspace{.5\baselineskip}
	\textit{$F_1$ is radially symmetric around the origin, and the radius is such that the random variable is unbounded, and the $L_8$ norm of the overall density is finite. Furthermore, the density approaches infinity near the origin. This case is chosen so as to test the assumptions of Corollary~\ref{theorem::bootstrap_pbn} with regards to the required norm bound.}
\end{figure}

\begin{figure}[H]
	\textbf{Generator 2:}
	\vspace{.5\baselineskip}
	\hrule
	\vspace{.5\baselineskip}
	\begin{algorithmic}[1]
		\State $\theta\sim\runif{\mathbb S^1}$
		\State $S\sim\runif{\set{-1, 1}}$
		\State $R\sim\runif{\brk{0, 1}}$
		
		\Return $X=\theta R^{.55S}$
	\end{algorithmic}
	\vspace{.5\baselineskip}
	\hrule
	\vspace{.5\baselineskip}
	\textit{$F_2$ is radially symmetric around the origin, and the radius is such that the random variable is unbounded. The $L_2$ norm of the overall density is finite, but the $L_8$ norm is infinite. As with distribution $F_1$, the density approaches infinity near the origin. This case violates the assumptions of Corollary~\ref{theorem::bootstrap_pbn}.}
\end{figure}

\begin{figure}[H]
	\textbf{Generator 3:}
	\vspace{.5\baselineskip}
	\hrule
	\vspace{.5\baselineskip}
	\begin{algorithmic}[1]
		\State $\theta\sim\runif{\mathbb S^1}$
		\State $X_1, X_2\sim \rnorm{0, .04}$
		
		\Return $\theta + \p{Y_1, Y_2}$.
	\end{algorithmic}
	\vspace{.5\baselineskip}
	\hrule
	\vspace{.5\baselineskip}
	\textit{$F_3$ represents a ring in $\reals^2$, combined with additive Gaussian noise. The variance parameter is chosen small enough so that the ring structure is not lost within the additive noise.}
\end{figure}

\begin{figure}[H]
	\textbf{Generator 4:}
	\vspace{.5\baselineskip}
	\hrule
	\vspace{.5\baselineskip}
	\begin{algorithmic}[1]
		\State $\theta\sim\runif{B_0\p{1}}$
		\State $X_1, X_2, X_3\sim \rnorm{0, .01}$
		
		\Return $\theta + \p{Y_1, Y_2, Y_3}$.
	\end{algorithmic}
	\vspace{.5\baselineskip}
	\hrule
	\vspace{.5\baselineskip}
	\textit{$F_4$ is the uniform distribution on the unit ball in $\reals^3$, with a small amount of additive noise included to slightly smooth the boundary at radius $1$.}
\end{figure}

\begin{figure}[H]
	\textbf{Generator 5:}
	\vspace{.5\baselineskip}
	\hrule
	\vspace{.5\baselineskip}
	\begin{algorithmic}[1]
		\State $X\sim \runif{\begin{Bmatrix} \p{0.38741799, 0.24263535, 0.09535272} \\ \p{0.25147839, 0.63824409, 0.62425101} \\ \p{0.73988542, 0.80749034, 0.84972394} \\ \p{0.26811913, 0.35911205, 0.08316547} \\ \p{0.65954757, 0.04704809, 0.02113341} \end{Bmatrix}}$
		\State $Y_1, Y_2, Y_3\sim \rexp{25}$
		
		\Return $X + \p{Y_1, Y_2, Y_3}$.
	\end{algorithmic}
	\vspace{.5\baselineskip}
	\hrule
	\vspace{.5\baselineskip}
	\textit{$F_5$ consists of $5$ clusters, one around each of the provided opints in $\reals^3$. Exponential noise is included to test the effects of heavier tails on the final coverage probability. The rate parameter was chosen large enough so that the $5$ clusters remain distinct after noise addition.}
\end{figure}

\begin{figure}[H]
	\textbf{Generator 6:}
	\vspace{.5\baselineskip}
	\hrule
	\vspace{.5\baselineskip}
	\begin{algorithmic}[1]
		\State $\theta\sim \runif{\mathbb S^2}$
		\State $Y_1, ..., Y_5\sim \text{Cauchy}\p{.1}$
		
		\Return $\p{\theta, 0, 0} + \p{Y_1, ..., Y_5}$
	\end{algorithmic}
	\vspace{.5\baselineskip}
	\hrule
	\vspace{.5\baselineskip}
	\textit{$F_6$ represents a $2$-dimensional unit sphere embedded in a higher dimension $\reals^6$. We have included additive Cauchy noise to investigate the effects of very heavy tails.}
\end{figure}

\begin{figure}[H]
	\textbf{Generator 7:}
	\vspace{.5\baselineskip}
	\hrule
	\vspace{.5\baselineskip}
	\begin{algorithmic}[1]
		\State $\p{\theta_1, \theta_2}\sim \runif{\mathbb S^1}$
		\State $S \sim \runif{\set{-1, 1}}$
		\State $Y_1, ..., Y_{10}\sim\rnorm{0, .04}$
		
		\Return $\p{\theta_1 + S, \theta_2, 0, ..., 0} + \p{Y_1, ..., Y_{10}}$
	\end{algorithmic}
	\vspace{.5\baselineskip}
	\hrule
	\vspace{.5\baselineskip}
	\textit{$F_7$ represents a dual ring, or figure-$8$ embedded in $\reals^{10}$. Full-dimensional Gaussian noise is added, with variance chosen small enough so that the dual rings are not closed upon noise addition. $F_7$ is included to illustrate the effects of the ``curse of dimensionality'' expected in higher dimensions.}
\end{figure}

\end{document}